\documentclass[11pt]{aart_big}
%\documentclass[11pt]{article}

%Inline subsections
\usepackage{titlesec}
%\titleformat{\subsection}[runin]{\normalfont\bfseries}{\thesubsection.}{.5em}{}[.]\titlespacing{\subsection}{0pt}{2ex plus .1ex minus .2ex}{.8em}
\titleformat{\subsubsection}[runin]{\bfseries}{\thesubsubsection.}{.3em}{}[.]\titlespacing{\subsubsection}{0pt}{1ex plus .1ex minus .2ex}{.5em}

%Figure style
\usepackage[labelfont=bf,font=small,labelsep=period]{caption}
\setlength{\intextsep}{3em}
\setlength{\textfloatsep}{3em}

%Set up paper size and margins
\usepackage[letterpaper, hmargin=1in, top=1.1in, bottom=1.3in, footskip=0.7in]{geometry}

%%%%%%%%%%%%%%%%    Package declarations    %%%%%%%%%%%%%%%%

\usepackage{amsmath} %[intlimits]
\usepackage{amssymb}
\usepackage{amsfonts}
\usepackage{latexsym}
\usepackage{amsthm}
\usepackage{amsxtra}
\usepackage{amscd}
\usepackage{bbm}
\usepackage{mathrsfs}
\usepackage{bm}
\usepackage{subfigure}

%%%%%%%%%%%%%%%%    Coloured comments    %%%%%%%%%%%%%%%%

\usepackage{graphicx, color}

\definecolor{darkred}{rgb}{0.9,0,0.3}
\definecolor{darkblue}{rgb}{0,0.3,0.9}

\usepackage{ifthen}
\def\comment#1{\ifthenelse{\isodd{\value{page}}}{\marginpar{\raggedright\scriptsize{\textcolor{darkred}{#1}}}}{\marginpar{\raggedleft\scriptsize{\textcolor{darkred}{#1}}}}}  

\definecolor{vdarkred}{rgb}{0.6,0,0.2}
\definecolor{vdarkblue}{rgb}{0,0.2,0.6}
\usepackage[pdftex, colorlinks, linkcolor=vdarkblue,citecolor=vdarkred]{hyperref}

%%%%%%%%%%%%%%%%    Graphics folder path    %%%%%%%%%%%%%%%%
 
\graphicspath{{./Figures/}}

%%%%%%%%%%%%%%%%    Citation and key options    %%%%%%%%%%%%%%%%

\setcounter{tocdepth}{1}
\usepackage[nottoc,notlof,notlot]{tocbibind} 
%\usepackage{cite} % Enabling `[1-4]` - style citing
%\usepackage{xspace} %command to insert a space after a latex command

%\usepackage[notref,notcite]{showkeys}
%\usepackage{showkeys}

%%%%%%%%%%%%%%%%    Global style commands    %%%%%%%%%%%%%%%%

\flushbottom
\numberwithin{equation}{section}
\numberwithin{figure}{section}
\interfootnotelinepenalty=10000 %do not allow LaTeX to break a footnote across multiple pages
% \newcommand{\defrm}[1]{{\fontseries{b}\selectfont#1}}
% \newcommand{\defit}[1]{{\fontseries{bx}\fontshape{it}\selectfont#1}}
 %Normal, broader bold, which also works in italic
%\renewcommand{\bfdefault}{b} %Reduce spacing in bold fonts

%\swapnumbers

%%%%%%%%%%%%%%%%    Character modifiers    %%%%%%%%%%%%%%%%

\newcommand{\f}[1]{\boldsymbol{\mathrm{#1}}} %bold
 %blackboard bold
 \newcommand{\rr}{\mathrm} %upright
 %sans serif
\renewcommand{\cal}{\mathcal} 
 
\newcommand{\fra}{\mathfrak} 
 %underline
\newcommand{\ol}[1]{\overline{#1} \!\,} %overline
\newcommand{\wh}{\widehat}
\newcommand{\wt}{\widetilde}
\newcommand{\txt}[1]{\text{\rm{#1}}}

%%%%%%%%%%%%%%%%    Blackboard bold characters    %%%%%%%%%%%%%%%%

\renewcommand{\P}{\mathbb{P}}
\newcommand{\E}{\mathbb{E}}
\newcommand{\R}{\mathbb{R}}
\newcommand{\C}{\mathbb{C}}
\newcommand{\N}{\mathbb{N}}
\newcommand{\Z}{\mathbb{Z}}

%%%%%%%%%%%%%%%%    Special symbols    %%%%%%%%%%%%%%%%

\newcommand{\me}{\mathrm{e}}
\newcommand{\ii}{\mathrm{i}}
\newcommand{\dd}{\mathrm{d}}
\newcommand{\col}{\mathrel{\vcenter{\baselineskip0.75ex \lineskiplimit0pt \hbox{.}\hbox{.}}}}
\newcommand*{\deq}{\mathrel{\vcenter{\baselineskip0.65ex \lineskiplimit0pt \hbox{.}\hbox{.}}}=}

\newcommand{\todist}{\overset{\mathrm{d}}{\longrightarrow}}
 %identity map
%\newcommand{\umat}{\mathbbmss{1}} %unit matrix
\renewcommand{\leq}{\leqslant}
\renewcommand{\geq}{\geqslant}
\renewcommand{\le}{\leqslant}
\renewcommand{\ge}{\geqslant}
\renewcommand{\epsilon}{\varepsilon}

%%%%%%%%%%%%%%%%    Parentheses    %%%%%%%%%%%%%%%%

\newcommand{\qq}[1]{[\![{#1}]\!]}

\newcommand{\ind}[1]{\f 1 (#1)}
\newcommand{\indb}[1]{\f 1 \pb{#1}}

\newcommand{\pb}[1]{\bigl({#1}\bigr)}
\newcommand{\pB}[1]{\Bigl({#1}\Bigr)}
\newcommand{\pbb}[1]{\biggl({#1}\biggr)}
\newcommand{\pBB}[1]{\Biggl({#1}\Biggr)}

\newcommand{\qb}[1]{\bigl[{#1}\bigr]}
\newcommand{\qB}[1]{\Bigl[{#1}\Bigr]}
\newcommand{\qbb}[1]{\biggl[{#1}\biggr]}
\newcommand{\qBB}[1]{\Biggl[{#1}\Biggr]}

\newcommand{\h}[1]{\{{#1}\}}
\newcommand{\hb}[1]{\bigl\{{#1}\bigr\}}

\newcommand{\hbb}[1]{\biggl\{{#1}\biggr\}}

\newcommand{\abs}[1]{\lvert #1 \rvert}
\newcommand{\absb}[1]{\bigl\lvert #1 \bigr\rvert}
\newcommand{\absB}[1]{\Bigl\lvert #1 \Bigr\rvert}
\newcommand{\absbb}[1]{\biggl\lvert #1 \biggr\rvert}
\newcommand{\absBB}[1]{\Biggl\lvert #1 \Biggr\rvert}

\newcommand{\norm}[1]{\lVert #1 \rVert}

\newcommand{\normbb}[1]{\biggl\lVert #1 \biggr\rVert}

\newcommand{\scalar}[2]{\langle{#1} \mspace{2mu}, {#2}\rangle}

%%%%%%%%%%%%%%%%    Mathematical operators    %%%%%%%%%%%%%%%%

\DeclareMathOperator{\tr}{Tr}
\DeclareMathOperator{\var}{Var}
\DeclareMathOperator{\supp}{supp}
\DeclareMathOperator{\real}{Re}
\DeclareMathOperator{\im}{Im}

\DeclareMathOperator{\dist}{dist}

\DeclareMathOperator{\spec}{spec}

%%%%%%%%%%%%%%%%    Custom commands    %%%%%%%%%%%%%%%%

\newcommand{\ga}{\gamma} 
\newcommand{\la}{\label}
\newcommand{\eqre}{\eqref}
\newcommand{\re}{\ref}
\newcommand{\ld}{\ldots}
\newcommand{\beg}{\begin}
\newcommand{\en}{\end}
\newcommand{\trm}{\textrm}
\newcommand{\bgt}{\begin{itemize}}
\newcommand{\ent}{\end{itemize}}
\newcommand{\ite}{\item} 
\newcommand{\op}{\operatorname}

\newcommand{\si}{\sigma}
\newcommand{\ds}{\displaystyle}

\newcommand{\Tr}{\operatorname{Tr}}
\newcommand{\p}{\mathbb{P}}

\newcommand{\ud}{\mathrm{d}}
\newcommand{\blem}{\begin{lem}}
\newcommand{\elem}{\end{lem}}
\newcommand{\pro}{probability }
\newcommand{\ff}{\frac{1}}
\newcommand{\lf}{\left}
\newcommand{\ri}{\right}
\newcommand{\st}{such that }
 \newcommand{\lam}{\lambda}
\newcommand{\La}{\Lambda}
\newcommand{\ti}{\times}

\newcommand{\ste}{\, ;\, }
\newcommand{\mc}{\mathcal }
\newcommand{\eps}{\varepsilon}
\newcommand{\al}{\alpha}
\newcommand{\tta}{\theta}
\newcommand{\Tta}{\Theta} 

\newcommand{\bbm}{\begin{bmatrix}}
\newcommand{\ebm}{\end{bmatrix}}
\newcommand{\bes}{\begin{equation*}}
\newcommand{\ees}{\end{equation*}}
\newcommand{\be}{\begin{equation}}
\newcommand{\ee}{\end{equation}}
\newcommand{\beqy}{\begin{eqnarray}}
\newcommand{\eeqy}{\end{eqnarray}}
\newcommand{\beq}{\begin{eqnarray*}}
\newcommand{\eeq}{\end{eqnarray*}}

\newcommand{\lto}{\longrightarrow}

\newcommand{\bpm}{\begin{pmatrix}}
\newcommand{\epm}{\end{pmatrix}}
\newcommand{\cd}{\cdots}
\newcommand{\wH}{\widetilde{H}}
\newcommand{\bpr}{\beg{proof}}
\newcommand{\epr}{\en{proof}}
\newcommand{\bet}{\beta}
\newcommand{\del}{\delta}

\newcommand{\ka}{\kappa}
\newcommand{\lan}{\langle}
\newcommand{\rang}{\rangle}

\newtheorem{Th}{Theorem}[section]

\newtheorem{propo}[Th]{Proposition}
\newtheorem{proposition}[Th]{Proposition} 
 
\newtheorem{lem}[Th]{Lemma}

\newtheorem{ex}[Th]{Example}
\newtheorem{Def}[Th]{Definition}

\theoremstyle{definition}
\newtheorem{rmk}[Th]{Remark}
\long\def\symbolfootnote[#1]#2{\begingroup
\def\thefootnote{\fnsymbol{footnote}}\footnote[#1]{#2}\endgroup}

\title{Lectures on the local semicircle law for Wigner matrices}
\author{Florent Benaych-Georges\footnote{Universit\'e Paris Descartes, MAP5. Email: {\tt florent.benaych-georges@parisdescartes.fr}.} \and Antti Knowles\footnote{ETH Z\"urich, Departement Mathematik. Email: {\tt knowles@math.ethz.ch}.}}

\begin{document}
\maketitle

\begin{abstract}
These notes provide an introduction to the local semicircle law from random matrix theory, as well as some of its applications. We focus on \emph{Wigner matrices}, Hermitian random matrices with independent upper-triangular entries with zero expectation and constant variance. We state and prove the local semicircle law, which says that the eigenvalue distribution of a Wigner matrix is close to Wigner's semicircle distribution, down to spectral scales containing slightly more than one eigenvalue. This local semicircle law is formulated using the Green function, whose individual entries are controlled by large deviation bounds.

We then discuss three applications of the local semicircle law: first, complete delocalization of the eigenvectors, stating that with high probability the eigenvectors are approximately flat; second, rigidity of the eigenvalues, giving large deviation bounds on the locations of the individual eigenvalues; third, a comparison argument for the local eigenvalue statistics in the bulk spectrum, showing that the local eigenvalue statistics of two Wigner matrices coincide provided the first four moments of their entries coincide. We also sketch further applications to eigenvalues near the spectral edge, and to the distribution of eigenvectors.
\end{abstract}

\newpage

\tableofcontents

\newpage
  
\section{Introduction}
 
These notes are based on lectures given by Antti Knowles at the conference \emph{\'Etats de la recherche en matrices al\'eatoires} at the Institut Henri Poincar\'e in December 2014. In them, we state and prove the local semicircle law and give several applications to the distribution of eigenvalues and eigenvectors. We favour simplicity and transparency of arguments over generality of results. In particular, we focus on one of the simplest ensembles of random matrix theory: Wigner matrices.

A Wigner matrix is a Hermitian random matrix whose entries are independent up to the symmetry constraint, and have zero expectation and constant variance. This definition goes back to the seminal work of Wigner \cite{Wig}, where he also proved the semicircle law, which states that the asymptotic eigenvalue distribution of a Wigner matrix is given with high probability by the semicircle distribution. Wigner matrices occupy a central place in random matrix theory, as a simple yet nontrivial ensemble on which many of the fundamental features of random matrices, such as universality of the local eigenvalue statistics and eigenvector delocalization, may be analysed. Moreover, many of the techniques developed for Wigner matrices, such as the ones presented in these notes, extend to other random matrix ensembles or serve as a starting point for more sophisticated methods.

The local semicircle law is a far-reaching generalization of Wigner's original semicircle law, and constitutes the key tool for analysing the local distribution of eigenvalues and eigenvectors of Wigner matrices, and in particular in the study of the universality of Wigner matrices. Roughly, it states that the eigenvalue distribution is well approximated by the semicircle distribution down to scales containing only slightly more than one eigenvalue. The first instance of the local semicircle law down to optimal spectral scales was obtained by Erd\H{o}s, Schlein, and Yau in \cite{ESY3}, following several previous results on larger spectral scales \cite{ESY2,Khor}. Since then, the local semicircle law has been improved and generalized in a series of works \cite{ESY4, ESRY,  MR2981427, EYY2, EYYrigi, EKYY1, EKYY4, CMS}. The proof presented in these notes is modelled on the argument of Erd\H{o}s, Knowles, Yau, and Yin from \cite{EKYY4}, which builds on the works \cite{ESY2, ESY3, ESY4,  MR2981427, EYY2, EYYrigi} of Erd\H{o}s, Schlein, Yau, and Yin.

In order to keep these notes focused, for the applications we restrict ourselves to relatively simple consequences of the semicircle law: eigenvalue rigidity, eigenvector delocalization, and a four-moment comparison theorem for the local eigenvalue statistics. Further topics and applications, such as distribution of eigenvalues near the spectral edges, Dyson Brownian motion and its local relaxation time, distribution of eigenvectors, and spectral statistics of deformed matrix ensembles, are not covered here. For further reading on Wigner matrices, we recommend the books \cite{Mehta, agz}. Additional applications of the local semicircle law, in particular in the analysis of the local relaxation time of Dyson Brownian motion, are given in the survey \cite{Erd1}. In Section \ref{sec:GFC_edge_vec}, we briefly outline applications of the local semicircle law to the analysis of eigenvectors and eigenvalues near the spectral edge.

\subsubsection*{Outline}
In Section \ref{sec:LL} we define Wigner matrices and state the local semicircle law, or \emph{local law} for short. We also give two simple consequences of the local law: \emph{eigenvalue rigidity} and \emph{complete eigenvector delocalization}. Sections \ref{sec:prelim}--\ref{sec:averaging} are devoted to the proof of the local law. Section \ref{sec:prelim} collects some basic tools from linear algebra and probability theory that are used throughout the proof. Section \ref{Sec:local_law_proof_abstract} gives a detailed outline of the proof. In Section \ref{sec:weak_law} we perform the first of two major steps of the proof: the weak local law, which yields control down to optimal scales but with non-optimal error bounds. In Section \ref{sec:opt_bounds} we perform the second major step of the proof, which yields optimal error bounds and concludes the proof of the local law. The key estimate used to obtain the optimal error bounds is a fluctuation averaging result, whose proof is given in Section \ref{sec:averaging}.

Having concluded the proof of the local law, in Sections \ref{sec:local_law_small_scales}--\ref{sec:extension} we draw some simple consequences. In Section \ref{sec:local_law_small_scales} we prove the semicircle law on small scales, which provides large deviation bounds on the number of eigenvalues in small intervals. In Section \ref{sec:rig} we prove eigenvalue rigidity, which provides large deviation bounds on the locations of individual eigenvalues. In Section \ref{sec:extension} we extend the estimates from the local law to arbitrary scales and distances from the spectrum.

In Section \ref{sec:comparison} we illustrate how to use the local law to obtain a four-moment comparison theorem for the local eigenvalue statistics, using the Green function comparison method from \cite{MR2981427}. We also sketch how to extend such comparison methods to the edge of the spectrum and to eigenvectors. Finally, in Section \ref{sec:outlook} we discuss further generalizations of the local law, and also other random matrix models for which local laws have been established.

The appendices contain some basic tools from linear algebra, spectral theory, and probability that are used throughout the notes, as well as some standard results on the semicircle distribution and the norm of Wigner matrices.

Pedagogical aspirations aside, in these notes we also give a coherent summary of the different guises of the local semicircle law that have proved useful in random matrix theory. They have all appeared, at least implicitly, previously in the literature; we take this opportunity to summarize them explicitly in Sections \ref{sec:LL} and \ref{sec:extension}.

\subsubsection*{Conventions} 
We use $C$ to denote a generic large positive constant, which may depend on some fixed parameters and whose value may change from one expression to the next. Similarly, we use $c$ to denote a generic small positive constant. For two positive quantities $A_N$ and $B_N$ depending on $N$ we use the notation $A_N \asymp B_N$ to mean $C^{-1} A_N \leq B_N \leq C A_N$ for some positive constant $C$. In statements of assumptions, we use the special constant $\tau > 0$, which should be thought of as a fixed arbitrarily small number. A smaller $\tau$ always leads to a weaker assumption. We always use the Euclidean norm $\abs{\f v}$ for vectors $\f v$, and denote by $\norm{A}$ the associated operator norm of a matrix $A$. Finally, in some heuristic discussions we use the symbols $\ll$, $\gg$, and $\approx$ to mean ``much less than'', ``much greater than'', and ``approximately equal to'', respectively.

 \section{The local law} \label{sec:LL}
Let $H = H^* = (H_{ij} \col 1 \leq i,j \leq N) \in \C^{N \times N}$ be an $N \ti N$ random Hermitian matrix. We normalize $H$ so that its eigenvalues are typically of order one, $\norm{H} \asymp 1$. A central goal of random matrix theory is to understand the distribution of the eigenvalues $\lam_1 \ge \lambda_2 \geq \cd \ge \lam_N$ and the normalized associated eigenvectors $\f u_1, \f u_2, \ld, \f u_N$ of $H$.

Since most quantities that we are interested in depend on $N$, we shall almost always omit the explicit argument $N$ from our notation. Hence, every quantity that is not explicitly a constant is in fact a sequence indexed by $N \in \N$.

\subsection{The Green function} \label{sec:Gfunct}
The main tool in the study of the eigenvalues and eigenvectors of $H$ is the \emph{Green function} or \emph{resolvent}
$$
\qquad G(z) \;\deq\; (H - z)^{-1}\,,
$$
where $z \in \C \setminus \{\lambda_1, \dots, \lambda_N\}$ is the \emph{spectral parameter}. Here, and throughout the following, we identify the matrix $z I$ with the scalar $z$.

Since we are ultimately interested in the eigenvalues and eigenvectors, it might seem that the Green function is an unnecessary distraction. In fact, however, the Green function is a much simpler and more stable object than the eigenvalues and eigenvectors. Using the Green function as the central object instead of eigenvalues and eigenvectors allows one to establish results whose direct proof (i.e.\ working directly on the eigenvalues and eigenvectors) is not feasible or at least much more complicated. We give an informal summary of the key features of the Green function that make it such a powerful tool.

\begin{enumerate}
\item
The Green function contains the complete information about the eigenvalues and eigenvectors. Indeed, by spectral decomposition we have
\begin{equation} \label{spec_decomp}
G(z) \;=\; \sum_{i=1}^N \frac{\f u_i\f u_i^*}{\lam_i-z}\,.
\end{equation}
Hence, $G$ is a matrix-valued meromorphic function in the complex plane, whose poles are the eigenvalues $\lambda_i$ of $H$ and whose residues the spectral projections $\f u_i \f u_i^*$ of $H$.  We may therefore use complex analysis to extract the full information about $\lambda_i$ and $\f u_i$ from $G(z)$. Note that the spectral projection $\f u_i \f u_i^*$ is a more natural object than the eigenvector $\f u_i$, since it discards the ambiguous phase of $\f u_i$. More details are given in Section \ref{sec:comparison}.
\item
Green functions form a basis of a powerful \emph{functional calculus}: every ``reasonable'' function $f(H)$ of $H$ may be expressed as a superposition of Green functions. For instance, if $f$ is holomorphic in an open domain containing the spectrum of $H$, we have
\begin{equation} \label{cont_rep}
f(H) \;=\; - \frac{1}{2 \pi \ii} \oint_{\Gamma} f(z) G(z) \, \dd z\,,
\end{equation}
where $\Gamma$ is a contour encircling the spectrum of $H$.

A more general and powerful functional calculus is provided by the \emph{Helffer-Sj\"ostrand functional calculus} given in Appendix \ref{sec:HS}.
\item
The Green function is trivial to differentiate in $H$. For instance,
\begin{equation*}
\frac{\partial G_{ij}(z)}{\partial H_{kl}} \;=\; - G_{ik}(z) G_{lj}(z)\,,
\end{equation*}
and higher order derivatives have analogous simple expressions. This is in stark contrast to the eigenvalues and eigenvectors, whose derivatives in the entries of $H$, while explicit, are notoriously unwieldy in higher orders.

A related observation is that the perturbation theory of Green functions is trivial. Let $\wt H = H + \Delta$ be a perturbation of $H$, and write $\wt G(z) \deq (\wt H - z)^{-1}$. Then by iterating the resolvent identity $\wt G = G - G \Delta \wt G$ we get the \emph{resolvent expansion}
\begin{equation} \label{res_exp}
\wt G(z) \;=\; \sum_{k = 0}^{n - 1} G(z) (-\Delta G(z))^k + \wt G(z) (-\Delta G(z))^n\,.
\end{equation}
Analogous perturbation series for the eigenvalues and eigenvectors are thoroughly unpleasant.
\item
The Green function satisfies a host of very useful identities. One example is the resolvent expansion from \eqref{res_exp}. More sophisticated identities can be derived from Schur's complement formula; see Lemma \ref{lem8} below.
\item
The Green function is analytically stable. More precisely, it is better suited for doing estimates and analysing the eigenvalue density on small scales than other families of functions. Common alternative families are the moments $H^k$, $k \in \N$, and the unitary one-parameter group $\me^{\ii t H}$, $t \in \R$. Each of these families serves as a basis of a functional calculus. For example, we have the Fourier representation $f(H) = \int \dd t \, \hat f(t) \, \me^{-\ii t H}$, where $\hat f$ is the Fourier transform of $f$. The advantage of Green functions is that integrals of the form \eqref{cont_rep} are far less oscillatory than e.g.\ the Fourier representation. For many arguments, this means that reaching small spectral scales using Green functions is much easier than using moments or the unitary one-parameter group. In fact, in these notes we show how to obtain control on the optimal scales using Green functions. Control of analogous precision using moments or the unitary one-parameter group has until now not been achieved.
\end{enumerate}

We conclude this subsection with a cultural discussion on the term \emph{Green function} (sometimes also called \emph{Green's function}). Many mathematical problems can be formulated in terms of a linear operator $L \col f \to Lf$ acting on functions $f = f(x)$ defined on some set $\cal X$. A prominent example is the Dirichlet problem on an open domain $\cal X \subset \R^d$, where the linear operator $L = -\Delta$ is the Laplacian with Dirichlet boundary conditions. For such a problem, the \emph{Green function} $G(x,y)$ is a function of two variables, $x,y \in \cal X$, defined as the integral kernel of the resolvent $(L - z)^{-1}$ with respect to some natural measure on $\cal X$, often Lebesgue measure if $\cal X$ is an open subset of $\R^d$ or the counting measure if $\cal X$ is discrete. Here the spectral parameter $z$ is often taken to be $0$. We discuss a few examples.
\begin{enumerate}
\item
In the above example of the Dirichlet problem with $\cal X \subset \R^d$ open and $L = -\Delta$ with Dirichlet boundary conditions, the Green function $L^{-1}(x,y)$ is the integral kernel of $L^{-1}$ with respect to Lebesgue measure. It gives the solution of the problem $L f = \rho$ via the integral formula $f(x) = \int_{\cal X} \dd y \, G(x,y) \rho(y)$.
\item
For a discrete-time random walk on $\Z^d$, $L$ is the transition matrix of the walk, and the Green function $G(x,y) = (1 - L)^{-1}(x,y)$ is the integral kernel of $(1 - L)^{-1}$ with respect to the counting measure on $\Z^d$. The quantity $G(x,y)$ has the probabilistic interpretation of the expected time spent at $y$ starting from $x$.
\item
For a continuous-time $\R^d$-valued Markov process $(X_t)_{t \geq 0}$ with generator $L$, the Green function $G(x,y) = (-L)^{-1}(x,y)$ is the integral kernel of $(-L)^{-1}$ with respect to Lebesgue measure. The quantity $G(x,y) \, \dd y$ has the probabilistic interpretation of the expected time spent in $\dd y$ starting from $x$. Indeed, for a test function $f$ we find $\int \dd y \, G(x,y) f(y) = ((-L)^{-1} f)(x) = \int_0^\infty \dd t \,  (\me^{L t} f)(x) = \int_0^\infty \dd t \, \E^x [f(X_t)] = \E^x \qb{\int_0^\infty \dd t \, f(X_t)}$, where in the third step we used that the semigroup $S(t)$ defined by $S(t) f(x) \deq \E^x [f(X_t)]$ is given as $S(t) = \me^{L t}$ in terms of the generator $L$.
\item
A Green function can be associated with much more general differential operators than the Laplacian from (i), and can for instance also be used for parabolic and hyperbolic partial differential equations. For example, let $\cal X = \R \times \R^d$ and $L = \partial_0^2 - \partial_1^2 - \cdots - \partial_d^2$ be the d'Alambert operator. A Green function $G(x,y)$ gives a solution of the wave equation $L f = \rho$ using the integral formula $f(x) =  \int \dd y \, G(x,y) \rho(y)$. Note that, since the homogeneous wave equation $L f = 0$ has nontrivial solutions, $L$ is not invertible and the Green function is not unique. Nevertheless, Green functions can be easily constructed.
\item
Let $\cal X = \{1, \dots, d\}$, and $L$ be a positive matrix on $\R^d$. Define the Gaussian probability measure $\mu(\dd \xi) \deq \frac{1}{Z} \me^{-\frac{1}{2} \scalar{\xi}{L\xi}} \dd \xi$. Then the \emph{two-point correlation function} $G(x,y) \deq \int \mu(\dd \xi) \, \xi_x \, \xi_y$ is simply equal to the matrix entry $(L^{-1})(x,y)$, i.e.\ the integral kernel of $L^{-1}$ with respect to the counting measure on $\cal X$. Generally, in statistical mechanics correlation functions of free fields are often referred to as Green functions. This convention is natural for classical fields, where a free field is a Gaussian measure, but it is also commonly used in quantum field theory.
\item
Finally, in the context of random matrix theory we take $\cal X = \{1, \dots, N\}$, $L = H$, and write $G_{ij}$ instead of $G(x,y)$. The spectral parameter $z$ plays a central role and is therefore often kept in the notation. Moreover, we do not distinguish the resolvent $(H - z)^{-1}$ from its entries $G_{ij} = (H - z)^{-1}_{ij}$, and refer to both as the Green function.
\end{enumerate}

\subsection{Local and global laws} 
We define the \emph{empirical eigenvalue distribution}
\begin{equation} \label{def_mu}
\mu \;\deq\; \ff{N}\sum_{i=1}^N \del_{\lam_i}\,.
\end{equation}
For many matrix models $H$ one has $\mu \to \varrho$ as $N \to \infty$ (in some sense to be made precise), where $\varrho$ is a deterministic probability measure that does not depend on $N$. (For instance, for the Wigner matrices defined in Definition \ref{def:Wigner} below, $\varrho$ is the semicircle distribution \eqref{defscl}.) This convergence is best formulated in terms of \emph{Stieltjes transforms}.
 
We define the Stieltjes transforms
\begin{equation} \label{def_s}
s(z) \;\deq\; \int \frac{\mu(\ud x)}{x-z} \;=\; \ff{N}\Tr G(z)
\end{equation}
and
\begin{equation} \label{def_m}
m(z) \;\deq\; \int \frac{\varrho(\ud x)}{x-z}
\end{equation}
of the probability measures $\mu$ and $\varrho$.
Note that $s(z)$ is random while $m(z)$ is deterministic.

From now on, we use the notation
\begin{equation*}
z \;=\; E + \ii \eta
\end{equation*}
for the real and imaginary parts of $z$. We always assume that $z$ lies in the open upper-half plane $\C_+$, i.e.\ that $\eta > 0$.
 
Note that \be\la{6121414h44}\ff{\pi}\im s(z) \;=\; \ff{N}\sum_{i=1}^N \frac{\eta/\pi}{(\lam_i-E)^2+\eta^2} \;=\; (\mu *\theta_\eta)(E)\,,\ee
where we defined
\begin{equation} \label{def_theta}
\theta_\eta(x) \;\deq\; \frac{1}{\pi} \im \frac{1}{x - \ii \eta} \;=\; \frac{\eta/\pi}{x^2+\eta^2}\,.
\end{equation}
The function $\theta_\eta$ is an approximate delta function as $\eta \downarrow 0$, with breadth $\eta$. We conclude that control of $\pi^{-1} \im s(z)$ is tantamount to control of the empirical distribution $\mu$ smoothed out on the scale $\eta$. Hence,   $\eta$ is called the \emph{spectral resolution}.

Stieltjes transforms in particular provide a convenient way of studying convergence of (random) probability measures in distribution.
The following lemma is proved in Appendix \ref{sec:PGL}.
 \beg{lem} \label{thm:global_law}
Let $\mu \equiv \mu_N$ be a random probability measure, and $\varrho$ a deterministic probability measure. Let $s \equiv s_N$ and $m$ denote the Stieltjes transforms of $\mu$ and $\varrho$ respectively. Then
 \be\la{cvscl} \mu \;\todist\; \varrho \qquad \Longleftrightarrow \qquad  s(z) \;\lto\; m(z) \txt{ for all fixed $z\in \C_+$}\,,\ee
where both convergences are as $N\to\infty$ and with respect to convergence in probability in the randomness of $\mu$ or $s$, and $\todist$ denotes convergence in distribution of probability measures\footnote{Explicitly, ``$\mu \todist \varrho$ in probability'' means: for all continuous bounded functions $f : \R \to \R$ and all $\epsilon > 0$ we have $\P(\abs{\int f \,\dd (\mu -  \varrho)} > \epsilon) \to 0$ as $N \to \infty$.}.
 \en{lem}

 Note that in Lemma \ref{thm:global_law} the spectral parameter $z$ does not depend on $N$. Hence, recalling the discussion following \eqref{6121414h44}, we see that the convergence $s(z) \to m(z)$ has a spectral resolution of order one. A result of the form
\begin{equation*}
s(z) \;\lto\; m(z) \txt{ for all $z\in \C_+$}
\end{equation*}
(for instance in probability) is therefore called a \emph{global law}.

A \emph{local law} is a result that controls the error $s(z) - m(z)$ for all $z$ satisfying $\eta \gg N^{-1}$. In other words, a local law admits arguments $z$ that depend on $N$, so that the spectral resolution $\eta$ may be much smaller than the global scale $1$. We always require the spectral resolution to be bigger than $N^{-1}$. This restriction is necessary and its origin easy to understand. Since we assumed that $\norm{H} \asymp 1$ and $H$ has $N$ eigenvalues, the typical separation of eigenvalues is of order $N^{-1}$. Individual eigenvalues are expected to fluctuate about their mean locations, so that the random quantity $s(z)$ can only be expected to be close to the deterministic quantity $m(z)$ if some averaging mechanism ensures that $s(z)$ depends strongly on a large number of eigenvalues. This means that the spectral resolution $\eta$ has to be larger than the typical eigenvalue spacing $N^{-1}$. See Figure \ref{Fig:Spectral_Resolution} for an illustration of the spectral scale $\eta$ and the approximation from \eqref{6121414h44}.

\begin{figure}[!ht]
\centering
\subfigure[Spectral resolution $\eta=N^{-0.8}$.]{
\includegraphics[scale=.35]{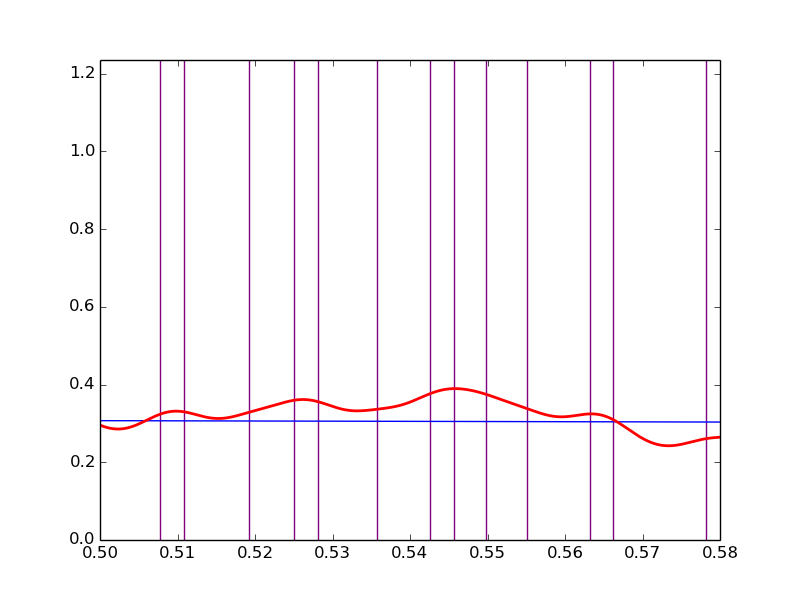}} \qquad 
\subfigure[Spectral resolution $\eta=N^{-1.2}$.]
{\includegraphics[scale=.35]{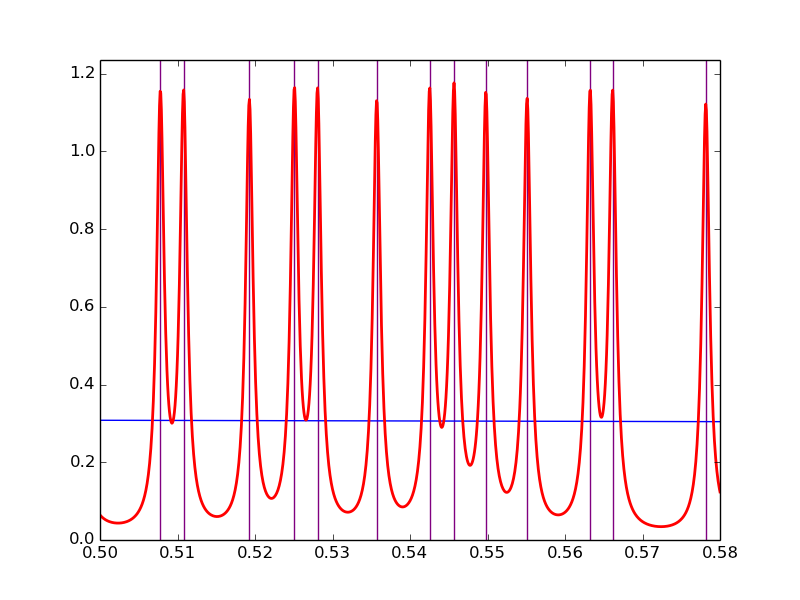}}
\caption{\emph{Role of the spectral resolution $\eta$.} In both plots we draw the eigenvalues of a $500\times 500$ GUE matrix (vertical lines), $\im s(z)/\pi$ as a function of $E$ for a fixed $\eta$  (red curve), and   $\im m(z)/\pi$  as a function of $E$  for the same fixed $\eta$  (blue curve). Both plots are drawn using the same realization of the underlying matrix and the same $x$- and $y$-ranges.}\la{Fig:Spectral_Resolution}
\end{figure}

For most applications to the distribution of the eigenvalues and eigenvectors, control of just $s(z) = N^{-1} \Tr G(z)$ is not enough, and one needs control of the Green function $G(z)$ regarded as a matrix. The need for such a stronger control is obvious if we are interested in the eigenvectors of $H$ (recall the spectral decomposition from \eqref{spec_decomp}). However, perhaps surprisingly, to understand the distribution of individual eigenvalues we also need to control the individual entries of $G$; see Section \ref{sec:comparison} for more details.

We shall see that, at least for Wigner matrices, the matrix $G(z)$ is close (in some sense to be made precise) to $m(z)$, a multiple of the identity matrix.

\subsection{The local law for Wigner matrices}

From now we focus on the case where $H$ is a Wigner matrix.

\beg{Def}[Wigner matrix] \label{def:Wigner}
A \emph{Wigner matrix}, or \emph{Wigner ensemble}, is a Hermitian $N\times N$ matrix $H = H^*$ whose entries $H_{ij}$ satisfy the following conditions.
\begin{enumerate}
\item
The upper-triangular entries $(H_{ij} \col 1\le i\le j\le N)$ are independent.
\item
For all $i,j$ we have $\E H_{ij}=0$ and $\E |H_{ij}|^2=N^{-1} (1 + O(\delta_{ij}))$.
\item
The random variables $\sqrt{N}H_{ij}$ are  bounded in any $L^p$ space, uniformly in $N,i,j$.
\end{enumerate}
\en{Def}

The conditions (i) and (ii) are standard. The factor $1 + O(\delta_{ij})$ in (ii) admits more general variances for the diagonal entries of $H$; as we shall see, many properties of the model are insensitive to the variances of the diagonal entries. Note that we do not assume the entries of $H$ to be identically distributed, and impose no constraint on $\E (H_{ij})^2$ associated with some symmetry class.

The condition (iii) is technical and made for convenience. Explicitly, it states that for each $p \in \N$ there exists a constant $C_p$ such that $\norm{\sqrt{N} H_{ij}}_p \leq C_p$ for all $N,i,j$, where we use the notation
\begin{equation*}
\norm{X}_p \;\deq\; \pb{\E \abs{X}^p}^{1/p}\,.
\end{equation*}
In the literature, various other conditions on the tails of the entries $\sqrt{N} H_{ij}$ have been used, ranging from sub-Gaussian tails to merely the existence of the second moment in (ii). The assumption (iii) can be relaxed (for instance by truncation), but we shall not pursue this direction in these notes.

This choice of the normalization $N^{-1}$ in (ii) ensures that $\norm{H} \asymp 1$ as required above. A simple heuristic way to convince ourselves that this is the right normalization is to compute the average square distance of an eigenvalue from the origin:
\begin{equation*}
\E \frac{1}{N} \sum_{i = 1}^N \lambda_i^2 \;=\; \frac{1}{N} \E \tr H^2 \;=\; \frac{1}{N} \sum_{i,j = 1}^N \E \abs{H_{ij}}^2 \;=\; 1 + O(N^{-1})\,,
\end{equation*}
as desired.

\begin{ex}[Gaussian ensembles] \label{def:Gaussian_W}
Let $X$ be an $N \times N$ matrix with i.i.d.\ real standard normal entries. Then the \emph{Gaussian orthogonal ensemble} (GOE) is defined as
\begin{equation*}
\frac{X + X^*}{\sqrt{2 N}}\,.
\end{equation*}
Similarly, let $Y$ be an $N \times N$ matrix with i.i.d.\ complex standard normal entries. (This means that $\real Y_{ij}$ and $\im Y_{ij}$ are independent standard normals.) Then the \emph{Gaussian unitary ensemble} (GUE) is defined as
\begin{equation*}
\frac{Y + Y^*}{\sqrt{4 N}}\,.
\end{equation*}
The entries of the GOE and the GUE are centred Gaussian random variables. Moreover, it is easy to check that if $H$ is the GOE then $N \E \abs{H_{ij}}^2 = 1 + \delta_{ij}$, and if $H$ is the GUE then $N \E \abs{H_{ij}}^2 = 1$. The terminology orthogonal/unitary stems from the fact that the GOE is invariant under conjugation $H \mapsto O H O^*$ by an arbitrary deterministic orthogonal matrix $O$, and the GUE is invariant under conjugation $H \mapsto U H U^*$ by an arbitrary deterministic unitary matrix $U$.
\end{ex}

Next, we define the \emph{semicircle distribution}
\be\la{defscl}\varrho(\ud x) \;\deq\; \ff{2\pi}\sqrt{(4-x^2)_+} \, \ud x \,.\ee
It is not hard to check (see Lemma \ref{lem:stieltjessclformula})
that the Stieltjes transform \eqref{def_m} of the semicircle distribution $\varrho$ is
\begin{equation} \label{m_qsolution}
m(z) \;=\; \int \frac{\varrho(\ud x)}{x-z} \;=\; \frac{-z+\sqrt{z^2-4}}{2}\,,
\end{equation}
where the square root $\sqrt{z^4 - 4}$ is chosen with a branch cut in the segment $[-2,2]$ so that $\sqrt{z^2 - 4} \sim z$ as $z \to \infty$. This ensures that $m(z) \in \C_+$ for $z \in \C_+$.

Clearly, we have
\begin{equation} \label{m_id}
m(z)+\ff{m(z)}+z\;=\;0\,.
\end{equation}
In fact, it is easy to show, for $z \in \C_+$, that  $m(z)$ is characterized as the unique solution of \eqref{m_id} in the upper half-plane $\C_+$. This latter characterization is often more convenient than the form \eqref{m_qsolution}.

The following global law, illustrated by Figure \ref{Fig:Global_Law},  is well known (see e.g.\ \cite{bai-silver-book,agz}). (It will also follow from the local law, Theorem \ref{Th3} below.)
\beg{Th}[Global law]\la{Th:Global_Law}
Let $H$ be a Wigner matrix and $\varrho$ defined by \eqre{defscl}. Then for all fixed $z \in \C_+$ we have $s(z) \to m(z)$ in probability as $N\to\infty$.
\en{Th}

\begin{figure}[!ht]
\centering
\subfigure[Histogram of the spectrum (in red) and density of the semicircle distribution (in blue).]{
\includegraphics[scale=.35]{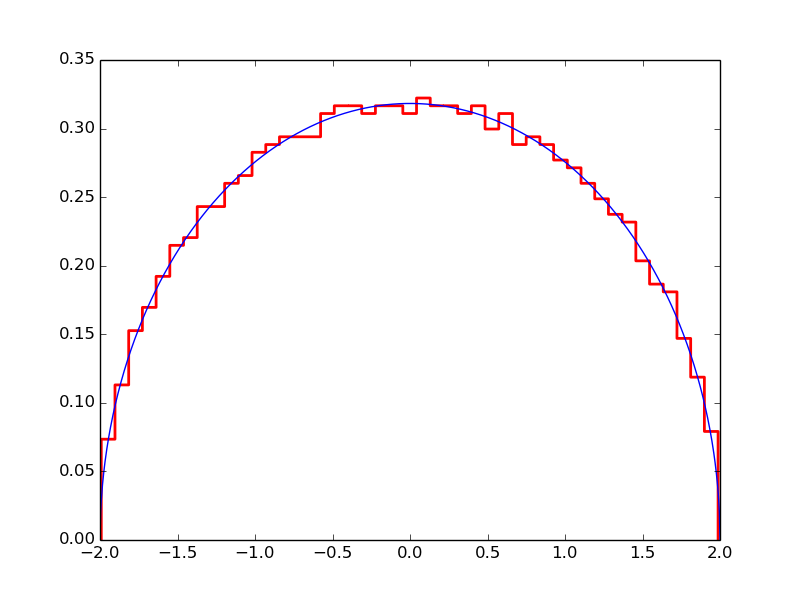}} \qquad 
\subfigure[As functions of $E$, for fixed spectral resolution $\eta=0.02$: $\im s(z)/\pi$ (in red) and   $\im m(z)/\pi$ (in blue).]
{\includegraphics[scale=.35]{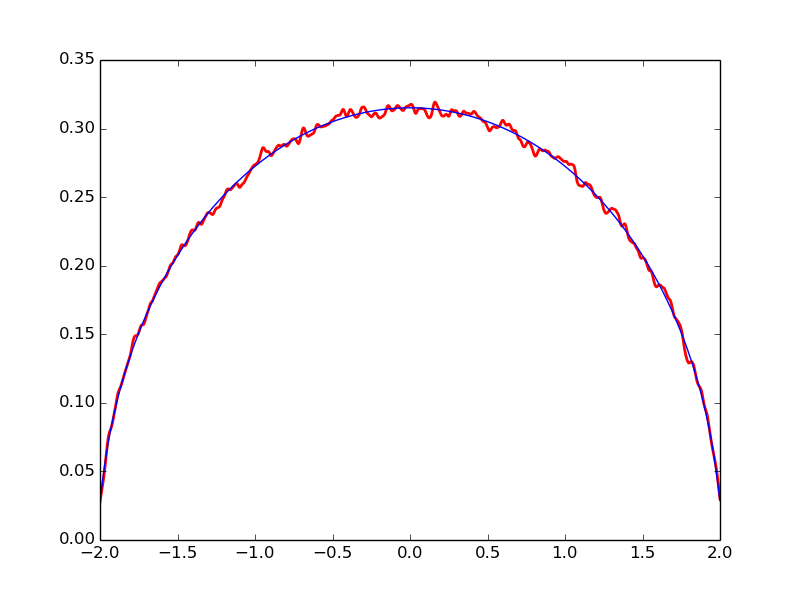}}
\caption{\emph{Global semicircle law.} Both plots, made with the same realization of a GUE matrix with $N=2 \cdot 10^3$, give slightly different (yet equivalent, by Lemma \ref{thm:global_law}) illustrations of the global law from Theorem \ref{Th:Global_Law}. As observed in \eqre{6121414h44}, $\im m(z)/\pi$ is close to the density of the semicircle distribution when $\eta$ is close (but not too close; see Figure \ref{Fig:Spectral_Resolution}) to zero.}\la{Fig:Global_Law}
\end{figure}

In order to state the local law, we use the following notion of high-probability bounds that was introduced in \cite{EKYfluc}. It provides a simple way of systematizing and making precise statements of the form ``$X$ is bounded with high probability by $Y$ up to small powers of $N$''. As we shall see, it is a very convenient way of wrapping various details of convergence in high probability, such as the low-probability exceptional events, into an object that very rarely needs to be unwrapped.

\begin{Def}[Stochastic domination] \label{def:stochdom}
Let
\begin{equation*}
X \;=\; \pb{X^{(N)}(u) \col N \in \N, u \in U^{(N)}} \,, \qquad
Y \;=\; \pb{Y^{(N)}(u) \col N \in \N, u \in U^{(N)}}
\end{equation*}
be two families of nonnegative random variables, where $U^{(N)}$ is a possibly $N$-dependent parameter set. 

We say that $X$ is \emph{stochastically dominated by $Y$, uniformly in $u$,} if for all (small) $\epsilon > 0$ and (large) $D > 0$ we have
\begin{equation*}
\sup_{u \in U^{(N)}} \P \qB{X^{(N)}(u) > N^\epsilon Y^{(N)}(u)} \;\leq\; N^{-D}
\end{equation*}
for large enough $N\geq N_0(\epsilon, D)$.
The stochastic domination is always uniform in all parameters (such as matrix indices and spectral parameters $z$) that are not explicitly fixed.

If $X$ is stochastically dominated by $Y$, uniformly in $u$, we use the notation $X \prec Y$. Moreover, if for some complex family $X$ we have $\abs{X} \prec Y$ we also write $X = O_\prec(Y)$.

Finally, we say that an event $\Xi=\Xi^{(N)}$ holds \emph{with high probability} if $1 - \ind{\Xi} \prec 0$, i.e.\ if for any $D>0$ there is $N_0(D)$ \st for all $N\ge N_0(D)$ we have $\P(\Xi^{(N)}) \ge 1-N^{-D}$.
\end{Def}

For example, it is easy to check that  $|H_{ij}|\prec N^{-1/2}$. Note that this notation implicitly means that $\prec$ is uniform in the indices $i,j$.

We may now state the main result of these notes.

\beg{Th}[Local law for Wigner matrices]\la{Th3}
Let $H$ be a Wigner matrix. Fix $\tau>0$ and define the domain 
\begin{equation} \label{def_S}
\f S \;\equiv\; \f S_N(\tau) \;\deq\; \hb{E+\mathrm{i}\eta\col |E|\le \tau^{-1}\,,\, N^{-1+\tau}\le \eta\le \tau^{-1}}\,.
\end{equation}
Then we have
\be\label{121414h53}
s(z) \;=\; m(z)+O_\prec \pbb{\ff{N\eta}}\ee
and
\be\label{121414h54}G_{ij}(z) \;=\; m(z) \del_{ij}+O_\prec (\Psi(z))\ee
uniformly for $i,j = 1, \dots, N$ and $z\in \f S$, where we defined the deterministic error parameter
\begin{equation} \label{def_Psi}
\Psi(z) \;\deq\; \sqrt{\frac{\im m(z)}{N\eta}}+\ff{N\eta}\,.
\end{equation}
\en{Th}

For $\abs{E} \leq 2$, both estimates \eqref{121414h53} and \eqref{121414h54}, as encapsulated by the respective error parameters $(N \eta)^{-1}$ and $\Psi$, are optimal up to the details in the definition of $\prec$. For $\abs{E} > 2$ the optimal bounds are better than those of Theorem \ref{Th3}.   Remarkably, these optimal bounds for $\abs{E} > 2$ are essentially a consequence of the weaker ones from Theorem \ref{Th3}; see Theorems \ref{thm:ext1} and \ref{thm:ext2} below for the precise statements.  

To help the interpretation of the error parameter $\Psi$, we consider the cases where $E$ is in the \emph{bulk} $(-2,2)$ of the spectrum and at the \emph{edge} $\{-2,2\}$ of the spectrum. Suppose first that $E \in (-2,2)$ is fixed in the bulk. Then we easily find $\im m(z) \asymp 1$ for $\eta \in [N^{-1},1]$. Hence, the first term of \eqref{def_Psi} dominates and we have $\Psi(z) \asymp (N \eta)^{-1/2}$, which is much smaller than $\im m(z)$ for $\eta \gg N^{-1}$.   Note that the scale $N^{-1}$ is the typical separation of the eigenvalues in the bulk, as can be seen for instance by choosing $i \in [cN, (1 -c)N]$ in \eqref{asymp_gamma} below, for some constant $c > 0$. 

On the other hand, if $E = 2$ is at the edge, we find $\im m(z) \asymp \sqrt{\eta}$. We conclude that the first term of \eqref{def_Psi} dominates over the second if $\eta \gg N^{-2/3}$ and the second over the first if $\eta \ll N^{-2/3}$.   Note that the threshold $N^{-2/3}$ is precisely the typical separation of the eigenvalues near the edge, as can be seen for instance by choosing $i  \leq C$ in \eqref{asymp_gamma} below, for some constant $C > 0$.   Hence, we conclude that at the edge $\Psi(z)$ is much smaller than $\im m(z)$ provided that $\eta \gg N^{-2/3}$. See Figure \ref{Fig:Local_Law}.
\begin{figure}[!ht]
\centering
\subfigure[$\im s(z)$ (in red),   $\im m(z)$ (in blue) and the error bounds $\im m(z)\pm 1/(N\eta)$ (green dashed lines).]{
\includegraphics[scale=.35]{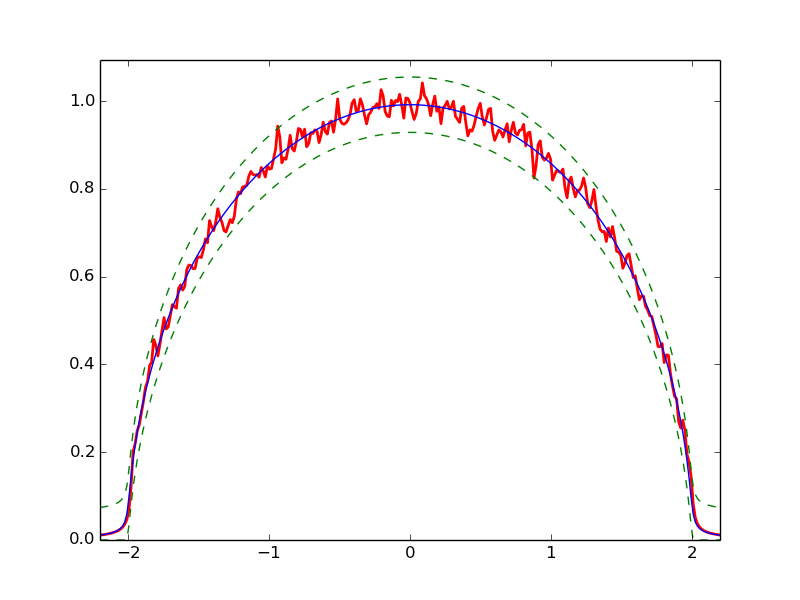}} \qquad 
\subfigure[$\im G_{11}(z)$ (in red),   $\im m(z)$ (in blue) and the error bounds $\im m(z)\pm \Psi(z)$ (green dashed lines).]
{\includegraphics[scale=.35]{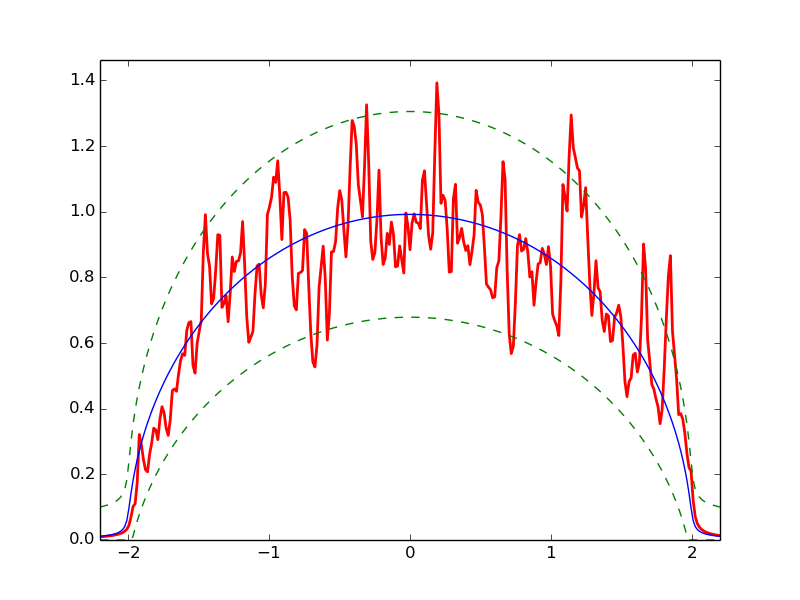}}
\\
\subfigure[$\im G_{12}(z)$ (in red)    and the error bounds $\pm \Psi(z)$ (green dashed lines).]
{\includegraphics[scale=.35]{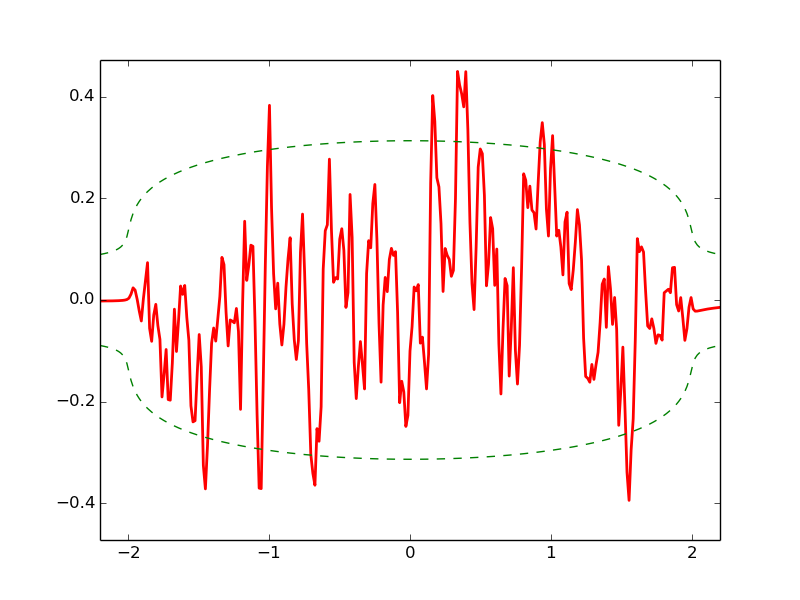}}
\caption{\emph{Local semicircle law.} All plots were obtained from the same realization of a GUE matrix with $N=10^3$. We draw several functions of $E$ at a fixed spectral resolution $\eta=N^{-0.6}$. As expected from Theorem \ref{Th3}, the fluctuations and the error bounds are larger for the matrix entries  than for the normalized trace of the Green function.}\la{Fig:Local_Law}
\end{figure}

\beg{rmk} \label{rem:Lipschitz}
The estimate \eqref{121414h54} holds uniformly in $i,j = 1, \dots, N$ and $z\in \f S$. In general, such an estimate does of course not imply a simultaneous high-probability bound for all $z \in \f S$. However, thanks to the Lipschitz-continuity of all quantities in \eqref{121414h54}, this implication does hold. More precisely, we have
\begin{equation} \label{simult_bound}
\sup_{z \in \f S} \max_{i,j} \frac{\abs{G_{ij}(z) - m(z) \delta_{ij}}}{\Psi(z)} \;\prec\; 1\,.
\end{equation}
To obtain \eqref{simult_bound} from \eqref{121414h54}, we define the $N^{-3}$-net $\wh {\f S} \deq (N^{-3} \Z^2) \cap \f S$. Hence, $\abs{\wh{\f S}} \leq C N^6$ and for any $z \in \f S$ there exists a $w \in \wh {\f S}$ such that $\abs{s - w} \leq 2 N^{-3}$. By a simple union bound, we find from \eqref{121414h54} that \eqref{simult_bound} holds with $\f S$ replaced with $\wh {\f S}$. Using that $G_{ij}(z)$, $m(z)$, and $\Psi(z)$ are all $N^2$-Lipschitz on $\f S$ (as follows immediately from their definitions) and that $\Psi(z) \geq 1/N$ on $\f S$, we easily deduce \eqref{simult_bound}.
\en{rmk}

\subsection{Applications of Theorem \ref{Th3}}
We now state some important consequences of Theorem \ref{Th3}. The first one gives the semicircle law on \emph{small scales}.
In Lemma \ref{thm:global_law} we saw that control of $s(z) - m(z)$ for fixed $z$ yields control of $\mu - \varrho$ on large scales. Using the strong local control on $s(z) - m(z)$ from Theorem \ref{Th3}, we may correspondingly obtain strong bounds on the local convergence of $\mu$ to $\varrho$.
\begin{Th}[Semicircle law on small scales]\la{thm:llsc}
For a Wigner matrix we have
\begin{equation*}
\mu(I) \;=\; \varrho(I) + O_\prec(N^{-1})
\end{equation*}
uniformly for all intervals $I \subset \R$.
\end{Th}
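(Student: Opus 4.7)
Since every interval $I \subset \R$ decomposes as a difference $(-\infty, b] \setminus (-\infty, a]$, it will suffice to show $\mathfrak n(E) - N(E) = O_\prec(N^{-1})$ uniformly in $E \in \R$, writing $\mathfrak n(E) \deq \mu((-\infty, E])$ and $N(E) \deq \varrho((-\infty, E])$. I would first dispatch the case $|E| \ge 3$ using the a priori bound $\norm{H} \le 2 + o(1)$ with high probability (from the appendix), which forces both $\mathfrak n(E)$ and $N(E)$ into $\{0, 1\}$ and makes them agree with probability $\ge 1 - N^{-D}$ for any $D$. The substantive case is $|E| \le 3$.

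Fixing a small $\sigma > 0$ (to be specified) and setting $\ell \deq N^{-1+\sigma}$, I would introduce $f \in C_c^\infty(\R)$, a smoothed indicator of $(-\infty, E] \cap [-3, 3]$: supported in $[-4, 4]$, equal to $1$ on $[-3, E]$, vanishing on $[E+\ell, 4]$, with transition widths $O(1)$ at $\pm 4$ and $\ell$ at $E$. Applying Theorem \ref{Th3} at $z = E + \ii \ell$ would yield
\begin{equation*}
\mu([E, E+\ell]) \;\le\; 2\ell \cdot \im s(E + \ii \ell) \;\prec\; \ell \pb{\im m(E+\ii\ell) + (N\ell)^{-1}} \;\prec\; \ell\,,
\end{equation*}
with the $\varrho$-analogue being elementary, while the $\mu$-mass outside $[-3, 3]$ vanishes with high probability by the norm bound. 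Hence $\mu(I_E) - \int f \, \dd \mu = O_\prec(\ell)$, which is $O_\prec(N^{-1})$ because $\sigma$ is at our disposal and $\prec$ absorbs any $N^\epsilon$ factor; the same holds for $\varrho$. This reduces the problem to showing $\int f \, \dd(\mu - \varrho) = O_\prec(N^{-1})$.

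For this I would invoke the Helffer-Sj\"ostrand formula (Appendix \ref{sec:HS}) with the first-order almost analytic extension $\wt f(E' + \ii \eta) \deq (f(E') + \ii \eta f'(E')) \chi(\eta)$ for a smooth cutoff $\chi$ with $\chi|_{[-1,1]} = 1$ and $\supp \chi \subset [-2, 2]$, obtaining
\begin{equation*}
\int f \, \dd(\mu - \varrho) \;=\; \frac{1}{\pi} \iint_\C \partial_{\bar z} \wt f(z) \pb{s(z) - m(z)} \, \dd E' \, \dd \eta\,.
\end{equation*}
The $\chi'$-parts of $\partial_{\bar z} \wt f$ are localized at $|\eta| \asymp 1$, where $|s - m| \prec N^{-1}$ by Theorem \ref{Th3}, contributing $O_\prec(N^{-1})$ directly. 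For the bulk term $\iint \eta f''(E') \chi(\eta) (s-m)(z) \, \dd E' \, \dd \eta$, I would split the $\eta$-integration at $\ell$. In the region $\eta \ge \ell$, integration by parts in $E'$ converts the integrand to $-\eta f'(E') \chi(\eta) \partial_{E'}(s-m)(z)$; Cauchy's integral formula on a disk of radius $\eta/2$ around $z$ (inside the domain of Theorem \ref{Th3}, provided its parameter $\tau$ is chosen smaller than $\sigma$) would yield $|\partial_{E'}(s-m)(z)| \prec (N\eta^2)^{-1}$, and the contribution to the integral would be $\prec \norm{f'}_{L^1} \int_\ell^2 \dd\eta/(N\eta) \prec \log N / N = O_\prec(N^{-1})$. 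In the region $0 < \eta < \ell$, the $\eta$-factor in $\partial_{\bar z} \wt f$, the monotonicity $\eta \im s(E' + \ii\eta) \le \ell \im s(E' + \ii\ell) \prec \ell$, and the $O(\ell)$ support of $f''$ near $E$ would together give a contribution $\prec \ell = O_\prec(N^{-1})$.

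The hard part will be controlling the region $\eta < \ell$, where Theorem \ref{Th3} provides no direct pointwise control on $s - m$. The monotonicity of $\eta \mapsto \eta \im s(E + \ii \eta)$, a general property of Stieltjes transforms of positive measures, converts the local-law bound at scale $\ell$ into $\im s(E + \ii \eta) \prec \ell/\eta$ for smaller $\eta$; by Cauchy-Schwarz this also controls $|s(z)|$. Combining with the $\eta$-factor in the almost analytic extension and the $O(\ell)$ support of $f''$, the required $O_\prec(\ell)$ bound follows. The arbitrariness of both $\tau$ in Theorem \ref{Th3} and $\sigma$ here, together with the $N^\epsilon$-slack built into $\prec$, absorbs all polynomial and logarithmic losses incurred along the way.
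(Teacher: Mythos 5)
Your proof is correct, and the high-level strategy coincides with the paper's: reduce to a smoothed indicator via the a~priori norm bound, apply Helffer--Sj\"ostrand with first-order almost analytic extension, split the $\eta$-integral at the smoothing scale $\ell = N^{-1+\sigma}$, and use the monotonicity of $\eta\mapsto\eta\im s$ for the regime below $\ell$. The one genuine technical divergence is in the intermediate region $\ell\leq\eta\leq 1$: the paper integrates by parts twice (once in $x$, once in $y$, using the Cauchy--Riemann relation $\partial_x\im\hat s=-\partial_y\real\hat s$) to land on $\int\abs{f'}\abs{\hat s}$, which integrates to $\log N/N$ via the $(N\eta)^{-1}$ bound; you instead integrate by parts once in $E'$ and then bound $\abs{\partial_{E'}(s-m)}\prec (N\eta^2)^{-1}$ directly via Cauchy's integral formula on a disc of radius $\eta/2$, which combined with the extra factor $\eta$ from the almost analytic extension gives the same $\int_\ell^2 \dd\eta/(N\eta)\asymp\log N/N$. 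Both exploit analyticity of $s-m$; yours makes the derivative estimate explicit, while the paper's rearranges to avoid ever estimating a derivative. You also sidestep the paper's final squeeze between the two smoothed indicators $f_{I,\eta}$ and $f_{I',\eta}$ by bounding $\mu([E,E+\ell])\leq 2\ell\im s(E+\ii\ell)\prec\ell$ directly, which is a small but clean simplification. One minor blemish: the remark ``by Cauchy--Schwarz this also controls $\abs{s(z)}$'' in the $\eta<\ell$ region is a red herring --- after the cancellation in $\bar\partial\tilde f$ the only contribution there involves $\im\hat s$, which the monotonicity argument already controls, so no bound on $\real\hat s$ (or on $\abs{s}$) is needed in that regime; the claim as stated isn't obviously true and fortunately isn't used.
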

\begin{figure}[ht]
\centering 
\subfigure[$N=50$, GUE matrix]{\includegraphics[scale=.4]{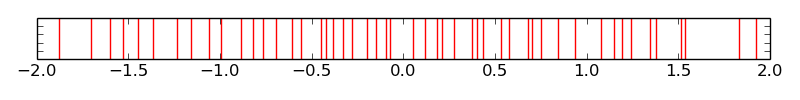}}
\subfigure[$N=10^3$, GUE matrix (zoomed in)]{\includegraphics[scale=.4]{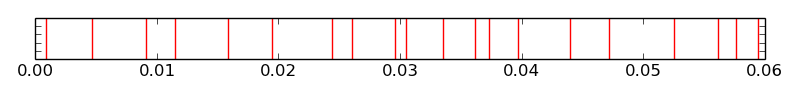}}
\subfigure[$N=50$, i.i.d.\ variables with semicircle distribution]{\includegraphics[scale=.4]{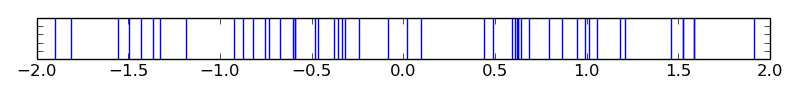}}
\subfigure[$N=10^3$, i.i.d.\ variables with semicircle distribution (zoomed in)]{\includegraphics[scale=.4]{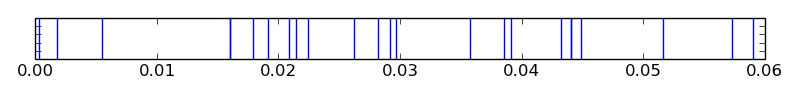}}
\caption{\emph{Local eigenvalue density.}  We compare the eigenvalues of an $N\ti N$ GUE matrix (abscissas of the red vertical lines) with a sample of $N$ i.i.d.\ variables with semicircle distribution (abscissas of the blue vertical lines). The GUE eigenvalue spacings are far more regular than those of the i.i.d.\ variables; the former are typically of order $N^{-1}$ (by Theorem \ref{thm:llsc} or Theorem \ref{thm:rig}), while the latter are typically of order $N^{-1/2}$.
}
\la{Fig:llsc}
\end{figure}

We postpone the proof of Theorem \ref{thm:llsc} to Section \ref{sec:local_law_small_scales}.
The second consequence of Theorem \ref{Th3} is an \emph{eigenvalue rigidity} estimate, which gives large deviation bounds on the locations of the eigenvalues. Eigenvalue rigidity for Wigner matrices was first established in \cite{EYYrigi}, using the optimal error bound from \eqref{121414h53} that was first obtained there.

For $i = 1, \dots, N$ we define the \emph{typical location of $\lam_i$} as the quantile $\ga_i$ satisfying
\begin{equation} \label{def_gamma}
N \int_{\ga_i}^{2}\varrho(\ud x) \;=\; i - 1/2\,.
\end{equation}
It is easy to see that the typical eigenvalue locations $\gamma_i$ satisfy
\begin{equation} \label{asymp_gamma}
2 - \gamma_i \;\asymp\; \pbb{\frac{i}{N}}^{2/3} \qquad (i \leq N/2)\,,
\end{equation}
and by symmetry a similar estimate holds for $i \geq N/2$. Note that the right-hand side of \eqref{asymp_gamma} is characteristic of the \emph{square root decay} of the density of $\varrho$ near the boundary of its support.
  
\beg{Th}[Eigenvalue rigidity] \label{thm:rig}
For a Wigner matrix we have
$$|\lam_i-\ga_i| \;\prec\; N^{-2/3}\pb{i\wedge (N+1-i)}^{-1/3}$$
uniformly for $i = 1, \dots, N$.
\en{Th}
In particular, the extreme eigenvalues $\lambda_1$ and $\lambda_N$ are with high probability located at a distance of order at most $N^{-2/3}$ from their typical locations. Moreover, the bulk eigenvalues, $\lambda_i$ satisfying $\abs{i} \asymp \abs{N - i} \asymp N$, are with high probability located at a distance of order at most $N^{-1}$ from their typical locations. The eigenvalue rigidity is a manifestation of a strong repulsion between the eigenvalues, which tend to form a rather rigid ``jelly'' where neighbouring eigenvalues avoid getting too close and are therefore pinned down by the influence of their neighbours. In contrast, if $\lambda_1, \dots, \lambda_N$ were i.i.d.\ random variables distributed according to the semicircle distribution, the global semicircle law for $s - m$ would remain true, but a standard exercise in order statistics shows that in this case $\lambda_i$ would typically fluctuate on the scale $N^{-1/2}$. This is much larger than the scale $N^{-1}$ from Theorem \ref{thm:rig}. Correspondingly, for this i.i.d.\ model we would expect to find gaps of order $N^{-1/2}$, and the error term in Theorem \ref{thm:llsc} would have to be replaced with $O_\prec(N^{-1/2})$. See Figures \ref{Fig:llsc} and \ref{Fig:Rigidity} for an illustration of this rigidity.
\begin{figure}[!ht]
\centering
\subfigure[$N=50$]{\includegraphics[scale=.35]{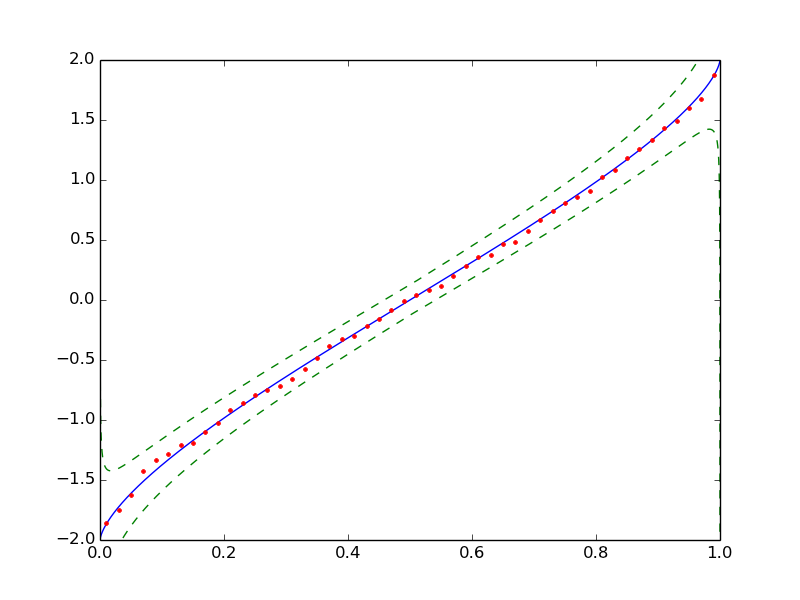}}\subfigure[$N=2.10^3$ (zoomed in)]{\includegraphics[scale=.35]{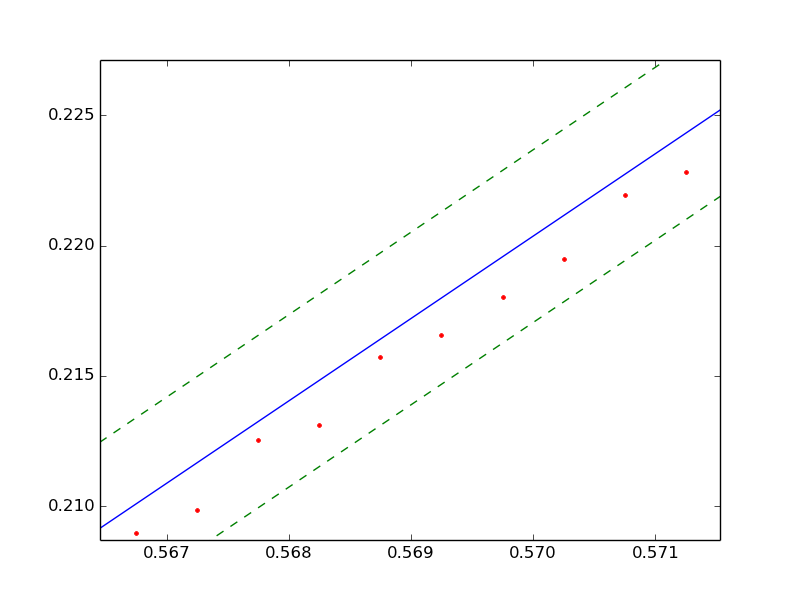}}
\caption{\emph{Eigenvalue rigidity.} The quantile function $x \in [0,1] \mapsto Q(x)$ of the semicircle distribution (in blue), the error bounds $x\in [0,1]\mapsto Q(x)\pm 5N^{-1}(x\wedge  (1-x))^{-1/3}$ (green dashed lines), and a scatter plot of the function $(i-1/2) /N \mapsto \lambda_i$ where $\lambda_i$ is the $i$-th eigenvalue of  an $N\ti N$ GUE matrix (red points).}\la{Fig:Rigidity}
\end{figure}

We postpone the proof of Theorem \ref{thm:rig} to Section \ref{sec:rig}.
A very easy corollary of Theorems \ref{Th3} and \ref{thm:rig} is the \emph{complete delocalization of eigenvectors}, illustrated in Figure \ref{Fig:Eigenvectors}. The complete delocalization of eigenvectors of Wigner matrices was first obtained in \cite{ESY2} as a corollary of the local law on optimal scales proved there.

We use the notation $\f u_i = (u_i(k))_{k = 1}^N$ for the components of the eigenvectors.  

\beg{Th}[Complete eigenvector delocalization] \label{thm:deloc}
For a Wigner matrix we have $$|u_i(k)|^2 \;\prec\; \ff{N}\,.$$
uniformly for $i,k = 1, \dots, N$.
\en{Th}

\bpr
By Theorem \ref{thm:rig} we know that $\abs{\lambda_i} \leq 3$ with high probability. We choose a random spectral parameter by choosing $z\deq \lam_i+\mathrm{i} \eta$ with $\eta \deq N^{-1 + \tau}$. Hence for small enough $\tau > 0$ we have $z \in \f S$ with high probability. Moreover, by Remark \ref{rem:Lipschitz} we may invoke Theorem \ref{Th3} with the random error parameter $z$. Since $|m(z)|\le 1$ (as can be easily checked using \eqref{m_qsolution}) and $\Psi(z)\le C$, we get from \eqre{121414h54} that
$$
1 \;\succ\; \im G_{kk}(z) \;=\; \sum_{j=1}^N \frac{\eta}{(\lam_j-\lam_i)^2+\eta^2}|u_j(k)|^2 \;\ge\; \ff{\eta}|u_i(k)|^2 \;=\; N^{1-\tau}|u_i(k)|^2\,.
$$
Since $\tau>0$ was arbitrary, the conclusion follows.
\epr

\begin{figure}[!ht]
\centering
\includegraphics[width=15cm,height=7cm]{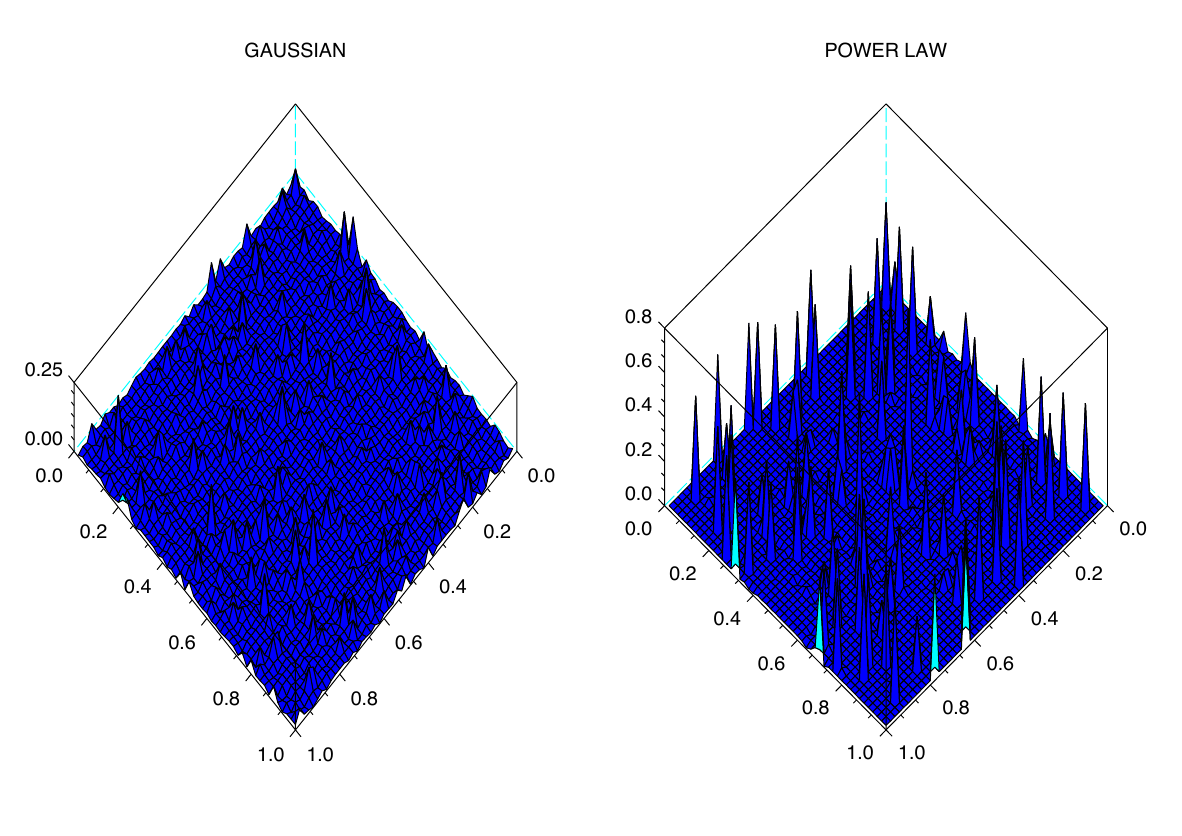}
\caption{\emph{Complete eigenvector delocalization.}  We illustrate two plots of the function $(i/N,k/N)\mapsto ||u_i(k)|^2-N^{-1}|$, where $u_i(k)$ denotes the $k$-th component of the $i$-th eigenvector of a Wigner matrix, and $N = 50$. The Wigner matrix of the left-hand plot has Gaussian entries (GOE), while that of the right-hand plot has heavy-tailed entries of the form $S U^{-(2+\eps)}$ for $S,U$ independent \st $S=\pm 1$ is symmetric, $U$ is uniformly distributed  on $[0,1]$ and $\eps=10^{-3}$.
}\la{Fig:Eigenvectors}
\end{figure}

To understand the meaning of Theorem \ref{thm:deloc}, observe that for each fixed $i$ the map $k \mapsto p_i(k) \deq \abs{u_i(k)}^2$ is a (random) probability measure on $\{1, \dots, N\}$. A fundamental question about eigenvectors is whether $p_i$ is mainly concentrated at a few points or whether it is evenly spread out over $\{1, \dots, N\}$. Complete eigenvector delocalization states that, for each eigenvector $\f u_i$ and with high probability, the latter possibility always holds. Note that if $G$ is a GOE/GUE matrix, the law of $H$ is invariant under orthogonal/unitary conjugations, and its eigenvector matrix therefore uniformly distributed on the orthogonal/unitary group. In particular, each eigenvector $\f u_i$ is uniformly distributed on the unit sphere, and we easily find that $\abs{u_i(k)}^2 \leq (\log N)^a N^{-1}$ for all $i,k$ with high probability, provided that $a > 1/2$. Up to the technical details in the definition of $\prec$ (see Remark \ref{rem:logN} below), this simple argument therefore recovers, for the special cases of GOE and GUE, the result of Theorem \ref{thm:deloc} (which holds of course for all Wigner matrices).

\begin{rmk} \label{rem:logN}
The notion of stochastic domination from Definition \ref{def:stochdom} greatly simplifies many statements and arguments. On the other hand, the specific notion of high-probability bounds that it yields is not as sharp as possible: the factors of $N^\epsilon$ can be improved to powers of $\log N$ and the polynomial error probabilities $N^{-D}$ can be improved to exponential error probabilities. Such extensions may in fact be obtained by a routine but tedious extension of the arguments presented in these notes. We do not pursue this further.
\end{rmk}

The local law has many further applications, for instance to the universality of the local eigenvalue statistics and the distribution of eigenvectors; see Section \ref{sec:comparison} below for more details.

\subsection{Sketch of the proof of Theorem \ref{Th3}}
We conclude this section with a sketch of the proof of Theorem \ref{Th3}, which is the main content of these notes. A more detailed presentation of the main steps of the proof is given in Section \ref{Sec:local_law_proof_abstract}.

\bgt\ite[(a)] The starting point of the proof is \emph{Schur's complement formula}
\be\la{SPSFintroShort} \ff{G_{ii}} \;=\; H_{ii}-z-\sum_{k,l \neq i} H_{ik}G^{(i)}_{kl}H_{l i}\,,\ee
where $G^{(i)}$ denotes the Green function of the matrix obtained from $H$ by removing the $i$-th row and column. The coefficients $G_{kl}^{(i)}$ are independent of the family $(H_{ik})_{k = 1}^N$, so that we may apply large deviation estimates to the sum, finding that it is in fact close with high probability to its expectation with respect to the randomness in the $i$-th column, which is in turn close to $\frac{1}{N} \sum_k G_{kk} = s$.
\ite[(b)] We split the right-hand side of \eqref{SPSFintroShort} into a leading term and a random error term, which yields after multiplication of both sides by $G_{ii}$
\be\la{3121417h38Short}1 \;=\; -zG_{ii}-sG_{ii}+(\trm{error term})\,.\ee
Taking $\eta \geq 1$, averaging over $i$, and estimating the random error term, we get \be\la{3121417h382Short}1+s^2+sz \;=\; O_\prec(N^{-1/2})\ee
for $\eta\ge 1$.
\ite[(c)] As $m(z)$ is a solution of \eqre{3121417h382Short} with the right-hand side set to zero, a stability analysis of \eqref{3121417h382Short} yields an estimate of $s(z)-m(z)$ for $\eta\ge 1$. This is the global semicircle law. Going back to \eqre{SPSFintroShort} and using further analogous formulas for the off-diagonal entries, we obtain weak control of the entries $G_{ij} - m \delta_{ij}$ on the global scale $\eta\ge 1$.
\ite[(d)] The heart of the proof of the local law is a \emph{boostrapping} from large scales to smaller scales. Roughly, we prove the local semicircle law on a certain scale $\eta$, and then use the Lipschitz continuity of $G_{ij} - m \delta_{ij}$ to deduce an a priori bound on the slightly smaller scale $\eta' = \eta - N^{-3}$. Using this a priori bound as input, we use the formula \eqref{SPSFintroShort} and similar formulas for the off-diagonal entries to obtain an improved bound $G_{ij} - m \delta_{ij}$ on the same scale $\eta'$. Thanks to this self-improving estimation, we may iterate this procedure an order $N^3$ times to reach the optimal scale $\eta \approx N^{-1}$. At each step of this iteration, we lose some probability. This loss has to be carefully tracked to ensure that the combined losses in probability from $N^3$ iterations remain much smaller than $1$. This procedure yields control of $G_{ij} - m \delta_{ij}$ for all $z \in \f S$, albeit with an error bound that is larger than $\Psi$ from \eqref{def_Psi}.

\ite[(e)]
In order to obtain \eqref{121414h54} with the optimal error bound $\Psi$, we use a \emph{fluctuation averaging argument}. Roughly, it says that the random variables $1/G_{ii}$, when centred with respect to the randomness of the $i$-th column of $H$, are not too strongly dependent, and hence their average is typically much smaller than any one of them.\ent

\section{Proof of Theorem \ref{Th3} (a): preliminaries} \label{sec:prelim}
 
We now move on to the proof of Theorem \ref{Th3}. For the proof we often omit the spectral parameter $z \in \f S$ from our notation, bearing in mind that we are typically dealing with random sequences (indexed by $N \in \N$) of functions (of $z \in \f S$).

In this preliminary section we collect the main tools of the proof.
\begin{Def}[Minors] \label{def: minors}
We consider general matrices whose indices lie in subsets of $\{1, \dots, N\}$. For $T \subset \{1, \dots, N\}$ we define $H^{(T)}$ as the $(N - \abs{T}) \times (N - \abs{T})$ matrix
\begin{equation*}
H^{(T)} \;=\; (H_{ij})_{i,j \in \{1, \dots, N\} \setminus T}\,.
\end{equation*}
Moreover, we define the Green function of $H^{(T)}$ through
\begin{equation*}
G^{(T)}(z) \;\deq\;  \pb{H^{(T)} - z}^{-1}\,.
\end{equation*}
We also set
\begin{equation*}
\sum_i^{( T)} \;\deq\; \sum_{i : i \notin  T}\,.
\end{equation*}
When $ T = \{a\}$, we abbreviate $(\{a\})$ by $(a)$ in the above definitions; similarly, we write $(ab)$ instead of $(\{a,b\})$.
\end{Def}

Note that in the minor $H^{(T)}$ it is important to keep the original values of the matrix indices, and not to identify $\{1, \dots, N\} \setminus T$ with $\{1, \dots, N - \abs{T}\}$.

\begin{Def}[Conditional expectation] \label{definition: P Q}
Let $X \equiv X(H)$ be a random variable. For $i \in \{1, \dots, N\}$ we define the operations $P_i$ and $Q_i$ through
\begin{equation*}
P_i X \;\deq\; \E(X | H^{(i)}) \,, \qquad Q_i X \;\deq\; X - P_i X\,.
\end{equation*}
Note that $P_i$ and $Q_i$ form a family of commuting orthogonal projections on $L^2(\P)$.
\end{Def}

The following lemma collects basic bounds on $m$. In order to state it, we define the \emph{distance to the spectral edge}
\begin{equation} \label{def_kappa}
\kappa \;\equiv\; \kappa(E) \;\deq\; \absb{\abs{E} - 2}\,.
\end{equation}
\begin{lem}[Basic properties of $m$] \label{lem7}
Fix $\tau > 0$. Then for $z \in \f S$ we have
\begin{equation} \label{mupperlowerbounded}
\abs{m(z)} \;\asymp\; 1
\end{equation}
and
\begin{equation} \label{Immmupperlowerbounded}
\im m(z) \;\asymp\;
\begin{cases}
\sqrt{\kappa + \eta} & \text{if $\abs{E} \leq 2$}
\\
\frac{\eta}{\sqrt{\kappa + \eta}} & \text{if $\abs{E} \geq 2$}\,.
\end{cases}
\end{equation}
\end{lem}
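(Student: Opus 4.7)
My plan is to derive both estimates from the explicit formula $m(z) = (-z + \sqrt{z^2 - 4})/2$ together with the quadratic identity $m + m^{-1} + z = 0$. For the bound $\abs{m(z)} \asymp 1$, I would take imaginary parts of $m + m^{-1} + z = 0$ to get $(\im m)(1 - \abs{m}^{-2}) = -\eta$. Since $m$ maps $\C_+$ to $\C_+$ we have $\im m > 0$, and combined with $\eta > 0$ this immediately forces $\abs{m} \le 1$. The lower bound then comes from $m^{-1} = -m - z$, which yields $\abs{m}^{-1} \le \abs{m} + \abs{z} \le 1 + 2\tau^{-1}$ and hence $\abs{m} \ge \tau/(2 + \tau)$ on $\f S$.

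For the imaginary-part estimate, the key step is the algebraic identity
\begin{equation*}
\im m(z) \cdot \pb{\im m(z) + \eta} \;=\; \frac{2 \eta^2}{\abs{z^2 - 4} + (E^2 - 4) + \eta^2}\,.
\end{equation*}
I would derive this by writing $\im m = (B - \eta)/2$ with $B \deq \im \sqrt{z^2 - 4}$, using the polar-form identity $B^2 = (\abs{z^2-4} - (E^2-\eta^2-4))/2$, and then simplifying via the elementary relation $\abs{z^2-4}^2 - (E^2+\eta^2-4)^2 = 16\eta^2$. This reduces the problem to estimating the denominator $D \deq \abs{z^2-4} + (E^2-4) + \eta^2$ and solving the resulting quadratic in $\im m$.

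The two cases of the lemma correspond to the sign behaviour of $E^2 - 4$. For $\abs{E} \le 2$ we have $E^2 - 4 \asymp -\kappa$ and $\abs{z^2-4} \asymp \sqrt{\kappa^2 + \eta^2}$; their near-cancellation yields $D \asymp \eta^2/(\kappa+\eta)$ on $\f S$, so $\im m\,(\im m + \eta) \asymp \kappa + \eta$, and since $\sqrt{\kappa+\eta} \ge \eta$ on $\f S$ the quadratic gives $\im m \asymp \sqrt{\kappa+\eta}$. For $\abs{E} \ge 2$ the two relevant summands in $D$ have the same sign, so $D \asymp \kappa + \eta$ and $\im m\,(\im m + \eta) \asymp \eta^2/(\kappa+\eta)$; treating the subregimes $\im m \le \eta$ and $\im m \ge \eta$ separately (corresponding respectively to $\kappa + \eta$ of order one and $\kappa + \eta \ll 1$, with the boundedness of $\kappa+\eta$ on $\f S$ absorbing $\tau$-dependent constants) both lead to $\im m \asymp \eta/\sqrt{\kappa+\eta}$. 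The main obstacle is establishing the clean identity above; once that is in hand, both asymptotics follow from elementary size analysis of $D$.
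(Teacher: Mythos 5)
Your proposal is correct, and it is an explicit execution of exactly the approach the paper alludes to: the paper's proof is the one-line remark ``an elementary exercise using \eqref{m_qsolution} and \eqref{m_id},'' and you derive both estimates from precisely those two ingredients — the quadratic identity for the bounds on $\abs{m}$, the explicit square-root formula for the bound on $\im m$. The only minor slip is the claim ``$\sqrt{\kappa+\eta}\geq\eta$ on $\f S$,'' which fails literally when $\eta>1$ and $\kappa$ is small; it should be stated as $\eta\lesssim\sqrt{\kappa+\eta}$ with a $\tau$-dependent constant (which follows from $\eta\leq\tau^{-1}$), but this does not affect the conclusion since $\asymp$ already allows $\tau$-dependent constants.
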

\begin{proof}
The proof is an elementary exercise using \eqref{m_qsolution} and \eqref{m_id}.
\end{proof}

The following lemma collects properties of stochastic domination $\prec$. Roughly, it states that $\prec$ satisfies the usual arithmetic
 properties of order relations. We shall use it tacitly throughout the following.

\begin{lem}[Basic properties of $\prec$] \label{lemma: basic properties of prec}
\begin{enumerate}
\item
Suppose that $X(u,v) \prec Y(u,v)$ uniformly in $u \in U$ and $v \in V$. If $\abs{V} \leq N^C$ for some constant $C$ then
\begin{equation*}
\sum_{v \in V} X(u,v) \;\prec\; \sum_{v \in V} Y(u,v)
\end{equation*}
uniformly in $u$.
\item
Suppose that $X_{1}(u) \prec Y_{1}(u)$ uniformly in $u$ and $X_{2}(u) \prec Y_{2}(u)$ uniformly in $u$. Then
$X_{1}(u) X_{2}(u) \prec Y_{1}(u) Y_{2}(u)$
uniformly in $u$.
\end{enumerate}
\end{lem}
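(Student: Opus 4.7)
Both parts are essentially exercises in the definition of $\prec$, and the unifying idea is that the polynomial slack $N^\epsilon$ and polynomial exceptional probability $N^{-D}$ built into the definition are preserved under sums of polynomially many terms and under finite products, via simple union bounds. The plan is to fix arbitrary $\epsilon > 0$ and $D > 0$ in each case and construct the required uniform bound from the hypotheses, with a judicious choice of the auxiliary exponents used in the input.

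For part (i), I would argue as follows. Fix $\epsilon, D > 0$, and observe the deterministic pointwise implication: if $X(u,v) \leq N^\epsilon Y(u,v)$ for \emph{every} $v \in V$, then $\sum_{v \in V} X(u,v) \leq N^\epsilon \sum_{v \in V} Y(u,v)$. Contrapositively, the event that the sum inequality fails is contained in $\bigcup_{v \in V} \{X(u,v) > N^\epsilon Y(u,v)\}$. Applying the hypothesis $X(u,v) \prec Y(u,v)$ with the same $\epsilon$ but with the exponent $D' \deq D + C + 1$, we obtain $\P(X(u,v) > N^\epsilon Y(u,v)) \leq N^{-D'}$ uniformly in $u,v$ for $N \geq N_0(\epsilon,D')$. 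The union bound and $\abs{V} \leq N^C$ then give $\P(\sum_v X > N^\epsilon \sum_v Y) \leq N^{C - D'} \leq N^{-D}$, uniformly in $u$, which is exactly the required statement.

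For part (ii), the idea is identical but the parameter trade-off is in $\epsilon$ rather than in $D$. Fix $\epsilon, D > 0$. If $X_i(u) \leq N^{\epsilon/2} Y_i(u)$ for both $i = 1, 2$, then $X_1(u) X_2(u) \leq N^\epsilon Y_1(u) Y_2(u)$ (here I use that the $Y_i$ are nonnegative). Hence the failure event is contained in the union of two events, each of which has probability at most $N^{-D}$ by applying the hypothesis with $\epsilon/2$ in place of $\epsilon$. A two-term union bound gives probability at most $2 N^{-D} \leq N^{-D+1}$, and since $D$ was arbitrary this suffices.

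There is essentially no obstacle here beyond bookkeeping: the only mildly delicate point is to remember, in each part, to apply the hypothesis with a \emph{different} value of $\epsilon$ or $D$ than the one we are trying to prove (shrinking $\epsilon$ by a factor of $2$ in (ii), and enlarging $D$ by $C+1$ in (i)), so that the union bound produces exactly the desired exponents. One should also note explicitly that uniformity in $u$ is preserved at each step because both the hypothesis bounds and the union bound are taken uniformly in $u$. No properties of the underlying random matrix $H$ are used; the lemma is purely about the calculus of the symbol $\prec$.
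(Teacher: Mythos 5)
Your proof is correct and is exactly what the paper means by its one-line justification, ``This is a simple union bound.'' The bookkeeping with the exponent trade-offs ($D \mapsto D + C + 1$ in (i), $\epsilon \mapsto \epsilon/2$ in (ii)) is the right way to make the union bound land precisely on the claimed estimate, and you correctly note that uniformity in $u$ carries through.
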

\begin{proof}
This is a simple union bound.
\end{proof}

The following resolvent identities form the backbone of all of our calculations. The idea behind them is that a Green function entry $G_{ij}$
depends strongly on the $i$-th and $j$-th columns of $H$, but weakly on all other columns. The first identity determines how to make
a Green function entry $G_{ij}$ independent of the randomness in the $k$-th row, where $k \neq i,j$.
The second identity expresses the dependence of a Green function entry $G_{ij}$ on the matrix elements in the $i$-th or in the $j$-th column of $H$. 

\begin{lem}[Resolvent identities] \label{lem8}
For any Hermitian matrix $H$ and $T \subset \{1, \dots, N\}$ the following identities hold.
If $i,j,k \notin T$ and $i,j \neq k$ then
\begin{equation} \label{RI1}
G_{ij}^{(T)} \;=\; G_{ij}^{(Tk)} + \frac{G_{ik}^{(T)} G_{kj}^{(T)}}{G_{kk}^{(T)}}\,.
\end{equation}
If $i,j \notin T$ satisfy $i \neq j$ then
\begin{equation} \label{RI2}
G_{ij}^{(T)} \;=\; - G_{ii}^{(T)} \sum_{k}^{(Ti)} H_{ik} G_{kj}^{(Ti)} \;=\; - G_{jj}^{(T)} \sum_k^{(Tj)} G_{ik}^{(Tj)} H_{k j}\,.
\end{equation}
\end{lem}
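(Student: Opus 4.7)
The plan is to reduce both identities to the case $T = \emptyset$, and then derive them from Schur's complement formula for the inverse of a $2 \times 2$ block matrix. Since the minor $H^{(T)}$ is itself a Hermitian matrix and since iterating the minor construction satisfies $(H^{(T)})^{(k)} = H^{(Tk)}$ and $(H^{(T)})^{(i)} = H^{(Ti)}$, every statement in the lemma for a general index set $T$ is the corresponding statement for the empty set applied to $H^{(T)}$. So I may assume $T = \emptyset$ throughout.

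For \eqref{RI1}, fix $k$ and write $H - z$ as the block matrix obtained by separating row/column $k$ from the remaining $N-1$ rows/columns, which together form $D \deq H^{(k)} - z$. Let $\f h \deq (H_{ka})_{a \neq k}$ be the off-diagonal part of the $k$-th column. Schur's complement formula gives
\begin{equation*}
G_{kk} \;=\; \frac{1}{(H_{kk} - z) - \scalar{\f h}{D^{-1} \f h}}\,,
\qquad
(H - z)^{-1} \Big|_{\text{lower-right block}} \;=\; D^{-1} + G_{kk} (D^{-1} \f h)(\f h^* D^{-1})\,,
\end{equation*}
so that for $i,j \neq k$ we have $G_{ij} = G_{ij}^{(k)} + G_{kk} (D^{-1} \f h)_i (\f h^* D^{-1})_j$. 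The off-diagonal block of $(H-z)^{-1}$ in the same decomposition gives $G_{ik} = -G_{kk} (D^{-1} \f h)_i$ and $G_{kj} = -G_{kk} (\f h^* D^{-1})_j$, whence $(D^{-1} \f h)_i = -G_{ik}/G_{kk}$ and $(\f h^* D^{-1})_j = -G_{kj}/G_{kk}$. Substituting these expressions produces \eqref{RI1}.

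For \eqref{RI2}, I repeat the block decomposition but this time isolating row/column $i$, with $D' \deq H^{(i)} - z$ and $\f h' \deq (H_{ik})_{k \neq i}$. The off-diagonal block of Schur's inversion formula reads, for $j \neq i$,
\begin{equation*}
G_{ij} \;=\; -G_{ii} (\f h'^{\,*} D'^{-1})_j \;=\; -G_{ii} \sum_{k \neq i} H_{ik}\, G_{kj}^{(i)}\,,
\end{equation*}
which is the first equality. The second equality follows by instead isolating row/column $j$ and using that $H$ is Hermitian, so that the corresponding vector becomes $(H_{kj})_{k \neq j}$ and the row identity turns into the column identity with the sum $\sum_k^{(j)} G_{ik}^{(j)} H_{kj}$.

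There is no genuine obstacle: the argument is pure linear algebra once Schur's formula is written down. The only subtlety is purely notational — ensuring that the extended index-set conventions of Definition \ref{def: minors} (in which minor indices retain their original labels rather than being relabelled to $\{1,\dots,N-|T|\}$) are respected when rewriting the block matrices, so that one correctly identifies $(D^{-1})_{ij}$ with $G_{ij}^{(Tk)}$ in \eqref{RI1} and $(D'^{-1})_{kj}$ with $G_{kj}^{(Ti)}$ in \eqref{RI2}.
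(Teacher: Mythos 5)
Your proof is correct, and it uses the same core tool as the paper (Schur's complement formula for a block inverse, after reducing to $T=\emptyset$), but it takes a cleaner and more direct route to \eqref{RI1}. The paper first establishes \eqref{RI2} from the off-diagonal blocks, then derives \eqref{RI1} from \eqref{RI2} by a differentiation trick: it shows $\sum_l^{(k)} M_{kl}\,(G_{lj} - G_{lk}G_{kj}/G_{kk}) = \sum_l^{(k)} M_{kl}\,G_{lj}^{(k)}$ for all choices of the matrix entries, and then differentiates in $M_{ki}$ (using the resolvent expansion to compute $\partial G_{ab}/\partial M_{ki} = -G_{ak}G_{ib}$) to extract \eqref{RI1}. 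You instead read \eqref{RI1} directly off Schur's formula for the lower-right block, $G_{ij} = G_{ij}^{(k)} + G_{kk}\,(D^{-1}\f h)_i(\f h^* D^{-1})_j$, and substitute the off-diagonal block identities $(D^{-1}\f h)_i = -G_{ik}/G_{kk}$ and $(\f h^* D^{-1})_j = -G_{kj}/G_{kk}$ to cancel the extra factors of $G_{kk}$. This avoids the differentiation argument entirely, which is arguably more elementary; it is close in spirit to the alternative the paper mentions in passing (explicit inversion for small $N$ plus Schur). One small remark: your argument for \eqref{RI1}, like the paper's, does not actually require Hermiticity — Schur's formula as stated in Lemma~\ref{lem:schur} applies to arbitrary block matrices, so the same substitution works with independent off-diagonal blocks $B$ and $C$; Hermiticity only tidies the notation by letting you write $C = \f h$ and $B = \f h^*$.
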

\begin{proof}
See Appendix \ref{sec:schur}.
\end{proof}

We also need the \emph{Ward identity}
\begin{equation} \label{Ward}
\sum_j |G_{ij}|^2 \;=\; \frac{\im G_{ii}}{\eta} \,.
\end{equation}
It may be proved by applying the resolvent identity to $G - G^{*}$, or, alternatively, by spectral decomposition:
\begin{equation*}
\sum_j \abs{G_{ij}}^2 \;=\; (G G^*)_{ii} \;=\; \sum_{j} \frac{\abs{u_j(i)}^2}{|\lam_j-z|^2}\;=\; \sum_j \ff{\eta}\im\frac{\abs{u_j(i)}^2}{\lam_j-z}\;=\; \ff{\eta} \im G_{ii}\,.
\end{equation*}
Assuming $\eta \gg \frac{1}{N}$, \eqref{Ward} shows that the squared $\ell^2$-norm $\frac{1}{N} \sum_j |G_{ij}|^2$ is smaller by the factor $\frac{1}{N\eta} \ll 1$
than the diagonal element $|G_{ii}|$. This fact plays a crucial role in our proof: it is precisely this gain of $\eta^{-1} \ll N$ as compared to $N$ that allows us to perform the bootstrapping underlying the proof. This identity is also the source of the ubiquitous factors of $\eta$ in denominators.
It was first used systematically in the proof the local semicircle law for random matrices in \cite{MR2981427}.

Finally, we record the following large deviation bounds.

\begin{lem}[Large deviation bounds] \label{lem9}
Let $\pb{X_i^{(N)}}$, $\pb{Y_i^{(N)}}$, $\pb{a_{ij}^{(N)}}$, and $\pb{b_i^{(N)}}$ be independent families of random variables, where $N \in \N$ and $i,j = 1, \dots, N$. Suppose that all entries $X_i^{(N)}$ and $Y_i^{(N)}$ are independent and satisfy the conditions
\begin{equation} \label{cond on X}
\E X \;=\; 0\,, \qquad \norm{X}_p \;\leq\; \mu_p
\end{equation}
for all $p$ with some constants $\mu_p$.
\begin{enumerate}
\item
Suppose that $\pb{\sum_i \abs{b_i}^2}^{1/2} \prec \Psi$. Then $\sum_i b_i X_i \prec \Psi$.
\item
Suppose that $\pb{\sum_{i \neq j} \abs{a_{ij}}^2}^{1/2} \prec \Psi$. Then $\sum_{i \neq j} a_{ij} X_i X_j \prec \Psi$.
\item
Suppose that $\pb{\sum_{i,j} \abs{a_{ij}}^2}^{1/2} \prec \Psi$. Then $\sum_{i,j} a_{ij} X_i Y_j \prec \Psi$.
\end{enumerate}
If all of the above random variables depend on an index $u$ and the hypotheses of (i) -- (iii) are uniform in $u$, then so are the conclusions.
\end{lem}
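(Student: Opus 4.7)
My plan is to prove all three parts by the standard route: reduce to a high-moment Markov bound, after conditioning on the coefficients. The key structural input is $\E X_i = 0$, which forces each index to appear at least twice in nonvanishing terms of any moment expansion; once enough moments are controlled, taking the exponent $2p$ large enough beats any prescribed polynomial error probability $N^{-D}$.

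For part~(i), I would first condition on the family $(b_i)$, which by hypothesis is independent of $(X_i)$. Expanding $|\sum_i b_i X_i|^{2p}$ and using independence together with $\E X_i = 0$, each nonvanishing monomial corresponds to a partition of the $2p$ factors in which every block has size at least $2$. The dominant (pair-partition) contribution gives $\bigl(\sum_i |b_i|^2 \mu_2^2\bigr)^p$; higher-multiplicity partitions give $\bigl(\sum_i |b_i|^{k}\bigr) \leq \bigl(\sum_i |b_i|^2\bigr)^{k/2}$ times a constant depending on the $\mu_q$'s. Combining, one gets the Marcinkiewicz--Zygmund type estimate
\begin{equation*}
\E\pBB{\absB{\sum_i b_i X_i}^{2p} \,\Big|\, (b_i)} \;\leq\; C_p \pB{\sum_i |b_i|^2}^p\,,
\end{equation*}
where $C_p$ depends only on $p$ and the moments $(\mu_q)_{q \leq 2p}$. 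Then, splitting according to whether $\bigl(\sum_i |b_i|^2\bigr)^{1/2} \leq N^{\epsilon/2} \Psi$ and using the hypothesis $\bigl(\sum_i |b_i|^2\bigr)^{1/2} \prec \Psi$, Markov's inequality applied to the conditional $2p$-th moment yields
\begin{equation*}
\P\pB{\absB{\sum_i b_i X_i} > N^\epsilon \Psi} \;\leq\; C_p \, N^{-p\epsilon} + N^{-D}\,,
\end{equation*}
which is at most $N^{-D}$ after choosing $p$ large enough in terms of $\epsilon, D$. Uniformity in $u$ is automatic since the bound depends only on $p$ and the $\mu_q$'s.

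For parts~(ii) and~(iii) the scheme is identical, but the combinatorics is more intricate. After conditioning on $(a_{ij})$, one expands $\E|\sum_{i \neq j} a_{ij} X_i X_j|^{2p}$ as a sum over $2p$-tuples of pairs $(i_r, j_r)$ weighted by $\E\prod_r X_{i_r} X_{j_r}$ (with appropriate conjugation). The centering $\E X_i = 0$ forces each index value in the multiset $\{i_1, j_1, \dots, i_{2p}, j_{2p}\}$ to appear at least twice; grouping the terms by the resulting partition and using the bound $|a_{ij}| \leq \bigl(\sum_{i \neq j} |a_{ij}|^2\bigr)^{1/2}$ to trade multiplicities for the $\ell^2$-sum, one arrives at
\begin{equation*}
\E\pBB{\absB{\sum_{i \neq j} a_{ij} X_i X_j}^{2p} \,\Big|\, (a_{ij})} \;\leq\; C_p \pB{\sum_{i \neq j} |a_{ij}|^2}^p\,,
\end{equation*}
and the same Markov argument as in (i) concludes the proof. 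Part~(iii) is slightly easier since the independence between the $X$- and $Y$-families removes the need to exclude the diagonal: the nonvanishing constraint reads that each index value must appear at least twice \emph{within each of the two families separately}, which is again a pair-partition condition.

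The main obstacle I expect is the combinatorial bookkeeping in part~(ii): one must verify that all partitions of size~$\geq 2$ contribute no more than the pair-partition count, which requires carefully accounting for the number of index tuples producing a given partition type and using $|a_{ij}|^k \leq |a_{ij}|^2 \cdot \bigl(\sum |a_{ij}|^2\bigr)^{(k-2)/2}$ to reduce higher powers. The off-diagonal restriction $i \neq j$ is essential here, since the diagonal terms $a_{ii} X_i^2$ would produce a deterministic-size contribution $\sum_i a_{ii} \E|X_i|^2$ which need not be $\prec \Psi$. Once these combinatorial estimates are in hand, the probabilistic conclusion is immediate from the conditional Markov bound and the definition of $\prec$.
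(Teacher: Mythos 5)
Your strategy is correct in spirit and would ultimately succeed, but it takes a genuinely different and technically heavier route than the paper. You propose to prove all three parts by direct combinatorial expansion of $2p$-th moments, grouping monomials by the induced partition and trading higher multiplicities against the $\ell^2$-sum of coefficients. The paper instead cites the classical Marcinkiewicz--Zygmund inequality as a black box (and upgrades it via a Jensen-argument to allow for non-identically distributed $X_i$, Lemma~F.1), then handles part~(iii) by applying this twice via conditioning (writing $\sum_{i,j} a_{ij}X_i Y_j = \sum_j b_j Y_j$ with $b_j = \sum_i a_{ij}X_i$ and estimating the two layers successively), and reduces part~(ii) to part~(iii) by a decoupling trick: averaging the identity $1 = Z_N^{-1}\sum_{I\sqcup J=\{1,\dots,N\}} \ind{i\in I}\ind{j\in J}$ over random bipartitions of the index set, so that for each fixed partition $(I,J)$ the families $(X_i)_{i\in I}$ and $(X_j)_{j\in J}$ are independent and Lemma~F.2 applies. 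Both your route and the paper's end with the same Markov/union-bound step.

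What each approach buys: yours is self-contained and does not need to import the MZ inequality, but — as you already anticipate — the combinatorial bookkeeping in part~(ii) is genuinely delicate. The expansion of $\E\absb{\sum_{i\neq j}a_{ij}X_i X_j}^{2p}$ lives on a multigraph whose edges are the pairs $(i_r,j_r)$; the vanishing condition is that every vertex has degree $\geq 2$, and one needs a careful enumeration of such multigraphs together with the right application of Cauchy--Schwarz/$\ell^\infty$-bounds on $(a_{ij})$ to land on $(\sum_{i\neq j}|a_{ij}|^2)^p$. This is where the random-bipartition trick in the paper is a real win: it reduces a quadratic chaos in one independent family to a bilinear form in two independent families, which factors cleanly into two applications of the linear estimate, with no graph combinatorics at all. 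If you pursue your route, be prepared to carry out the multigraph counting in full (it is classical — in the style of Wigner's trace method — but noticeably longer than the decoupling argument); alternatively, consider adopting the decoupling identity, which you could insert at the start of your proof of (ii) without changing anything else.
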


\begin{proof}
See Appendix \ref{sec:LDE}
\end{proof}

\section{Outline of the proof of Theorem \ref{Th3}}\la{Sec:local_law_proof_abstract}

\bgt

\ite[(a)] The diagonal entries of $G$ can be written using Schur's complement formula as
\be\la{SPSFintro} \ff{G_{ii}} \;=\; H_{ii}-z-\sum_{k,l}^{(i)}H_{ik}G^{(i)}_{kl}H_{l i}.\ee
Since the coefficients $G^{(i)}_{kl}$ in \eqref{SPSFintro} are independent of the entries $(H_{ik})_{k = 1}^N$, we can condition on $H^{(i)}$ and apply Lemma \ref{lem9} to find that the sum on the right-hand side of \eqref{SPSFintro} is with high probability close to its $P_i$-expectation, $\frac{1}{N} \sum_k^{(i)} G_{kk}^{(i)}$. Using Lemma \ref{lem8} we can get rid of the upper index $(i)$ up to a small error term, and find that the sum on the right-hand side of \eqref{SPSFintro} is with high probability close to $s$.

In order to quantify these errors, we introduce the random $z$-dependent error parameter $$\La \;\deq\; \max_{i,j}|G_{ij}-m\del_{ij}|\,.$$
Then using large deviation estimates as outlined above as well as the Ward identity \eqref{Ward}, we find that if either $\eta\ge 1$ or $\La \le N^{-\tau/10}$, then   \eqre{SPSFintro} reads \be\la{SPSFintrobis} \ff{G_{ii}}=-z-s+ O_\prec(\Psi_\Theta)\,, \qquad \Psi_\Tta \;\deq\; \sqrt{\frac{\im m+|s-m|}{N\eta}}\,.\ee
Combined with analogous arguments applied to the off-diagonal entries starting from \eqref{RI2}, we obtain the estimate
\begin{equation} \label{Gij_sketch}
\max_{i, j} \abs{G_{ij} - \delta_{ij} s} \;\prec\; \Psi_\Theta
\end{equation}
on the individual entries of $G$. Note that the diagonal entries are compared to the empirical quantity $s$ instead of the deterministic quantity $m$.
\ite[(b)]
In the next step, we compare $s$ to $m$.
Rewriting \eqre{SPSFintrobis} as \be\la{3121417h38}1=-zG_{ii}-sG_{ii}+O_\prec(\Psi_\Theta)\ee and averaging over $i$, we obtain
\begin{equation} \label{s_sketch_eq}
1 + zs + s^2 \;=\; O_\prec(\Psi_\Theta)\,.
\end{equation}
Recall from \eqref{m_id} that $m$ solves the same equation as \eqref{s_sketch_eq} without the error term on the right-hand side. Thus, concluding estimating $s - m$ from \eqref{s_sketch_eq} in terms of the error $O_\prec(\Psi_\Theta)$ involves a stability analysis of the quadratic equation \eqref{s_sketch_eq}.

\item[(c)]
If $\eta \geq 1$ we have $\Psi_\Theta \leq C N^{-1/2}$, and it is then not hard to deduce that $\abs{s - m} \prec N^{-1/2}$, so that \eqref{Gij_sketch} implies $\Lambda \prec N^{-1/2}$. Here the stability analysis of \eqref{s_sketch_eq} is simple because the two solutions of \eqref{s_sketch_eq} are well separated.
\ite[(d)]
In order to estimate $s - m$ for $\eta \leq 1$, we use a \emph{bootstrapping} from large to small scales, a self-consistent multiscale induction where estimates on a given scale are used as a priori information for obtaining estimates on the subsequent, smaller, scale. This is the core of the proof. The bootstrapping is performed at fixed energy $E$ along the sequence $z_k\deq E+ \ii (1-kN^{-3})$, where $k = 0,1, \dots, N^3 - N^{2 + \tau}$. At each step of the bootstrapping we establish the estimate $\Lambda(z_k) \prec (N \eta)^{-1/4}$.

The bootstrapping is started at $z_0$ for which the simple analysis of (c) applies.

To explain the bootstrapping, suppose that $\Lambda(z_k) \prec (N \eta)^{-1/4}$ holds at $z_k$. Then we use the trivial Lipschitz continuity of $\Lambda$ with Lipschitz constant $N^2$ and the estimate $\abs{z_{k+1} - z_k} = N^{-3}$ to deduce that with high probability $\Lambda \leq N^{-\tau/10}$ at $z_{k+1}$. This is the a priori assumption needed to obtain the estimates \eqref{SPSFintrobis} and \eqref{Gij_sketch} at $z_{k+1}$. Hence, we also obtain \eqref{s_sketch_eq} at $z_{k+1}$. In order to obtain a bound on $\abs{s - m}$ from \eqref{s_sketch_eq} and hence complete the induction step, we need to perform a stability analysis of \eqref{s_sketch_eq}. For small $\eta$ and $E$ near the spectral edge, this stability analysis requires some care, because the two solutions of \eqref{s_sketch_eq} can be close. This completes the induction step.

\ite[(e)]
From (c)--(d) we obtain $\Lambda \prec (N \eta)^{-1/4}$ for all $z_k$ at some fixed energy $E$. By a union bound, we obtain a simultaneous high-probability estimate for all $z$ in the $N^{-3}$-net $\f S \cap N^{-3} \Z^2 \subset \f S$. By Lipschitz continuity of $\Lambda$ with Lipschitz constant $N^2$, we extend this estimate to a simultaneous high-probability bound for all $z \in \f S$.
\ite[(f)]
In (e) we obtained the \emph{weak local law} $\Lambda \prec (N \eta)^{-1/4}$ for all $z \in \f S$. Although the error bound $(N \eta)^{-1/4}$ is far from optimal, this is a highly nontrivial and very useful estimate, because it prove that $\Lambda$ is small, and in particular bounded, for all $z \in \f S$. This estimate serves as the key input in the proof of the local law, where the optimal error bound is obtained.

The key tool behind the proof of the optimal error bound is a \emph{fluctuation averaging argument}, which states that the error term in \eqref{3121417h38} becomes much smaller after averaging over $i$. Thus, the optimal error in \eqref{s_m_outside} is in fact much smaller than in \eqref{3121417h38}. The main work behind this improvement is to estimate averages of the form $\sum_{i = 1}^N X_i$ where $X_i \deq Q_i \frac{1}{G_{ii}}$ has (by definition) expectation zero. Clearly, if the variables $X_i$ were all equal then the averaging would not change anything, and if they were all independent we would gain a factor $N^{-1/2}$ from the averaging. In fact, with the choice $X_i = Q_i \frac{1}{G_{ii}}$ neither is true, as the variables $X_i$ are obviously not equal but they are nevertheless strongly correlated. As it turns out, the averaging yields a gain of order $(N \eta)^{-1/2}$, so that the variables $X_i$ are almost uncorrelated for $\eta = 1$ and almost equal for $\eta \approx N^{-1}$.

\ent

\section{Proof of Theorem \ref{Th3} (b): weak local law} \label{sec:weak_law}

The conclusion of Theorem \ref{Th3} may be regarded as consisting of two separate achievements on the quantities $G_{ij}-m\del_{ij}$ and $s - m$: first, control for small values of $\eta$; second, optimal error bounds. These two achievements in fact entail separate difficulties in the proof, and we correspondingly separate the proof into two parts. The weak local law presented in this section pertains to the first achievement.

The goal of the weak local law is to establish the smallness of $G_{ij} - m \delta_{ij}$ for all $z \in \f S$, without however trying to get optimal error bounds. Once the weak law has been established, we will use it to improve the error estimates.

In this section we state and prove the weak law. The central quantity is the random $z$-dependent error parameter
$$\La \;\deq\; \max_{i,j}|G_{ij}-m\del_{ij}|\,.$$
\beg{propo}[Weak local law]\la{prop1}
We have $$\La \;\prec\; (N\eta)^{-1/4}$$ for $z\in \f S$.
\en{propo}

The rest of this section is devoted to the proof of Proposition \ref{prop1}. In addition to $\Lambda$, we shall need the additional error parameters
$$\La_* \;\deq\; \max_{i\ne j}|G_{ij}|\,,\qquad
\Tta \;\deq\; |s-m|\,,\qquad \Psi_\Tta \;\deq\; \sqrt{\frac{\im m+\Tta}{N\eta}}\,.
$$
Our starting point is Schur's complement formula (see Lemma \ref{lem:schur}), which we write as
\be\la{SPSF} \ff{G_{ii}} \;=\; H_{ii}-z-\sum_{k,l}^{(i)}H_{ik}G^{(i)}_{kl}H_{l i}\,.\ee
We have the following simple identity.
\beg{lem}\la{lem11}
We have $$\ff{G_{ii}} \;=\; -z-s+Y_i\,,
$$ where we defined
\begin{equation} \label{def_Y_A_Z}
Y_i \;\deq\; H_{ii}+A_i-Z_i\,, \qquad A_i \;\deq\; \ff{N}\sum_k\frac{G_{ki}G_{ik}}{G_{ii}}\,, \qquad Z_i \;\deq\; Q_i\sum_{k,l}^{(i)}H_{ik}G^{(i)}_{kl}H_{l i}\,.
\end{equation}
\en{lem}
\bpr 
Using \eqref{RI1} we get
$$P_i\sum_{k,l}^{(i)}H_{ik}G^{(i)}_{kl}H_{l i} \;=\; \ff{N} \sum_{k}^{(i)} G_{kk}^{(i)} \;=\;  
\ff{N}\sum_{k}G_{kk} 
-\ff{N}\sum_k\frac{G_{ki}G_{ik}}{G_{ii}} \;=\; s-\ff{N}\sum_k\frac{G_{ki}G_{ik}}{G_{ii}}\,.$$
Hence,
$$\sum_{k,l}^{(i)}H_{ik}G^{(i)}_{kl}H_{l i} \;=\; s-\ff{N}\sum_k\frac{G_{ki}G_{ik}}{G_{ii}}+Q_i\sum_{k,l}^{(i)}H_{ik}G^{(i)}_{kl}H_{l i} \;=\; s-A_i+Z_i\,.$$
The claim now follows using \eqre{SPSF}.
\epr

The strategy behind the proof of Proposition \ref{prop1} is a self-consistent multiscale approach, which involves a bootstrapping from the large scale $\eta \geq 1$ down to the small scale $\eta = N^{-1 + \tau}$. The bootstrapping hypothesis will be the event $\phi = 1$, where we defined the $z$-dependent indicator function
\be\la{6}\phi \;\deq\; \indb{\La\le N^{-\tau/10}}\,.
\ee
The following result gives trivial bounds on entries of $G$ on the event $\phi = 1$.
\begin{lem} \label{rmklem7}
Let $p \in \N$ be fixed. Then we have
\begin{equation*}
\phi \abs{G_{ij}^{(T)}} + \phi \absbb{\frac{1}{G_{ii}^{(T)}}} \;\prec\; 1
\end{equation*}
uniformly for $i,j \notin T \subset \{1, \dots, N\}$ satisfying $\abs{T} \leq p$.
\end{lem}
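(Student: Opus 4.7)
The plan is to prove the stronger deterministic statement that on the event $\{\phi = 1\}$ we have
\begin{equation*}
\absb{G_{ij}^{(T)} - m \delta_{ij}} \;\leq\; C_{|T|} N^{-\tau/10}
\end{equation*}
uniformly in $i, j \notin T$, where $C_0, C_1, \dots, C_p$ are a finite sequence of constants depending only on $p$ and on the lower bound on $\abs{m}$ from Lemma \ref{lem7}. Once this is granted, the lemma follows immediately: on $\{\phi = 1\}$ the off-diagonal bound is trivial, and $\absb{G_{ii}^{(T)}} \geq \abs{m} - C_p N^{-\tau/10} \geq c$ for $N$ large enough, since $\abs{m} \asymp 1$ by Lemma \ref{lem7}, so $\absb{1/G_{ii}^{(T)}} \leq 1/c$. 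Because these are deterministic bounds on $\{\phi=1\}$, multiplying by $\phi$ gives $\prec 1$ trivially.

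The argument is by induction on $q = \abs{T}$. The base case $q = 0$ is precisely the definition of $\phi$: on $\{\phi = 1\}$ we have $\abs{G_{ij} - m \delta_{ij}} \leq \Lambda \leq N^{-\tau/10}$, so $C_0 = 1$ works. For the inductive step from $q$ to $q+1$, write $T = T' \cup \{k\}$ with $k \notin T'$ and invoke the resolvent identity \eqref{RI1} in the form
\begin{equation*}
G_{ij}^{(T)} \;=\; G_{ij}^{(T')} - \frac{G_{ik}^{(T')} G_{kj}^{(T')}}{G_{kk}^{(T')}}\,.
\end{equation*}
Since $i,j \notin T$ we have $i, j \neq k$, so the inductive hypothesis applied to the minor $T'$ (which has $\abs{T'} = q$) gives, on $\{\phi = 1\}$, the bounds $\absb{G_{ij}^{(T')} - m \delta_{ij}} \leq C_q N^{-\tau/10}$, $\absb{G_{ik}^{(T')}} \leq C_q N^{-\tau/10}$, $\absb{G_{kj}^{(T')}} \leq C_q N^{-\tau/10}$, and $\absb{G_{kk}^{(T')}} \geq \abs{m} - C_q N^{-\tau/10} \geq c/2$ for $N$ large. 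The correction term is therefore $O(N^{-\tau/5})$, and we can take $C_{q+1} \deq C_q + 1$, say, for $N$ sufficiently large.

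There is no substantive obstacle: the argument is purely algebraic once the a priori bound $\Lambda \leq N^{-\tau/10}$ encoded by $\phi$ is in hand. The only point requiring any attention is that one must verify the denominator $G_{kk}^{(T')}$ does not get dangerously close to zero, which is precisely what the closeness-to-$m$ form of the inductive hypothesis ensures; this is why one proves the stronger statement rather than the mere boundedness of $\absb{G_{ij}^{(T)}}$ directly. Since $p$ is fixed, finitely many iterations suffice and the constants $C_q$ remain bounded uniformly in $N$.
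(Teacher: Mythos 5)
Your proof is correct and follows exactly the route the paper indicates (and leaves to the reader): induction on $|T|$ via repeated application of \eqref{RI1}, combined with the lower bound $|m|\asymp 1$ from Lemma \ref{lem7} to keep the diagonal denominators away from zero. The idea of carrying the stronger ``closeness to $m$'' statement through the induction, rather than mere boundedness, is precisely the right bookkeeping device.
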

\begin{proof}
The proof follows using \eqref{mupperlowerbounded} and a repeated application of \eqref{RI1}, whose details we leave to the reader.
\end{proof}

The following lemma contains the key a priori estimates used to perform the bootstrapping. Note that the assumptions for the estimates are either $\eta \geq 1$ (start of bootstrapping) or $\phi = 1$ (bootstrapping assumption used for iteration).

\beg{lem}[Main estimates]\la{lem12} For $z\in \f S$ we have
\be\la{7}\pb{\phi+\ind{\eta\ge 1}}\pb{\La_*+|A_i|+|Z_i|} \;\prec\; \Psi_\Tta\ee
and
\be\la{8}\pb{\phi+\ind{\eta\ge 1}}|G_{ii}-s| \;\prec\; \Psi_\Tta\,.\ee
\en{lem}

\bpr
We begin with $Z_i$ under the bootstrapping assumption $\phi = 1$. 
We split
\begin{equation} \label{Z_i_est_split}
\phi \abs{Z_i} \;\le\; \phi \absBB{\sum_k^{(i)}\pbb{|H_{ik}|^2-\ff{N}}G_{kk}^{(i)}}+\phi\absBB{\sum_{k\ne l}^{(i)}H_{ik}G^{(i)}_{kl}H_{l i}}\,.
\end{equation}
Using Lemmas  \ref{lem9} and \ref{rmklem7} we estimate the first term of \eqref{Z_i_est_split} as
$$ \phi \absBB{\sum_k^{(i)}\pbb{|H_{ik}|^2-\ff{N}}G_{kk}^{(i)}} \;\prec\; \phi \pBB{\ff{N^2}\sum_{k}^{(i)}|G_{kk}^{(i)}|^2}^{1/2} \;\prec\; \phi N^{-1/2} \;\le\; C\phi \Psi_\Tta\,,$$
where in the last step we used \eqref{Immmupperlowerbounded} to estimate $\Psi_\Theta \geq c N^{-1/2}$.

Next, using Lemma \ref{lem9} we estimate the second term of \eqref{Z_i_est_split} as
$$\phi\absBB{\sum_{k\ne l}^{(i)}H_{ik}G^{(i)}_{kl}H_{l i}} \;\prec\; \phi\pBB{\sum_{k,l}^{(i)}\ff{N^2}|G_{kl}^{(i)}|^2}^{1/2} \;=\; \phi\pBB{\sum_k^{(i)}\ff{N^2\eta}\im G_{kk}^{(i)}}^{1/2}\,.$$
Here the second step follows from the Ward identity \eqref{Ward} applied to the matrix $H^{(i)}$. Using \eqref{RI1} and Lemma \ref{rmklem7} we therefore conclude
$$\phi\absBB{\sum_{k\ne l}^{(i)}H_{ik}G^{(i)}_{kl}H_{l i}} \;\prec\; \phi\pBB{\sum_k\frac{\im G_{kk}+\La_*^2}{N^2\eta}}^{1/2} \;\le\; \phi\Psi_\Tta+\phi\La_*\,.$$ 
We conclude from \eqref{Z_i_est_split} that
\begin{equation} \label{Zi_est}
\phi \abs{Z_i} \;\prec\; \phi \Psi_\Theta + \phi \Lambda_*\,.
\end{equation}

Next, from Lemma \ref{rmklem7} we get the trivial bound
\begin{equation} \label{Ai_est}
\phi|A_i| \;\prec\; \phi \La_*^2\;\le\; \phi\La_*\,.
\end{equation}

In order to estimate $\La_*$, we take $i\ne j$ and use \eqre{RI2} twice to get
\be\la{EqAuxiliere}G_{ij} \;=\; -G_{ii}\sum_{k}^{(i)}H_{ik}G^{(i)}_{kj}\;=\;-G_{ii}G^{(i)}_{jj}\pBB{H_{ij}-\sum_{k,l}^{(ij)}H_{ik}G_{kl}^{(ij)}H_{l j}}\,.\ee
We estimate the second term using Lemma \ref{lem9} and the identity \eqref{Ward} (applied to $H^{(ij)}$) according to
\begin{multline*}
\phi\absBB{\sum_{k,l}^{(ij)}H_{ik}G_{kl}^{(ij)}H_{l j}} \;\prec\; \phi\pBB{\frac{1}{N^2}\sum_{k,l}^{(ij)}|G_{kl}^{(ij)}|^2}^{1/2} \;=\; \phi\pBB{\frac{1}{N^2 \eta} \sum_{k}^{(ij)}\im G_{kk}^{(ij)}}^{1/2}
\\
\le\; \phi\pBB{\sum_k\frac{\im G_{kk}+C \La_*^2}{N^2\eta}}^{1/2}\le \phi\Psi_\Tta+ C \phi(N\eta)^{-1/2}\La_*\,,
\end{multline*}
where the third step follows using a repeated application of \eqref{RI1}. Using $\abs{H_{ij}} \prec N^{-1/2}$, Lemma \ref{rmklem7}, and the estimate $(N\eta)^{-1/2}\le CN^{-\tau/4}$ valid for $z \in \f S$, we therefore conclude that
$$\phi\La_* \;\prec\; \phi\Psi_\Tta+\phi N^{-\tau/4}\La_*\,.$$
Using the definition of $\prec$, we deduce that
\begin{equation} \label{est_Lambda_star}
\phi\La_* \;\prec\; \phi\Psi_\Tta\,.
\end{equation}
Plugging \eqref{est_Lambda_star} into \eqref{Zi_est} and \eqref{Ai_est} concludes the proof of the term $\phi$ in \eqref{7}.

Moreover, the proof of the term $\ind{\eta \geq 1}$ is almost the same, using the trivial bound $|G_{ij}^{(T)}| \le 1$ instead of Lemma \ref{rmklem7}.

What remains is the estimate \eqre{8}.
From Lemma \ref{lem11} and \eqref{7} we get
$$(\phi+\ind{\eta\ge 1})\absBB{\ff{G_{ii}}-\ff{G_{jj}}} \;\le\; (\phi+\ind{\eta\ge 1})(|Y_i|+|Y_j|) \;\prec\; \phi \Psi_\Tta\,,$$ which implies, by Lemma \ref{rmklem7} that $$\pb{\phi+\ind{\eta\ge 1}}\abs{{G_{ii}}-{G_{jj}}} \;\prec\; \Psi_\Tta\,.$$ Averaging over $j$ yields \eqref{8}.
\epr

We emphasize here the essential role played by the identity \eqref{Ward} in the proof. It is the source of the factors $(N \eta)^{-1}$ in our error bounds. Using it, we can estimate the sum over $N$ Green function entries by $\eta^{-1}$ times a Green function entry. Since $\eta^{-1} \ll N$, this is a nontrivial gain. This gain is responsible not only for the optimal error bounds for $\Lambda$ but also for the self-improving mechanism in the estimation of $\Lambda$ that allows us to bootstrap in $\eta$.

The following elementary lemma is used to estimate $s - m$. It may be regarded as a quantitative stability result for the equation \eqref{m_id}.

\beg{lem}\la{lem13}Let $z\in \f S$. The equation $$u^2+zu+1=0$$ has two solutions, $m\in \C_+$ and $\tilde{m}\in \C_-$. They satisfy $$|m-\tilde{m}| \;\asymp\; \sqrt{\ka+\eta}\,.$$ Moreover, if $u$ solves
\begin{equation} \label{u_eq}
u^2+zu+1+r \;=\; 0 \qquad (|r|\le 1)
\end{equation}
 then
 \begin{equation} \label{u-m_eq}
 \min\{|u-m|,|u-\tilde{m}|\} \;\le\; \frac{C|r|}{\sqrt{\ka+\eta+|r|}}
 \end{equation}
for some constant $C$.
\en{lem}

\begin{proof}
See Appendix \ref{sec:stability}
\end{proof}

The following lemma starts the bootstrapping at $\eta \geq 1$.

\beg{lem}\la{lem14}For $z\in \f S$, we have $$\ind{\eta\ge 1}\La \;\prec\; N^{-1/2}$$
\en{lem}

\bpr
Let $\eta \geq 1$.
From \eqref{7} an Lemma \ref{lem7} we get $\Lambda_* \prec N^{-1/2}$. What remains therefore is the estimate of $G_{ii} - m$. By Lemmas \ref{lem11} and \ref{lem12}, we have $$1+sG_{ii}+zG_{ii} \;=\; O_\prec(N^{-1/2})\,.$$
Averaging over $i$ yields
\begin{equation} \label{avg_s_eq}
1+s^2+sz \;=\; O_\prec(N^{-1/2})\,.
\end{equation}
Lemma \ref{lem13} therefore gives
$$\min\{|s-m|, |s-\tilde{m}|\} \;\prec\; N^{-1/2}\,.$$
We now need to make sure that $s$ is close to $m$ and not $\tilde m$.
Since $\eta \geq 1$, it is easy to see that $\im \tilde m \leq -c$ for some positive constant $c$. Since $\im s > 0$, we therefore deduce that
$$\Tta \;=\; |s-m| \;\prec\; N^{-1/2}\,.$$
Using \eqref{8}, we deduce that $$|G_{ii}-m| \;\le\; |G_{ii}-s|+\Tta \;\prec\; N^{-1/2}\,,$$ 
as claimed.
\epr

We may now come to heart of the proof of Proposition \ref{prop1}: the self-consistent multiscale bootstrapping from large to small $\eta$. If all error estimates were deterministic, we could do a standard continuity argument by choosing a continuous path $\eta(t) = 1 - t$. However, owing to the stochastic nature of our estimates, the bootstrapping has to be done in a finite number of steps. At each step we lose some probability, which imposes an upper bound on the number of allowed steps. On the other hand, the steps have to be small enough to be able to carry over information from one step to the next using continuity. At each step, the deterioration of the error probabilities has to be tracked carefully. Roughly, at each step we lose an additive factor $N^{-D}$ in the error probabilities, which means that we can perform an order $N^C$ steps for any fixed constant $C$. Using the Lipschitz continuity of all error parameters, it turns out to be enough to perform steps of size $N^{-3}$.   Hence, for the \emph{stochastic continuity argument} that underlies the self-consistent multiscale approach, we replace the purely topological notions of a continuous path and continuous error parameters with the quantitative notions of a lattice and Lipschitz continuous error parameters.  

Thus, we define the discrete subset $\wh {\f S} \deq \f S \cap (N^{-3}\Z^2)$. Since $s$, $m$, $G_{ij}$, and $\Psi$ are all $N^2$-Lipschitz on $\f S$, it is enough to prove Proposition \ref{prop1} for $z \in \wh {\f S}$. This is the content of the following lemma.

\blem\la{lem15}  For $z \in \wh {\f S}$ we have $\La\prec (N\eta)^{-1/4}$.
\elem

\bpr The case $\eta\ge 1$ follows from Lemma \ref{lem14}. For the other $z \in \wh {\f S}$ we construct a lattice of points $z_k \in \wh {\f S}$ as follows. Let $z_0=E+\mathrm{i}\in \wh {\f S}$ define $z_k \deq E+\mathrm{i}\eta_k$ with $\eta_k\deq 1-kN^{-3}$.

Fix $\eps\in (0, \tau/16)$ and $D> 0$. Set $\del_k\deq (N\eta_k)^{-1/2}$ and define the events $$\Omega_k \;\deq\; \hbb{\Tta(z_k)\le\frac{N^\eps\del_k}{\sqrt{\ka+\eta_k+\del_k}}}\,, \qquad \Xi_k \;\deq\; \hb{\La(z_k)\le N^{2\eps}\sqrt{\del_k}}\,.$$
By Lemma \ref{lem14}, $$\p((\Omega_0\cap\Xi_0)^c) \;\le\; N^{-D}\,.$$
In the bootstrapping step we prove that, conditioned on the event $\Omega_{k-1} \cap \Xi_{k-1}$, the event $\Omega_k \cap \Xi_k$ has high probability. The estimate on $\Theta$ is obtained from Lemma \ref{lem13} by noting that $s$ solves an equation of the form \eqref{u_eq}. Then we use the estimate \eqref{u-m_eq}. The key difficulty is in making sure that $s$ is close to $m$ and not $\tilde m$. Depending on the relative sizes of $\abs{m - \tilde m}$ and $\abs{r}$, we need to consider two cases. In the first case $\abs{m - \tilde m}$ is large enough for us to deduce by continuity from the previous step that $u$ cannot be close to $\tilde m$. In the second case $\abs{m - \tilde m}$ is so small that it does not matter whether $u$ is close to $m$ or $\tilde m$. The crossover from the first to the second case happens at the index
$$K \;\deq\; \min\{k\col \del_k\ge N^{-2\eps}(\ka+\eta_k)\}\,.$$
Note that, as $k$ increases, $\eta_k$ decreases and $\del_k$ increases. Hence, $\del_k<N^{-2\eps}(\ka+\eta_k)$ for all $k<K$.

\subsubsection*{Case 1: $1\le k < K$} Since $\La$ is $N^2$-Lipschitz on $\f S$, we have $\phi(z_k)=1$ on $\Xi_{k-1}$. Using Lemma \ref{lem12}, we therefore deduce that $$\ind{\Xi_{k-1}}\pb{|Y_i(z_k)|+\La_*(z_k)} \;\prec\; \del_k\,,$$
where we also used that $\im m + \phi \Theta \leq C$.
Plugging this into Lemma \ref{lem11}, averaging over $i$, and invoking Lemma  \ref{lem13} yields
\begin{equation} \label{P_Xi_bs}
\p\pBB{\Xi_{k-1}\cap \pBB{\min\hb{|s(z_k)-m(z_k)| \,,|s(z_k)-\tilde{m}(z_k)|}>\frac{N^\eps\del_k}{\sqrt{\ka+\eta_k+\del_k}}}} \;\le\; N^{-D}\,.
\end{equation}
On the other hand, by Lipschitz continuity, on $\Omega_{k-1}$ we have $$|s(z_k)-m(z_k)| \;\le\; \frac{2N^\eps\del_k}{\sqrt{\ka+\eta_k+\del_k}}\,.$$
Moreover, by Lemma \ref{lem13}, we have $$|m(z_k)-\tilde{m}(z_k)| \;\asymp\; \sqrt{\ka+\eta_k} \;\geq\; \frac{N^{2 \epsilon} \delta_k}{\sqrt{\kappa + \eta_k + \delta_k}}\,,$$ where in the last step we used that $\delta_k \leq N^{-2\epsilon} (\kappa + \eta_k)$.
Therefore, on $\Omega_{k-1}$ we have
$$\min\hb{|s(z_k)-m(z_k)|,|s(z_k)-\tilde{m}(z_k)| } \;=\; |s(z_k)-m(z_k)|\,.$$
We have therefore proved that
\begin{equation} \label{boots1}
\p\pb{\Omega_{k-1}\cap \Xi_{k-1}\cap \Omega_k^c} \;\le\; N^{-D}\,.
\end{equation}

Moreover, using Lemma \ref{lem12} we find
\begin{equation*}
\indb{\Xi_{k-1} \cap \Omega_k} \Lambda(z_k) \;\prec\; \indb{\Xi_{k-1} \cap \Omega_k} \pb{\Psi_\Theta(z_k) + \Theta(z_k)} \;\leq\; C N^\epsilon \sqrt{\delta_k}\,,
\end{equation*}
where in the first step we used that $\phi(z_k) = 1$ on $\Xi_{k - 1}$ and the general estimate $\Lambda \leq \Lambda_* + \max_i \abs{G_{ii} - s} + \Theta$, and in the second step we used the definitions of $\Omega_k$ and $\delta_k$.
We conclude that
\begin{equation} \label{boots2}
\p(\Omega_{k-1}\cap\Xi_{k-1}\cap\Omega_k\cap \Xi_k^c) \;\le\; N^{-D}\,.
\end{equation}

Summarizing, defining $B_k\deq \Omega_k\cap\Xi_k$, we get from \eqref{boots1} and \eqref{boots2} that
\begin{equation} \label{k_leq_concl}
\p(B_0^c) \;\le\; N^{-D} \,, \qquad \p(B_{k-1}\cap B_{k}^c) \;\le\; 2N^{-D} \qquad (k < K)\,.
\end{equation}
Therefore we get for $k < K$ that $$\p(B_k^c) \;\le\; \p(B_0^c)+\sum_{l=1}^{k} \p(B_l^c\cap B_{l-1}) \;\le\; 2N^3N^{-D}\,.$$

\subsubsection*{Case 2: $ k\ge K$}
In this case the argument is similar but somewhat easier, since there is no need to use the assumption $\Omega_{k-1}$.
Then the argument is similar, but easier: there is no need to track $\Tta$ (so that we do not need the events $\Omega_k$). If $$\min\hb{|s(z_k)-m(z_k)|,|s(z_k)-\tilde{m}(z_k)| }\;\le\; \frac{N^\eps\del_k}{\sqrt{\ka+\eta_k+\del_k}}$$ then $$\Tta(z_k) \;\le\; N^\eps\sqrt{\del_k} +|m(z_k)-\tilde{m}(z_k)| \;\le\; CN^\eps\sqrt{\del_k}\,,$$
where we used Lemma \ref{lem13}. Using \eqref{P_Xi_bs} and Lemma \ref{lem12} we therefore deduce that
$$\p(\Xi_{k-1}\cap \Xi_k^c) \;\le\; N^{-D}\,.$$
From \eqref{k_leq_concl} we therefore conclude that $$\p(\Xi_k^c)\le CN^{3-D}$$ for all $k$. Since $\eps > 0$ and $D$ were arbitrary, the claim follows.
\epr

\section{Proof of Theorem \ref{Th3} (c): optimal error bounds} \label{sec:opt_bounds}

In this section we complete the proof of Theorem \ref{Th3} by improving the error bounds from Proposition \ref{prop1} to optimal ones. The main observation is that the equation for $s$ of the form \eqref{avg_s_eq} arises from an averaging over $i$. When deriving \eqref{avg_s_eq} (and its analogue for smaller $\eta$), we simply estimated the average $\frac{1}{N} \sum_i Y_i$ by $\max_i \abs{Y_i}$. (Recall the definition \eqref{def_Y_A_Z}.) In fact, the random variables $Y_i$ have a small expectation, so that their average should be smaller than the typical size of each individual variable. If they were independent, we would gain a factor $N^{-1/2}$ by a trivial concentration result. However, they are not independent. In fact, for small $\eta$ different $Y_i$ are strongly correlated, and one typically has
\begin{equation*}
\frac{1}{N} \var(Y_i) \;\ll\; \var \pbb{\frac{1}{N} \sum_j Y_j} \;\ll\; \var(Y_i)\,.
\end{equation*}
The upper bound corresponds to fully correlated variables, and the lower bound to uncorrelated variables. For general $\eta$, the variables $Y_i$ are in between these two extremes. The extreme cases are reached for $\eta \approx N^{-1}$ (almost fully correlated) and $\eta \approx 1$ (almost uncorrelated, in fact almost independent); see Remark \ref{rem: fa_scale} below for more details.

The key result that allows us to obtain optimal bounds on the average $\frac{1}{N} \sum_i Y_i$ is the following \emph{fluctuation averaging} result. Recall the definitions of $P_i$ and $Q_i$ from Definition \ref{definition: P Q}.

\beg{propo}[Fluctuation averaging]\la{prop16}
Suppose that for $z \in \f S$ we have
\be\la{04121411h45}\La_* \;\prec\; \Phi,\ee where
\begin{equation} \label{admissible Psi}
N^{-1/2} \;\le\; \Phi \;\le\; N^{-c}
\end{equation}
for some constant $c>0$. Then for $z \in \f S$ we have $$\ff{N}\sum_i Q_i\ff{G_{ii}}\;=\;O_\prec (\Phi^2)\,.$$
\en{propo}

The proof of Proposition \ref{prop16} is postponed to next section. In this section we use Proposition \ref{prop16} to complete the proof of Theorem \ref{Th3}. Throughout, we use that $1 - \phi \prec 0$ in $\f S$, as follows from Proposition \ref{prop1}. This allows us for instance to drop the indicator functions on the left-hand sides of \eqref{7} and \eqref{8}.

\blem\la{lem17} Under the assumptions of Proposition \ref{prop16}, the error terms $Y_i$ defined in Lemma \ref{lem11} satisfy $$\ff{N}\sum_{i=1}^NY_i \;=\; O_\prec(\Phi^2)\,.$$\elem

\bpr
From Schur's complement formula \eqref{SPSF} we get
$$Q_i\ff{G_{ii}} \;=\; H_{ii}-Q_i\sum_{k,l}^{(i)}H_{ik}G_{kl}^{(i)}H_{l i} \;=\; H_{ii}-Z_i \;=\; Y_i-A_i\,,$$
so that $Y_i = A_i+Q_i1/G_{ii}$. From the definition of $A_i$ in Lemma \ref{lem11} we obtain $A_i = O_\prec(\Phi^2)$, where we used Proposition \ref{prop1} and \eqref{mupperlowerbounded}. The claim now follows from Proposition \ref{prop16}.
\epr

\begin{rmk} \label{rem: fa_scale}
Since $A_i = O_\prec(\Phi^2)$ even without averaging, we find that the important part of $Y_i$ is $X_i \deq Q_i\ff{G_{ii}}$. Suppose for simplicity that we are in the bulk, i.e.\ $E \in [-2+c, 2 - c]$ for some constant $c > 0$. As evidenced by Theorem \ref{Th3}, $X_i$ is typically of size $(N \eta)^{-1/2}$ in the bulk. Moreover, again by Theorem \ref{Th3}, we have the bound $\Lambda_* \prec \Phi$ with $\Phi \deq (N \eta)^{-1/2}$. Thus, we find that the effect of the averaging on $X_i$ is to multiply it by a factor $(N \eta)^{-1/2}$. We conclude that for $\eta \approx 1$, the average $\frac{1}{N} \sum_i X_i$ is smaller than $X_i$ by a factor $N^{-1/2}$, which corresponds to the behaviour of independent random variables $X_1, \dots, X_N$. In contrast, for $\eta \approx N^{-1}$, the average $\frac{1}{N} \sum_i X_i$ is of the same size as $X_i$, which corresponds to the behaviour of fully correlated random variables $X_1 = \cdots = X_N$.  
\end{rmk}

Now, suppose that $\Tta\prec (N\eta)^{-\si}$ in $\f S$, where $\si \in [1/4,1]$. Using Proposition \ref{prop1} and Lemma \ref{lem12}, we get
\begin{equation} \label{def_Phi_s}
\La_*+|Y_i|+|G_{ii}-s| \;\prec\; \Phi_\si \;\deq\; \sqrt{\frac{\im m +(N\eta)^{-\si}}{N\eta}}\,.
\end{equation}
Using Lemma \ref{lem11}, we get $$-s-z+Y_i \;=\; \ff{G_{ii}-s+s} \;=\; \ff{s}-\ff{s^2}(G_{ii}-s)+\ff{s^2G_{ii}}(G_{ii}-s)^2\,.$$
Averaging over $i$ and using that $\abs{s^2G_{ii}}^{-1} \prec 1$, we get
$$-s-z+\ff{N}\sum_i Y_i \;=\; \ff{s}+O_\prec (\Phi_\si^2)\,.$$
Using Lemma \ref{lem17}, we get
\begin{equation} \label{flav_qeq}
s^2+sz+1 \;=\; O_\prec(\Phi_\si^2)\,.
\end{equation}
This is the desired perturbed quadratic equation for $s$.

Next, fix $\eps,D>0$ and define
\begin{equation*}
\cal E_\sigma \;\deq\; \frac{N^\eps\Phi_\si^2}{\sqrt{\ka+\eta+\Phi_\si^2}}\,.
\end{equation*}
From \eqref{flav_qeq} and Lemma \ref{lem13} combined with the fact that $s$, $m$, and $\cal E_\sigma$ are $N^2$-Lipschitz, we find that there exists an event $\Xi$ that does not depend on $z$ such that $1 - \ind{\Xi} \prec 0$ and
\be\la{512140017}
\ind{\Xi}\min\{|s-m|\,, |s-\tilde{m}|\} \;\le\; \cal E_\sigma
\ee
for all $z \in \f S$.

As in the proof of Lemma \ref{lem15}, we now have to show that $s$ is always close to $m$ and not $\tilde m$. The argument is a continuity argument very similar to that from the proof of Lemma \ref{lem15}. In fact, since we do not have to worry about the deterioration of error probabilities, it is even easier. We do not even have to use a lattice, since on the high-probability event $\Xi$ the estimate \eqref{512140017} holds simultaneously for all $z \in \f S$. The case $\eta \geq 1$ is again trivial by the simple observation that $\im \tilde m \leq c$ for some positive constant $c$. Let us therefore focus on the case $\eta \leq 1$. For the following we fix a realization $H \in \Xi$, so that the argument is entirely deterministic.

Let $z = E + \ii \in \f S$ and define $z_t \deq z + \ii \eta_t$ where $\eta_t \deq 1 - t$. The crossover time is defined by
\begin{equation*}
T \;\deq\; \min \hb{t \geq 0 \col \Phi_\sigma(z_t)^2 \geq N^{-2\epsilon} (\kappa + \eta_t)}\,.
\end{equation*}
Using the initial bound $\abs{s(z_0) - m(z_0)} \leq \cal E_\sigma(z_0)$ and the estimates
\begin{equation*}
\min\hb{|s(z_t)-m(z_t)| \,, |s(z_t)-\tilde{m}(z_t)|} \;\le\; \cal E_\sigma(z_t) \,, \qquad \abs{m(z_t)-\tilde{m}(z_t)} \;\geq\; c \sqrt{\kappa + \eta_t} \;\gg\; \cal E_\sigma(z_t)
\end{equation*}
valid for $t \in [0,T]$, we get using the continuity of $s$, $m$, and $\tilde m$, that $|s(z_t)-m(z_t)| \leq \cal E_\sigma(z_t)$ for $t \in [0,T]$.

Next, for $t > T$ we use the estimate
\begin{equation*}
|s(z_t)-m(z_t)| \;\leq\; \cal E_\sigma(z_t) + \abs{m(z_t)-\tilde{m}(z_t)} \;\leq\; 2 N^{\epsilon} \cal E_\sigma(z_t)\,.
\end{equation*}
We conclude that $\abs{s - m} \leq 2 N^\epsilon \cal E_\sigma$ on $\Xi$ and for all $z \in \f S$. Since $\epsilon$ and $D$ were arbitrary, we conclude that if $\Theta \prec (N \eta)^{-\sigma}$ then
\begin{equation} \label{Theta_self_impr}
\Theta \;\prec\; \frac{\Phi_\si^2}{\sqrt{\ka+\eta+\Phi_\si^2}} \;\leq\; \frac{\im m}{N\eta \sqrt{\ka+\eta}}+\sqrt{\frac{(N\eta)^{-\si}}{N\eta}} \;\leq\; \frac{C}{N\eta}+(N\eta)^{-1/2-\si/2}\,,
\end{equation}
where in the last step we used Lemma \ref{lem7}. Note that the second estimate is in general wasteful. It turns out to be optimal for $E \in [-2,2]$, but if $E$ is away from the limiting spectrum $[-2,2]$, it can be improved. We shall return to this point in Lemma \ref{lem:improved_ll} below.

In conclusion, we have shown for any $\sigma \in [1/4,1]$ that
\begin{equation} \label{sc_iter}
\Theta \;\prec\; (N \eta)^{-\sigma} \qquad \Longrightarrow \qquad \Theta \;\prec\; (N \eta)^{-1/2 -\sigma/2}\,.
\end{equation}
Starting from $\sigma = 1/4$ from Proposition \ref{prop1} and iterating \eqref{sc_iter} a bounded number of times, we get $\Theta \prec (N \eta)^{-1}$. (Note that the number of iterations is independent of $N$; it depends only on the constant $\epsilon$ in the definition of $\prec$, which is arbitrary but fixed.) This concludes the proof of \eqref{121414h53}. Finally, \eqref{121414h54} follows from \eqref{121414h53} and the estimates \eqref{7} and \eqref{8}. This concludes the proof of Theorem \ref{Th3}.

\section{Fluctuation averaging: proof of Proposition \ref{prop16}} \label{sec:averaging}

The first instance of the fluctuation averaging mechanism appeared in \cite{EYY2} for the Wigner case. A different proof (with a better bound on the constants) was given in \cite{EYYrigi}.
A conceptually streamlined version of the original proof was extended to
sparse matrices \cite{EKYY1} and to sample covariance matrices \cite{PY1}. 
Finally, an extensive analysis in \cite{EKYfluc} treated the fluctuation averaging of general polynomials
of Green function entries and identified the order of cancellations depending on the
algebraic structure of the polynomial. Moreover, in \cite{EKYfluc} an  additional cancellation effect was found
for the quantity $Q_i|G_{ij}|^2$. These improvements played a key role in obtaining 
the diffusion profile for the Green function of band matrices. The version we present here is based on the greatly simplified proof given in \cite{EKYY4}.

We start with a simple lemma which summarizes the key properties of $\prec$ when combined with expectation. Note that if $X$ and $Y$ are deterministic, $X\,\prec\, Y$ from Definition \ref{def:stochdom} simply means that for each $\eps>0$, we have for large enough $N$ and all $u$ that $X^{(N)}(u) \,\le \,  N^\epsilon Y^{(N)}(u)$.

\begin{lem} \label{lem: exp prec}
Suppose that the deterministic control parameter $\Phi$ satisfies $\Phi  \geq N^{-C}$ for some constant $C > 0$, and that for all $p$ there is a constant $C_p$ such that the nonnegative 
random variable $X$ satisfies $\E X^p \leq N^{C_p}$. 
Then we have the equivalence
\be\la{eq:stoch_domination}X \prec \Phi \qquad \Longleftrightarrow \qquad \E X^n \prec \Phi^n \txt{ for any fixed $n \in \N$} \,.\ee
Moreover, with $P_i$ and $Q_i$ as in Definition \ref{definition: P Q}, we have
\begin{equation} \label{stoch_dom_P}
X \prec \Phi \qquad \Longrightarrow \qquad P_i X^n \prec \Phi^n \,\txt{ and }\, Q_i X^n \prec \Phi^n \txt{ for any fixed $n \in \N$}\,.
\end{equation}
Finally, if $X \equiv X(u)$ and $\Phi \equiv \Phi(u)$ depend on some parameter $u$ and any of the above hypothesis is uniform in $u$, then so is the corresponding conclusion.
\end{lem}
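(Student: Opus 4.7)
My plan is to treat the two parts of the lemma in the obvious sequence: first establish the scalar equivalence \eqref{eq:stoch_domination}, and then deduce the implications for $P_iX^n$ and $Q_iX^n$ from it by combining Jensen's inequality with Markov's inequality.

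For the forward direction of \eqref{eq:stoch_domination}, assume $X\prec\Phi$ and fix $\varepsilon>0$ and $n\in\N$. I would split
\begin{equation*}
\E X^n \;=\; \E\qb{X^n\ind{X\le N^{\varepsilon/2}\Phi}}+\E\qb{X^n\ind{X> N^{\varepsilon/2}\Phi}}\,.
\end{equation*}
The first term is bounded by $N^{n\varepsilon/2}\Phi^n$ deterministically. For the second term I would apply Cauchy--Schwarz together with the a priori bound $\E X^{2n}\le N^{C_{2n}}$ and the tail bound $\P(X>N^{\varepsilon/2}\Phi)\le N^{-D}$ with $D$ chosen so large that, using $\Phi\ge N^{-C}$, the resulting contribution is $\le \Phi^n$. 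This gives $\E X^n\le 2N^{n\varepsilon/2}\Phi^n$ for large $N$, which is the required $\E X^n\prec\Phi^n$.

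For the converse direction, assuming $\E X^n\prec\Phi^n$ for every fixed $n$, I would simply invoke Markov's inequality: for any $\varepsilon,D>0$,
\begin{equation*}
\P\pb{X>N^\varepsilon\Phi}\;\le\;\frac{\E X^n}{N^{n\varepsilon}\Phi^n}\;\le\;\frac{N^{\varepsilon/2}}{N^{n\varepsilon}}\;=\;N^{-(n-1/2)\varepsilon}
\end{equation*}
for large $N$, and choosing $n$ large enough (depending only on $\varepsilon$ and $D$) makes the right-hand side smaller than $N^{-D}$.

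For the implication \eqref{stoch_dom_P}, assume $X\prec\Phi$. Fix $n\in\N$, $\varepsilon>0$, and $D>0$. For any integer $k\ge 1$, conditional Jensen (applied to the convex function $t\mapsto t^k$ on $[0,\infty)$, with $X^n\ge 0$) gives $(P_iX^n)^k\le P_i(X^{nk})$, hence $\E(P_iX^n)^k\le\E X^{nk}\prec\Phi^{nk}$ by the forward direction already proved. Markov's inequality then yields
\begin{equation*}
\P\pb{P_iX^n>N^\varepsilon\Phi^n}\;\le\;\frac{\E(P_iX^n)^k}{N^{k\varepsilon}\Phi^{nk}}\;\le\;N^{1-k\varepsilon}
\end{equation*}
for large $N$, and choosing $k$ so that $k\varepsilon\ge D+1$ gives the claim for $P_iX^n$. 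For $Q_iX^n=X^n-P_iX^n$ I would combine $\abs{Q_iX^n}\le X^n+P_iX^n$ with the forward direction (applied to $X^n\prec\Phi^n$) and the bound just proved for $P_iX^n$. The uniformity in auxiliary parameters $u$ is transparent throughout, since all the quantitative bounds depend only on $\varepsilon$, $D$, $n$, $k$, the constants $C_p$, and the constant $C$ from $\Phi\ge N^{-C}$, all of which are uniform by assumption.

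There is no serious obstacle; the only subtlety is making sure that the a priori polynomial bound $\E X^p\le N^{C_p}$ is strong enough to absorb the contribution from the low-probability exceptional event, which is why the lower bound $\Phi\ge N^{-C}$ is needed: without it, one could not ensure that a tail contribution of size $N^{-D}$ is small compared with $\Phi^n$ for arbitrary fixed $n$.
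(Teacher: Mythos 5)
Your proof is correct and follows essentially the same route as the paper: Markov's inequality for the direction ``$\Longleftarrow$'', truncation at $N^{\varepsilon/2}\Phi$ plus Cauchy--Schwarz and the polynomial a priori bound for ``$\Longrightarrow$'', and conditional Jensen combined with the scalar equivalence for $P_iX^n$ and $Q_iX^n$. The only cosmetic difference is that the paper proves the forward implication just for $n=1$ and then bootstraps to general $n$ via $X\prec\Phi\Rightarrow X^n\prec\Phi^n$, whereas you run the truncation argument directly at the level of $\E X^n$; both yield the same estimate with the same ingredients.
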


\bpr
If $\E X^n \prec \Phi^n$ then for any $\eps>0$ we get from Markov's inequality
$$\P(X > N^\eps\Phi)\;\le \; \frac{\E X^n}{N^{n\eps}\Phi^n}\;\le \; N^{-\eps(n-1)}\,.$$
Choosing $n$ large enough (depending on $\epsilon$) proves the implication  ``$\Longleftarrow$'' of    \eqre{eq:stoch_domination}. Conversely, if  $X \prec \Phi$ then for any $D > 0$ we get
\begin{multline*} 
\E X \;=\; \E X \ind{X \leq N^{\epsilon} \Phi} + \E X \ind{X > N^{\epsilon} \Phi}
\\
\leq\; N^{\epsilon} \Phi + \sqrt{\E X^2} \sqrt{\P(X > N^\epsilon \Phi)} \;\leq\; N^\epsilon \Phi + N^{C_2/2 - D/2}\,,
\end{multline*}
Using $\Phi  \geq N^{-C}$ and choosing $D$ large enough, we obtain the implication ``$\Longrightarrow$'' of \eqref{eq:stoch_domination} for $n=1$. The same implicitation for arbitrary $n$ follows from the fact that $X \prec \Phi$ implies $X^n \prec \Phi^n$ for any fixed $n$. Moreover, \eqref{stoch_dom_P} follows using \eqre{eq:stoch_domination} and Jensen's inequality for conditional expectations. The final claim about uniformity is trivial.
\epr

We shall apply Lemma \ref{lem: exp prec} to the entries of $G$. In order to verify its assumptions, we record the following bounds.

\begin{lem}\label{lemma:B2}
Suppose that $\Lambda_* \prec \Phi$ for some deterministic control parameter $\Phi$ satisfying \eqref{admissible Psi}. Fix $\ell \in \N$. Then for any $i \neq j$ and  $T \subset \{1, \dots, N\}$ satisfying $\abs{T} \leq \ell$ and $i,j \notin T$ we have
\begin{equation} \label{basic bounds on GT}
G_{ij}^{(T)} \;=\; O_\prec(\Phi) \,, \qquad \frac{1}{G_{ii}^{(T)}} \;=\; O_\prec(1)\,.
\end{equation}
Moreover, we have the rough bounds $\absb{G_{ij}^{(T)}} \leq N$ and
\begin{equation} \label{rough bound on 1/G}
\E \absBB{\frac{1}{G_{ii}^{(T)}}}^n \;\leq\; N^\epsilon
\end{equation}
for any $\epsilon > 0$ and $N \geq N_0(n, \epsilon)$.
\end{lem}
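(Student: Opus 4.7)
\medskip

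\noindent\emph{Proof proposal.} The plan is to establish the two assertions \eqref{basic bounds on GT} by induction on $|T|$, and then to deduce the rough bounds from them essentially for free. The central tool is the resolvent identity \eqref{RI1}, which expresses $G_{ij}^{(Tk)}$ as $G_{ij}^{(T)}$ minus the correction $G_{ik}^{(T)} G_{kj}^{(T)}/G_{kk}^{(T)}$: two off-diagonal entries (small) divided by a diagonal entry (bounded, by induction).

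For the base case $|T|=0$, the off-diagonal statement $G_{ij}=O_\prec(\Phi)$ ($i\neq j$) is exactly the hypothesis $\Lambda_*\prec \Phi$. For the diagonal, I would invoke Proposition \ref{prop1} to get $|G_{ii}-m|\prec (N\eta)^{-1/4}\leq N^{-\tau/4}$ on $\f S$, and combine this with $|m|\asymp 1$ from Lemma \ref{lem7} to conclude that there is a deterministic constant $c>0$ with $|G_{ii}|\geq c$ with high probability, whence $1/G_{ii}=O_\prec(1)$. For the inductive step, write $T=T'\cup\{k\}$ with $k\notin T'$ and apply \eqref{RI1}. When $i\neq j$ (and both differ from $k$), the main term $G_{ij}^{(T')}$ is $O_\prec(\Phi)$ by induction, and the correction factors as $O_\prec(\Phi)\cdot O_\prec(\Phi)\cdot O_\prec(1)=O_\prec(\Phi^2)$, which is absorbed by $\Phi$. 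For $i=j$, the correction is likewise $O_\prec(\Phi^2)\leq N^{-2c}$ and is negligible compared to a constant-order lower bound on the main term $G_{ii}^{(T')}$.

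The main obstacle is the diagonal lower bound: $1/G_{ii}^{(T)}=O_\prec(1)$ by itself only yields $|G_{ii}^{(T)}|\geq N^{-\epsilon}$ with high probability for arbitrary small $\epsilon>0$, which is not robust enough to absorb the correction $\Phi^2$ cleanly. I would therefore propagate the slightly stronger inductive statement $|G_{ii}^{(T)}-m|\prec \Phi+(N\eta)^{-1/4}$ alongside \eqref{basic bounds on GT}. This version telescopes immediately along the at most $\ell$ applications of \eqref{RI1}, since the accumulated correction is $O_\prec(\ell\,\Phi^2)=o(1)$; combined with $|m|\asymp 1$ this gives a uniform constant lower bound $|G_{ii}^{(T)}|\geq |m|/2$ with high probability, valid for all $|T|\leq\ell$, which is precisely what the inductive step needs.

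Finally, the rough bounds. The deterministic estimate $|G_{ij}^{(T)}|\leq \|G^{(T)}\|\leq \eta^{-1}\leq N^{1-\tau}\leq N$ follows at once from spectral theory applied to the Hermitian matrix $H^{(T)}$. For $\E|1/G_{ii}^{(T)}|^n\leq N^\epsilon$, I would first apply Schur's formula to $H^{(T)}$ to get
\begin{equation*}
\frac{1}{G_{ii}^{(T)}} \;=\; H_{ii}-z-\sum_{k,l}^{(Ti)} H_{ik}\,G_{kl}^{(Ti)}\,H_{li},
\end{equation*}
and estimate crudely $\bigl|\sum_{k,l}^{(Ti)} H_{ik}G_{kl}^{(Ti)}H_{li}\bigr|\leq \eta^{-1}\sum_k |H_{ik}|^2$. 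Since $H_{ii}$ and $\sum_k|H_{ik}|^2$ have moments bounded uniformly in $N$ by assumption (iii) of Definition \ref{def:Wigner} and $\eta^{-1}\leq N$, this gives a crude polynomial moment bound $\E|1/G_{ii}^{(T)}|^n\leq N^{Cn}$. Combined with the already-proved $1/G_{ii}^{(T)}=O_\prec(1)$, the implication ``$\Longrightarrow$'' in Lemma \ref{lem: exp prec} then upgrades this to $\E|1/G_{ii}^{(T)}|^n\prec 1$, which is exactly the claimed estimate.
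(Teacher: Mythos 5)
Your proof is correct and follows essentially the same route as the paper: iterate \eqref{RI1} to control minor Green functions, keep the diagonal bounded away from zero via Proposition \ref{prop1} and $|m|\asymp 1$, use $\eta \geq N^{-1+\tau}$ for the deterministic $O(N)$ bound, and deduce \eqref{rough bound on 1/G} from a crude Schur-formula moment estimate combined with the already-established $1/G_{ii}^{(T)} \prec 1$ via Lemma \ref{lem: exp prec}. The subtlety you flag---that $1/G_{ii}^{(T)} = O_\prec(1)$ alone gives only $|G_{ii}^{(T)}| \geq N^{-\epsilon}$, so one must propagate an actual closeness to $m$ through the induction---is real, and is also what the paper's terse proof does implicitly by invoking $\Lambda \prec N^{-c}$ together with \eqref{mupperlowerbounded}.
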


\begin{proof}
The bounds \eqref{basic bounds on GT} follow easily by a repeated application of \eqref{RI1}, the estimate $\Lambda \prec N^{-c}$ from Proposition \ref{prop1}, and the lower bound in \eqref{mupperlowerbounded}. The deterministic bound
 $\absb{G_{ij}^{(T)}} \leq N$ follows immediately from $\eta \geq N^{-1}$ by definition of a spectral domain.

In order to prove \eqref{rough bound on 1/G}, we use Schur's complement formula \eqref{SPSF} applied to $1/ G_{ii}^{(T)}$, where the expectation is estimated using (iii) of Definition \ref{def:Wigner} and $\absb{G_{ij}^{(T)}} \leq N$. This gives
\begin{equation*}
\E \absBB{\frac{1}{G_{ii}^{(T)}}}^p \;\prec\; N^{C_p}
\end{equation*}
for all $p \in \N$. Since $1 / \abs{G_{ii}^{(T)}} \prec 1$, \eqref{rough bound on 1/G} therefore follows from \eqref{eq:stoch_domination}.
\end{proof}

\begin{proof}[Proof of Proposition \ref{prop16}]
First we claim that, for any fixed $\ell \in \N$, we have
\begin{equation} \label{Xi estimate}
\absbb{Q_k \frac{1}{G^{(T)}_{kk}}} \;\prec\; \Phi
\end{equation}
uniformly for $T \subset \{1, \dots, \N\}$, $\abs{T} \leq \ell$, and $k \notin T$. To simplify notation, for the proof we set $T = \emptyset$; the proof for nonempty $T$ is the same.
From Schur's complement formula \eqref{SPSF} we get $\abs{Q_k  (G_{kk})^{-1}} \leq \abs{H_{kk}} + \abs{Z_k}$.
The first term is estimated by $\abs{H_{kk}} \prec N^{-1/2} \leq \Phi$. The second term is estimated exactly as in \eqref{Z_i_est_split}:
\begin{equation*}
\abs{Z_k}^2 \;\prec\; \ff{N^2}\sum_{k}^{(i)}|G_{kk}^{(i)}|^2 + \sum_{k,l}^{(i)}\ff{N^2}|G_{kl}^{(i)}|^2 \;\prec\; N^{-1} + \Phi^2 \;\prec\; \Phi^2\,,
\end{equation*}
where in the second step we used Lemma \ref{lemma:B2}. This concludes the proof of \eqref{Xi estimate}.

Abbreviate $X_k \deq Q_k (G_{kk})^{-1}$. We shall estimate $N^{-1} \sum_{k} X_k$ in probability by estimating its $p$-th moment by $\Phi^{2p}$, from which the claim will easily follow using Markov's inequality. Before embarking on the estimate for arbitrary $p$, we illustrate its idea by estimating the variance 
\begin{equation} \label{variance of average of X}
\E \absbb{N^{-1} \sum_k X_k}^2 \;=\; N^{-2} \sum_{k,l} \E X_{k} \ol X_{l} \;=\; N^{-2} \sum_{k}  \E X_{k} \ol X_{k} + N^{-2} \sum_{k \neq l} \E X_{k} \ol X_{l}\,.
\end{equation}
Using Lemma \ref{lem: exp prec}, we find that the first term on the right-hand side of \eqref{variance of average of X} is $O_\prec(N^{-1} \Phi^2) = O_\prec(\Phi^4)$, where we used the estimate \eqref{admissible Psi}. Let us therefore focus on the second term of \eqref{variance of average of X}. Using the fact that $k \neq l$, we apply \eqref{RI1} to $X_{k}$ and $X_{l}$ to get
\begin{equation} \label{variance calculation}
\E X_{k} \ol X_{l} \;=\; \E Q_{k} \pbb{\frac{1}{G_{k k}}} Q_{l} \ol{\pbb{\frac{1}{G_{l l}}}} \;=\;
\E Q_{k} \pbb{\frac{1}{G_{kk}^{(l)}} - \frac{G_{kl} G_{lk}}{G_{kk} G_{kk}^{(l)} G_{ll}}} Q_{l} \ol{\pbb{\frac{1}{G_{ll}^{(k)}} - \frac{G_{lk} G_{kl}}{G_{ll} G_{ll}^{(k)} G_{kk}}}}\,.
\end{equation}
We multiply out the parentheses on the right-hand side. The crucial observation is that if the random variable $Y$ is $H^{(i)}$-measurable then \begin{equation}\label{eq:indepprojzero}\E Q_i(X) Y = \E Q_i(XY) = 0.\end{equation} Hence out of the four terms obtained from the right-hand side of \eqref{variance calculation}, the only nonvanishing one is
\begin{equation*}
\E Q_{k} \pbb{\frac{G_{kl} G_{lk}}{G_{kk} G_{kk}^{(l)} G_{ll}}} Q_{l} \ol{\pbb{\frac{G_{lk} G_{kl}}{G_{ll} G_{ll}^{(k)} G_{kk}}}} \;\prec\; \Phi^4\,.
\end{equation*}
This concludes the proof of $\E \absb{N^{-1} \sum_{k}X_k}^2 \prec \Phi^4$.

After this pedagogical interlude we move on to the full proof. Fix some even integer $p$ and write
\begin{equation*}
\E \absbb{\frac{1}{N} \sum_k X_k}^p \;=\; \frac{1}{N^p} \sum_{k_1, \dots, k_p}  \E X_{k_1} \cdots X_{k_{p/2}} \ol X_{k_{p/2 + 1}} \cdots \ol X_{k_p}\,.
\end{equation*}
Next, we regroup the terms in the sum over $\f k \deq (k_1, \dots, k_p)$ according to the partition of $\{1, \dots, p\}$ generated by the indices $\f k$. To that end, let $\fra P_p$ denote the set of partitions of $\{1, \dots, p\}$, and $\cal P(\f k)$ the element of $\fra P_p$ defined by the equivalence relation $r \sim s$ if and only if $k_r = k_s$.
In short, we reorganize the summation according to coincidences among the indices $\f k$. Then we write
\begin{equation} \label{Xp expanded}
\E \absbb{\frac{1}{N}\sum_k X_k}^p \;=\; \sum_{P \in \fra P_p} \frac{1}{N^p} \sum_{\f k} \ind{\cal P(\f k) = P} V(\f k) \,,
\end{equation}
where we defined
\begin{equation*}
V(\f k) \;\deq\; \E X_{k_1} \cdots X_{k_{p/2}} \ol X_{k_{p/2 + 1}} \cdots \ol X_{k_p}\,.
\end{equation*}
Fix $\f k$ and set $P \deq \cal P(\f k)$ to be the partition induced by the coincidences in $\f k$.
For any $r\in  \{1, \dots, p\}$, we denote by  $[r]$ the block of $r$ in $P$. 
Let $L \equiv L(P) \deq \h{r \col [r] = \{r\}} \subset \{1, \dots, p\}$ be the set of ``lone'' labels.
We denote by $\f k_L \deq (k_r)_{r \in L}$ the summation indices associated with lone labels.

The Green function entry $G_{kk}$ depends strongly on the randomness in the $k$-column of $H$, but only weakly on the randomness in the other columns.
We conclude that if $r$ is a lone label then all factors $X_{k_s}$ with $s \neq r$
in $V(\f k)$ depend weakly on the randomness in the $k_r$-th column of $H$. Thus, the idea is to make all Green function entries inside the expectation of $V(\f k)$ as independent as possible of the randomness in the rows of $H$ indexed by $\f k_L$ (see Definition \ref{definition: P Q}),
using the identity \eqref{RI1}. To that end, we say that a Green function entry $G_{xy}^{(T)}$ with $x,y \notin T$ is \emph{maximally expanded} if $\f k_L \subset T \cup \{x,y\}$. 
The motivation behind this definition is that using \eqref{RI1} we cannot add upper indices from the set $\f k_L$ to a maximally expanded Green function entry. We shall apply
 \eqref{RI1} to all Green function entries in $V(\f k)$.
In this manner we generate a sum of monomials consisting of off-diagonal Green function entries
and inverses of diagonal Green function entries. We can now repeatedly apply
 \eqref{RI1} to each factor
 until either they are all maximally expanded or a sufficiently large 
 number of off-diagonal Green function entries has been generated. 
 The cap on the number of off-diagonal entries is introduced to 
ensure that this procedure terminates after a finite number of steps.

In order to define the precise algorithm, let $\cal A$ denote the set of monomials in the off-diagonal entries $G_{xy}^{(T)}$, with $T \subset \f k_L$, $x \neq y$, and $x,y \in \f k \setminus T$, as well as the inverse diagonal entries $1 / G_{xx}^{(T)}$, with $T \subset \f k_L$ and $x \in \f k \setminus T$. Starting from $V(\f k)$, the algorithm will recursively generate sums of monomials in $\cal A$.
Let $d(A)$ denote the number of off-diagonal entries in $A \in \cal A$. For $A \in \cal A$ we shall define $w_0(A), w_1(A) \in \cal A$ satisfying
\begin{equation} \label{properties of w}
A \;=\; w_0(A) + w_1(A) \,, \qquad d(w_0(A)) \;=\; d(A) \,, \qquad d(w_1(A)) \;\geq\; \max \hb{2, d(A) + 1}\,.
\end{equation}
The idea behind this splitting is to use \eqref{RI1} on one entry of $A$; the first term on the right-hand side of \eqref{RI1} gives rise to $w_0(A)$ and the second to $w_1(A)$. The precise definition of the algorithm applied to $A \in \cal A$ is as follows.

\begin{itemize}
\item[(1)]
If all factors of $A$ are maximally expanded or $d(A) \geq p + 1$ then 
stop the expansion of $A$. In other words, the algorithm cannot be applied to $A$ in the future.
\item[(2)]
Otherwise choose some (arbitrary) factor of $A$ that is not maximally expanded. If this entry is off-diagonal, $G^{(T)}_{xy}$, write
\begin{equation} \label{splitting off-diag}
G^{(T)}_{xy} \;=\; G_{xy}^{(T u)} + \frac{G_{xu}^{(T)} G_{uy}^{(T)}}{G_{uu}^{(T)}}
\end{equation}
for the smallest $u \in \f k_L \setminus (T \cup \{x,y\})$. If the chosen entry is diagonal, $1/ G^{(T)}_{xx}$, write
\begin{equation} \label{splitting diag}
\frac{1}{G_{xx}^{(T)}} \;=\; \frac{1}{G_{xx}^{(Tu)}} - \frac{G_{xu}^{(T)} G_{ux}^{(T)}}{G_{xx}^{(T)} G_{xx}^{(Tu)} G_{uu}^{(T)}}
\end{equation}
for the smallest $u \in \f k_L \setminus (T \cup \{x\})$. Then the splitting $A = w_0(A) + w_1(A)$ is defined by the splitting induced by \eqref{splitting off-diag} or \eqref{splitting diag}, in the sense that we replace the factor $G^{(T)}_{xy}$ or $1/G_{xx}^{(T)}$ in the monomial $A$ by the right-hand sides of  \eqref{splitting off-diag} or \eqref{splitting diag}.
\end{itemize}
(This algorithm contains some arbitrariness in the choice of the factor of $A$ to be expanded. It may be removed for instance by first fixing some ordering  of all Green function entries $G_{ij}^{(T)}$. Then in (2) we choose the first factor of $A$ that is not maximally expanded.) Note that \eqref{splitting off-diag} and \eqref{splitting diag} follow from \eqref{RI1}. It is clear that \eqref{properties of w} holds with the algorithm just defined.

We now apply this algorithm recursively to each entry $A^{r} \deq 1 / G_{k_r k_r}$ 
in the definition of $V(\f k)$. More precisely, we start with $A^r$ and define $A_{0}^r \deq w_0(A^r)$ and $A_{1}^r \deq w_1(A^r)$. In the second step of the algorithm we define four monomials
$$
 A_{00}^r \;\deq\; w_0(A_0^r)\,, \qquad A_{01}^r \;\deq\; w_0(A_1^r)\,, \qquad  
A_{10}^r \;\deq\; w_1(A_0^r)\,, \qquad A_{11}^r \;\deq\; w_1(A_1^r)\,,
$$
and so on, at each iteration performing the steps (1) and (2) on each new monomial independently of the others. 
Note that the lower indices are binary sequences that describe the recursive application of the
operations $w_0$ and $w_1$.
In this manner we generate a binary tree whose vertices are given by finite binary strings $\sigma$. 
The associated monomials satisfy $A_{\sigma i}^r \deq w_i(A_\sigma^r)$ for $i = 0,1$,
where $\sigma i$ denotes the binary string obtained by appending $i$ to the right end of $\sigma$. See Figure \ref{fig: tree} for an illustration of the tree.

\begin{figure}[ht!]
\begin{center}
\includegraphics{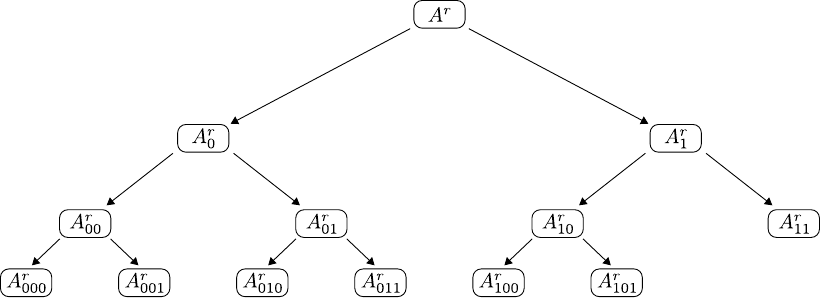}
\end{center}
\caption{The binary tree generated by applying the algorithm (1)--(2) to a monomial $A^r$. Each vertex of the tree is indexed by a binary string $\sigma$, and encodes a monomial $A^r_\sigma$. An arrow towards the left represents the action of $w_0$ and an arrow towards the right the action of $w_1$. The monomial $A_{11}^r$ satisfies the assumptions of step (1), and hence its expansion is stopped, so that the tree vertex $11$ has no children. \label{fig: tree}}
\end{figure}

We stop the recursion of a tree vertex whenever the associated monomial satisfies the stopping rule of step (1). In other words, the set of leaves of the tree is the set of binary strings $\sigma$ such that either all factors of $A^r_\sigma$ are maximally expanded or $d(A^r_\sigma) \geq p + 1$. We claim that the resulting binary tree is finite, i.e.\ that the algorithm always reaches step (1) after a finite number of iterations. Indeed, by the stopping rule in (1), we have $d(A^r_\sigma) \leq p + 1$ for any vertex $\sigma$ of the tree. Since each application of $w_1$ increases $d(\cdot)$ by at least one, and in the first step (i.e.\ when applied to $A^r$) by two,
we conclude that the number of ones in any $\sigma$ is at most $p$. Since each application of $w_1$ increases the number of Green function entries by at most four, and the application of $w_0$ does not change this number,
we find that the number of Green function entries in $A^r_\sigma$ is bounded by $4 p + 1$. Hence the maximal number of
 upper indices in $A^r_\sigma$ for any tree vertex $\sigma$ is $(4p+1)p$.
Since each application of $w_0$ increases the total number of upper indices by one, we find that $\sigma$ contains at most $(4p+1)p$  zeros. We conclude that the maximal length of the string $\sigma$ (i.e.\ the depth of the tree) is at most $(4p +1)p+ p = 4p^2+2p$. 
A string $\sigma$ encoding a tree vertex contains at most $p$ ones. Denoting by $k$ the number of ones in a string encoding a leaf of the tree, we find that the number of leaves is bounded by $\sum_{k = 0}^p \binom{4p^2 + 2p}{k} 
\leq (Cp^2)^{p}$.
Therefore, denoting by $\cal L_r$ the set of leaves of the binary tree generated from $A^r$, 
we have $\abs{\cal L_r} \leq (Cp^2)^p$.

By definition of the tree and $w_0$ and $w_1$, we have the decomposition
\begin{equation} \label{X split}
X_{k_r} \;=\; Q_{k_r} \sum_{\sigma \in \cal L_r} A_\sigma^r\,.
\end{equation}
Moreover, each monomial $A_\sigma^r$ for $\sigma \in \cal L_r$ either consists entirely of maximally expanded Green function entries or satisfies $d(A_\sigma^r) = p + 1$. (This is an immediate consequence of the stopping rule in (1)).

Next, we observe that for any string $\sigma$ we have
\begin{equation} \label{A sigma size}
A_\sigma^k \;=\; O_\prec \pb{\Phi^{b(\sigma) + 1}}\,,
\end{equation}
where $b(\sigma)$ is the number ones in the string $\sigma$. Indeed, if $b(\sigma) = 0$ then this follows from \eqref{Xi estimate}; if $b(\sigma) \geq 1$ this follows from the last statement in \eqref{properties of w} and \eqref{basic bounds on GT}.

Using \eqref{Xp expanded} and \eqref{X split} we have the representation
\begin{equation} \label{V in terms of leaves}
V(\f k) \;=\; \sum_{\sigma_1 \in \cal L_1} \cdots \sum_{\sigma_p \in \cal L_p} \E \pb{Q_{k_1} A_{\sigma_1}^1} \cdots \pb{Q_{k_p} \ol{A_{\sigma_p}^p}}\,.
\end{equation}

We now claim that any nonzero term on the right-hand side of \eqref{V in terms of leaves} satisfies
\begin{equation} \label{estimate of integrand}
\pb{Q_{k_1} A_{\sigma_1}^1} \cdots \pb{Q_{k_p} \ol{A_{\sigma_p}^p}} \;=\; O_\prec \pb{\Phi^{p + \abs{L}}}\,.
\end{equation}

\begin{proof}[Proof of \eqref{estimate of integrand}]
Before embarking on the proof, we explain its idea.
By \eqref{A sigma size}, the naive size of the left-hand side of \eqref{estimate of integrand} is $\Phi^p$. The key observation is that each
lone label $s\in L$ yields one extra factor $\Phi$ to the estimate. This is because by \eqre{eq:indepprojzero},  the expectation 
in \eqref{V in terms of leaves} would vanish if all other factors  $\pb{Q_{k_r} A_{\sigma_r}^r}$, $r\ne s$,
were $H^{(k_s)}$-measurable. The expansion of the binary tree makes this dependence explicit
by exhibiting $k_s$ as a lower index. But this requires performing an operation $w_1$
with the choice $u=k_s$ in \eqref{splitting off-diag} or \eqref{splitting diag}.
However, $w_1$ increases the number of off-diagonal element by at least one. 
In other words, every index associated with a lone label must have a ``partner'' index in a different Green function entry which arose by application of $w_1$.
Such a partner index may only be obtained through the creation of at least one off-diagonal Green function entry. The actual proof
below shows that this effect applies \emph{cumulatively} for all lone labels.

In order to prove \eqref{estimate of integrand}, we consider two cases. Consider first the case where for some $r = 1, \dots, p$ the monomial $A_{\sigma_r}^r$ on the left-hand side of \eqref{estimate of integrand} is not maximally expanded. Then $d(A_{\sigma_r}^r) = p + 1$, so that \eqref{basic bounds on GT} yields $A_{\sigma_r}^r \prec \Phi^{p + 1}$. Therefore the observation that $A_{\sigma_s}^s \prec \Phi$ for all $s \neq r$, together with \eqref{stoch_dom_P} implies that the left-hand side of \eqref{estimate of integrand} is $O_\prec\pb{\Phi^{2p}}$. Since $\abs{L} \leq p$, \eqref{estimate of integrand} follows.

Consider now the case where $A_{\sigma_r}^r$ on the left-hand side of \eqref{estimate of integrand} is maximally expanded for all $r = 1, \dots, p$. The key observation is the following claim about the left-hand side of \eqref{estimate of integrand} with a nonzero expectation.
\begin{itemize}
\item[$(*)$]
For each $s \in L$ there exists $r = \tau(s) \in \{1, \dots, p\} \setminus \{s\}$ such that the monomial $A_{\sigma_r}^r$ contains a Green function entry with lower index $k_s$.
\end{itemize}
In other words,  after expansion, the lone label $s$ has a ``partner'' label $r = \tau(s)$, such that the index
$k_s$ appears also in the expansion of $A^r$ (note that there may be several such partner labels $r$). 
To prove $(*)$, suppose by contradiction that there exists an $s \in L$ such that for all $r \in \{1, \dots, p\} \setminus \{s\}$ the lower index $k_s$ does not appear in the monomial $A_{\sigma_r}^r$. To simplify notation, we assume that $s = 1$. Then, for all $r = 2, \dots, p$, since $A_{\sigma_r}^r$ is maximally expanded, we find that $A_{\sigma_r}^r$ is $H^{(k_1)}$-measurable.
Therefore we have
\begin{equation*}
\E \pb{Q_{k_1} A_{\sigma_1}^1} \pb{Q_{k_2} A_{\sigma_2}^2} \cdots \pb{Q_{k_p} \ol{A_{\sigma_p}^p}} \;=\; \E Q_{k_1} \pB{A_{\sigma_1}^1 \pb{Q_{k_2} A_{\sigma_2}^2} \cdots \pb{Q_{k_p} \ol{A_{\sigma_p}^p}}} \;=\; 0\,,
\end{equation*}
where in the last step we used \eqre{eq:indepprojzero}.
This concludes the proof of $(*)$.

For $r \in \{1, \dots, p\}$ we define $l(r) \deq \sum_{s \in L} \ind{\tau(s) = r}$, the number of times that the label $r$ was chosen as a partner
to some lone label $s$. We now claim that 
\begin{equation} \label{key linking estimate}
A_{\sigma_r}^r \;=\; O_\prec \pb{\Phi^{1 + l(r)}}\,.
\end{equation}
To prove \eqref{key linking estimate}, fix $r \in \{1, \dots, p\}$. By definition, for each $s \in \tau^{-1}(\{r\})$ the index $k_s$ appears as a lower index in the monomial $A_{\sigma_r}^r$. Since $s \in L$ is by definition a lone label
and $s \neq r$, we know that $k_s$ does not appear as an index in $A^r$. By definition of the monomials associated with the tree vertex $\sigma_r$, it follows that $b(\sigma_r)$, the number of ones in $\sigma_r$, is at least $\absb{\tau^{-1}(\{r\})} = l(r)$
since each application of $w_1$ adds precisely one new (lower) index.
Note that in this step it is crucial that $s \in \tau^{-1}(\{r\})$ was a lone label. Recalling \eqref{A sigma size}, we therefore get \eqref{key linking estimate}.

Using \eqref{key linking estimate} and Lemma \ref{lem: exp prec} we find
\begin{equation*}
\absB{\pb{Q_{k_1} A_{\sigma_1}^1} \cdots \pb{Q_{k_p} \ol{A_{\sigma_p}^p}}} \;\prec\; \prod_{r = 1}^p \Phi^{1 + l(r)} \;=\; \Phi^{p + \abs{L}}\,.
\end{equation*}
This concludes the proof of \eqref{estimate of integrand}.
\end{proof}

Summing over the binary trees in \eqref{V in terms of leaves} and using Lemma \ref{lem: exp prec}, we get from \eqref{estimate of integrand}
\begin{equation} \label{bound for Vk}
V(\f k) \;=\; O_\prec\pb{\Phi^{p + \abs{L}}}\,.
\end{equation}
We now return to the sum \eqref{Xp expanded}. We perform the summation by first fixing $P \in \fra P_p$, with associated lone labels $L = L(P)$. We find
\begin{equation*}
\absbb{\frac{1}{N^p} \sum_{\f k} \ind{\cal P(\f k) = P}} \;\leq\;   (N^{-1})^{p-|P|} \;\leq\; (N^{-1/2})^{p - \abs{L}}\,;
\end{equation*}
in the first step we used that the summation is performed over $\abs{P}$ free indices, the remaining $p - \abs{P}$ being estimated by $N^{-1}$; in the second step we used that  
each block of $P$ that is not contained in $L$ consists of at least two labels, so that $p - \abs{P} \geq (p-\abs{L})/2$.
From \eqref{Xp expanded} and \eqref{bound for Vk} we get
\begin{equation*}
\E \absbb{\frac{1}{N} \sum_k X_k}^p \;\prec\; \sum_{P \in \fra P_p} (N^{-1/2})^{p - \abs{L(P)}} \, \Phi^{p + \abs{L(P)}}  \;\leq\; C_p \Phi^{2p}\,,
\end{equation*}
where in the last step we used the lower bound from \eqref{admissible Psi} and estimated the summation over $\fra P_p$ with a constant $C_p$ (which is bounded by $(Cp^2)^p$). Summarizing, we have proved that
\begin{equation} \label{final moment estimate}
\E \absbb{\frac{1}{N} \sum_k X_k}^p \;\prec\; \Phi^{2p}
\end{equation}
for any $p \in 2 \N$.

We conclude the proof of Proposition \ref{prop16} with a simple application of 
Markov's inequality. Fix $\epsilon > 0$ and $D > 0$. Using \eqref{final moment estimate} and Markov's inequality we find
\begin{equation*}
\P \pbb{\absbb{\frac{1}{N} \sum_k X_k} > N^\epsilon \Phi^2} \;\leq\; N \, N^{-\epsilon p}
\end{equation*}
for large enough $N \geq N_0(\epsilon, p)$. Choosing $p \geq \epsilon^{-1} (1 + D)$ concludes
 the proof of Proposition \ref{prop16}.
\end{proof}

We conclude this section with an alternative proof of Proposition \ref{prop16}. While the underlying argument remains similar, the following proof makes use of an additional decomposition of the space of random variables, which avoids the use of the stopping rule from Step (1) in the above proof of Proposition \ref{prop16}. This decomposition may be regarded as an abstract reformulation of the stopping rule.

\begin{proof}[Alternative proof of Proposition \ref{prop16}]
As before, we set $X_k \deq Q_k (G_{kk})^{-1}$. The decomposition is defined using the operations $P_i$ and $Q_i$, introduced in Definition \ref{definition: P Q}. It is immediate that $P_i$ and $Q_i$ are projections, that $P_i + Q_i = 1$, and that all of these projections commute with each other (by Fubini's theorem). For a set $A \subset \{1, \dots, N\}$ we use the notations $P_A \deq \prod_{i \in A} P_i$ and $Q_A \deq \prod_{i \in A} Q_i$.

Let $p$ be even and introduce the shorthand $\wt X_{k_s} \deq X_{k_s}$ for $s \leq p/2$ and $\wt X_{k_s} \deq \ol X_{k_s}$ for $s > p/2$. Then we get
\begin{equation*}
\E \absbb{\frac{1}{N} \sum_k X_k}^{p}
\;=\; \frac1{N^p}  \sum_{k_1, \dots, k_p}  \E \prod_{s = 1}^p \wt X_{k_s} \;=\;
\frac1{N^p}  \sum_{k_1, \dots, k_p}  \E \prod_{s = 1}^p \pBB{\prod_{r = 1}^p (P_{k_r} + Q_{k_r})\wt X_{k_s}}\,.
\end{equation*}
Introducing the notations $\f k = (k_1, \dots, k_p)$ and $[\f k] = \{k_1, \dots, k_p\}$, we therefore get by multiplying out the parentheses
\begin{equation} \label{Zp expanded}
\E \absbb{\frac{1}{N} \sum_k X_k}^{p} \;=\;
\frac1{N^p}  \sum_{\f k} \sum_{A_1, \dots, A_p \subset [\f k]}  \E \prod_{s = 1}^p \pb{P_{A_s^c} Q_{A_s} \wt X_{k_s}}\,.
\end{equation}

Next, by definition of $\wt X_{k_s}$, we have that $\wt X_{k_s} = Q_{k_s} \wt X_{k_s}$, which implies that $P_{A^c_s} \wt X_{k_s} = 0$ if $k_s \notin A_s$. Hence may restrict the summation to $A_s$ satisfying
\begin{equation} \label{C cond 1}
k_s \;\in\; A_s
\end{equation}
for all $s$. Moreover, we claim that the right-hand side of \eqref{Zp expanded} vanishes unless
\begin{equation} \label{C cond 2}
k_s \;\in\; \bigcup_{q \neq s} A_{q}
\end{equation}
for all $s$. Indeed, suppose that $k_s \in \bigcap_{q \neq s} A_{q}^c$ for some $s$, say $s = 1$. In this case, for each $s = 2, \dots, p$, the factor $P_{A_s^c} Q_{A_s} \wt X_{k_s}$ is $H^{(k_1)}$-measurable. Thus we get
\begin{multline*}
\E \prod_{s = 1}^p \pb{P_{A_s^c} Q_{A_s} \wt X_{k_s}} \;=\; \E \pb{P_{A_1^c} Q_{A_1} Q_{k_1} \wt X_{k_1}} \prod_{s = 2}^p \pb{P_{A_s^c} Q_{A_s} \wt X_{k_s}}
\\
=\; \E Q_{k_1} \pbb{\pb{P_{A_1^c} Q_{A_1} \wt X_{k_1}} \prod_{s = 2}^p \pb{P_{A_s^c} Q_{A_s} \wt X_{k_s}}} \;=\; 0\,,
\end{multline*}
where in the last step we used that $\E Q_i(X) = 0$ for any $i$ and any random variable $X$.

We conclude that the summation on the right-hand side of \eqref{Zp expanded} is restricted to indices satisfying \eqref{C cond 1} and \eqref{C cond 2}. Under these two conditions we have
\begin{equation} \label{sum size A}
\sum_{s = 1}^p \abs{A_s} \;\geq\; 2 \, \abs{[\f k]}\,,
\end{equation}
since each index $k_s$ must belong to at least two different sets $A_q$: to $A_s$ (by \eqref{C cond 1}) as well as to some $A_q$ with $q \neq s$ (by \eqref{C cond 2}).

Next, we claim that for $k \in A$ we have
\begin{equation} \label{claim on size of QA}
\abs{Q_A X_k } \;\prec\; \Phi^{\abs{A}}\,.
\end{equation}
Before proving \eqref{claim on size of QA}, we show it may be used to complete the proof. Using \eqref{Zp expanded}, \eqref{claim on size of QA}, and Lemma \ref{lem: exp prec}, we find
\begin{multline*}
\E \absbb{\frac{1}{N} \sum_k X_k}^{p} \;\prec\; C_p \frac{1}{N^p} \sum_{\f k} \Phi^{2 \abs{[k]}} \;=\; C_p \sum_{u = 1}^p \Phi^{2u} \frac{1}{N^p} \sum_{\f k} \ind{\abs{[\f k]} = u}
\\
\leq\; C_p \sum_{u = 1}^p \Phi^{2u} N^{u - p} \;\leq\; C_p (\Phi + N^{-1/2})^{2p} \;\leq\; C_p \Phi^{2p}\,,
\end{multline*}
where in the first step we estimated the summation over the sets $A_1, \dots, A_p$ by a combinatorial factor $C_p$ depending on $p$, in the fourth step we used the elementary inequality $a^n b^m \leq (a + b)^{n + m}$ for positive $a,b$, and in the last step we used \eqref{admissible Psi}. Thus we have proved \eqref{final moment estimate}, from which the claim follows exactly as in the first proof of Proposition \ref{prop16}.

What remains is the proof of \eqref{claim on size of QA}. The case $\abs{A} = 1$ (corresponding to $A = \{k\}$) follows from \eqref{Xi estimate}, exactly as in the first proof of Proposition \ref{prop16}. To simplify notation, for the case $\abs{A} \geq 2$ we assume that $k = 1$ and $A = \{1, 2, \dots, t\}$ with $t \geq 2$. It suffices to prove that
\begin{equation} \label{claim for QA 1/G}
\absbb{Q_t \cdots Q_2 \frac{1}{G_{11}}} \;\prec\; \Phi^{t}\,.
\end{equation}
We start by writing, using \eqref{RI1},
\begin{equation*}
Q_2 \frac{1}{G_{11}} \;=\; Q_2 \frac{1}{G_{11}^{(2)}} - Q_2 \frac{G_{12} G_{21}}{G_{11} G_{11}^{(2)} G_{22}} \;=\; Q_2 \frac{G_{12} G_{21}}{G_{11} G_{11}^{(2)} G_{22}}\,,
\end{equation*}
where the first term vanishes since $G_{11}^{(2)}$ is $H^{(2)}$-measurable. We now consider
\begin{equation*}
Q_3 Q_2 \frac{1}{G_{11}} \;=\; Q_2 Q_3 \frac{G_{12} G_{21}}{G_{11} G_{11}^{(2)} G_{22}}\,,
\end{equation*}
and apply \eqref{RI1} with $k = 3$ to each Green function entry on the right-hand side, and multiply everything out. The result is a sum of fractions of entries of $G$, whereby all entries in the numerator are off-diagonal and all entries in the denominator are diagonal. The leading order term vanishes,
\begin{equation*}
Q_2 Q_3 \frac{G_{12}^{(3)} G_{21}^{(3)}}{G_{11}^{(3)} G_{11}^{(23)} G_{22}^{(3)}} \;=\; 0\,,
\end{equation*}
so that the surviving terms have at least three (off-diagonal) Green function entries in the numerator.
We may now continue in this manner; at each step the number of (off-diagonal) Green function entries in the numerator increases by at least one.

More formally, we obtain a sequence $A_2, A_3, \dots, A_t$, where $A_2 \deq Q_2 \frac{G_{12} G_{21}}{G_{11} G_{11}^{(2)} G_{22}}$ and $A_{i}$ is obtained by applying \eqref{RI1} with $k = i$ to each entry of $Q_i A_{i - 1}$, and keeping only the nonvanishing terms. The following properties are easy to check by induction.
\begin{enumerate}
\item
$A_i = Q_i A_{i - 1}$.
\item
$A_i$ consists of the projection $Q_2 \cdots Q_i$ applied to a sum of fractions such that all entries in the numerator are off-diagonal and all entries in the denominator are diagonal.
\item
The number of (off-diagonal) entries in the numerator of each term of $A_i$ is at least $i$.
\end{enumerate}
By Lemma \ref{lem: exp prec} combined with (ii) and (iii) we conclude that $\abs{A_i} \prec \Phi^i$. From (i) we therefore get
\begin{equation*}
Q_t \cdots Q_2 \frac{1}{G_{11}} \;=\; A_t \;=\; O_\prec(\Phi^t)\,.
\end{equation*}
This is \eqref{claim for QA 1/G}. Hence the proof is complete.
\end{proof}

\section{Semicircle law on small scales: proof of Theorem \ref{thm:llsc}}\la{sec:local_law_small_scales}
We define the signed measure $\hat \mu$ and its Stieltjes transform $\hat s$ through
\begin{equation}
\hat \mu \;\deq\; \mu - \varrho \,, \qquad \hat s(z) \;\deq\; \int \frac{\hat \mu(\dd x)}{x - z} \;=\; s(z) - m(z)\,.
\end{equation}
The basic idea behind the proof of Theorem \ref{thm:llsc} is to estimate $\hat \mu(I)$ using the Helffer-Sj\"ostrand formula from Appendix \ref{sec:HS} in terms of its Stieltjes transfrom, $\hat s$, which is controlled by Theorem \ref{Th3}.

To that end, fix $\epsilon > 0$ and define $\eta \deq N^{-1 + \epsilon}$. Then for any interval $I \subset [-3,3]$ we choose a smoothed indicator function $f \equiv f_{I,\eta} \in \cal C^\infty_c(\R;[0,1])$ satisfying $f(x) = 1$ for $x \in I$, $f(x) = 0$ for $\dist(x,I) \geq \eta$, $\norm{f'}_\infty \leq C \eta^{-1}$, and $\norm{f''}_\infty \leq C \eta^{-2}$. Note that the supports of $f'$ and $f''$ have Lebesgue measure at most $2 \eta$, which we tacitly use from now on. Next, choose a smooth, even, cutoff function $\chi \in \cal C^\infty_c(\R;[0,1])$ satisfying $\chi(y) = 1$ for $\abs{y} \leq 1$, $\chi(y) = 0$ for $\abs{y} \geq 2$, and $\norm{\chi'}_\infty \leq C$.

Now using the Helffer-Sj\"ostrand formula from Proposition \ref{prop:HS} with $n = 1$, we get
\begin{equation*}
\int f(\lambda) \, \hat \mu(\dd \lambda) \;=\; \frac{1}{2\pi} \int \dd x \int \dd y \, (\partial_x + \ii \partial_y) \qb{\pb{f(x) + \ii y f'(x)} \chi(y)} \, \hat s(x + \ii y)\,.
\end{equation*}
Since the left-hand side is real, we obtain
\begin{align}
\int f(\lambda) \, \hat \mu(\dd \lambda) &\;=\; -\frac{1}{2\pi} \int \dd x \int_{\abs{y} \leq \eta} \dd y \, f''(x) \chi(y) y \im \hat s(x + \ii y)
\label{HS_error1} \\
&\qquad
-\frac{1}{2\pi} \int \dd x \int_{\abs{y} > \eta} \dd y \, f''(x) \chi(y) y \im \hat s(x + \ii y)
\label{HS_error2} \\ \label{HS_error3}
&\qquad
+ \frac{\ii}{2\pi} \int \dd x \int \dd y \, \pb{f(x) + \ii y f'(x)} \chi'(y) \hat s(x + \ii y)\,.
\end{align}
Note the crucial cancellation of the terms proportional to $f'(x) \chi(y)$.

We shall estimate the three error terms \eqref{HS_error1}--\eqref{HS_error3} by using the estimate \eqref{121414h53} from the local semicircle law, Theorem \ref{Th3}. In fact, \eqref{121414h53} only holds at each $z \in \f S$ individually, and in order to apply it to the integrals in \eqref{HS_error1}--\eqref{HS_error3} we need a \emph{simultaneous} estimate for all $z \in \f S$. This extension is the content of the following simple lemma, which follows from Theorem \ref{Th3} using the argument in Remark \ref{rem:Lipschitz} and the trivial identity $\hat s(x - \ii y) = \ol{\hat s(x + \ii y)}$.

\begin{lem} \label{lem:s_hat_est}
For any fixed $\epsilon > 0$, we have with high probability $\abs{\hat s(x + \ii y)} \leq N^\epsilon / (N \abs{y})$ for $\abs{x} \leq \epsilon^{-1}$ and $\abs{y} \in [\eta, \epsilon^{-1}]$.
\end{lem}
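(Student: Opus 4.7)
The plan is to upgrade the pointwise estimate \eqref{121414h53} of Theorem \ref{Th3} to a simultaneous one via the standard net-plus-Lipschitz argument already sketched in Remark \ref{rem:Lipschitz}, and then to handle the case $y < 0$ by reflection. First I would choose $\tau \deq \epsilon$ in the definition \eqref{def_S}, so that $\f S = \f S_N(\tau)$ contains the rectangle $R_+ \deq \{x+\ii y \col \abs{x}\le \epsilon^{-1},\, \eta\le y\le \epsilon^{-1}\}$ (recall that $\eta = N^{-1+\epsilon}$). Theorem \ref{Th3} then gives $\abs{\hat s(z)} \prec 1/(N\im z)$ for each individual $z \in R_+$.

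Next I would introduce the discrete set $\wh {\f S} \deq \f S \cap (N^{-3}\Z^2)$, which has cardinality at most $CN^6$. Applying the definition of $\prec$ with a sufficiently large exponent $D$ and taking a union bound over $\wh{\f S}$, I would conclude that with high probability the bound $\abs{\hat s(z')} \le N^{\epsilon/2}/(N\im z')$ holds \emph{simultaneously} for every $z' \in \wh{\f S}$.

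To pass from $\wh{\f S}$ to all of $R_+$, I would differentiate the Stieltjes transform: $\hat s'(z) = \int (x-z)^{-2}\hat\mu(\dd x)$, which gives $\abs{\hat s'(z)} \le 2/(\im z)^2 \le 2N^{2-2\epsilon}$ on $R_+$. Thus $\hat s$ is $N^2$-Lipschitz on $R_+$. For any $z \in R_+$ there is $z' \in \wh{\f S}$ with $\abs{z-z'}\le 2N^{-3}$, and since $\eta\ge N^{-1+\epsilon}\gg N^{-3}$ we also have $\im z' \ge (\im z)/2$ for large $N$. Combining these,
\begin{equation*}
\abs{\hat s(z)} \;\le\; \abs{\hat s(z')} + N^2\cdot 2N^{-3} \;\le\; \frac{2N^{\epsilon/2}}{N\im z} + 2N^{-1} \;\le\; \frac{N^\epsilon}{N\im z}\,,
\end{equation*}
which is the desired bound for $y>0$. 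The case $y<0$ follows immediately from the reflection identity $\hat s(x-\ii\abs{y}) = \overline{\hat s(x+\ii\abs{y})}$, valid because $\mu$ and $\varrho$ are real measures.

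I do not expect any genuine obstacle: the argument is a routine discretization, and the only point requiring a small amount of care is keeping track of the exponents so that the loss of $N^{\epsilon/2}$ from the union bound plus the additive $2N^{-1}$ from the Lipschitz step can be absorbed into the final $N^\epsilon$.
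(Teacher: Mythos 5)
Your proposal is correct and follows exactly the route the paper takes: the paper derives Lemma \ref{lem:s_hat_est} by citing the net-plus-Lipschitz argument of Remark \ref{rem:Lipschitz} together with the reflection identity $\hat s(x - \ii y) = \ol{\hat s(x + \ii y)}$, and your write-up spells out precisely these steps (a $\tau=\epsilon$ domain, an $N^{-3}$-net of size $O(N^6)$, a union bound, the bound $\abs{\hat s'(z)} \le 2/(\im z)^2$ to get Lipschitz continuity, and conjugation for $y<0$), with the bookkeeping carried out correctly.
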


Using Lemma \ref{lem:s_hat_est}, we may now estimate \eqref{HS_error1}--\eqref{HS_error3}. First, using that $\chi'$ is supported in $[-2,2] \setminus (-1,1)$, we easily find
\begin{equation} \label{HS_error3f}
\abs{\text{\eqref{HS_error3}}} \;\leq\; C \eta
\end{equation}
with high probability.

Next, we estimate \eqref{HS_error1}. Since $y$ can be arbitrarily small, Lemma \ref{lem:s_hat_est} does not apply. However, this can be easily remedied by the following monotonicity argument. For simplicity, we only deal with $y > 0$; the estimate for $y < 0$ is analogous. It is easy to see that the map $y \mapsto y \im s(x + \ii y)$ is nondecreasing for all $x$ (this is a general property of the Stieltjes transform of any positive measure). Hence, for $y \in (0,\eta)$ we have
\begin{equation*}
y \im \hat s(x + \ii y) \;\leq\; y \im s(x + \ii y) \;\leq\; \eta \im s(x + \ii \eta) \;\leq\; \eta (N^\epsilon / (N \eta) + C) \;\leq\; C \eta
\end{equation*}
with high probability, where in the third step we used Lemma \ref{lem:s_hat_est}. We conclude that
\begin{equation} \label{HS_error1f}
\abs{\text{\eqref{HS_error1}}} \;\leq\; \eta^{-1} \int_{\abs{y} \leq \eta} \dd y \, C \eta \;=\; C \eta
\end{equation}
with high probability.

What remains is \eqref{HS_error2}. Unlike in \eqref{HS_error1}, the second derivative on $f$ is not affordable (since it results in a factor $\eta^{-1}$ after integration over $x$). The solution is to integrate by parts in $x$ and then in $y$. Combined with $\hat s(x - \ii y) = \ol{\hat s(x + \ii y)}$, we therefore get
\begin{multline}\label{HS_error12}
\abs{\text{\eqref{HS_error1}}} \;\leq\; \int \dd x \int_\eta^\infty \dd y \, \absb{f'(x) \chi(y) \hat s(x + \ii y)}
\\
+
\int \dd x \int_\eta^\infty \dd y \, \absb{f'(x) y \chi'(y) \hat s(x + \ii y)}
+
\int \dd x \, \absb{f'(x) \eta \hat s(x + \ii \eta)}\,.
\end{multline}
As above, using Lemma \ref{lem:s_hat_est} we easily find that the second line of \eqref{HS_error12} is bounded by $C \eta$ with high probability. Moreover, the first term on the right-hand side of \eqref{HS_error12} is bounded by
\begin{equation*}
C \int_\eta^2 \dd y \, \frac{N^\epsilon}{N y} \;\leq\; C \eta \log N\,.
\end{equation*}
Recalling \eqref{HS_error3f} and \eqref{HS_error1f}, we have therefore proved that
\begin{equation} \label{HS_est_main}
\absbb{\int f(\lambda) \, \hat \mu(\dd \lambda)} \;\leq\; C N^{-1 + 2 \epsilon}
\end{equation}
with high probability.

In order to conclude the proof of Theorem \ref{thm:llsc}, we have to return from the smoothed indicator function $f$ to the sharp indicator function of $I$. To that end, we note that if $I \subset [-3,3]$ we have the upper bound
\begin{equation*}
\mu(I) \;\leq\; \int f_{I,\eta}(\lambda) \, \mu(\dd \lambda) \;\leq\; \int f_{I,\eta}(\lambda) \, \varrho(\dd \lambda) + O(N^{-1 + 2 \epsilon}) \;\leq\;
\varrho(I) + O(N^{-1 + 2 \epsilon})
\end{equation*}
with high probability, where in the second step we used \eqref{HS_est_main}, and in the third that the density of $\varrho$ is bounded (see \eqref{defscl}). Conversely, denoting by $I' \deq \h{x \in \R \col \dist(x, I^c) \geq \eta}$, we get the lower bound
\begin{equation*}
\mu(I) \;\geq\; \int f_{I',\eta}(\lambda) \, \mu(\dd \lambda) \;\geq\; \int f_{I',\eta}(\lambda) \, \varrho(\dd \lambda) + O(N^{-1 + 2 \epsilon}) \;\geq\;
\varrho(I) + O(N^{-1 + 2 \epsilon})
\end{equation*}
with high probability.
Since $\epsilon > 0$ was arbitrary, we conclude that for any $I \subset [-3,3]$ we have $\hat \mu(I) = O_\prec(N^{-1})$.

Finally, in order to extend the result to arbitrary $I \subset \R$, we note that we have proved that $\hat \mu([-2,2]) = O_\prec(N^{-1})$. Since $\mu$ is a probability measure and $\varrho([-2,2]) = 1$, we therefore deduce that $\mu(\R \setminus [-2,2]) \prec N^{-1}$. The claim of Theorem \ref{thm:llsc} now easily follows for arbitrary $I \subset \R$ by splitting
\begin{equation*}
\mu(I) \;=\; \mu(I \cap [-2,2]) + \mu(I \cap (\R \setminus [-2,2])) \;=\; \mu(I \cap [-2,2]) + O_\prec(N^{-1}) \;=\; \varrho(I) + O_\prec(N^{-1})\,,
\end{equation*}
where in the last step we used $\hat \mu(I) = O_\prec(N^{-1})$ for $I \subset [-2,2]$.
(Note that this estimate is in fact simultaneous for all $I \subset \R$, i.e.\ for any fixed $\epsilon > 0$ we have with high probability $\hat \mu(I) = O(N^{-1 + \epsilon})$ for all $I \subset \R$.) This concludes the proof of Theorem \ref{thm:llsc}.

\section{Eigenvalue rigidity: proof of Theorem \ref{thm:rig}}\la{sec:rig}
The first key input of the proof is the following estimate of the norm of $H$. (Note that the exponent of $N^{-2/3}$ is optimal.)

\begin{proposition} \label{prop:bound_H}
For a Wigner matrix $H$ we have $\norm{H} \leq 2 + O_\prec(N^{-2/3})$.
\end{proposition}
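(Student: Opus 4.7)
The plan is to combine the local law, Theorem~\ref{Th3}, with a contradiction argument. If an eigenvalue $\lambda_j$ of $H$ lay in $[2 + \kappa, \infty)$ with $\kappa \gg N^{-2/3}$, the spectral decomposition would force $\im s(z)$ at $z = \lambda_j + \ii \eta$ to be at least $1/(N\eta)$, and to close the argument we need to upper-bound $\im s(z)$ by something strictly smaller. The lower edge $-\lambda_N$ is handled symmetrically, since $-H$ is also a Wigner matrix, so I focus on the upper edge $\lambda_1$.

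As a preparatory input I would first establish a crude a priori bound $\|H\| \leq 3$ with very high probability. This does \emph{not} follow directly from Theorem~\ref{thm:llsc}: the bound $\mu([3,\infty)) \prec N^{-1}$ only gives $N\mu([3,\infty)) \leq N^{\epsilon}$, and since this is a nonnegative integer one cannot immediately conclude that it vanishes. Instead, I would invoke the classical moment method $\E\|H\|^{2p} \leq \E\tr H^{2p}$ with $p \sim \log N$, together with the standard combinatorics of closed walks for Wigner matrices, which yields $\P(\|H\| > 3) \leq N^{-D}$ for any $D > 0$. This ensures that $\lambda_1 + \ii \eta$ lies in the spectral domain $\f S$ with high probability for all reasonable $\eta$.

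With this a priori bound in hand, fix $\delta > 0$, set $\kappa \deq N^{-2/3 + \delta}$ and $\eta \deq N^{-2/3 + \delta/8}$ (so $\eta < \kappa$), and suppose for contradiction that $\lambda_1 \geq 2 + \kappa$. Setting $z \deq \lambda_1 + \ii \eta \in \f S$, Remark~\ref{rem:Lipschitz} allows us to apply Theorem~\ref{Th3} at this random point. The spectral decomposition yields the trivial lower bound
\begin{equation*}
\im s(z) \;=\; \frac{1}{N}\sum_i \frac{\eta}{(\lambda_i - \lambda_1)^2 + \eta^2} \;\geq\; \frac{1}{N\eta}\,,
\end{equation*}
hence $N\eta\,\im s(z) \geq 1$. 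Conversely, Lemma~\ref{lem7} (applied to $\lambda_1 - 2 \geq \kappa$) gives $\im m(z) \asymp \eta/\sqrt{(\lambda_1 - 2) + \eta}$, and Theorem~\ref{Th3} then yields $N\eta\,\im s(z) \leq CN\eta^2/\sqrt{\kappa + \eta} + O_\prec(1) = O(N^{-\delta/4}) + O_\prec(1)$. The deterministic part is $o(1)$, but the averaged error is merely $O_\prec(1)$, which is \emph{not} strictly less than $1$; no contradiction is obtained from this step alone.

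The main obstacle is thus to upgrade the averaged error $O_\prec(1/(N\eta))$ in Theorem~\ref{Th3} to something smaller whenever the real part of $z$ lies outside $[-2, 2]$. The resolution is a refined stability analysis of the perturbed quadratic equation \eqref{flav_qeq}: outside the spectrum the two roots $m, \tilde m$ of \eqref{m_id} are well separated, $|m - \tilde m| \asymp \sqrt{\kappa + \eta}$ (Lemma~\ref{lem13}), and this separation enters the denominator of \eqref{u-m_eq} to produce the sharpened estimate
\begin{equation*}
|s(z) - m(z)| \;\prec\; \frac{1}{N(\kappa + \eta)}\,,
\end{equation*}
valid for $z = E + \ii\eta$ with $E - 2 \geq \kappa > 0$ and $\eta \leq \kappa$. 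Substituting this sharper bound in place of $O_\prec(1/(N\eta))$ replaces the $O_\prec(1)$ in the contradiction argument by $O_\prec\bigl(\eta/(\kappa + \eta)\bigr) = O_\prec(N^{-7\delta/8})$, which is genuinely $o(1)$; combined with the deterministic $O(N^{-\delta/4})$ this violates $N\eta\,\im s(z) \geq 1$ and forces $\lambda_1 < 2 + \kappa$ on an event of probability at least $1 - N^{-D}$. Since $\delta, D > 0$ were arbitrary and the lower edge is symmetric, this completes the proof.
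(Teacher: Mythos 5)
Your argument is correct and takes essentially the same route as the paper's proof: establish a crude a priori bound $\|H\| \leq 3$ with high probability by a moment method (this is the F\"uredi--Koml\'os argument, Theorem~\ref{Th:Furedi_Komlos}), then use an improved local semicircle law outside the spectrum, driven by the separation $|m - \tilde m| \asymp \sqrt{\kappa + \eta}$, to contradict the trivial lower bound $\im s(\lambda_1 + \ii\eta) \geq 1/(N\eta)$ at a scale $\eta$ of order $N^{-2/3}$. One small imprecision worth flagging: your stated bound $|s-m| \prec 1/(N(\kappa+\eta))$ ``for all $\eta \leq \kappa$'' overclaims, since the paper's Lemma~\ref{lem:improved_ll} gives the two-term form $\frac{1}{\sqrt{\kappa+\eta}}\bigl(\frac{\im m}{N\eta} + \frac{1}{(N\eta)^2}\bigr)$, and the second term dominates your stated bound whenever $\eta$ is below $N^{-2/3}$ (getting your cleaner form at your scale would require one extra pass through the self-improving iteration). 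At your chosen $\eta = N^{-2/3+\delta/8}$, $\kappa = N^{-2/3+\delta}$, the paper's bound multiplied by $N\eta$ gives $O_\prec(N^{-5\delta/8})$ rather than your $O_\prec(N^{-7\delta/8})$, but this is still $o(1)$ and the contradiction goes through unchanged.
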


The main tool in the proof of Proposition \ref{prop:bound_H} is the following improved version of the local semicircle law \eqref{121414h53} outside of the spectrum.

\begin{lem}[Improved local semicircle law outside of the spectrum] \label{lem:improved_ll}
For a Wigner matrix $H$ we have
\begin{equation*}
s(z) \;=\; m(z) + O_\prec\pBB{\frac{1}{\sqrt{\kappa + \eta}} \pbb{\frac{\im m}{N \eta} + \frac{1}{(N \eta)^2}}}
\end{equation*}
uniformly for $z \in \f S$.
\end{lem}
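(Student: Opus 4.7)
The improvement over \eqref{121414h53} is not obtained via any new estimate on the Schur complement; rather, it comes from re-running the stability analysis of Section \ref{sec:opt_bounds} more carefully, without wastefully killing the factor $\sqrt{\kappa+\eta+|r|}$ in the denominator of Lemma \ref{lem13}. The key observation is that the final bound $\Theta\prec (N\eta)^{-1}$ already established by Theorem \ref{Th3} allows us to feed $\sigma=1$ into the self-consistent equation for $s$, and at this final stage of the iteration there is no longer any need to trade off $\sqrt{\kappa+\eta}$ for anything.

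Concretely, the plan proceeds as follows. First, invoke Theorem \ref{Th3} to obtain $\Theta\prec (N\eta)^{-1}$ uniformly on $\f S$; this gives us the input hypothesis $\Theta\prec (N\eta)^{-\sigma}$ with $\sigma=1$. Then, repeat verbatim the derivation of \eqref{flav_qeq} in Section \ref{sec:opt_bounds}, using Lemma \ref{lem17} together with Lemma \ref{lem11}, to obtain
\begin{equation*}
s^2+sz+1 \;=\; O_\prec(\Phi_1^2)\,, \qquad \Phi_1^2 \;=\; \frac{\im m}{N\eta}+\frac{1}{(N\eta)^2}\,.
\end{equation*}
Next, apply Lemma \ref{lem13} in the form \eqref{u-m_eq} with $r=O_\prec(\Phi_1^2)$ to get
\begin{equation*}
\min\hb{|s-m|\,,\,|s-\tilde m|} \;\prec\; \frac{\Phi_1^2}{\sqrt{\kappa+\eta+\Phi_1^2}}\,,
\end{equation*}
uniformly on $\f S$, and then reuse the deterministic continuity/branch-selection argument from Section \ref{sec:opt_bounds} (with the initial input $\eta=1$, where $\im\tilde m\leq -c$) to rule out the branch $\tilde m$. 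This yields
\begin{equation*}
\Theta \;\prec\; \frac{\Phi_1^2}{\sqrt{\kappa+\eta+\Phi_1^2}}\,.
\end{equation*}

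Finally, to reach the form stated in the lemma, apply the completely trivial monotonicity bound $\sqrt{\kappa+\eta+\Phi_1^2}\geq\sqrt{\kappa+\eta}$, which gives
\begin{equation*}
\Theta \;\prec\; \frac{\Phi_1^2}{\sqrt{\kappa+\eta}} \;=\; \frac{1}{\sqrt{\kappa+\eta}}\pbb{\frac{\im m}{N\eta}+\frac{1}{(N\eta)^2}}\,,
\end{equation*}
as required. This is precisely the step that the argument in Section \ref{sec:opt_bounds} had (deliberately) foregone by bounding $\Phi_\sigma^2/\sqrt{\kappa+\eta+\Phi_\sigma^2}\leq \Phi_\sigma$ in the second summand, and which the closing remark of that section signalled would be sharpened here.

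The only point of substance to verify is that the continuity argument goes through with $\sigma=1$. This is not an obstacle: the argument in Section \ref{sec:opt_bounds} produces a high-probability event $\Xi$, independent of $z$, on which the quadratic inequality \eqref{512140017} holds simultaneously for all $z\in \f S$, and on which the continuity argument in $\eta$ selects the correct branch. Exactly the same event works here with $\cal E_\sigma$ replaced by $\cal E_1\deq N^\epsilon\Phi_1^2/\sqrt{\kappa+\eta+\Phi_1^2}$, because the comparison $|m-\tilde m|\asymp\sqrt{\kappa+\eta}\gg \cal E_1$ required to reject the branch $\tilde m$ holds in the ``small'' regime $\Phi_1^2\leq N^{-2\epsilon}(\kappa+\eta)$, while in the complementary ``large'' regime one simply uses $|s-m|\leq \cal E_1+|m-\tilde m|\leq 2N^\epsilon\cal E_1$, exactly as in the conclusion of Section \ref{sec:opt_bounds}. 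Hence no new analytic ingredient is needed, and the only genuine work has already been done in proving Theorem \ref{Th3}.
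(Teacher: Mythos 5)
Your proposal is correct and coincides with the paper's proof: the paper simply applies the self-improving estimate \eqref{Theta_self_impr} (already proved in Section \ref{sec:opt_bounds} for any $\sigma\in[1/4,1]$) with the input $\Theta\prec(N\eta)^{-1}$ from \eqref{121414h53}, keeping the factor $\sqrt{\kappa+\eta+\Phi_1^2}\geq\sqrt{\kappa+\eta}$ in the denominator rather than discarding it. Your re-derivation of the quadratic equation, the branch-selection/continuity argument for $\sigma=1$, and the final monotonicity bound are exactly the content of \eqref{Theta_self_impr}, so no new idea is involved; you have merely unwound a citation that the paper keeps compressed.
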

\begin{proof}
We use \eqref{Theta_self_impr} with the a priori bound $\Theta \prec (N \eta)^{-1}$ from \eqref{121414h53}, which yields
\begin{equation*}
\Theta \;\prec\; \frac{1}{\sqrt{\kappa + \eta}} \, \Phi_1^2\,,
\end{equation*}
where $\Phi_\sigma$ was defined in \eqref{def_Phi_s}. The claim follows.
\end{proof}
Note that the bound from Lemma \ref{lem:improved_ll} is better than \eqref{121414h53} when $E$ is sufficiently far outside of the spectrum, for large enough $\kappa$ and small enough $\eta$, since in that case $\im m$ is small by \eqref{Immmupperlowerbounded}.

\begin{proof}[Proof of Proposition \ref{prop:bound_H}]
For definiteness, we prove that the largest eigenvalue $\lambda_1$ of $H$ satisfies $\lambda_1 \leq 2 + O_\prec(N^{-2/3})$; the smallest eigenvalue $\lambda_N$ is handled similarly. From the F\"uredi-Koml\'os argument in Theorem \ref{Th:Furedi_Komlos}, we know that $\lambda_1 \leq 3$ with high probability. It therefore remains to show that, for any fixed $\epsilon > 0$, there is no eigenvalue of $H$ in
\begin{equation} \label{def_I_bad}
I \;\deq\; \qb{2 + N^{-2/3 + 4 \epsilon}, 3}
\end{equation}
with high probability. We do this using Lemma \ref{lem:improved_ll}. The idea of the proof is to choose, for each $E \in I$, a scale $\eta(E)$ such that $\im s(E + \ii \eta(E)) \leq \frac{N^{-\epsilon}}{N \eta(E)}$ with high probability; such an estimate will be possible thanks to the improved bound from Lemma \ref{lem:improved_ll}. (Note that the estimate \eqref{121414h53} can never produce such a bound.) We shall then conclude the argument by noting that such an upper bound is strong enough to rule out the existence of an eigenvalue at $E$.

By an argument analogous to Remark \ref{rem:Lipschitz}, we find from Lemma \ref{lem:improved_ll} and \eqref{Immmupperlowerbounded} that, with high probability,
\begin{equation} \label{s_m_outside}
\abs{s(z) - m(z)} \;\leq\; N^\epsilon \pbb{\frac{\eta}{\kappa} \frac{1}{N \eta} + \frac{1}{\sqrt{\kappa}} \frac{1}{(N \eta)^2}}
\end{equation}
for all $z \in \f S$ satisfying $E \geq 2$. For each $E \in I$, we define
\begin{equation}
z(E) \;\deq\; E + \ii \eta(E) \,, \qquad \eta(E) \;\deq\; N^{-1/2 - \epsilon} \kappa(E)^{1/4}\,.
\end{equation}
Using \eqref{Immmupperlowerbounded}, we find for $E \in I$ that
\begin{equation} \label{no_ev1}
\im m(z(E)) \;\leq\; \frac{\eta(E)}{\sqrt{\kappa(E)}} \;\leq\; \frac{N^{-\epsilon}}{N \eta(E)}\,.
\end{equation}
Moreover, from \eqref{s_m_outside} we find that, with high probability,
\begin{equation} \label{no_ev2}
\absb{s(z(E)) - m(z(E))} \;\leq\; \frac{2 N^{-\epsilon}}{N \eta(E)}
\end{equation}
for all $E \in I$. From \eqref{no_ev1} and \eqref{no_ev2} we conclude that, with high probability,
\begin{equation} \label{upper_bd_ims}
\im s(z(E)) \;\leq\; \frac{3 N^{-\epsilon}}{N \eta(E)}
\end{equation}
for all $E \in I$.

Now suppose that there is an eigenvalue, say $\lambda_i$, of $H$ in $I$. Then we find
\begin{equation} \label{lower_bd_ims}
\im s(z(\lambda_i)) \;=\; \frac{1}{N} \sum_j \frac{\eta}{(\lambda_j - \lambda_i)^2 + \eta(\lambda_i)^2} \;\geq\; \frac{1}{N \eta(\lambda_i)}\,.
\end{equation}
Since \eqref{upper_bd_ims} and \eqref{lower_bd_ims} with $E = \lambda_i \in I$ are mutually exclusive, we conclude that, with high probability, there is no eigenvalue of $H$ in $I$. Since $\epsilon > 0$ was arbitrary, the claim follows.
\end{proof}

Armed with Proposition \ref{prop:bound_H}, we may now complete the proof of Theorem \ref{thm:rig}. We only consider $i \leq N/2$; the indices $i > N/2$ are dealt with analogously. From Theorem \ref{thm:llsc} we get $\mu([-1,\infty)) \geq 1/2$ with high probability, which implies that $\lambda_i \geq -1$ for all $i \leq N/2$ with high probability; we shall use this fact tacitly in the following.

Next, we define the function
\begin{equation*}
f(E) \;\deq\; \varrho([E,\infty))\,.
\end{equation*}
Fix $\epsilon > 0$. Then from the definitions \eqref{def_mu} and \eqref{def_gamma}, we find
\begin{equation} \label{f_com_rig}
\frac{i}{N} \;=\; f(\gamma_i) + \frac{1}{2N} \;=\; \mu([\lambda_i, \infty)) \;=\; f(\lambda_i) + O \pbb{\frac{N^\epsilon}{N}}
\end{equation}
with high probability, where in the last step we used Theorem \ref{thm:llsc}. We consider two cases.

Suppose first that
\begin{equation} \label{rigi_cond}
\gamma_i, \lambda_i \;\in\; \bigl[2 - N^{-2/3 + 2 \epsilon}, \infty\bigr)\,.
\end{equation}
Then from \eqref{asymp_gamma} we find $i \leq C N^{3 \epsilon}$. Moreover, by Proposition \ref{prop:bound_H}, we have $\abs{\lambda_i - 2} \leq N^{-2/3 + 2 \epsilon}$ with high probability. Since $\gamma_i \in [2 - N^{-2/3 + 2 \epsilon}, 2]$, we therefore deduce that
\begin{equation} \label{rigi_est1}
\abs{\lambda_i - \gamma_i} \;\leq\; 2 N^{-2/3 + 2 \epsilon} \;\leq\; C N^{3 \epsilon} N^{-2/3} i^{-1/3}\,.
\end{equation}

Conversely, suppose that \eqref{rigi_cond} does not hold. Then, by definition of $f$, we have
\begin{equation*}
f(\gamma_i) \vee f(\lambda_i) \;\geq\; c (N^{-2/3 + 2 \epsilon})^{3/2} \;\geq\;  N^\epsilon \frac{N^\epsilon}{N}\,.
\end{equation*}
From \eqref{f_com_rig} we therefore get
\begin{equation*}
f(\gamma_i) \;=\; f(\lambda_i) (1 + O(N^{-\epsilon}))
\end{equation*}
with high probability.
Since $f(\lambda) \asymp (2 - \lambda)^{3/2}$, we deduce that $2 - \lambda_i \asymp 2 - \gamma_i$ with high probability. Moreover, since $f'(\lambda) \asymp (2 - \lambda)^{1/2}$, we deduce that $f'(\lambda_i) \asymp f'(\gamma_i)$ with high probability, and hence that $f'(\lambda) \asymp f'(\gamma_i)$ with high probability for any $\lambda$ between $\lambda_i$ and $\gamma_i$. Using the mean value theorem, \eqref{asymp_gamma}, and \eqref{f_com_rig}, we therefore find
\begin{equation} \label{rigi_est2}
\abs{\lambda_i - \gamma_i} \;\asymp\; \frac{\abs{f(\lambda_i) - f(\gamma_i)}}{\abs{f'(\gamma_i)}} \;\leq\; C \frac{N^\epsilon}{N} \pbb{\frac{i}{N}}^{-1/3} \;=\; C N^\epsilon N^{-2/3} i^{-1/3}
\end{equation}
with high probability.

From the conclusions \eqref{rigi_est1} and \eqref{rigi_est2} for both cases, we conclude that $\abs{\lambda_i - \gamma_i} \leq C N^{3 \epsilon} N^{-2/3} i^{-1/3}$ with high probability, for all $i \leq N/2$. Since $\epsilon > 0$ was arbitrary, the proof of Theorem \ref{thm:rig} is complete.

\section{Extension of the spectral domain in the local law} \la{sec:extension}
The restriction of $z$ to $\f S$ in \ref{Th3} is natural in the light of the requirement $\Psi \leq C$, but is in fact not necessary. Indeed, once Theorem \ref{Th3} has been established, it is not too hard to extend it to a larger subset of $\C_+$, with sharper (in fact optimal) error bounds outside of the spectrum. (In fact, Theorem \ref{Th3} may be extended to the whole upper-half plane $\C_+$ with optimal error bounds throughout. Since the region $\eta > \tau^{-1}$ is of limited practical interest, we shall not deal with it.)

\subsection{Extension to all $z$ down to the real axis}

We begin by noting that the lower bound on $\eta$ in \eqref{def_S} may be omitted.
\beg{Th}\la{thm:ext1}
Theorem \ref{Th3} remains valid for $\f S$ replaced with
\begin{equation*}
\hb{E+\mathrm{i}\eta\col |E|\le \tau^{-1}\,,\, 0 < \eta\le \tau^{-1}}\,.
\end{equation*}
\en{Th} 
For an application of Theorem \ref{thm:ext1}, see Section \ref{sec:comparison}, where it is used to derive a simple universality result for the local eigenvalue statistics of Wigner matrices.

The rest of this subsection is devoted to the proof of Theorem \ref{thm:ext1}. The key observation is the following simple deterministic monotonicity result. Define
\begin{equation} \label{e:Gammadef}
\Gamma(z) \;\deq\; \max_{i,j} \abs{G_{ij}(z)}\,.
\end{equation}

\begin{lem} \label{lem:Gbd}
  For any $M>1$ and $z \in \C_+$ we have
  $\Gamma(E+\ii\eta/M) \leq M \Gamma(E+\ii\eta)$.
\end{lem}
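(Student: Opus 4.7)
The plan is to deduce this deterministic inequality from the resolvent identity
\begin{equation*}
G(z_1) - G(z_2) \;=\; (z_1 - z_2)\, G(z_1) G(z_2),
\end{equation*}
combined with the Ward identity \eqref{Ward} and the monotonicity of $y \mapsto y\, \im G_{ii}(E + \ii y)$. Applying the resolvent identity with $z_1 = E + \ii \eta/M$ and $z_2 = E + \ii \eta$ and taking the $(i,j)$ entry expresses $G_{ij}(E + \ii \eta/M)$ as $G_{ij}(E + \ii \eta)$ plus an error term of the form $-\ii\eta(1 - 1/M) \sum_k G_{ik}(E + \ii \eta/M)\, G_{kj}(E + \ii \eta)$.

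The sum in the error is controlled by Cauchy-Schwarz and the Ward identity applied separately at $z_1$ and $z_2$, which gives the bound $\sqrt{(M/\eta)\, \im G_{ii}(E + \ii \eta/M)}\cdot\sqrt{\im G_{jj}(E + \ii \eta)/\eta}$. The monotonicity $y \mapsto y\, \im G_{ii}(E + \ii y)$, which is immediate from the spectral decomposition since each summand $y^2/((\lambda_k - E)^2 + y^2)$ is nondecreasing in $y$, then yields $\im G_{ii}(E + \ii \eta/M) \leq M\, \im G_{ii}(E + \ii \eta) \leq M\, \Gamma(E + \ii \eta)$.

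Putting these ingredients together, the error term is bounded in absolute value by $\eta(1 - 1/M)\cdot M\Gamma(E + \ii \eta)/\eta = (M-1)\,\Gamma(E + \ii \eta)$, hence $|G_{ij}(E + \ii \eta/M)| \leq M\,\Gamma(E + \ii \eta)$, and taking the maximum over $i,j$ proves the claim. The main subtlety, which the monotonicity is used to resolve, is the delicate balance of the $\eta$-factors: the Ward identity at $z_1 = E + \ii \eta/M$ produces a denominator $\eta/M$ rather than $\eta$, and only after invoking the monotonicity (and the a priori bound $\im G_{ii} \leq |G_{ii}| \leq \Gamma$) do the various prefactors assemble into the sharp constant $M$ on the right-hand side.
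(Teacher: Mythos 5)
Your proof is correct, but it takes a genuinely different route from the paper. The paper's proof is infinitesimal: it shows that $\Gamma$ is locally Lipschitz in $\eta$ with the differential inequality $\absb{\dd \Gamma / \dd \eta} \leq \Gamma/\eta$, deduces that $\eta \mapsto \eta \Gamma(E + \ii \eta)$ is nondecreasing, and integrates. Your proof instead applies the resolvent identity \emph{directly} between the two spectral parameters $z_1 = E + \ii\eta/M$ and $z_2 = E + \ii\eta$, and closes the estimate in one step. The cost of working over a macroscopic gap rather than infinitesimally is that the Cauchy--Schwarz--Ward bound naturally produces $\im G_{ii}(z_1)$ (at the \emph{lower} scale) on the right-hand side, which you cannot absorb into $\Gamma(z_2)$; this is exactly where you need the extra input that $y \mapsto y \im G_{ii}(E + \ii y)$ is nondecreasing (immediate from the spectral decomposition), to trade $\im G_{ii}(z_1) \leq M \im G_{ii}(z_2) \leq M\Gamma(z_2)$. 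Tracing the constants, the error term is bounded by $\eta(1-1/M) \cdot \sqrt{\tfrac{M\cdot M\Gamma(z_2)}{\eta}}\sqrt{\tfrac{\Gamma(z_2)}{\eta}} = (M-1)\Gamma(z_2)$, which indeed yields $\Gamma(z_1) \leq M\Gamma(z_2)$. The paper's differential approach is perhaps more robust (it does not need the pointwise monotonicity of $\im G_{ii}$, only the Ward identity, and scales gracefully to more general settings), whereas yours is a clean one-shot argument that avoids any discussion of Lipschitz continuity or a.e.\ derivatives.
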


\begin{proof}
Fix $E \in \R$ and write $\Gamma(\eta) = \Gamma(E+\ii \eta)$. 
For sufficiently small $h$, using the resolvent identity,
the Cauchy-Schwarz inequality, and \eqref{Ward}, we get
\begin{align*}
  \abs{\Gamma(\eta+h)-\Gamma(\eta)}
  &\;\leq\; \max_{i,j} \abs{G_{ij}(E+\ii (\eta+h)) - G_{ij} (E+\ii \eta)}
  \nonumber\\
  &\;\leq\; |h| \max_{i,j} \sum_k \abs{G_{ik}(E+\ii (\eta+h))G_{kj} (E+\ii \eta)}
  \;\leq\; |h| \sqrt{\frac{\Gamma(\eta+h)\Gamma(\eta)}{(\eta+h)\eta}}
  \,.
\end{align*}
Thus, $\Gamma$ is locally Lipschitz continuous, and its almost everywhere defined derivative satisfies
\begin{equation*}
  \absbb{\frac{\dd \Gamma}{\dd \eta}}
  \;\leq\; \frac{\Gamma}{\eta}\,.
\end{equation*}
This implies $\frac{\dd }{\dd \eta} (\eta \Gamma(\eta)) \geq 0$ and therefore $\Gamma(\eta/M) \leq M\Gamma(\eta)$ as claimed.
\end{proof}

In order to prove Theorem \ref{thm:ext1}, we have to prove \eqref{121414h53} and \eqref{121414h54} for $z$ satisfying $\abs{E} \leq \tau^{-1}$ and $0 < \eta < N^{-1 + \tau}$. Using that $\abs{m(z)} \leq C$ by \eqref{mupperlowerbounded}, we find
\begin{equation*}
\abs{G_{ij}(z) - m(z) \delta_{ij}} \;\leq\; \Gamma(z) + C \;\leq\; \frac{N^\tau}{N \eta} \Gamma(E + \ii N^{-1 + \tau}) + C \;\prec\; \frac{N^\tau}{N \eta} \;\leq\; N^{2 \tau} \Psi(z)\,,
\end{equation*}
where in the second step we used Lemma \ref{lem:Gbd}, in the third step we used \eqref{121414h54} with spectral parameter $E + \ii N^{-1 + \tau} \in \f S$, and in the last step we used that $\eta < N^{-1 + \tau}$.  Since $\tau > 0$ was arbitrary, we obtain
\begin{equation*}
\abs{G_{ij}(z) - m(z) \delta_{ij}} \;\prec\; \Psi(z) \,,
\end{equation*}
which concludes the proof of \eqref{121414h54} for $\eta < N^{-1 + \tau}$. The proof of \eqref{121414h53} is similar. This concludes the proof of Theorem \ref{thm:ext1}.

\subsection{Local law outside of the spectrum}
Next, we extend Theorem \ref{Th3} to all $E$ outside of the spectrum, with optimal error bounds. Here, ``outside of the spectrum''  means that the distance from $E$ to the limiting spectrum $[-2,2]$ is more than $N^{-2/3}$, the scale on which the extreme eigenvalues of $H$ fluctuate. Recall the definition of $\kappa$ from \eqref{def_kappa}.

\begin{Th}\label{thm:ext2}
Let $H$ be a Wigner matrix. Fix $\tau > 0$ and define the domain
\begin{equation*}
\f S^o \;\equiv\; \f S^o_N(\tau) \;\deq\; \hb{E + \ii \eta \col \abs{E} \geq 2 + N^{-2/3 + \tau}, \eta > 0}\,.
\end{equation*}
Then we have
\begin{equation} \label{s-m_outside}
s(z) \;=\; m(z) + O_\prec \pbb{\frac{1}{N} \frac{1}{(\kappa + \eta) + (\kappa + \eta)^2}}
\end{equation}
and
\begin{equation} \label{G-m_outside}
G_{ij}(z) \;=\; m(z) \delta_{ij} + O_\prec \pbb{\frac{1}{\sqrt{N}} \frac{1}{(\kappa + \eta)^{1/4} + (\kappa + \eta)^2}}
\end{equation}
uniformly for $i,j = 1, \dots, N$ and $z\in \f S^o$.
\end{Th}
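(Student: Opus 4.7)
The approach is to combine Theorem \ref{Th3} (together with its small-$\eta$ extension \ref{thm:ext1}) with two additional inputs: the eigenvalue localization of Proposition \ref{prop:bound_H} and an iterated stability analysis of the quadratic equation $s^2 + zs + 1 = O_\prec(\Phi^2)$ established at the start of Section \ref{sec:opt_bounds}. The extra cancellation that produces bounds much better than $\Psi$ is that, outside the spectrum, both $\im m$ and $\abs{m - \tilde m}$ depend explicitly on $\kappa + \eta$, and this dependence must be tracked carefully through the self-consistent analysis.

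First, I would work on the high-probability event $\{\norm{H} \leq 2 + N^{-2/3 + \tau/3}\}$ guaranteed by Proposition \ref{prop:bound_H}, on which $G(z)$ is analytic in a neighborhood of $\f S^o$. For $z \in \f S^o$ with moderate imaginary part, say $\eta \geq N^{-2/3 + \tau/3}$, I would revisit the stability step behind Lemma \ref{lem:improved_ll}: outside the spectrum one has $\abs{m - \tilde m} \asymp \sqrt{\kappa + \eta}$ by Lemma \ref{lem13}, and $\im m \asymp \eta/\sqrt{\kappa + \eta}$ by \eqref{Immmupperlowerbounded}. Starting from $\Theta \prec (N\eta)^{-1}$ (the conclusion of Theorem \ref{Th3}) and iterating the improvement $\Theta \prec \Phi_\sigma^2/\sqrt{\kappa + \eta + \Phi_\sigma^2}$ a bounded number of times, using the outside-spectrum value of $\im m$ at each step, the bound stabilizes at $\Theta \prec 1/(N(\kappa + \eta))$ for $\kappa + \eta \leq 1$. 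Together with $\Phi^2 \asymp 1/(N\sqrt{\kappa + \eta})$ and the entrywise input \eqref{7}--\eqref{8}, this yields \eqref{s-m_outside} and \eqref{G-m_outside} at scales $\eta \geq N^{-2/3 + \tau/3}$.

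For smaller $\eta$ with $z \in \f S^o$, I would use analyticity rather than resolvent arguments. On the good event, $G$ and $m$ are analytic on $\{w : \dist(w, [-2, 2]) > N^{-2/3 + \tau/2}\}$. Setting $z_0 \deq E + \ii N^{-2/3 + \tau/3}$, the bounds from the previous step apply at $z_0$. The resolvent identity $G(z) - G(z_0) = (z - z_0) G(z) G(z_0)$, combined with the deterministic spectral bound $\abs{G_{ij}(w)} \leq C/\kappa$ for $w$ in a disc around $z$ inside the analyticity region, gives $\abs{G_{ij}(z) - G_{ij}(z_0)} \leq C \abs{z - z_0}/\kappa^2$, which is smaller than the target error. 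The bound \eqref{s-m_outside} transfers similarly, using Lipschitz continuity of $m$ on the analyticity region.

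For the improved $(\kappa + \eta)^{-2}$ decay when $\kappa + \eta \geq 1$, I would switch to the Neumann series $G(z) = -\sum_{k \geq 0} z^{-k-1} H^k$ (convergent once $\abs{z} > \norm{H}$) and the corresponding expansion $m(z) = -\sum_{k \geq 0} z^{-k-1} \int x^k \varrho(\ud x)$, then bound $N^{-1}\Tr H^k - \int x^k \varrho(\ud x)$ and $(H^k)_{ij} - \delta_{ij}\int x^k \varrho(\ud x)$ by standard moment estimates on Wigner matrices (size $O_\prec(N^{-1})$ and $O_\prec(N^{-1/2})$ respectively, summed over $k$). The main obstacle I anticipate is the stability analysis in the iteration for $\eta$ just above $N^{-2/3 + \tau}$: there $\kappa + \eta$ can be as small as $N^{-2/3 + \tau}$, and one has to verify that at every iterate the correct branch $s \approx m$ (rather than $s \approx \tilde m$) is selected, via a continuity argument of the type used in the proof of Lemma \ref{lem15}, now tracked with quantitative $\kappa$-uniformity.
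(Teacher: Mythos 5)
Your step~3 --- transporting the estimate to small $\eta$ by resolvent Lipschitz continuity --- is where the argument breaks down. On the good event the resolvent identity indeed gives $\abs{G_{ij}(z)-G_{ij}(z_0)}\leq C\abs{z-z_0}/\kappa^2$, so with $z_0=E+\ii N^{-2/3+\tau/3}$ the transport error is at most $C N^{-2/3+\tau/3}\kappa^{-2}$. However, the target error in \eqref{G-m_outside} for $\kappa+\eta\leq 1$ is of order $N^{-1/2}\kappa^{-1/4}$, and the ratio of these two quantities is $N^{-1/6+\tau/3}\kappa^{-7/4}$, which is $\gg 1$ once $\kappa\ll N^{-2/21}$. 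In the critical regime $\kappa\asymp N^{-2/3+\tau}$ the Lipschitz bound is in fact larger than one while the target is $o(1)$. Nor can you fix this by shrinking $\eta_0$: the local law requires $\eta_0\geq N^{-1+\tau'}$, whereas the Lipschitz loss is dominated by the target only when $\eta_0\leq \kappa^{7/4}N^{-1/2}$, which forces $\kappa\geq c N^{-2/7}$. The operator-norm bound $\norm{G}\leq C/\kappa$ is simply too crude to move a Green-function estimate across a vertical strip of height comparable to $N^{-2/3}$ when $\kappa$ is itself close to $N^{-2/3}$.

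The correct replacement is a spectral-decomposition comparison. Take the $\kappa$-dependent scale $\eta_0\deq N^{-1/2}\kappa^{1/4}$ (which satisfies $\eta_0\leq\kappa/2$ on $\f S^o$). On the good event $\abs{E-\lambda_i}\geq\eta_0$ for all $i$, and hence for $0<\eta<\eta_0$
\begin{equation*}
\im\scalar{\f v}{G(z)\f v}\;=\;\sum_i\frac{\abs{\scalar{\f v}{\f u_i}}^2\,\eta}{(E-\lambda_i)^2+\eta^2}\;\leq\;2\sum_i\frac{\abs{\scalar{\f v}{\f u_i}}^2\,\eta_0}{(E-\lambda_i)^2+\eta_0^2}\;=\;2\,\im\scalar{\f v}{G(z_0)\f v}\,,
\end{equation*}
a multiplicative comparison with no additive Lipschitz loss; the real part admits an analogous explicit estimate, and $\abs{m(z)-m(z_0)}$ is controlled using $\abs{m'}\leq C\kappa^{-1/2}$. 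The local law at $z_0\in\f S$ then yields the claim. Two further remarks: your step~2 iteration is a plausible alternative for \eqref{s-m_outside} on its range, but the paper derives \eqref{s-m_outside} for all $\eta>0$ in a single step from rigidity (Theorem~\ref{thm:rig}), writing $s(z)-m(z)=\sum_i\int_{\tilde\gamma_i}^{\tilde\gamma_{i-1}}\varrho(\dd x)\bigl(\frac{1}{\lambda_i-z}-\frac{1}{x-z}\bigr)$, which avoids the branch-selection bookkeeping entirely; and for $\kappa+\eta>1$ the Neumann-series sketch in your step~4 would require a nonstandard uniform-in-$k$ moment bound on $(H^k)_{ij}-\delta_{ij}\int x^k\,\varrho(\dd x)$, which the paper sidesteps by applying the Helffer--Sj\"ostrand formula to $f_z(x)=1/(x-z)+1/z$ tested against $\sum_i\abs{\scalar{\f v}{\f u_i}}^2\delta_{\lambda_i}-\varrho$, reducing the estimate to the moderate-$\kappa$ bound on the cutoff's support.
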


Results of this type (typically in the stronger guise of \emph{isotropic local laws}; see Section \ref{sec:isotropic}) are very useful for instance in the study of spiked random matrix models. See \cite{BEN2,KnowlesYinIso,KnowlesYinOutliers,BKYYPCA}   for more details.

The rest of this subsection is devoted to the proof of Theorem \ref{thm:ext2}. We begin with \eqref{s-m_outside}, which is an easy consequence of the rigidity result from Theorem \ref{thm:rig}. Let $\epsilon \in (0,\tau/2)$. From Theorem \ref{thm:rig} we get, with high probability,
\begin{equation} \label{rigi_tilde}
|\lam_i-\gamma_i| \;\leq\; N^\epsilon N^{-2/3} \pb{i\wedge (N+1-i)}^{-1/3}
\end{equation}
for all $i = 1, \dots, N$.
Next, it is convenient to define a slightly modified version $\tilde \gamma_i$ of the typical eigenvalue location $\gamma_i$ from \eqref{def_gamma} through $N \int_{\tilde \gamma_i}^2 \varrho(\dd x) = i$.  With the convention $\tilde \gamma_0 \deq 2$, we may write
\begin{equation*}
s(z) - m(z) \;=\; \sum_{i = 1}^N \int_{\tilde \gamma_i}^{\tilde \gamma_{i - 1}} \varrho(\dd x) \pbb{\frac{1}{\lambda_i - z} - \frac{1}{x - z}}\,.
\end{equation*}
Using \eqref{rigi_tilde}, we find for $x \in [\tilde \gamma_i, \tilde \gamma_{i - 1}]$ that
\begin{equation*}
\abs{\lambda_i - \gamma_i} + \abs{x - \gamma_i} \;\leq\; 2 N^\epsilon N^{-2/3}\pb{i\wedge (N+1-i)}^{-1/3}
\end{equation*}
with high probability. Since $\abs{z - \gamma_i} \geq N^{-2/3 + \tau}$ for all $i$ and $\epsilon > 0$ can be made arbitrarily small, we therefore get
\begin{equation} \label{s-m_series}
\abs{s(z) - m(z)} \;\prec\; \frac{1}{N} \sum_{i = 1}^N N^{-2/3}\pb{i\wedge (N+1-i)}^{-1/3} \, \frac{1}{\abs{\gamma_i - z}^2}\,.
\end{equation}
Now obtaining \eqref{s-m_outside} is an elementary exercise in estimating the right-hand side of \eqref{s-m_series}, using \eqref{asymp_gamma} (and its analogue for $i \geq N/2$); we omit the details.

What remains is the proof of \eqref{G-m_outside}. Unlike \eqref{s-m_outside}, the rigidity estimate from Theorem \ref{thm:rig} is clearly not sufficient, since we need to control individual entries of $G$. We note first that, by polarization, it suffices to prove
\begin{equation} \label{claim outside of spectrum}
\absb{\scalar{\f v}{G(z) \f v} - m(z)} \;\prec\; \frac{1}{\sqrt{N}} \frac{1}{(\kappa + \eta)^{1/4} + (\kappa + \eta)^2}
\end{equation}
for all $z \in \f S^o$ and for all unit vectors $\f v$ having at most two nonzero components\footnote{In fact, \eqref{claim outside of spectrum} is valid for arbitrary unit vectors $\f v$; see Section \ref{sec:isotropic}.}. In the rest of the proof, $\f v$ denotes such a vector.

We split the proof into two cases: $\kappa + \eta \leq 1$ and $\kappa + \eta > 1$.

\subsubsection*{Case 1: $\kappa + \eta \leq 1$} 

Define $\eta_0 \deq N^{-1/2} \kappa^{1/4}$. By definition of the domain $\f S^o$,
 we have $\eta_0 \leq \kappa/2$. 
Using \eqref{121414h54} in Theorem \ref{Th3} and \eqref{Immmupperlowerbounded}, we find that  \eqref{claim outside of spectrum}  holds if $\eta \geq \eta_0$.
For the following we therefore take
\begin{equation} \label{eta_kappa_est}
0 \;<\; \eta \;\leq\; \eta_0 \;\leq\; \kappa/2\,.
\end{equation}
We proceed by comparison using the two spectral parameters
\begin{equation*}
z \;\deq\; E + \ii \eta\,, \qquad z_0 \;\deq\; E + \ii \eta_0\,.
\end{equation*}
Since \eqref{claim outside of spectrum} holds at $z_0$ by Theorem \ref{Th3}, it is enough to prove the estimates
\begin{equation} \label{z z0 comparison 1}
\absb{m(z) - m(z_0)} \;\leq\; C N^{-1/2} \kappa^{-1/4}
\end{equation}
and
\begin{equation} \label{z z0 comparison 2}
\absb{\scalar{\f v}{G(z) \f v} - \scalar{\f v}{G(z_0) \f v}} \;\prec\; N^{-1/2} \kappa^{-1/4}\,.
\end{equation}
We start with \eqref{z z0 comparison 1}. From the definition \eqref{def_m} and the square root decay of the density of $\varrho$ near the spectral edges $\pm 2$, it is not hard to derive the bound $m'(z) \leq C \kappa^{-1/2}$ for $z \in \f S^o$ satisfying $\eta \leq \kappa \leq 2$.
Therefore we get
\begin{equation*}
\absb{m(z) - m(z_0)} \;\leq\; C \kappa^{-1/2} \eta_0 \;=\; C N^{-1/2} \kappa^{-1/4}\,,
\end{equation*}
which is \eqref{z z0 comparison 1}.

What remains is to prove \eqref{z z0 comparison 2} under \eqref{eta_kappa_est}. By Theorem \ref{thm:rig} and \eqref{eta_kappa_est} we have $\abs{E} \geq \norm{H} + \eta_0$ with high probability. Thus we get
\begin{equation} \label{strong 4}
\im \scalar{\f v}{G(z) \f v} \;=\; \sum_i \frac{\abs{\scalar{\f v}{\f u_i}}^2 \eta}{(E - \lambda_i)^2 + \eta^2} \;\leq\;
2 \sum_i \frac{\abs{\scalar{\f v}{\f u_i}}^2 \eta_0}{(E - \lambda_i)^2 + \eta_0^2} \;=\; 2 \im \scalar{\f v}{G(z_0) \f v} \;\prec\; N^{-1/2} \kappa^{-1/4}
\end{equation}
by Theorem \ref{Th3} and \eqref{Immmupperlowerbounded} at $z_0$.

Finally, we estimate the real part of the error in \eqref{z z0 comparison 2} using
\begin{multline} \label{strong 5}
\absb{\real \scalar{\f v}{G(z) \f v} - \real \scalar{\f v}{G(z_0) \f v}} \;=\; \sum_i \frac{(E - \lambda_i)(\eta_0^2 - \eta^2) \abs{\scalar{\f u_i}{\f v}}^2}{\pb{(E - \lambda_i)^2 + \eta^2} \pb{(E - \lambda_i)^2 + \eta^2_0}}
\\
\leq\; \frac{\eta_0}{E - \lambda_1} \sum_i \frac{\eta_0 \abs{\scalar{\f u_i}{\f v}}^2}{(E - \lambda_i)^2 + \eta^2_0} \;\leq\; \im \scalar{\f v}{G(z_0) \f v}
\end{multline}
with high probability,
where in the last step we used that $\abs{E} \geq \norm{H} + \eta_0$ with high probability. Combining \eqref{strong 4} and \eqref{strong 5} completes the proof of \eqref{z z0 comparison 2}, and hence of \eqref{claim outside of spectrum} for the case $\kappa + \eta \leq 1$.

\subsubsection*{Case 2: $\kappa + \eta > 1$}
Define the random signed measure
\begin{equation*}
\tilde \mu \;\deq\; \sum_{i = 1}^N \scalar{\f v}{\f u_i}\scalar{\f u_i}{\f v} \delta_{\lambda_i} - \varrho
\end{equation*}
with Stieltjes transfrom
\begin{equation*}
\tilde m(z) \;\deq\; \int \frac{\tilde \mu(\dd x)}{x - z} \;=\; \scalar{\f v}{G(z) \f v} - m(z)\,.
\end{equation*}
The basic idea of the proof is to apply the Helffer-Sj\"ostrand formula from Proposition \ref{prop:HS} to the function
\begin{equation*}
f_z(x) \;\deq\; \frac{1}{x - z} + \frac{1}{z}\,.
\end{equation*}
To that end, we choose a smooth compactly supported cutoff function $\chi \in \cal C_c^\infty(\C;[0,1])$ satisfying $\chi = 1$ in the $1/6$-neighbourhood of $[-2,2]$, $\chi = 0$ outside of the $1/3$-neighbourhood of $[-2,2]$, and $\abs{\partial_{\bar w} \chi(w)} \leq C$. By Theorem \ref{thm:rig} we have $\supp \tilde \mu \subset \{\chi = 1\}$ with high probability. By assumption of Case 2, $\dist(z, [-2,2]) \geq 1/2$. Since $f_z$ is holomorphic in $\supp \chi$, the Helffer-Sj\"ostrand formula yields, for $x \in \supp \tilde \mu$,
\begin{equation} \label{HS formula}
f_z(x) \;=\; \frac{1}{\pi} \int_{\C} \frac{\partial_{\bar w} (f_z(w) \chi(w))}{x - w} \, \dd^2 w
\end{equation}
with high probability. Noting that $\int \dd \tilde \mu = 0$, we may therefore write
\begin{equation} \label{HS formula 2}
\tilde m(z) \;=\; \int \tilde \mu(\dd x) \, f_z(x)
\;=\;
\frac{1}{\pi} \int_{\C}  f_z(w) \, \partial_{\bar w} \chi(w) \, \tilde m(w) \, \dd^2 w
\end{equation}
with high probability, where in second step we used \eqref{HS formula} and the fact that $f_z$ is holomorphic away from $z$. The integral is supported on the set $\supp \partial_{\bar w} \chi \subset \h{w \col \dist(w,[-2,2]) \in [1/6,1/3]}$, on which we have the estimates $\abs{f_z(w)} \leq C (\kappa(z) + \eta(z))^{-2}$ and $\abs{\tilde m(w)} \prec N^{-1/2}$, as follows from Theorem \ref{Th3} and Case 1. As in Remark \ref{rem:Lipschitz}, we may easily obtain the simultaneous bounds $\abs{\tilde m(w)} \leq N^{\epsilon - 1/2}$ for all $w \in \supp \partial_{\bar w}$, with high probability, for any fixed $\epsilon > 0$. Plugging these estimates into \eqref{HS formula 2}, we get
\begin{equation*}
\abs{\tilde m(z)} \;\prec\;  (\kappa + \eta)^{-2} N^{-1/2}\,,
\end{equation*}
which is \eqref{claim outside of spectrum}. This conclude the proof of Case 2, and hence also of Theorem \ref{thm:ext2}.

\section{Local law and comparison arguments} \label{sec:comparison}

\subsection{Overview of the Green function comparison method} \label{sec:GFC_overview}
In this section we explain how the local law can be used to compare the local eigenvalue distribution of two random matrix ensembles whose entries are close. Here, the closeness is quantified using \emph{moment matching}.

\begin{Def} \label{def:match}
Let $\E'$ and $\E''$ denote the expectations of two different Wigner ensembles. We say that $\E'$ and $\E''$ \emph{match to order $n$} if we have, for all $i,j = 1, \dots, N$ and $k,l = 0,1,\dots$ satisfying $k+l \leq n$,
\begin{equation} \label{moment_match}
\E' (H_{ij})^k (\ol H_{ij})^l \;=\; \E'' (H_{ij})^k (\ol H_{ij})^l\,.
\end{equation}
\end{Def}

Typically, we choose $\E'$ to be a Gaussian ensemble from Definition \ref{def:Gaussian_W}, for which the statistic that we are interested in can be explicitly computed, and $\E''$ to be a general Wigner ensemble. This allows us to obtain unversality results, expressing that if $\E'$ and $\E''$ match to a high enough order then they have the same asymptotic eigenvalue and eigenvector statistics.

Such comparison ideas go back to Lindeberg and his proof of the central limit theorem \cite{LINDEBERG}. They were introduced into random matrix theory in \cite{ChatterjeeLindeberg} and used to derive a so-called \emph{four-moment theorem} for the local eigenvalue statistics in \cite{Tao-Vu_CMP,Tao-Vu_ActaMath2011}. The use of Green functions greatly simplifies such comparison techniques in random matrix theory. This was first observed in \cite{MR2981427},
  where the \emph{Green function comparison method} was introduced. In this section we give a simple application of Green function comparison to the local eigenvalue statistics, given in Theorem \ref{thm:fourmoment} below.

Before explaining the Green function comparison method, it is instructive to recall Lindeberg's original proof of the central limit theorem, which forms the core idea of the Green function comparison method. (For its statement, we do not aim for optimal assumptions as the emphasis is on a clear and simple proof.)

\begin{Th}[CLT \`a la Lindeberg]  \label{thm:CLT}
Let $n \in \N$, and suppose that $X_1, \dots, X_n$ are independent real-valued random variables satisfying $\sum_{i = 1}^n \E X_i^2 = n$ as well as $\E X_i = 0$ and $\E \abs{X_i}^3 \leq K$ for all $1 \leq i \leq n$ with some constant $K > 0$. Then for any $f \in \cal C^3(\R)$ we have
\begin{equation*}
\E f\pbb{\frac{X_1 + \cdots + X_n}{\sqrt{n}}} - \E f(Y) \;=\; O\pb{K \norm{f'''}_\infty \, n^{-1/2}}\,,
\end{equation*}
where $Y$ is a standard normal random variable.
\end{Th}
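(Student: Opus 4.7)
The plan is to use Lindeberg's original swapping argument, which replaces the $X_i$'s by Gaussians one at a time and estimates each swap via a Taylor expansion. First, I would introduce auxiliary centred Gaussians $Y_1, \dots, Y_n$, independent of each other and of the $X_i$'s, with variances matched individually: $\E Y_i^2 = \E X_i^2 \eqd \sigma_i^2$. Since $\sum_i \sigma_i^2 = n$, the sum $(Y_1 + \cdots + Y_n)/\sqrt{n}$ is a standard normal, so it has the same law as $Y$. Defining the interpolating sums
\begin{equation*}
Z_i \;\deq\; \frac{1}{\sqrt{n}} \pb{X_1 + \cdots + X_i + Y_{i+1} + \cdots + Y_n} \qquad (0 \leq i \leq n)\,,
\end{equation*}
we have $\E f(Z_0) = \E f(Y)$ and $\E f(Z_n) = \E f((X_1 + \cdots + X_n)/\sqrt{n})$, so the problem reduces to bounding the telescoping sum $\sum_{i=1}^n \qb{\E f(Z_i) - \E f(Z_{i-1})}$.

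For each $i$, set $W_i \deq Z_i - X_i/\sqrt{n} = Z_{i-1} - Y_i/\sqrt{n}$, which is independent of both $X_i$ and $Y_i$. A third-order Taylor expansion of $f$ around $W_i$ gives
\begin{equation*}
f(Z_i) \;=\; f(W_i) + f'(W_i) \frac{X_i}{\sqrt{n}} + f''(W_i) \frac{X_i^2}{2n} + R_i \,, \qquad \abs{R_i} \;\leq\; \tfrac{1}{6}\norm{f'''}_\infty \abs{X_i}^3 n^{-3/2}\,,
\end{equation*}
and analogously for $f(Z_{i-1})$ with $X_i$ replaced by $Y_i$. Taking expectations and exploiting the independence of $W_i$ from $(X_i, Y_i)$ together with the matched moments $\E X_i = \E Y_i = 0$ and $\E X_i^2 = \E Y_i^2 = \sigma_i^2$, the zeroth-, first-, and second-order terms cancel in the difference $\E f(Z_i) - \E f(Z_{i-1})$. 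Only the two third-order remainders survive, giving
\begin{equation*}
\absb{\E f(Z_i) - \E f(Z_{i-1})} \;\leq\; \frac{\norm{f'''}_\infty}{6 n^{3/2}} \pb{\E \abs{X_i}^3 + \E \abs{Y_i}^3}\,.
\end{equation*}

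It remains to sum and estimate. The third absolute moment of $Y_i$ is a constant multiple of $\sigma_i^3$, and Jensen's inequality yields $\sigma_i^3 = (\E X_i^2)^{3/2} \leq \E\abs{X_i}^3 \leq K$, so $\E\abs{Y_i}^3 \leq C K$. Summing the telescoping bound over $i$ then gives $\absb{\E f(Z_n) - \E f(Z_0)} \leq C \norm{f'''}_\infty n^{-3/2} \sum_i \E\abs{X_i}^3 \leq C K \norm{f'''}_\infty n^{-1/2}$, which is the claim. There is no genuine obstacle in this argument; the one delicate point is to realise that one must match the variances of the auxiliary Gaussians $Y_i$ individually to those of the $X_i$'s (rather than taking all $Y_i$ standard), so that the second-order terms cancel pairwise without any further control beyond the single hypothesis $\sum_i \sigma_i^2 = n$.
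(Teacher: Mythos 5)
Your proof is correct and follows essentially the same Lindeberg swapping argument as the paper: replace the $X_i$'s one at a time by Gaussians with individually matched first and second moments, telescope, and Taylor-expand to third order around the swap-independent part so that the low-order terms cancel and only the third-order remainders survive. The only difference is cosmetic (your $Z_i$ interpolates in the reverse order from the paper's $Z_i^\gamma$), and your moment bound $\E\abs{Y_i}^3 \leq C (\E X_i^2)^{3/2} \leq C \E\abs{X_i}^3 \leq CK$ is the same estimate the paper uses.
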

By a standard approximation argument of continuous bounded functions by smooth functions with bounded derivatives, Theorem \ref{thm:CLT} implies the convergence in distribution of $n^{-1/2} (X_1^{(n)} + \cdots + X_n^{(n)})$ taken from a triangular array $(X_i^{(n)})_{1 \leq i \leq n}$ satisfying the assumptions of Theorem \ref{thm:CLT} for each $n \in \N$.

\begin{proof}[Proof of Theorem \ref{thm:CLT}]
Let $(Y_i)_{1 \leq i \leq n}$ be a family of independent Gaussian random variables, independent of $(X_i)_{1 \leq i \leq n}$, satisfying $\E Y_i = 0$ and $\E Y_i^2 = \E X_i^2$ for $1 \leq i \leq n$. For $0 \leq \gamma \leq n$ define the new family $(Z_i^\gamma)_{1 \leq i \leq n}$ through
\begin{equation*}
Z_i^\gamma \;\deq\;
\begin{cases}
X_i & \text{if } i \leq \gamma
\\
Y_i & \text{if } i > \gamma\,.
\end{cases}
\end{equation*}
By telescoping  we get
\begin{equation} \label{Lindeberg_telescope}
\E f \pbb{\frac{1}{\sqrt{n}} \sum_{i = 1}^n X_i} - \E f \pbb{\frac{1}{\sqrt{n}} \sum_{i = 1}^n Y_i} \;=\; \sum_{\gamma = 1}^n \E \qBB{f \pbb{\frac{1}{\sqrt{n}} \sum_{i = 1}^n Z_i^{\gamma}} - f \pbb{\frac{1}{\sqrt{n}} \sum_{i = 1}^n Z_i^{\gamma - 1}}}\,.
\end{equation}
Note that the two sums over $i$ on the right-hand side only differ in the summand $i = \gamma$, where we have $Z_\gamma^\gamma = X_\gamma$ and $Z^{\gamma - 1}_\gamma = Y_\gamma$.
We estimate the difference using a Taylor expansion of order three around the random variable
\begin{equation*}
W^\gamma_n \;\deq\; \frac{1}{\sqrt{n}} \sum_{i = 1}^n \ind{i \neq \gamma} Z_i^{\gamma}\,.
\end{equation*}
Then the expectation on the right-hand side of \eqref{Lindeberg_telescope} is equal to
\begin{equation} \label{Lind_oneterm}
\E \qb{f(W^\gamma_n + X_\gamma) - f(W^\gamma_n + Y_\gamma)} \;=\; \E \qBB{f'(W^\gamma_n) \frac{X_\gamma - Y_\gamma}{n^{1/2}} + \frac{1}{2} f''(W^\gamma_n) \frac{X_\gamma^2 - Y_\gamma^2}{n}} + O\pb{K \norm{f'''}_\infty \, n^{-3/2}}\,,
\end{equation}
where we used the assumption $\E \abs{X_i}^3 \leq K$ as well as $\E \abs{Y_i}^3 \leq 3 (\E Y_i^2)^{3/2} = 3 (\E X_i^2)^{3/2} \leq 3 \E \abs{X_i}^3 \leq 3 K$, by H\"older's inequality. Since $X_\gamma$ and $Y_\gamma$ are independent of $W_n^\gamma$, and using that the first two moments of $X_\gamma$ and $Y_\gamma$ match, we find that the right-hand side of \eqref{Lind_oneterm} is $O\pb{K \norm{f'''}_\infty \, n^{-3/2}}$. Plugging this back into \eqref{Lindeberg_telescope}, we find that \eqref{Lindeberg_telescope} is $O\pb{K \norm{f'''}_\infty \, n^{-1/2}}$. The claim then follows from the observation that 
$Y \deq \frac{1}{\sqrt{n}} \sum_{i = 1}^n Y_i$ is a standard normal random variable.
\end{proof}

Note that the central limit theorem can be regarded as a universality result: the asymptotic distribution of $n^{-1/2} \sum_{i = 1}^N X_i$ and $n^{-1/2} \sum_{i = 1}^N Y_i$ coincide if variables $X_i$ and $Y_i$ have matching first two moments.
Lindeberg's proof given above lays down the strategy for the Green function comparison method, which performs a similar comparison for the distribution of Green functions. Suppose that we have two random matrix ensembles $\E'$ and $\E''$ that match to order $n$ (see Definition \ref{def:match}). As in Lindeberg's proof, we introduce the trivial independent coupling of these ensembles, where our probability space consists of the independent matrices $H'$ and $H''$ whose laws have expectations $\E'$ and $\E''$ respectively.

In order to obtain a telescoping sum, we choose some (arbitrary) bijection
\begin{equation*}
\phi \;\col\; \h{(i,j) \col 1 \leq i \leq j \leq N} \;\to\; \hb{1,2, \dots, \gamma_N}\,,
\end{equation*}
where $\gamma_N \deq \frac{N(N+1)}{2}$ is the number of independent matrix entries. We then define the matrices $H^0, H^1, \dots, H^{\gamma_N}$ as the Wigner matrices satisfying
\begin{equation*}
H_{ij}^\gamma \;\deq\;
\begin{cases}
H_{ij}'' & \text{if } \phi(i,j) \leq \gamma
\\
H_{ij}' & \text{if } \phi(i,j) > \gamma
\end{cases}
\end{equation*}
for $i \leq j$.

Now suppose that $f$ is some statistic of Wigner matrices whose expectation we would like to understand. We telescope
\begin{equation} \label{GFC_telescope}
\E'' f(H) - \E' f(H) \;=\; \E (f(H'') - f(H')) \;=\; \sum_{\gamma = 1}^{\gamma_N} \E \pb{f(H^\gamma) - f(H^{\gamma - 1})}\,.
\end{equation}
As in Lindeberg's proof, we estimate each summand using Taylor's theorem. Note that there are $\gamma_N \asymp N^2$ terms, so that, if all derivatives of $f$ were of order one, we would require \emph{four moment matching}  instead of \emph{two moment matching} (as in Lindeberg's proof) for the sum to be $o(1)$.
The importance of this four moment matching condition for random matrices was first realized in \cite{ChatterjeeLindeberg}, and it was first applied to the question of universality of the local spectral statistics in \cite{Tao-Vu_ActaMath2011}.

It clear therefore that the key difficulty is to obtain good control on the derivatives of $f$. Especially for statistics that give information about the local eigenvalue distribution, $f$ can be a very unstable function and controlling its derivatives is a highly nontrivial task. It is here that the local law enter the game, by providing such estimates.

A good choice of statistic $f$ is some combination of Green functions, typically a well-behaved smooth function of some polynomial in the Green function entries.  As outlined in Section \ref{sec:Gfunct} and detailed in Sections \ref{sec:les}--\ref{sec:GFC} below for the example of local eigenvalue statistics, such combinations cover all eigenvalue and eigenvector statistics of interest. Performing a Taylor expansion of each summand in \eqref{GFC_telescope},  we find that we need to differentiate Green functions in the entries of $H$. By \eqref{res_exp}, this results in a sum over various polynomials in the Green function entries. The local law, for instance through \eqref{121414h54}, provides the required control of such polynomials. Note that in order to obtain information about the distribution of the eigenvalues on small scales, the imaginary part $\eta$ of the Green function spectral parameter has to be chosen very small, and such bounds on the Green function are a highly nontrivial input. See Lemma \ref{lem:GFC} below for a precise statement and proof of Green function comparison.

The Green function comparison method has been successfully applied to many different problem in random matrix theory: local eigenvalue distribution in the bulk \cite{MR2981427,EKYYERGII}, local eigenvalue distribution near the edge \cite{EYYrigi}, distribution of the eigenvectors \cite{KnowlesYinEig},  distribution of outliers in deformed matrix models \cite{KnowlesYinIso,KnowlesYinOutliers} and even deriving large deviation estimates for the Green function \cite{KnowlesYinIso,BourgadeCirc2,BaoErdosblockband}.
 In some of these applications, such as \cite{KnowlesYinIso,KnowlesYinOutliers},  the Green function comparison method is used not only to   obtain an upper bound on   the difference between the two ensembles $H'$ and $H''$ but to actually analyse this difference and hence understand the failure of universality. For instance, the distribution of outliers of deformed Wigner matrices in general depend on the third and fourth moments of the matrix entries, and the Green function comparison method can be used to compute the dependence of this distribution on the moments of $H$ in full generality \cite{KnowlesYinIso,KnowlesYinOutliers}.

In these notes we present the Green function comparison method as applied to the local eigenvalue statistics in the bulk spectrum.

\subsection{Local eigenvalue statistics} \label{sec:les}

A central problem in random matrix theory is to characterize the local eigenvalue statistics. It is best addressed using \emph{correlation functions} of the eigenvalue process. To simplify the notation somewhat, we assume that $H$ has an absolutely continuous law\footnote{The case of general $H$ may be easily recovered by considering $H^\epsilon \deq H + \epsilon V$ instead of $H$, where $V$ is a GOE or GUE matrix and $\epsilon > 0$. Then $H^\epsilon$ has an absolutely continuous law for all $\epsilon > 0$, and the eigenvalue distribution of $H$ coincides with that of $H^\epsilon$ after taking the limit $\epsilon \downarrow 0$.}. By the Weyl integration formula (see e.g.\ \cite[Sec. 4.1]{agz}, \cite[Sec. 2.6.1]{TAO2}, or \cite[Sec. 4.1]{PasturBook}),   we find that the eigenvalues of $H$ also have an absolutely continuous law.

For any $k = 1,2, \dots, N$ we define the \emph{$k$-point intensity measure} of the eigenvalue process $\sum_{i = 1}^N \delta_{\lambda_i}$ as the measure $\rho_k$ on $\R^k$ defined by
\begin{equation*}
\int f(x_1, \dots, x_k) \, \rho_k(\dd x_1, \dots, \dd x_k) \;\deq\;\E \sideset{}{^*}\sum_{i_1, \dots, i_k = 1}^N f(\lambda_{i_1}, \dots, \lambda_{i_k})
\end{equation*}
for bounded and measurable $f$,
where the star next to the sum means that the sum is restricted to $k$-tuples $i_1, \dots, i_k$ of distinct indices. Since the law of the eigenvalues of $H$ is absolutely continuous, we find that $\rho_k(\dd x_1, \dots, \dd x_k) = p_k(x_1, \dots, x_k) \, \dd x_1 \cdots \dd x_k$ has a density $p_k$, called the \emph{$k$-point correlation function} of the eigenvalue process. Formally, $p_k$  can be written as
\begin{equation} \label{def_p_k}
p_k(x_1, \dots, x_k) \;=\; \E \sideset{}{^*}\sum_{i_1, \dots, i_k = 1}^N \delta(x_1 - \lambda_{i_1}) \cdots \delta(x_k - \lambda_{i_k})\,,
\end{equation}
where $\delta$ is the Dirac delta function.  

The interpretation of $\rho_k(\dd x_1, \dots, \dd x_k)$ is the expected number of $k$-tuples of eigenvalues lying in the disjoint sets $\dd x_1, \dots, \dd x_k$, so that $p_k(x_1, \dots, x_k)$ is the expected density of $k$-tuples of eigenvalues near the distinct locations $x_1, \dots, x_k$. Moreover, it is immediate that $p_k$ is $\frac{N!}{(N - k)!}$ times the $k$-point marginal of the joint symmetrized probability density of the eigenvalues $\lambda_1, \dots, \lambda_N$.

In order to see individual eigenvalues, we need to zoom into the spectrum to the scale of the typical eigenvalue spacing.  We denote by
\begin{equation*}
\varrho_E \;\deq\; \frac{\varrho(\dd E)}{\dd E} \;=\; \frac{1}{2\pi} \sqrt{(4 - E^2)_+}
\end{equation*}
the density of the semicircle distribution at $E \in (-2,2)$.   By the semicircle law, the typical density of eigenvalues near the energy $E \in (-2,2)$ is $N \varrho_E$, so that the typical eigenvalue spacing is of order $(N \varrho_E)^{-1}$. In order to perform the zooming in, we define the \emph{local $k$-point intensity measure around the energy $E \in (-2,2)$}, denoted by $\rho_{k,E}$, as the $k$-point intensity measure of rescaled eigenvalue process
\begin{equation} \label{rescaled_ev_process}
\sum_{i = 1}^N \delta_{N \varrho_E (\lambda_i - E)}\,.
\end{equation}
Explicitly,
\begin{equation*}
\rho_{k,E}(\dd u_1, \dots, \dd u_k) \;=\; \rho_k \pbb{E + \frac{\dd u_1}{N \varrho_E}, \dots, E + \frac{\dd u_1}{N \varrho_E}}\,.
\end{equation*}
The measure $\rho_{k,E}$ has density
\begin{equation} \label{def_pE}
p_{k,E}(u_1, \dots, u_k) \;\deq\; \frac{1}{(N \varrho_E)^k} \, p_k \pbb{E + \frac{u_1}{N \varrho_E}, \dots, E + \frac{u_1}{N \varrho_E}}\,,
\end{equation}
the \emph{local $k$-point correlation function at energy $E$}.
Note that $p_{k,E}(u_1, \dots, u_k)$ has the interpretation of the expected density of $k$-tuples of eigenvalues near the distinct locations $u_1, \dots, u_k$ \emph{on the microscopic scale} where the typical separation of the eigenvalues is $1$. We therefore expect $p_{k,E}$ to have a limit as $N \to \infty$, for fixed $E \in (-2,2)$ and $k \in \N$. Indeed, in a series of pioneering works \cite{Gaudin, Dyson, Mehta}  Gaudin, Dyson, and Mehta proved that if $H$ is the GUE then
\begin{equation} \label{det_process}
p_{k,E}(u_1, \dots, u_k) \;\to\; \det \pb{K(u_i - u_j)}_{i,j = 1}^k
\end{equation}
weakly as $N \to \infty$, where
\begin{equation*}
K(u) \;\deq\; \frac{\sin (\pi u)}{\pi u}
\end{equation*}
is the \emph{sine kernel}. Thus, the correlation functions in the limit have a determinantal structure. Generally, a point process whose correlation functions have a determinantal structure as on the right-hand side of \eqref{det_process} is called \emph{determinantal}. We have therefore seen that the rescaled eigenvalue process \eqref{rescaled_ev_process} converges as $N \to \infty$ to a determinantal point process with kernel $K$. A similar formula holds for the GOE with a more complicated kernel $K$.

Returning to the analogy of the central limit theorem, the limiting behaviour of $p_{k,E}$ for the Gaussian ensembles GUE and GOE can be determined thanks to an explicit computation using the ``integrable'' nature of the Gaussian distribution. The question of universality of the local eigenvalue statistics is whether \eqref{det_process} is true for other Wigner matrices as well.

\subsection{Green function comparison for local eigenvalue statistics} \label{sec:GFC}
We now explain how the Green function comparison method can be used to prove that the asymptotic behaviour of $p_{k,E}$ only depends on the first four moments of the Wigner ensembles.

\begin{Th} \label{thm:fourmoment}
Let $\E'$ and $\E''$ be Wigner ensembles with local $k$-point correlation functions \eqref{def_pE} denoted by $p_{k,E}'$ and $p_{k,E}''$ respectively. Suppose that $\E'$ and $\E''$ match to order 4. Then for any fixed $k \in \N$, $E \in (-2,2)$, and $f \in \cal C^1_c(\R^k)$ there exists a $\xi > 0$ such that
\begin{equation} \label{def_vague_conv}
\int (p_{k,E}' - p_{k,E}'')(u_1, \dots, u_k) \, f(u_1, \dots, u_k) \, \dd u_1 \cdots \dd u_k \;=\; O(N^{-\xi})\,.
\end{equation}
\end{Th}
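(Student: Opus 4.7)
The plan is to apply the Green function comparison method outlined in Section \ref{sec:GFC_overview}: I will represent $\int f \cdot p_{k,E}$, up to small errors, as an expectation $\E F(H)$ of a bounded polynomial $F$ in entries of the resolvent $G(z)$ at spectral parameters with small imaginary part, and then show $|\E' F(H) - \E'' F(H)| = O(N^{-\xi})$ by a Lindeberg telescoping over the upper-triangular entries, using the four-moment matching \eqref{moment_match} to cancel the first five Taylor terms in each swap.

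For the first step, I would fix a small $\epsilon > 0$ and set $\eta \deq N^{-1-\epsilon}$, so that $\theta_\eta(\lambda - E_j) = \pi^{-1} \im (\lambda - E_j - \ii \eta)^{-1}$ resolves eigenvalues on a scale slightly below the typical microscopic spacing. By inclusion--exclusion on coincident indices and the identity $\sum_i \theta_\eta(\lambda_i - E_j) = \pi^{-1} \im \Tr G(E_j + \ii \eta)$, the distinct-index sum $\sum^*_{i_1, \dots, i_k} \prod_j \theta_\eta(\lambda_{i_j} - E_j)$ equals a fixed polynomial combination of averaged traces such as $N^{-1} \Tr G(E_j + \ii \eta)$ and auxiliary lower-order traces like $N^{-1} \Tr[G(z_1) G(z_2)]$ (coming from coincident pairs). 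Integrating against $f$ in the rescaled variables produces a functional of the form
\bes
F(H) \;=\; \int f(u_1, \dots, u_k) \, P\pb{G(E_1 + \ii \eta), \dots, G(E_k + \ii \eta)} \, du_1 \cdots du_k \,,
\ees
where $E_j = E + u_j/(N\varrho_E)$ and $P$ is a fixed polynomial in the indicated matrices. The $\cal C^1$ regularity of $f$ together with the rigidity bound of Theorem \ref{thm:rig} shows that the $\eta$-smoothing introduces an error $O_\prec(N^{-\epsilon})$ in the integral against $p_{k,E}$, and the use of $\eta < N^{-1}$ is justified by the extension Theorem \ref{thm:ext1}. It thus suffices to establish $|\E' F(H) - \E'' F(H)| = O(N^{-\xi})$.

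The second step is the Lindeberg telescoping: enumerate the upper-triangular entries, construct intermediate matrices $H^\gamma$ interpolating between $H'$ and $H''$ by swapping one entry at a time, and telescope as in \eqref{GFC_telescope}. For each swap at position $(i,j)$, I would Taylor-expand $F(H^\gamma) - F(H^{\gamma-1})$ to order $5$ in the swapped entry around the matrix with that entry set to zero. By \eqref{moment_match} the expectations of the zeroth through fourth-order Taylor terms coincide under $\E'$ and $\E''$ and so cancel in the telescoped sum; only the fifth-order remainder survives, bounded in expectation by $C \|\partial^5_{H_{ij}} F\|_\infty \cdot \E|H_{ij}|^5 \leq C N^{-5/2} \|\partial^5_{H_{ij}} F\|_\infty$ by Definition \ref{def:Wigner}(iii). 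Summing over the $\gamma_N = O(N^2)$ swaps leaves a total error bounded by $O(N^{-1/2}) \cdot \max \|\partial^5_{H_{ij}} F\|_\infty$.

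The main obstacle is the bound on the fifth derivative of $F$. Differentiating via $\partial_{H_{kl}} G = -G E_{kl} G$ produces additional Green function entries, and the explicit identity $\partial^m_{H_{kl}} \Tr G(z) = (-1)^m m! \, G_{lk}^{m-1} (G^2)_{lk}$ combined with $|G_{lk}| \prec \Psi(z)$ for $l \neq k$ (from \eqref{121414h54}) and $|(G^2)_{lk}| \prec 1/\eta$ (via the Ward identity \eqref{Ward}) yields $|\partial^m_{H_{kl}}(N^{-1}\Tr G(z))| \prec \Psi(z)^{m-1}/(N\eta)$. At $z = E_j + \ii \eta$ with $E \in (-2,2)$ and $\eta = N^{-1-\epsilon}$ one has $\Psi(z) \asymp N^\epsilon$, so $|\partial^m_{H_{kl}}(N^{-1}\Tr G)| \prec N^{m\epsilon}$; Leibniz then produces $|\partial^5_{H_{ij}} F| \prec N^{5\epsilon}$. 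The total telescoped error is therefore $O_\prec(N^{-1/2 + 5\epsilon})$, which is $O(N^{-\xi})$ for $\xi \in (0, 1/2 - 5\epsilon)$ once $\epsilon < 1/10$. The delicate point is precisely this structural derivative bound: naive estimation giving one factor $1/\eta$ per differentiation step would be far too weak ($1/\eta^5 \asymp N^{5(1+\epsilon)}$); the gain exploits the fact that $\partial^m_{H_{kl}} \Tr G$ consists of scalar powers of a single off-diagonal entry $G_{lk}$ (small by the local law) together with just one Ward-controlled factor $(G^2)_{lk}$, rather than $m$ such factors.
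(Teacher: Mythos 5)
Your overall strategy is the same as the paper's: smear the $k$-point correlation function by $\theta_\eta$ at scale $\eta = N^{-1-\epsilon}$ and use rigidity to control the smearing error (the paper's Lemma \ref{lem:approx_eta}), express the smeared correlation function through products and traces of Green functions, apply a Lindeberg telescoping over the $O(N^2)$ upper-triangular entries, Taylor expand to fifth order in each swap so that the four-moment matching cancels the first five terms, and control the fifth-order remainder via the entrywise bound $\abs{G_{ij}} \prec N^\epsilon$ from Theorem \ref{thm:ext1}. This is exactly the paper's Green function comparison (Lemma \ref{lem:GFC}) plus desmearing argument (Lemma \ref{lem:approx_eta}).

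The one genuine issue is the derivative identity at the core of your step (iii). You write $\partial^m_{H_{kl}} \Tr G(z) = (-1)^m m!\, G_{lk}^{m-1}(G^2)_{lk}$, which treats $H_{kl}$ as a free variable. For a Hermitian matrix, the perturbation in position $(k,l)$ necessarily perturbs position $(l,k)$ as well, so the correct one-parameter family is $H + t\Delta$ with $\Delta$ the rank-$\leq 2$ Hermitian matrix supported on $\{k,l\}$ (this is precisely \eqref{V_explicit}). Expanding $G(H + t\Delta)$ via \eqref{res_exp} then generates, for each derivative order $m$, a \emph{sum} of monomials of the form $N^{-1}\Tr[G E^{(c_1 d_1)} G \cdots G E^{(c_m d_m)} G]$ with $c_j, d_j \in \{k,l\}$. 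Among these are terms such as $G_{ll}(G^2)_{kk}$ that contain no off-diagonal factor $G_{kl}$ at all, contradicting the claimed structure "scalar power of a single off-diagonal $G_{lk}$ times one $(G^2)_{lk}$." Your final bound would need to account for all such terms. Fortunately the worst-case term still gives the same order (each of the bounded number of factors is $\prec N^{\epsilon}$ and there is one Ward-controlled $(G^2)$-type factor giving $\prec N^\epsilon/\eta$), so the argument survives once this is repaired. A secondary slip: in the bulk at $\eta = N^{-1-\epsilon}$ one has $\im G_{ii} \prec N^\epsilon$ rather than $O(1)$, so $\abs{(G^2)_{lk}} \prec N^\epsilon/\eta$, not $\prec 1/\eta$; this shifts the final exponent (e.g.\ from $N^{-1/2 + 5\epsilon}$ to $N^{-1/2 + 6\epsilon}$) but is harmless for small enough $\epsilon$. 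The paper sidesteps both issues by working directly with the resolvent expansion \eqref{res_exp_T} in the rank-$\leq 2$ Hermitian perturbation and estimating the remainder monomials via the blunt entrywise bound \eqref{bound_monom}.
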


The rest of this subsection is devoted to the proof of Theorem \ref{thm:fourmoment} using Green function comparison. We have to express the local correlation function $p_{k,E}$ in terms of the Green function. To this end, we smear out   $p_{k,E}$ using the approximate delta function $\theta_\eta$ from \eqref{def_theta}, which can be written using the Green function as in \eqref{6121414h44}. Fix $\epsilon > 0$ and choose a scale $\eta \deq N^{-1 - \epsilon}$ slightly smaller than the typical eigenvalue spacing $(N \varrho_E)^{-1}$. Then we define the corresponding microscopic scale $\tilde \eta \deq N \varrho_E \eta = \varrho_E N^{-\epsilon}$ and the corresponding smeared-out observable
\begin{equation} \label{def_f_smeared}
f_{\tilde \eta}(u_1, \dots, u_k) \;\deq\; \int \dd v_1 \cdots \dd v_k \, f(v_1, \dots, v_k) \, \theta_{\tilde \eta}(u_1 - v_1) \cdots \theta_{\tilde \eta}(u_k - v_k)\,.
\end{equation}

The main work in the proof is to estimate \eqref{def_vague_conv} with $f$ replaced by $f_{\tilde \eta}$. Having done this, going back to $f$ will easily follow provided we have an upper bound on $p_{k,E}'$ and $p_{k,E}''$; such a bound will be obtained from the local semicircle law (see Lemma \ref{lem:approx_eta} below).

We transfer the smearing-out to the local $k$-point correlation function using
\begin{equation} \label{smearing_corr_f}
\int p_{k,E}(u_1, \dots, u_k) \, f_{\tilde \eta}(u_1, \dots, u_k) \, \dd u_1 \cdots \dd u_k
\;=\; \int p_{k,E, \tilde \eta}(u_1, \dots, u_k) \, f(u_1, \dots, u_k) \, \dd u_1 \cdots \dd u_k\,,
\end{equation}
where we defined the smeared-out local $k$-point correlation function
\begin{equation}
p_{k,E, \tilde \eta}(u_1, \dots, u_k) \;\deq\; \int p_{k,E}(v_1, \dots, v_k) \theta_{\tilde \eta}(u_1 - v_1) \cdots \theta_{\tilde \eta}(u_1 - v_1) \,\dd v_1 \cdots \dd v_k\,.
\end{equation}
Using the macroscopic variables
\begin{equation} \label{def_local_coordinates}
x_i \;\deq\; E + \frac{u_i}{N \varrho_E}
\end{equation}
associated with the microscopic variables $u_i$, we find
\begin{align*}
p_{k,E, \tilde \eta}(u_1, \dots, u_k)
&\;=\; \frac{1}{(N \varrho_E)^k} \int p_k(y_1, \dots, y_k) \, \theta_\eta(x_1 - y_1) \cdots \theta_\eta(x_k - y_k) \, \dd y_1 \cdots \dd y_k
\\
&\;=\; \frac{1}{(N \varrho_E)^k} \E \sideset{}{^*}\sum_{i_1, \dots, i_k} \theta_\eta(x_1 - \lambda_{i_1}) \cdots \theta_\eta(x_k - \lambda_{i_k})\,.
\end{align*}

From now on, to simplify notation, we only consider the case $k = 2$; other $k$ are handled in the same way with more complicated notation. We find
\begin{align*}
p_{2,E, \tilde \eta}(u_1, u_2) &\;=\;  \frac{1}{(N \varrho_E)^2} \E \sum_{i,j} \theta_\eta(x_1 - \lambda_i) \, \theta_\eta(x_2 - \lambda_j) -  \frac{1}{(N \varrho_E)^2} \E \sum_{i} \theta_\eta(x_1 - \lambda_i) \, \theta_\eta(x_2 - \lambda_i)
\\
&\;=\; \frac{1}{(N \varrho_E)^2} \E \tr \theta_\eta(x_1 - H) \tr \theta_\eta(x_2 - H) - \frac{1}{(N \varrho_E)^2}\E \tr \pb{\theta_\eta(x_1 - H) \theta_\eta(x_2 - H)}
\\
&\;=\; \frac{1}{\pi^2 (N \varrho_E)^2} \E \tr \pb{\im G(z_1)} \tr \pb{\im G(z_2)} - \frac{1}{\pi^2 (N \varrho_E)^2} \E \tr \pb{\im G(z_1) \im G(z_2)}\,,
\end{align*}
where $z_i \deq x_i + \ii \eta$ for $i = 1,2$.
Here in the last step we used \eqref{def_theta}. This formula is the desired representation of the smeared-out local two-point correlation function in terms of the Green function.

Recalling \eqref{smearing_corr_f} and the definition \eqref{def_local_coordinates}, we find that Theorem \ref{thm:fourmoment} follows immediately from the following two lemmas.

\begin{lem}[Green function comparison] \label{lem:GFC}
Suppose that $\E'$ and $\E''$ match to order $4$.
Fix $E \in (-2,2)$ and $\epsilon \in (0,1/20)$. Then there exists a $\xi > 0$ such that for $\eta \deq N^{-1-\epsilon}$ we have
\begin{equation} \label{GFC1}
\frac{1}{N^2}(\E' - \E'') \tr G(z) \tr G(w) \;=\; O(N^{-\xi})
\end{equation}
and
\begin{equation} \label{GFC2}
\frac{1}{N^2}(\E' - \E'') \tr \pb{G(z) G(w)} \;=\; O(N^{-\xi})
\end{equation}
for any $z,w \in \C$ satisfying $\abs{\im z} = \abs{\im w} = \eta$ and $\abs{\real z - E} + \abs{\real w - E} \leq C / N$.
\end{lem}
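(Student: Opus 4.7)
I would implement the Green function comparison (Lindeberg) scheme outlined in Section~\ref{sec:GFC_overview}. Set $F(H) \deq N^{-2}\tr G(z)\tr G(w)$ for \eqref{GFC1} and $F(H) \deq N^{-2}\tr(G(z)G(w))$ for \eqref{GFC2}. Couple $H'$ and $H''$ independently, enumerate the upper-triangular entries as $\gamma = 1,\dots,\gamma_N$ with $\gamma_N = N(N+1)/2$, and define the interpolating matrices $H^0 = H', H^1, \dots, H^{\gamma_N} = H''$, where $H^\gamma$ agrees with $H''$ at the first $\gamma$ independent entries and with $H'$ elsewhere. The telescoping identity \eqref{GFC_telescope} then reduces the problem to estimating $\E[F(H^\gamma) - F(H^{\gamma-1})]$ for each $\gamma$, and it is enough to produce a per-step bound of order $o(N^{-2})$.

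For each fixed $\gamma$, let $(a,b)$ be the swapped upper-triangular position and let $R$ be the matrix obtained from $H^{\gamma}$ (equivalently $H^{\gamma-1}$) by zeroing the entries at $(a,b)$ and $(b,a)$. Writing $V'$ and $V''$ for the rank-$\leq 2$ Hermitian perturbations of $R$ corresponding to $H^{\gamma-1}$ and $H^\gamma$, I would Taylor-expand $F(R+V')$ and $F(R+V'')$ in the real and imaginary parts of the swapped entry up to order~$4$, with integral remainders of order~$5$. Since $R$ is independent of $V',V''$, and Definition~\ref{def:match} guarantees that all joint moments of $V'$ and $V''$ up to order~$4$ coincide, the expectations of all Taylor terms of orders $0$--$4$ cancel exactly. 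Only the fifth-order remainders survive, and each is bounded in absolute value by $C\,\|V\|_{\max}^{5}\sup_{t\in[0,1]}\absb{D^5 F(R+tV)}$. Using $\|V\|_{\max} \prec N^{-1/2}$ from Definition~\ref{def:Wigner}(iii), the telescoped sum over the $\gamma_N \asymp N^2$ interpolation steps will be $\prec N^{-1/2}\sup\absb{D^5 F}$, so it suffices to establish $\sup\absb{D^5 F} \prec N^{C\epsilon}$ for some fixed constant~$C$ in order to conclude with any $\xi \in (0,\, 1/2 - C\epsilon)$, which is a nonempty range because $\epsilon < 1/20$.

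The main obstacle, and the place where the local semicircle law enters critically, is precisely this fifth-derivative bound. The difficulty is that $\abs{\im z} = \abs{\im w} = \eta = N^{-1-\epsilon}$ lies \emph{below} the lower boundary $N^{-1+\tau}$ of the domain $\f S$ on which Theorem~\ref{Th3} was proved. I would invoke Theorem~\ref{thm:ext1}, which extends the local law to all $\eta > 0$ (with weaker but still usable bounds), to conclude $\max_{i,j}\absb{G_{ij}(z)} \prec 1 + \Psi(z) \prec N^\epsilon$ uniformly in $i,j$ and in $z$ over the relevant window. Using the explicit formula $\partial_{ab}^k G_{ij} = (-1)^k k!\, G_{ia} G_{ba}^{k-1} G_{bj}$ and its analogue in $H_{ba}$, each monomial appearing in $D^5 F$ is a polynomial in entries of $G(z), G(w)$ together with a bounded number of entries of the matrix products $(G(z)^2)_{ba}$, $(G(w)^2)_{ba}$, or $(G(z)G(w))_{ba}$. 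These mixed entries I would control by the Ward identity \eqref{Ward} and Cauchy--Schwarz, which give e.g.\ $\absb{(G(z)^2)_{ba}} \leq \eta^{-1}\sqrt{\im G_{aa}(z)\,\im G_{bb}(z)} \prec N^{1+\epsilon}$; combined with the prefactor $N^{-2}$ in the definition of $F$, each monomial is $\prec N^{C\epsilon}$ as required.

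One bookkeeping point to dispatch is that the local law has to be applied at the auxiliary matrix $R$ rather than at the genuine Wigner matrix $H^\gamma$. Since $R$ and $H^\gamma$ differ by a rank-$\leq 2$ Hermitian perturbation of operator norm $\prec N^{-1/2}$, I would use the resolvent expansion \eqref{res_exp} to transfer entrywise Green function bounds from $H^\gamma$ (or $H^{\gamma-1}$) to $R$, and to any matrix $R+tV$ along the Taylor segment, with only an additional $N^{O(\epsilon)}$ multiplicative factor that is harmlessly absorbed into the $N^{C\epsilon}$ estimate. Assembling the telescoping, the Taylor cancellation through order~$4$, the fifth-order remainder bound, and the uniform control of derivatives via Theorem~\ref{thm:ext1} and the Ward identity yields \eqref{GFC1} and \eqref{GFC2} simultaneously, with the same choice of $\xi > 0$.
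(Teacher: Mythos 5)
Your proposal is correct and follows essentially the same route as the paper: telescoping over the $\asymp N^2$ entry swaps, a fourth-order expansion around the matrix with the swapped entry zeroed out, exact cancellation of orders $0$--$4$ by moment matching, and a fifth-order remainder of size $\prec N^{-5/2} \cdot N^{O(\epsilon)}$ per step, totalling $\prec N^{-1/2 + O(\epsilon)}$. The only real difference is cosmetic: you Taylor-expand the scalar observable $F$ in the entries of the perturbation, while the paper inserts the resolvent expansion $T = \sum_{\ell=0}^{4} R(-\Delta R)^\ell + T(-\Delta R)^5$ directly into the product of traces and sorts the resulting monomials by total degree in $\Delta$. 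These are the same computation in two coordinate systems; the paper's bookkeeping via the classes $\cal M_{\ell,p}$ with $p \leq 2\ell$, $\ell \geq 5$ makes the final exponent $\xi = 1/2 - 10\epsilon - \delta$ explicit, whereas your account leaves the constant $C$ in $N^{C\epsilon}$ unspecified (a small numerical correction: with $\eta = N^{-1-\epsilon}$ one has $\im G_{ii} \prec N^\epsilon$ in the bulk, so your Ward/Cauchy--Schwarz bound gives $|(G(z)^2)_{ba}| \prec N^{1+2\epsilon}$, not $N^{1+\epsilon}$; this shifts your $C$ but changes nothing qualitative, since $\epsilon < 1/20$ still leaves positive room). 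Your step that transfers entrywise bounds from $H^\gamma$ to $R$ and to points along the Taylor segment via the finite resolvent expansion is exactly the device the paper uses at the end of its proof; and the Ward identity, while a clean way to bound $(G^2)_{ba}$, is not strictly needed in the bulk since the crude entrywise estimate $|(G^2)_{ba}| \leq N \max_{i,j}|G_{ij}|^2 \prec N^{1+2\epsilon}$ already suffices -- which is why the paper does not invoke it here.
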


\begin{lem}[Removal of smearing-out] \label{lem:approx_eta}
Fix $E \in (-2,2)$ and $\epsilon > 0$. 
Let $f \in \cal C^1_c(\R^2)$ and denote by $f_{\tilde \eta}$ its smeared-out version from \eqref{def_f_smeared} with $\tilde \eta = \varrho_E N^{-\epsilon}$. Then there exists a $\xi > 0$ such that
\begin{equation*}
\int p_{k,E}(u_1,u_2) (f - f_{\tilde \eta})(u_1,u_2) \, \dd u_1 \, \dd u_2 \;=\; O(N^{-\xi})\,,
\end{equation*}
where $p_{k,E}$ is the local two-point correlation function at energy $E$ of a Wigner ensemble.
\end{lem}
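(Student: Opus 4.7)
The plan is to exploit the $\mathcal{C}^1$ regularity of $f$ to control $f - f_{\tilde \eta}$ pointwise, and to bound the integral of $p_{2,E}$ against this control using the local semicircle law (Theorem \ref{thm:llsc}) together with the compact support of $f$. Since $\tilde \eta = \varrho_E N^{-\epsilon}$ is small, smoothing by $\theta_{\tilde \eta} \otimes \theta_{\tilde \eta}$ should change a $\mathcal{C}^1$ function by $O(\tilde \eta \log(1/\tilde \eta))$ on the core of its support, but the heavy Cauchy tail of $\theta_{\tilde \eta}$ requires a separate treatment, because the intensity measure $p_{2,E}$ has total mass of order $N^2$.

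I would start by writing
\begin{equation*}
(f - f_{\tilde \eta})(u_1, u_2) \;=\; \int \qb{f(u_1, u_2) - f(u_1 - t_1, u_2 - t_2)} \theta_{\tilde \eta}(t_1) \theta_{\tilde \eta}(t_2) \, \dd t_1 \, \dd t_2\,,
\end{equation*}
and splitting the $t$-integration into a core region $\{\max(|t_1|, |t_2|) \leq K\}$ and its complement, with $K$ chosen to be any fixed constant. Let $A > 0$ be such that $\supp f \subset [-A, A]^2$. In the core region, use the Lipschitz bound $|f(u) - f(u - t)| \leq \|\nabla f\|_\infty (|t_1| + |t_2|)$ and Fubini. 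The $t$-integral contributes a factor $\int_{|t_i| \leq K}(|t_1| + |t_2|) \theta_{\tilde \eta}(t_1) \theta_{\tilde \eta}(t_2) \, \dd t \leq C \tilde \eta \log(K/\tilde \eta)$, and the $u$-integral of $p_{2,E}$ over $[-A - K, A + K]^2$ equals, after change of variables $x_i = E + u_i/(N \varrho_E)$, the expected number of ordered pairs of eigenvalues in a macroscopic-scale interval $I$ of width $O(1/N)$ centred at $E$; by Theorem \ref{thm:llsc} the eigenvalue count $N_I$ satisfies $N_I \prec 1$, hence $\E N_I^2 \leq N^\delta$ for any $\delta > 0$ (using the trivial deterministic bound $N_I \leq N$ to promote stochastic domination to an expectation bound).

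For the tail $|t_1| > K$ (the case $|t_2| > K$ being symmetric), estimate trivially $|f(u) - f(u - t)| \leq 2\|f\|_\infty \ind{u \in [-A,A]^2 \cup (t + [-A,A]^2)}$. The Cauchy tail bound $\int_{|t_1| > K} \theta_{\tilde \eta}(t_1) \, \dd t_1 = O(\tilde \eta / K)$ reduces matters to estimating, uniformly in $t$, the $u$-integral of $p_{2,E}$ over the two relevant microscopic windows. For each $t_1$ the shifted window corresponds to a macroscopic interval of size $O(1/N)$ centred at $E + t_1/(N \varrho_E)$, and Theorem \ref{thm:llsc} again bounds the expected eigenvalue count there by $N^\delta$ for any $\delta > 0$, as long as this interval lies in $[-3, 3]$; outside this range, i.e.\ for $|t_1| \gtrsim N$, Proposition \ref{prop:bound_H} ensures that no eigenvalues lie there with overwhelming probability, contributing negligibly. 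The tail contribution is therefore $O(N^\delta \tilde \eta / K) = O(N^{\delta - \epsilon})$ for any $\delta > 0$. Combining the two estimates gives the claim for any $\xi < \epsilon$; the general case $k \geq 3$ is handled identically up to notational changes.

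The main obstacle is the heavy tail of $\theta_{\tilde \eta}$: smoothing is unavoidable for passing between $p_{2,E}$ and its Green function representation, and the Cauchy tails generate contributions from eigenvalue pairs at essentially arbitrary distances in the spectrum. Bounding these uniformly is exactly the content of the local semicircle law, supplemented near and beyond the spectral edge by the rigidity-type bound on $\|H\|$ from Proposition \ref{prop:bound_H}.
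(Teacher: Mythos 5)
Your proof is correct and takes a genuinely different route from the paper's. The paper partitions the $u$-integration into a core $\{\abs{u_1},\abs{u_2}\le\zeta\}$ with $\zeta=N^\xi$ and its complement, uses the pointwise bound $\norm{f-f_{\tilde\eta}}_\infty$ together with the Cauchy decay of $f_{\tilde\eta}$ outside the core, and then invokes eigenvalue rigidity (Theorem \ref{thm:rig}) on a high-probability event to control the per-eigenvalue locations $N\varrho_E(\lambda_i-E)$ as a near-lattice, which is what allows summing the resulting four terms. You instead partition the $t$-integration in the convolution at a fixed constant scale $K$, reducing everything to a uniform upper bound on expected eigenvalue-pair counts in windows of macroscopic width $O(1/N)$; this follows directly from Theorem \ref{thm:llsc} (semicircle law on small scales) upgraded to an $L^2$ expectation bound via the trivial estimate $N_I\le N$ and Lemma \ref{lem: exp prec}, with Proposition \ref{prop:bound_H} handling the far tail $\abs{t_1}\gtrsim N$. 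Your route is slightly more elementary in that it uses only eigenvalue counting and not the finer location control of rigidity, and it also naturally exhibits the logarithmic loss $\tilde\eta\log(1/\tilde\eta)$ coming from the non-integrable first moment of $\theta_{\tilde\eta}$, which the paper's assertion $\norm{f-f_{\tilde\eta}}_\infty=O(\tilde\eta)$ glosses over (harmlessly, since a log factor is absorbed into any $N^\delta$). Both approaches yield a valid $\xi>0$; yours gives any $\xi<\epsilon$, the paper works with $\xi<\epsilon/3$, and neither exponent is claimed to be optimal.
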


\begin{proof}[Proof of Lemma \ref{lem:GFC}]
We write the left-hand sides of \eqref{GFC1} and \eqref{GFC2} as
\begin{equation}
(\E' - \E'') \frac{1}{N^2} \sum_{i,j} G_{jj}(z) G_{ii}(w) \,, \qquad (\E' - \E'') \frac{1}{N^2} \sum_{i,j} G_{ij}(z) G_{ji}(w)
\end{equation}
respectively. We only prove \eqref{GFC1}; the proof of \eqref{GFC2} is analogous. We use the independent coupling $(H',H'')$ from Section \ref{sec:GFC_overview} and the telescopic sum from \eqref{GFC_telescope}, with $f(H) \deq \frac{1}{N^2} \sum_{i,j} G_{ii}(z) G_{jj}(w)$. For each $\gamma = 1, \dots, \gamma_N$ we introduce the Hermitian matrix $W^\gamma = (W^\gamma_{ij})$ through \begin{equation*}
W_{ij}^\gamma \;\deq\;
\begin{cases}
H_{ij}'' & \text{if } \phi(i,j) < \gamma
\\
0 & \text{if } \phi(i,j) = \gamma
\\
H_{ij}' & \text{if } \phi(i,j) > \gamma
\end{cases}
\end{equation*}
for $i \leq j$. Thus, the matrices $H^\gamma - W^\gamma$ and $H^{\gamma - 1} - W^\gamma$ have rank at most two, and they are independent of $W^\gamma$.

For $\gamma = 1, \dots, \gamma_N$ we define the Green functions
\begin{equation*}
R^\gamma(z) \;\deq\; (W^\gamma - z)^{-1}\,, \qquad S^\gamma(z) \;\deq\; (H^{\gamma - 1} - z)^{-1}\,, \qquad T^\gamma(z) \;\deq\; (H^\gamma - z)^{-1}\,.
\end{equation*}
Since $\gamma_N = O(N^2)$, by \eqref{GFC_telescope} it suffices to prove that for each $\gamma = 1, \dots, \gamma_N$ we have
\begin{equation} \label{GFC_step_est}
\E \frac{1}{N^2} \sum_{i,j} T^\gamma_{ii}(z) T^\gamma_{jj}(w) - \E \frac{1}{N^2} \sum_{i,j} S^\gamma_{ii}(z) S^\gamma_{jj}(w) \;=\; O(N^{-2 - \xi})\,.
\end{equation}
To prove \eqref{GFC_step_est}, it suffices to prove
\begin{align}
\E \frac{1}{N^2} \sum_{i,j} T^\gamma_{ii}(z) T^\gamma_{jj}(w)  &\;=\; \cal A(z,w) + O(N^{-2 - \xi})\,, \label{GFC_comp1}
\\
\E \frac{1}{N^2} \sum_{i,j} S^\gamma_{ii}(z) S^\gamma_{jj}(w)  &\;=\; \cal A(z,w) + O(N^{-2 - \xi})\,, \label{GFC_comp2}
\end{align}
for some $\cal A(z,w) \in \C$.
We focus on \eqref{GFC_comp1}. The idea is to perform a resolvent expansion of $T^\gamma$ around $R^\gamma$. Denoting by $\Delta^\gamma \deq H^\gamma - W^\gamma$, we have
\begin{equation} \label{res_exp_T}
T^\gamma \;=\; \sum_{\ell = 0}^4 R^\gamma (-\Delta^\gamma R^\gamma)^\ell + T^\gamma (-\Delta^\gamma R^\gamma)^5\,.
\end{equation}

From now on, we fix $\gamma = 1, \dots, \gamma_N$ satisfying $\phi(a,b) = \gamma$ with $a \leq b$, and consistently omit $\gamma$ from our notation. In particular,
\begin{equation} \label{V_explicit}
\Delta_{ij} \;=\; \delta_{ia} \delta_{jb} H''_{ij} + (1 - \delta_{ab}) \delta_{ib} \delta_{ja} H''_{ij}\,,
\end{equation}
so that $\Delta$ is a bounded-rank Hermitian matrix consisting of entries of $H''$; its rank is two if $a < b$ and one if $a = b$.

Plugging \eqref{res_exp_T} into the left-hand side of \eqref{GFC_comp1}, we get
\begin{multline} \label{GFC_expansion}
\E \frac{1}{N^2} \sum_{i,j} T^\gamma_{ii}(z) T^\gamma_{jj}(w)
\;=\; \E \frac{1}{N^2} \sum_{i,j} \pBB{\sum_{\ell = 0}^4 \pb{R(z) (-\Delta R(z))^\ell}_{ii} + \pb{T(z) (-\Delta R(z))^5}_{ii}}
\\
\times 
\pBB{\sum_{\ell = 0}^4 \pb{R(w) (-\Delta R(w))^\ell}_{jj} + \pb{T(w) (-\Delta R(w))^5}_{jj}}\,.
\end{multline}
We multiply out the parentheses to get a series of terms. We split it them into the \emph{main terms}, which contain a total power of $\Delta$ at most four, and \emph{remainder terms}. The main terms are large but their contribution depends only on the first four moments of $\Delta$, i.e.\ of $H''$, and they will be put into $\cal A(z,w)$. The remainder terms are small thanks to the local semicircle laws for $T$ and $R$ and the estimate
\begin{equation} \label{Delta_bounds}
\norm{\Delta_{ij}}_\ell \;\leq\; C_\ell \, \indb{\{i,j\} = \{a,b\}} N^{-1/2}\,,
\end{equation}
which follows from Definition \ref{def:Wigner} (iii).
Explicitly, we define
\begin{equation*}
\cal A(z,w) \;\deq\; \frac{1}{N^2} \sum_{i,j} \sum_{\ell_1, \ell_2 \geq 0} \ind{\ell_1 + \ell_2 \leq 4} \, \E \qB{\pb{R(z) (-\Delta R(z))^{\ell_1}}_{ii} \pb{R(w) (-\Delta R(w))^{\ell_2}}_{jj}}
\end{equation*}
Since $\Delta$ is independent of $R$, we find that $\cal A(z,w)$ depends on $(H',H'')$ only through $W$ and the first four moments of $H''$.

Repeating the same argument for \eqref{GFC_comp2}, we find the \emph{same} $\cal A(z,w)$ because the first four moments of $H''$ and $H'$ coincide.

What remains, therefore, is to estimate
\begin{equation} \label{GFC_splitTS}
\E \frac{1}{N^2} \sum_{i,j} T^\gamma_{ii}(z) T^\gamma_{jj}(w) - \cal A(z,w) \,, \qquad
\E \frac{1}{N^2} \sum_{i,j} S^\gamma_{ii}(z) S^\gamma_{jj}(w) - \cal A(z,w)\,.
\end{equation}
We again only concentrate on the first expression. It is given by a sum
\begin{equation} \label{sum_as_monom}
\E \frac{1}{N^2} \sum_{i,j} T^\gamma_{ii}(z) T^\gamma_{jj}(w) - \cal A(z,w) \;=\; \frac{1}{N^2} \sum_{i,j} \sum_{\alpha = 1}^C \E X_{ij}(\alpha)
\end{equation}
for some constant $C$, where $X_{ij}(\alpha)$ lies in the set $\cal M_{\ell,p}$, defined as the set monomials in the entries of $T(z), R(z), T(w), R(w)$, and $\Delta$, such that the total degree of the entries of $\Delta$ is $\ell$ and total degree of the entries of $T(z), R(z), T(w), R(w)$ is $p$. Moreover, we immediately find that for every $X_{ij}(\alpha) \in \cal M_{\ell,p}$ we have $p \leq 2\ell$.

We estimate each such monomial by using the entrywise bounds \eqref{Delta_bounds} and
\begin{equation} \label{bound_RT}
\abs{R_{ij}(\zeta)} + \abs{T_{ij}(\zeta)} \;\prec\; N^{\epsilon}
\end{equation}
for all $i,j$ and $\zeta \in \{z,w\}$. The estimate \eqref{bound_RT} is the crucial estimate that makes the Green function comparison method work; it is proved using the local semicircle law. Before proving it, we explain how to use it to conclude the proof of Lemma \ref{lem:GFC}.

Take a monomial $X \in \cal M_{\ell,p}$ with $\ell, p \leq C$. Using H\"older's inequality, \eqref{Delta_bounds}, and \eqref{bound_RT}, we find
\begin{align}
\abs{\E X} &\;\leq\; \pB{\max_{i,j} \norm{\Delta_{ij}}_{\ell + p}^\ell} \max_{i,j} \sum_{\zeta \in \{z,w\}} \pB{\norm{R_{ij}(\zeta)}_{\ell + p}^p + \norm{T_{ij}(\zeta)}_{\ell + p}^p}
\notag \\ \label{bound_monom}
&\;\leq\; N^{\delta - \ell /2 + p \epsilon}
\end{align}
for any fixed $\delta > 0$ and large enough $N$. Here we estimated $\norm{R_{ij}(\zeta)}_{p+\ell} \prec N^\epsilon$ (and similarly for $\norm{T_{ij}(\zeta)}_{p+\ell}$), by using the estimate $\abs{R_{ij}(\zeta)} \leq N^{\epsilon + \delta_1}$ on an event of probability at least $1 - N^{-D}$ for any large $D > 0$ and small $\delta_1 > 0$, for large enough $N$ (as follows from \eqref{bound_RT}), and the trivial bound $\abs{R_{ij}(\zeta)} \leq \eta^{-1} = N^{1 + \epsilon}$ for the complementary low-probability event.

Plugging the estimate \eqref{bound_monom} into \eqref{sum_as_monom} and recalling that $p \leq 2 \ell$ and $\ell = 5, \dots, 10$ for all $X_{ij}(\alpha)$, we find
\begin{equation*}
\E \frac{1}{N^2} \sum_{i,j} T^\gamma_{ii}(z) T^\gamma_{jj}(w) - \cal A(z,w) \;=\; O\pb{N^{-5/2 + 10 \epsilon + \delta}}
\end{equation*}
for any $\delta > 0$ and large enough $N$.
A similar bound holds for the second term of \eqref{GFC_splitTS}. We conclude that \eqref{GFC_comp1} and \eqref{GFC_comp2} hold with $\xi \deq 1/2 - 10 \epsilon - \delta$.

All that remains to complete the proof of Lemma \ref{lem:GFC} is the estimate \eqref{bound_RT}. For definiteness, we suppose that $\zeta = z$ and $\im z = \eta > 0$. (The case $\im z = - \eta$ is obtained simply by complex conjugation.) Since $T(z)$ is the Green function of a Wigner matrix, we immediately get from Theorem \ref{thm:ext1} that $\abs{T_{ij}(z)} \prec (N \eta)^{-1} \leq C N^\epsilon$. For the analogous bound for $R(z)$ we cannot invoke Theorem \ref{thm:ext1} because $W$ is not a Wigner matrix. However, $W$ differs from the Wigner matrix $H^\gamma \equiv H$ by $\Delta$, so that we may again perform a resolvent expansion, except that we now expand around $T$ instead of $R$:
\begin{equation*}
R_{ij} \;=\; \sum_{\ell = 0}^2 \pb{T (\Delta T)^\ell}_{ij} + \pb{R (\Delta T)^{3}}_{ij}\,.
\end{equation*}
Using the above bound on the entries of $T_{ij}$ and the trivial bound $\abs{R_{ik}} \leq \eta^{-1} \leq C N^{1 + \epsilon}$, we therefore get
\begin{equation*}
\abs{R_{ij}} \;\prec\; \sum_{\ell = 0}^2 N^\epsilon (N^{-1/2 + \epsilon})^\ell + N^{1 + \epsilon} (N^{-1/2 + \epsilon})^3 \;\prec\; N^{\epsilon}\,,
\end{equation*}
as claimed. This concludes the proof of Lemma \ref{lem:GFC}.
\end{proof}

\begin{proof}[Proof of Lemma \ref{lem:approx_eta}]
Using \eqref{def_pE} and \eqref{def_p_k} we write
\begin{equation} \label{pkE_int_f}
I \;\deq\; \int p_{k,E}(u_1,u_2) (f - f_{\tilde \eta})(u_1,u_2) \, \dd u_1 \, \dd u_2 \;=\; \E \sum_{i \neq j} (f - f_{\tilde \eta}) \pb{N \varrho_E (\lambda_i - E), N \varrho_E (\lambda_j - E)}\,.
\end{equation}
We choose $\xi \in (0,\epsilon / 3)$ and set $\zeta \deq N^{\xi}$. We split
\begin{align}
&\mspace{-20mu}(f - f_{\tilde \eta})(u_1, u_2)
\notag \\
&\;=\; (f - f_{\tilde \eta})(u_1, u_2)  \, \ind{\abs{u_1} \leq \zeta} \, \ind{\abs{u_2} \leq \zeta}
\notag \\
&\qquad 
- f_{\tilde \eta}(u_1, u_2) \pB{\ind{\abs{u_1} \leq \zeta} \, \ind{\abs{u_2} > \zeta} + \ind{\abs{u_1} > \zeta} \, \ind{\abs{u_2} \leq \zeta} + \ind{\abs{u_1} > \zeta} \, \ind{\abs{u_2} > \zeta}}
\notag \\
&\;=\; O\pb{ \tilde \eta \log \tilde \eta^{-1} \, \ind{\abs{u_1} \leq \zeta} \, \ind{\abs{u_2} \leq \zeta}} + O\pbb{\frac{\tilde \eta \, \ind{\abs{u_1} \leq \zeta} \, \ind{\abs{u_2} > \zeta}}{u_2^2 + \zeta^2}}
\notag \\ \label{f-f_eta_decomp}
& \qquad
+ O\pbb{\frac{\tilde \eta \, \ind{\abs{u_1} > \zeta} \, \ind{\abs{u_2} \leq \zeta}}{u_1^2 + \zeta^2}}
+ O\pbb{\frac{\tilde \eta^2 \, \ind{\abs{u_1} > \zeta} \, \ind{\abs{u_2} > \zeta}}{(u_2^2 + \zeta^2)(u_1^2 + \zeta^2)}}
\end{align}
where in the second step we used that $\norm{f - f_{\tilde \eta}}_\infty = O(\tilde \eta \log \tilde \eta^{-1})$ since $f$ is bounded and Lipschitz, and that $f$ is bounded with compact support.

Plugging \eqref{f-f_eta_decomp} into \eqref{pkE_int_f} yields the decomposition $I = I_1 + I_2 + I_3 + I_4$ in self-explanatory notation. We estimate these terms using the eigenvalue rigidity estimates from Theorem \ref{thm:rig}, which imply that the event
\begin{equation} \label{rigi_corr}
\Xi \;\deq\; \hb{N \varrho_E \abs{\lambda_i - E} \;\geq\; c \, \abs{i - i_0} - N^{\xi/2} \text{ for all } i = 1, \dots, N}
\end{equation}
has probability at least $1 - N^{-3}$, for some constant $c > 0$ and $i_0 \in \{1, \dots, N\}$. We decompose the expectation on the right-hand side of \eqref{pkE_int_f} using $\ind{\Xi} + \ind{\Xi^c}$. The contribution of $\ind{\Xi^c}$ is easily bounded by $O(N^{-1})$ because $f$ is bounded.

In the following estimates it therefore suffices to estimate the expectation over the event $\Xi$. We find
\begin{equation*}
I_1 \;\leq\; O(N^{-1}) + O(\zeta^2 \tilde \eta  \log \tilde \eta^{-1}) \;=\; O(N^{-\xi})\,.
\end{equation*}
Moreover, we find
\begin{equation*}
I_2 \;=\; O(N^{-1}) + O \pbb{\sum_{i} \frac{\zeta \tilde \eta}{(i - i_0)^2 + \zeta^2}} \;=\; O(N^{-\xi})\,.
\end{equation*}
The terms $I_3$ and $I_4$ are estimated analogously. This concludes the proof.
\end{proof}

We conclude this subsection by remarking that the moment matching assumptions from Definition \ref{def:match} can be somewhat relaxed. Indeed, it is easy to check that Theorem \ref{thm:fourmoment} remains correct, with the same proof up to trivial adjustments, provided that, instead of the assumption that \eqref{moment_match} hold for $k + l \leq 4$, we assume that
\begin{equation*}
\E' (H_{ij})^k (\ol H_{ij})^l \;=\; \E'' (H_{ij})^k (\ol H_{ij})^l + O \pb{N^{(k+l)/2 - 2 + \delta_{ij} - \delta}}
\end{equation*}
for $k + l \leq 4$ for some constant $\delta > 0$. We leave the details of this extension to the interested reader.

\subsection{Green function comparison for the spectral edge and eigenvectors} \label{sec:GFC_edge_vec}

We conclude this section by sketching how the Green function comparison method can also be applied to the local eigenvalue statistics near the edge and to the distribution of the eigenvectors.

For the distribution of the extreme eigenvalues, we focus on the top edge $2$.
Based on Theorem \ref{thm:rig}, we expect the typical separation and scale of fluctuations of eigenvalues near the edge $2$ to be of order $N^{-2/3}$. For any fixed $\ell \in \N$, we define the random vector
\begin{equation*}
\f q_\ell \;\deq\; N^{2/3} (\lambda_1 - 2, \lambda_2 - 2, \dots, \lambda_\ell - 2)\,.
\end{equation*}
For the case of the GUE, the asymptotic behaviour of $\f q_\ell$ can be explicitly computed, and we have
\begin{equation} \label{edge_conv}
\f q_\ell \;\todist\; \f q_{\ell,\infty}
\end{equation}
as $N \to \infty$, where $\f q_{\ell,\infty}$ is the \emph{GUE-Tracy-Widom-Airy vector}. An analogous result holds for the GOE, with the \emph{GOE-Tracy-Widom-Airy} random vector as the limit.

The Tracy-Widom-Airy distributions appear in several contexts in probability theory, usually at the boundary between the two phases of weakly versus strongly coupled components in a system. For brevity, here we only focus on the GUE-Tracy-Widom-Airy vector for the scalar case $\ell = 1$. More general descriptions can be found in \cite{TW94,TW96,agz,RRV2011JAMS}. Let $F_2$ denote the cumulative distribution of the random variable $q_{1,\infty}$ from \eqref{edge_conv}. Then $F_2$ is given by the Fredholm determinant \bes\la{TWdef1} F_2(s) \;=\; \det(I-K_{\op{Ai}})_{L^2([s,+\infty))}\,,\ees
where  
$$K_{\op{Ai}}(x,y)\;\deq\; \frac{\op{Ai}(x)\op{Ai}'(y)-\op{Ai}'(x)\op{Ai}(y)}{x-y}\;=\; \int_0^{+\infty}\op{Ai}(x+t)\op{Ai}(y+t) \, \ud t$$is the  \emph{Airy kernel} and $\op{Ai}$ denotes the \emph{Airy function}. 
Alternatively, $F_2$ is also given by  \bes\la{TWdef2} F_2(s) \;=\;\exp\lf(-\int_s^{+\infty}(x-s)q^2(x)\ud x\ri),\ees where $q$ is the  solution of the \emph{Painlev\'e II} differential equation   $q''(x)=xq(x)+2q^3(x)$ \st  $q(x)/\op{Ai}(x)$ tends to $1$   as $x \to + \infty$.

The question of \emph{edge universality} is whether \eqref{edge_conv} holds for arbitrary Wigner matrices. It has seen much attention over the past 20 years, starting with the pioneering work \cite{SinaiSoshni,soshniCMP99,soshniJSP2002,SandrineSoshni},  where it was proved that \eqref{edge_conv} holds for any Wigner matrix whose entries have a symmetric distribution that matches the GUE to order two. 

The Green function comparison method can be used to establish \eqref{edge_conv} for arbitrary Wigner matrices that match the GUE to order two. This was proved in \cite{EYYrigi}, and a similar result holds for real symmetric Wigner matrices and the GOE. The basic idea of the proof is similar to that of Theorem \ref{thm:fourmoment}. Note, however, that instead of matching to order \emph{four} we only need to assume matching to order \emph{two}.

The matching to order four arose naturally in the proof of Theorem \ref{thm:fourmoment}, as the number $N^2$ of terms in the telescopic sum \eqref{GFC_telescope} was compensated by the size $N^{-5/2}$ of each term. The factor $N^{-5/2}$ arose from the four-moment matching assumption and the bound of order $1$ on the entries of the Green function. If we only assume matching to order two, a similar argument yields the bound $N^{-3/2}$ for each term of \eqref{GFC_telescope}, which is clearly far from enough. The key observation leading to the solution to this problem is that near the edge the entries of the Green function are in fact small. Indeed, in order to resolve the individual eigenvalues near the edge we take $\eta = N^{-2/3 - \epsilon}$ instead of $N^{-1 - \epsilon}$ as in Lemma \ref{lem:GFC}. For $\abs{E - 2} \leq N^{-2/3 + \epsilon}$ and $z = E + \ii \eta$, we find from the local semicircle law and the elementary bound \eqref{Immmupperlowerbounded} that
\begin{equation*}
\abs{G_{ij}(z)} \;\prec\; \delta_{ij} + N^{-1/3 + \epsilon}\,.
\end{equation*}
Thus, off-diagonal Green function entries provide additional smallness near the spectral edges. Exploiting this mechanism requires a careful analysis of the various monomials generated in expansions of the type \eqref{GFC_expansion}, classifying them in terms of the number of off-diagonal Green function entries. We refer to \cite{EYYrigi}  for the full details.

Finally, the Green function comparison method can also be used to analyse the distribution of eigenvectors: in \cite{KnowlesYinEig}, it was used to prove a universality result for the distribution of individual eigenvectors, stating that the distribution of eigenvectors of Wigner matrices is universal near the spectral edges, as well as in the bulk under an additional four-moment matching condition. Subsequently, this latter result was also obtained in \cite{Tao-Vu_vectors} using a different comparison method. Recently, in \cite{BourgadeYau}, the results of \cite{KnowlesYinEig} were extended to the bulk eigenvectors by combining the comparison results of \cite{KnowlesYinEig} with a novel eigenvector moment flow.

We now summarize the Green function comparison method for eigenvectors from \cite{KnowlesYinEig}.
In the case of the GUE, the distribution of eigenvectors is trivial: the law of the GUE is invariant under conjugation by unitary matrices, from which we deduce that the matrix of eigenvectors of the GUE is uniformly distributed on the group $\rr{SU}(N)$. This implies that any eigenvector $\f u_i = (u_i(k))_{k = 1}^N$ is uniformly distributed on the unit sphere of $\C^N$, and the random variables
\begin{equation} \label{evect_coord}
X_{ik} \;\deq\; \sqrt{N} u_i(k)
\end{equation}
are asymptotically independent standard complex normals.

Using Green function comparison, one can prove, just like for the eigenvalues, a four-moment theorem in the bulk and a two-moment theorem near the edge. Indeed, it was proved in \cite{KnowlesYinEig}
  that the distributions of a bounded number of eigenvector entries $(X_{i_1k_1}, \dots, X_{i_\ell k_\ell})$ for two different Wigner ensembles $E'$ and $E''$ coincide if $E'$ and $E''$ match to order four. Moreover, if $E'$ and $E''$ match to order two, then the distributions of $(X_{i_1k_1}, \dots, X_{i_\ell k_\ell})$ coincide near the edge, i.e.\ provided that the indices $i_1, \dots, i_\ell$ are bounded by $N^\epsilon$ for some small enough $\epsilon > 0$. In fact, using more refined techniques, it was proved in \cite[Section 7]{BKYYPCA}  that the distributions of $(X_{i_1k_1}, \dots, X_{i_\ell k_\ell})$ for two ensembles matching to order two coincide provided that $i_1, \dots, i_\ell \leq N^{1 - \epsilon}$ for any fixed $\epsilon > 0$.

The starting point of these proofs is to express the eigenvector components using the Green function. A helpful observation is that eigenvectors are only defined up to a global phase. To remove this ambiguity, it is therefore better to analyse the components of the rescaled \emph{spectral projections} $N \f u_i \f u_i^*$. They can be expressed, at least in an averaged form, in terms of the Green function using
\begin{equation} \label{GFC_evect}
\sum_{j} \frac{\eta / \pi}{(E - \lambda_j)^2 + \eta^2} \, N \f u_j \f u_j^* \;=\; \frac{N}{2 \pi \ii} \pb{G(E + \ii \eta) - G(E - \ii \eta)}\,.
\end{equation}
In order to extract $N \f u_i \f u_i$ from the identity \eqref{GFC_evect}, we have to choose $\eta$ to be much smaller than the typical eigenvalue separation near the eigenvalues $\lambda_i$, and integrate \eqref{GFC_evect} in $E$ over a random interval $I$ centred around $\lambda_i$, whose length is much larger than $\eta$, such that all eigenvalues $\lambda_j$, $j \neq i$, are far from $I$ on the scale $\eta$. Such an interval can be showed to exist with high probability \cite{KnowlesYinEig}. Then the Green function comparison may be applied to integrals of entries of the right-hand side of \eqref{GFC_evect}. We refer to \cite{KnowlesYinEig,BKYYPCA} for the details.

\section{Outlook: some further developments} \label{sec:outlook}

In this concluding section we survey further developments of local laws beyond the simple case of Wigner matrices presented in these notes.

\subsection{Isotropic laws} \label{sec:isotropic}
The local law from Theorem \ref{Th3} states roughly that the matrices $G$ and $m I$ are close with high probability for $z \in \f S$. As the dimension $N$ of these matrices is very large, which notion of closeness to use is a nontrivial question. Theorem \ref{Th3} establishes closeness in the sense of individual matrix entries. Other canonical (and stronger) notions are closeness in the \emph{weak operator sense}, \emph{strong operator sense}, and \emph{norm sense}, which entail controlling
\begin{equation*}
\scalar{\f w}{(G - mI) \f v} \,, \qquad
\abs{(G - mI) \f v}\,, \qquad \norm{G - mI}
\end{equation*}
respectively, for deterministic $\f v, \f w \in \C^N$. It is easy to see that already convergence in the strong operator sense must fail: $\abs{(G - mI)\f v}$ cannot be small for $z \in \f S$ with high probability. To see this, write
\begin{equation*}
\abs{(G - mI) \f v} \;=\; \sup_{\abs{\f w} \leq 1} \abs{\scalar{\f w}{(G - mI) \f v}} \;\geq\; \abs{\scalar{\f u_i}{\f v}} \absbb{\frac{1}{\lambda_i - z} - m}\,,
\end{equation*}
for any $i = 1, \dots, N$, where in the last step we chose $\f w = \f u_i$ to be an eigenvector of $H$. Suppose that $z = E + \ii \eta$ with $E$ in the bulk spectrum and $\eta \geq N^{-1 + \tau}$. Then from Theorem \ref{thm:rig} we get that with high probability there exists $i \equiv i(E) = 1, \dots, N$ such that $\abs{\lambda_i - z} \leq 2 \eta$. Thus,
\begin{equation*}
\abs{(G - mI) \f v} \;\geq\; \frac{c \abs{\scalar{\f u_i}{\f v}}}{\eta}\,.
\end{equation*}
Since $\f v$ is deterministic, the numerator is at least of order $N^{-1/2}$ with probability $1 - o(1)$; for instance in the case of GUE this is trivial because $\f u_i$ is uniformly distributed on the unit sphere. We conclude that $\abs{(G - mI) \f v}$ is with probability $1 - o(1)$ at least of order $N^{-1/2} \eta^{-1}$, which is much bigger than one for $\eta \ll N^{-1/2}$.

As it turns out, $G$ and $m I$ are close in the weak operator sense, and \eqref{121414h54} generalizes to
\begin{equation} \label{isotropc_ll}
\scalar{\f w}{G(z) \f v} - m(z) \scalar{\f w}{\f v} \;=\; O_\prec(\Psi(z))
\end{equation}
uniformly\footnote{We emphasize that the estimate is uniform in $\f v, \f w$ in the sense of Definition \ref{def:stochdom}, but of course not \emph{simultaneous} in $\f v, \f w$. Indeed, a simultaneous statement would imply closeness in the norm sense, which is wrong as remarked above.} for all deterministic unit vectors $\f v, \f w \in \C^N$ and $z \in \f S$. Such a result is called an \emph{isotropic local law}, and was first obtained for Wigner matrices in \cite{KnowlesYinIso} and subsequently generalized to other models in \cite{BEKYYPCA}.  The isotropic local law is a nontrivial extension of Theorem \ref{Th3}, and both proofs of \cite{KnowlesYinIso,BEKYYPCA}  in fact rely crucially on the entrywise estimate from \eqref{121414h54} as input.

Isotropic local laws have proved very useful in the analysis of deformed random matrix ensembles \cite{KnowlesYinIso,BKYYPCA}  and the distribution of eigenvectors \cite{BourgadeYau}.   Here we only mention an immediate consequence: \emph{isotropic delocalization} which extends the estimate from Theorem \ref{thm:deloc} to $\abs{\scalar{\f v}{\f u_i}}^2 \prec N^{-1}$ for all $i = 1, \dots, N$ and all deterministic unit vectors $\f v$. The proof is the same as that of Theorem \ref{thm:deloc}, using \eqref{isotropc_ll} instead of Theorem \ref{Th3}.

\subsection{Beyond Wigner matrices} \label{sec:beyondW}
In these notes we only consider Wigner matrices from Definition \ref{def:Wigner}. Although Wigner matrices occupy a central place in random matrix theory, there has recently been much interest in more general random matrix ensembles. One natural way to generalize Definition \ref{def:Wigner} is to admit matrix entries with general variances. Thus, we consider a matrix $H$ as in Definition \ref{def:Wigner}, except that we replace (ii) and (iii) with the following conditions.
\begin{itemize}
\item[(ii')]
For all $i,j$ we have $E H_{ij} = 0$ and $\E \abs{H_{ij}}^2 = S_{ij}$ for some matrix $S = (S_{ij})$.
\item[(iii')]
If $S_{ij} > 0$ then $S_{ij}^{-1/2} H_{ij}$ is bounded in any $L^p$ space, uniformly in $N,i,j$.
\end{itemize}
A common assumption on the matrix $S$ is that it be (doubly) stochastic:
\begin{equation} \label{S_stoch}
\sum_{j} S_{ij} \;=\; 1
\end{equation}
for all $i$. The assumption \eqref{S_stoch} guarantees that the limiting behaviour of $H$ is still governed by the semicircle distribution \cite{MPasturK,BMPastur}. Two important classes of random matrices satisfying \eqref{S_stoch} are the following.
\begin{itemize}
\item
\emph{Generalized Wigner matrix.} We require \eqref{S_stoch} and that $N S_{ij} \asymp 1$ uniformly in $N,i,j$.
\item
\emph{Band matrix.} Let $f$ be a symmetric probability density on $\R$, called the \emph{band profile function}, and $W \in [1,N]$ be the \emph{band width}. Then we set
\begin{equation*}
S_{ij} \;\deq\; \frac{1}{Z} f \pbb{\frac{[i - j]_N}{W}}\,,
\end{equation*}
where $[i]_N \deq (i + N \Z) \cap [-N/2,N/2)$ is the periodic representative of $i \in \Z$ in $\Z \cap [-N/2,N/2)$, and $Z > 0$ is  chosen so that \eqref{S_stoch} holds.
\end{itemize}

All of the results in these notes also hold for generalized Wigner matrices; see e.g. \cite{EYYrigi,EKYY4}
  for more details. The proofs for generalized Wigner matrices are similar to the ones presented in these notes. The major difference is that in the proof of Theorem \ref{Th3} one has to split the space $\C^N$ into the one-dimensional space spanned by $\f e \deq N^{-1/2} (1,1,\dots, 1)^*$ and its orthogonal complement. In the direction of $\f e$, the analysis is identical to that of Sections \ref{sec:weak_law}--\ref{sec:opt_bounds} thanks to the assumption \eqref{S_stoch}. On the orthogonal complement of $\f e$, the key extra difficulty is to obtain good bounds on the deterministic operator $(1 - m^2 S)^{-1}$. Near the spectral edge we have $m^2 \approx 1$ so that this operator is singular because, by \eqref{S_stoch}, $1$ is an eigenvalue of $S$ with eigenvector $\f e$. However, thanks to the assumptions $N S_{ij} \asymp 1$ it is not hard to see that $(1 - m^2 S)^{-1}$ is in fact regular on the orthogonal complement of $\f e$. We refer to \cite{EKYY4}  for the full details.

Band matrices are a much more challenging, and interesting, model. They arise from solid state physics as an alternative to the Anderson model to describe the physics of a disordered quantum Hamiltonian. So far an optimal local law and eigenvector delocalization are open problems, although partial progress has been made in \cite{EKQD,EKQDGeneral,EKYY4,EKYY3, BaoErdosblockband}.

If the condition \eqref{S_stoch} does not hold, then the asymptotic eigenvalue distribution of $H$ is no longer governed by the semicircle distribution, but by a much more complicated distribution that depends on $S$. This scenario was recently investigated in depth in \cite{AjankiErdosKruger,AjankiErdosKruger2},  where in particular a local law was established by a nontrivial extension of the methods presented in these notes, involving a stability analysis of a nonlinear quadratic vector equation.

Next, we say a few words about random matrices that are not of the above form but can nevertheless be studied using variants of the method presented in these notes. An important example is the \emph{sample covariance matrix} $H \deq X X^*$, where $X$ is an $M \times N$ matrix and $M,N \to \infty$ simultaneously. If the entries of $X$ are independent with mean zero and variance $N^{-1}$ then the asymptotic eigenvalue distribution of $H$ is governed by the Marchenko-Pastur distribution \cite{MP1967,bai-silver-book} instead of the semicircle distribution, and the local law was established in \cite{ESYY, PY1,SchleinCirc}. If the entries of $X$ have a nontrivial covariance structure $\Sigma \deq \E H$, the asymptotic eigenvalue distribution becomes much more complicated and depends on $\Sigma$, and the basic strategy from these notes cannot be applied. A local law for such matrices was established in great generality in \cite{KnowlesYinAnIso} using a new self-consistent comparison method.

We remark that a generalization of the methods of \cite{ESYY, PY1} for sample covariance matrices may also be used to obtain local laws for non-Hermitian matrices with independent entries, using Girko's Hermitization trick \cite{Girko}; see \cite{BourgadeCirc1, BourgadeCirc2, YinCirc} for more details.

Local laws have also been derived for matrix models describing the free additive convolution \cite{KarginPTRF12,KarginAOP13,KarginAOP13Sub,OrourkeVu,BG:SRTLL,BaoErdosSchnelli1,BaoErdosSchnelli2} and, using different techniques from the ones presented in these notes, for general $\beta$-ensembles (or one-dimensional log-gases) \cite{BEY1,BEY2,BEY3}.  We also mention that a local law was proved for Wigner matrices with heavy tailed entries in \cite{CharlesAlice}.

Finally, we note that local laws have been established for \emph{sparse} random matrices, where most entries of $H$ are with high probability zero. Such matrices typically arise as adjacency or Laplacian matrices of random graphs. A local law for the Erd\H{o}s-R\'enyi graph was derived in \cite{EKYY1}. Because the entries of the adjacency matrix of the Erd\H{o}s-R\'enyi graph are independent up to the symmetry condition, the basic structure of the proof presented in these notes remains applicable. This is stark contrast to the case of \emph{random regular graphs}, where the entries of the adjacency matrix exhibit strong nonlocal correlations and whole setup of the proof of Theorem \ref{Th3} breaks down. For this case, a local law was recently derived in \cite{LocalLawRegGraphs}  using a new local resampling method.

\appendix

\section{Schur's complement formula and proof of Lemma \ref{lem8}} \label{sec:schur}

\begin{lem}[Schur's complement formula] \label{lem:schur}
Provided all of the following inverse matrices exist, the inverse of a block matrix is given by
\begin{align*}
\begin{pmatrix}
A & B \\ C & D
\end{pmatrix}^{-1}
&\;=\;
\begin{pmatrix}
\cal A^{-1} & - \cal A^{-1} B D^{-1}
\\
-D^{-1} C \cal A^{-1} & D^{-1} C \cal A^{-1} B D^{-1} + D^{-1}
\end{pmatrix}
\\
&\;=\;
\begin{pmatrix}
A^{-1} B \cal D^{-1} C A^{-1} + A^{-1} & - A^{-1} B \cal D^{-1}
\\
- \cal D^{-1} C A^{-1} & \cal D^{-1}
\end{pmatrix}\,,
\end{align*}
where we defined
\begin{equation*}
\cal A \;\deq\; A - B D^{-1} C \,, \qquad \cal D \;\deq\; D - C A^{-1} B\,.
\end{equation*}
\end{lem}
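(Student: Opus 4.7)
The plan is to verify Schur's formula by a block LDU factorization, which both reveals the structure and reduces the problem to inverting factors that are trivial to invert. First I would write the block decomposition
\begin{equation*}
\begin{pmatrix} A & B \\ C & D \end{pmatrix} \;=\; \begin{pmatrix} I & B D^{-1} \\ 0 & I \end{pmatrix} \begin{pmatrix} \cal A & 0 \\ 0 & D \end{pmatrix} \begin{pmatrix} I & 0 \\ D^{-1} C & I \end{pmatrix}\,,
\end{equation*}
which is checked simply by multiplying out the three factors on the right and using the definition $\cal A = A - B D^{-1} C$. This decomposition requires $D$ to be invertible, which is one of our standing hypotheses, and it shows that $\begin{pmatrix} A & B \\ C & D \end{pmatrix}$ is invertible if and only if $\cal A$ is.

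Next I would invert each factor. The outer two factors are block unit triangular, so their inverses are obtained simply by negating the off-diagonal blocks:
\begin{equation*}
\begin{pmatrix} I & B D^{-1} \\ 0 & I \end{pmatrix}^{-1} \;=\; \begin{pmatrix} I & -B D^{-1} \\ 0 & I \end{pmatrix}\,, \qquad \begin{pmatrix} I & 0 \\ D^{-1} C & I \end{pmatrix}^{-1} \;=\; \begin{pmatrix} I & 0 \\ -D^{-1} C & I \end{pmatrix}\,.
\end{equation*}
The middle factor is block diagonal, so its inverse is $\diag(\cal A^{-1}, D^{-1})$. Multiplying these three inverses in the reverse order yields the first formula of Lemma \ref{lem:schur} after straightforward block multiplication.

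For the second formula, the same argument applies with the roles of the two diagonal blocks exchanged: one uses instead the factorization with $\cal D = D - C A^{-1} B$ as the nontrivial block and unit triangular factors involving $A^{-1} B$ and $C A^{-1}$. No real obstacle arises; the computation is purely algebraic, and the only subtlety worth flagging is that the two formulas tacitly require the invertibility of $D$ and $A$ respectively, in addition to that of $\cal A$ and $\cal D$, which is precisely the hypothesis ``provided all of the following inverse matrices exist'' in the statement.
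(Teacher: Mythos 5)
Your proposal uses exactly the same block UDL/LDU (Gaussian elimination) factorization as the paper's proof, inverts the three triangular/diagonal factors and multiplies them back together. It is correct and takes essentially the same route.
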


\begin{proof}
This is routine verification. A good way of arriving at these formulas is to use Gaussian elimination to write
\begin{align*}
\begin{pmatrix}
A & B \\ C & D
\end{pmatrix}
&\;=\;
\begin{pmatrix}
1 & BD^{-1} \\ 0 & 1
\end{pmatrix}
\begin{pmatrix}
A - B D^{-1} C & 0
\\
0 & D
\end{pmatrix}
\begin{pmatrix}
1 & 0 \\ D^{-1} C & 1
\end{pmatrix}
\\
&\;=\;
\begin{pmatrix}
1 & 0 \\ C A^{-1} & 1
\end{pmatrix}
\begin{pmatrix}
A & 0 \\ 0 & D - C A^{-1} B
\end{pmatrix}
\begin{pmatrix}
1 & A^{-1} B \\ 0 & 1
\end{pmatrix}\,,
\end{align*}
from which the claim follows trivially.
\end{proof}

\begin{proof}[Proof of Lemma \ref{lem8}]
Without loss of generality, we take $T = \emptyset$. (Otherwise consider the matrix $H^{(T)}$ instead of $H$.) It suffices to prove the claim for $G = M^{-1}$ for a general matrix $M$. As in Definition \ref{def: minors}, we use the notation $M^{(k)} = (M_{ij})_{i,j \in \{1, \dots, N\} \setminus \{k\}}$.

First, from Schur's complement formula for the off-diagonal blocks we get for $i \neq j$
\begin{equation} \label{RI2M}
G_{ij} \;=\; - G_{ii} \sum_{k}^{(i)} M_{ik} G_{kj}^{(i)} \;=\; - G_{jj} \sum_{k}^{(j)} G_{ik}^{(j)} M_{kj}\,,
\end{equation}
which immediately yields \eqref{RI2}.

It remains to prove \eqref{RI1}; there are several approaches. Perhaps the most natural one is based on the Woodbury matrix identity
\begin{equation*}
(A + UBV)^{-1} = A^{-1} - A^{-1} U \pb{B^{-1} + V A^{-1} U}^{-1} V A^{-1}\,,
\end{equation*}
which can itself be deduced for instance using the resolvent identity $Y^{-1} = X^{-1} + X^{-1} (X - Y) Y^{-1}$. Let $\f e_k$ be the standard $k$-th column unit basis vector and define the matrices
\begin{equation*}
M^{[k]}_{ij} \deq M_{ij} \ind{i \neq k} \ind{j \neq k}\,, \qquad U \deq
\begin{pmatrix}
H \f e_k & \f e_k
\end{pmatrix}
\,, \qquad
V \deq
\begin{pmatrix}
\f e_k^* \\ \f e_k^* H
\end{pmatrix}\,,
\end{equation*}
so that we have
\begin{equation*}
M = M^{[k]} + UV - M_{kk} \f e_k \f e_k^*\,.
\end{equation*}
Then by the Woodbury matrix identity we have
\begin{equation*}
\pb{M^{[k]} - M_{kk} \f e_k \f e_k}^{-1} = G + G U (I - VGU)^{-1} VG\,.
\end{equation*}
A straightforward calculation yields
\begin{equation*}
I - VGU = -
\begin{pmatrix}
0 & G_{kk}
\\
M_{kk} & 0
\end{pmatrix}\,, \qquad
(I - VGU)^{-1} = - \frac{1}{M_{kk} G_{kk}}
\begin{pmatrix}
0 & G_{kk}
\\
M_{kk} & 0
\end{pmatrix}\,,
\end{equation*}
from which we deduce, after a short calculation,
\begin{equation*}
\pb{M^{[k]} - M_{kk} \f e_k \f e_k}^{-1} = G - \frac{1}{M_{kk}} \f e_k \f e_k^* - \frac{1}{G_{kk}} G \f e_k \f e_k^* G\,.
\end{equation*}
We conclude
\begin{equation*}
(M^{[k]})^{-1} = G - \frac{1}{G_{kk}} G \f e_k \f e_k^* G\,,
\end{equation*}
from which \eqref{RI1} follows.

For a second proof of \eqref{RI1}, write, for $k \neq j$,
\begin{equation*}
\sum_{l}^{(k)} M_{kl} \pbb{G_{lj} - 
\frac{G_{lk} G_{kj}}{G_{kk}}}
\;=\; - M_{kk} G_{kj} - \frac{G_{kj}}{G_{kk}} (1 - M_{kk} G_{kk}) \;=\; - \frac{G_{kj}}{G_{kk}}
\;=\;
\sum_{l}^{(k)} M_{kl} G_{lj}^{(k)}\,,
\end{equation*}
where in the last step we used \eqref{RI2M}. Taking the partial derivative $\frac{\partial}{\partial M_{ki}}$ of this identity (using the resolvent expansion to differentiate the entries of $G$) with $i \neq k$ yields
\begin{equation*}
G_{ij} -  \frac{G_{ik} G_{kj}}{G_{kk}} 
\;=\; G_{ij}^{(k)}\,,
\end{equation*}
which is \eqref{RI1}.

A third proof of \eqref{RI1} can be be obtained by explicit matrix inversion for $N = 3$, and then extended to arbitrary $N > 3$ using Schur's complement formula.
\end{proof}

\section{Basic properties of the semicircle distribution and proof of Lemma \ref{lem13}} \label{sec:stability}

For $k\ge 0$ define the \emph{$k$-th Catalan number} $\cal C_k\deq\ff{k+1}\binom{2k}{k}$.

\beg{lem}[Moments of the semicircle distribution and Catalan numbers]\la{lem:Catalan}For each $k\ge 0$, we have $$\ff{2\pi}\int_{-2}^2x^k\sqrt{4-x^2} \, \dd x \;=\; \beg{cases} \ds \cal C_{k/2}&\txt{ if $k$ is even}\\
0&\txt{ if $k$ is odd}\,.
\end{cases}$$
\en{lem}

\bpr This is an easy computation using the change of variables $x = 2 \cos \theta$ with $\theta \in [0,\pi]$.
\epr

\beg{lem}[Stieltjes transform of the semicircle distribution]\la{lem:stieltjessclformula}The Stieltjes transform $m(z)$  of the  semicircle distribution satisfies the relation \eqref{m_id}, and is explicitly given by the formula \eqref{m_qsolution}.
\end{lem}

\bpr
We prove \eqref{m_qsolution}, from which \eqref{m_id} follows immediately. Since both $m(z)$ and the right-hand side of \eqref{m_qsolution} are analytic for $z \in \C \setminus [-2,2]$, it suffices to prove \eqref{m_qsolution} for $z$ in some subset of $\C$ with a limit point. In fact, we establish \eqref{m_qsolution} for $\abs{z} > 2$.
In this case,  by Lemma \ref{lem:Catalan},  $$m(z) \;=\; -z^{-1}\int \ff{1-(x/z)} \, \varrho(\ud x) \;=\; -z^{-1}\sum_{n\ge 0}\cal C_nz^{-2n}\,,$$
where the power series is absolutely convergent. Now \eqref{m_qsolution} follows directly from the series expansion of the right-hand side of \eqref{m_qsolution}.\epr

\bpr[Proof of Lemma \ref{lem13}] We have $$\tilde{m}\;=\; \frac{-z-\sqrt{z^2-4}}{2}\,,$$ with the conventions for the square root introduced after \eqref{m_qsolution}. From this we get
\begin{equation*}
\abs{m - \tilde m} \;=\; \absb{\sqrt{z^2 - 4}} \;=\; \sqrt{\abs{z - 2} \abs{z + 2}} \;\asymp\; \sqrt{\kappa + \eta}\,,
\end{equation*}
where in the last step we used that $\abs{z} \leq C$ for $z \in \f S$.

Next, if $u$ is a solution of \eqref{u_eq} then 
$u\in \{m_r, \tilde{m}_r\}$, where we defined
$$m_r\;\deq\;\frac{-z + \sqrt{z^2-4(1+r)}}{2}\,, \qquad \tilde m_r\;\deq\;\frac{-z-\sqrt{z^2-4(1+r)}}{2}\,.$$
(Note that the choice of the branch of the square root is immaterial.) Note that for any complex square root $\sqrt{\cdot}$ and $w,\zeta \in \C$ we have
\begin{equation*}
\abs{\sqrt{w + \zeta} - \sqrt{w}} \wedge \abs{\sqrt{w + \zeta} + \sqrt{w}} \leq \frac{\abs{\zeta}}{\sqrt{\abs{w}}} \wedge \sqrt{\abs{\zeta}}\,.
\end{equation*}
Hence \eqref{u-m_eq} follows from $\abs{\sqrt{z^2 - 4}} \asymp \sqrt{\kappa + \eta}$.
\epr

\section{The Helffer-Sj\"ostrand formula} \label{sec:HS}

\begin{propo}[Helffer-Sj\"ostrand formula] \label{prop:HS}
Let $n \in \N$ and $f \in \cal C^{n+1}(\R)$. We define the \emph{almost analytic extension of $f$ of degree $n$} through
\begin{equation*}
\tilde f_n(x + \ii y) \;\deq\; \sum_{k = 0}^n \frac{1}{k !} (\ii y)^k f^{(k)}(x)\,.
\end{equation*}
Let $\chi \in \cal C^\infty_c(\C;[0,1])$ be a smooth cutoff function. Then for any $\lambda \in \R$ satisfying $\chi(\lambda) = 1$ we have
\begin{equation*}
f(\lambda) \;=\; \frac{1}{\pi} \int_\C \frac{\bar \partial (\tilde f_n(z) \chi(z))}{\lambda - z} \, \dd^2 z\,,
\end{equation*}
where $\dd^2 z$ denotes the Lebesgue measure on $\C$ and $\bar \partial \deq \frac{1}{2} (\partial_x + \ii \partial_y)$ is the antiholomorphic derivative.
\end{propo}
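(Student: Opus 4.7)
The strategy is to reduce the statement to the Cauchy--Pompeiu formula (the $\bar\partial$-version of Cauchy's integral formula), which says that for a bounded domain $\Omega \subset \C$ with piecewise smooth boundary and any $g \in \cal C^1(\ol\Omega)$, and any $\lambda \in \Omega$,
\begin{equation*}
g(\lambda) \;=\; \frac{1}{2\pi \ii} \oint_{\partial \Omega} \frac{g(z)}{z - \lambda} \, \dd z + \frac{1}{\pi} \int_\Omega \frac{\bar\partial g(z)}{z - \lambda} \, \dd^2 z\,.
\end{equation*}
This identity is a direct consequence of Stokes' theorem applied to the $1$-form $\frac{g(z)}{z - \lambda} \dd z$ on $\Omega$ with a small disk of radius $\epsilon$ around $\lambda$ removed, upon sending $\epsilon \downarrow 0$ (the boundary contribution at $\lvert z - \lambda \rvert = \epsilon$ produces $g(\lambda)$ in the limit).

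Having this tool, I would take $g \deq \tilde f_n \cdot \chi$ and pick $\Omega$ to be any large disk $D_R$ containing $\supp \chi$. Then $g \equiv 0$ on $\partial \Omega$, so the boundary integral vanishes, and the desired formula reduces to the Cauchy--Pompeiu identity, \emph{provided} I can justify applying it despite the fact that $g$ need not be $\cal C^1$ on $\R$ (it is only $\cal C^n$ there since $f$ is only $\cal C^{n+1}$) and despite the singularity of $\frac{1}{z - \lambda}$. The crucial algebraic input is the explicit computation
\begin{equation*}
\bar \partial \tilde f_n(x + \ii y) \;=\; \tfrac{1}{2}(\partial_x + \ii \partial_y) \tilde f_n(x + \ii y) \;=\; \frac{1}{2 \, n!} (\ii y)^n f^{(n+1)}(x)\,,
\end{equation*}
obtained by telescoping the two power series for $\partial_x \tilde f_n$ and $\ii \partial_y \tilde f_n$. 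Consequently $\bar \partial \tilde f_n$ vanishes to order $n$ on the real axis and is continuous on $\C$, and the full integrand
\begin{equation*}
\frac{\bar\partial (\tilde f_n \chi)(z)}{\lambda - z} \;=\; \frac{(\bar\partial \tilde f_n)(z) \, \chi(z)}{\lambda - z} + \frac{\tilde f_n(z) \, (\bar\partial \chi)(z)}{\lambda - z}
\end{equation*}
is a bona fide integrable function on $\Omega$: the second summand is smooth and vanishes in a neighbourhood of $\lambda$ since $\chi \equiv 1$ there, while the first is bounded by $C \lvert y \rvert^{n-1}$ near the real axis, which is integrable (and in particular integrable near $\lambda$).

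The cleanest way to finalize the argument is to apply the Cauchy--Pompeiu formula with $\Omega \deq D_R \setminus (D_\epsilon(\lambda) \cup U_\delta)$, where $D_\epsilon(\lambda)$ is a small disk around $\lambda$ and $U_\delta \deq \{\lvert \im z \rvert < \delta\} \setminus D_\epsilon(\lambda)$ is a thin strip around the real axis outside that disk; here $g$ is genuinely $\cal C^1$ on $\ol\Omega$. Then send $\delta \downarrow 0$ first (the boundary contribution on $\{\im z = \pm \delta\}$ vanishes because $g$ is continuous on $\C$, agrees on the two sides of the real axis, and the two pieces are traversed in opposite orientations), and then send $\epsilon \downarrow 0$ (the boundary contribution on $\partial D_\epsilon(\lambda)$ produces $-g(\lambda) = -f(\lambda)$ by the standard argument, using $\chi(\lambda) = 1$ and $\tilde f_n(\lambda) = f(\lambda)$). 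The area integral converges by dominated convergence thanks to the estimate $\lvert \bar\partial \tilde f_n(z) \rvert \leq C \lvert y \rvert^n$ together with $\lvert z - \lambda \rvert^{-1}$ being locally integrable. The main technical obstacle is this last limit step, where one must verify carefully that the boundary pieces along $\{\im z = \pm \delta\}$ indeed drop out in the limit; this is where the vanishing of $\bar\partial \tilde f_n$ on $\R$ and the continuity of $\tilde f_n$ across $\R$ (both by construction) are essential.
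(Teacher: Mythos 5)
Your argument is correct and follows the same core strategy as the paper: apply Green's/Stokes' theorem (equivalently, Cauchy--Pompeiu) to $g \deq \tilde f_n \chi$ on a large region with a small disk around $\lambda$ removed, use that $g$ vanishes on the outer boundary, and let the inner radius tend to zero to pick up $-2\pi\ii\, g(\lambda)$. The one place where your proposal diverges from the paper is the additional removal of the thin strip $U_\delta$ around the real axis, motivated by your concern that $g$ ``need not be $\cal C^1$ on $\R$.'' This worry is unfounded: since $f \in \cal C^{n+1}(\R)$, each derivative $f^{(k)}$ appearing in $\tilde f_n$ (for $k \le n$) is at least $\cal C^1$, and the explicit formulas for $\partial_x \tilde f_n$ and $\partial_y \tilde f_n$ (which you compute) are continuous on $\C$. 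Hence $\tilde f_n$, and therefore $g$, is genuinely $\cal C^1$ on all of $\C$, and Green's formula applies directly on $B_r(0) \setminus B_\epsilon(\lambda)$ without the extra strip. The paper does exactly this, and your argument collapses to the paper's once you drop $U_\delta$. Separately, note that your bound $C\abs{y}^{n-1}$ on the first summand is not literally integrable when $n=0$; the integrability there comes from the sharper bound $C/\abs{z-\lambda}$, which is locally integrable in two dimensions.
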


Applied to the matrix $H$, the Helffer-Sj\"ostrand formula gives
\begin{equation} \label{HS_rep}
f(H) \;=\; \frac{1}{\pi} \int_\C \bar \partial (\tilde f_n(z) \chi(z)) \, G(z) \, \dd^2 z\,,
\end{equation}
provided $\chi$ is chosen so that $\chi = 1$ on $\spec(H)$. Compared to the functional calculus from \eqref{cont_rep}, the Helffer-Sj\"ostrand representation \eqref{HS_rep} is more versatile and general. In particular, we only require that $f$ be differentiable instead of analytic. In most applications, however, we shall require some additional smoothness of $f$ (typically $f \in \cal C^2(\R)$), since the integral on the right-hand side of \eqref{HS_rep} is more stable for small $\eta$ if $n$ is large. This generality is of great use in applications, since it allows one to choose $f$ to be a (smoothed) indicator function, which is not possible with \eqref{cont_rep}.

\begin{proof}[Proof of Proposition \ref{prop:HS}]
Abbreviate $g(z) \deq \tilde f_n(z) \chi(z)$. By assumption, we have $g(\lambda) = f(\lambda)$ for $\lambda \in \R$ satisfying $\chi(\lambda) = 1$. Choose $r > 0$ large enough that $g = 0$ outside the ball $B_r(0)$. For $\epsilon > 0$ define the region $D_\epsilon \deq B_r(0) \setminus B_\epsilon(\lambda)$. Then we have
\begin{equation} \label{0_HS_split}
0 \;=\; \oint_{\partial B_r(0)} \frac{g(z)}{\lambda -z} \, \dd z \;=\; \int_{\partial D_\epsilon} \frac{g(z)}{\lambda -z} \, \dd z + \oint_{\partial B_\epsilon(\lambda)} \frac{g(z)}{\lambda -z}\,.
\end{equation}
By Green's formula, the first term of \eqref{0_HS_split} is equal to
\begin{equation*}
\int_{\partial D_\epsilon} \frac{g(z)}{\lambda -z} \, \dd z \;=\; \int_{D_\epsilon} \dd \pbb{\frac{g(z)}{\lambda -z}} \wedge \dd z \;=\; 
\int_{D_\epsilon} \bar \partial \pbb{\frac{g(z)}{\lambda -z}} \dd \bar z \wedge \dd z \;=\; \int_{D_\epsilon}  \frac{\bar \partial g(z)}{\lambda -z} \, 2 \ii \,\dd^2 z\,,
\end{equation*}
where in the last step we used that $(\lambda - z)^{-1}$ is holomorphic in $D_\epsilon$. Moreover, by Cauchy's theorem the second term of \eqref{0_HS_split} is equal to $-2 \pi \ii g(\lambda) + o(1)$ as $\epsilon \downarrow 0$, since $g$ is continuous. We conclude that
\begin{equation*}
g(\lambda) \;=\; \frac{1}{\pi} \int_{D_\epsilon}  \frac{\bar \partial g(z)}{\lambda -z} \,\dd^2 z + o(1)
\end{equation*}
as $\epsilon \downarrow 0$.
Letting $\epsilon \downarrow 0$ completes the proof.
\end{proof}

\section{Multilinear large deviation estimates: proof of Lemma \ref{lem9}} \label{sec:LDE}
We first recall the following  version of the Marcinkiewicz-Zygmund inequality.
\begin{lem}\label{lm:MZ}
Let $X_1, \dots, X_N$ be a family of independent random variables each satisfying \eqref{cond on X} and suppose that the family $(b_i)$ is deterministic. Then
\be\label{MZ}
\normbb{\sum_i b_i X_i}_p \;\leq\; (Cp)^{1/2} \mu_p \pbb{\sum_i \abs{b_i}^2}^{1/2}
\ee
\end{lem}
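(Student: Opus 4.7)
The plan is to prove Lemma \ref{lm:MZ} by the classical symmetrization/Khintchine argument, which reduces the bound on a sum of independent centered variables to a Rademacher sum with the optimal $(Cp)^{1/2}$ dependence. I will treat the cases $p \ge 2$ and $p \in [1,2]$ separately, since the latter is immediate from $L^2$-orthogonality.

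For $p \in [1,2]$, Jensen's inequality gives $\normb{\sum_i b_i X_i}_p \le \normb{\sum_i b_i X_i}_2 = \pb{\sum_i b_i^2 \E \abs{X_i}^2}^{1/2} \le \mu_2 \pb{\sum_i b_i^2}^{1/2} \le \mu_p \pb{\sum_i b_i^2}^{1/2}$, using $\E X_i = 0$, independence, and Jensen once more for $\mu_2 \le \mu_p$. For $p \ge 2$, let $(X_i')$ be an independent copy of $(X_i)$, and let $(\epsilon_i)$ be i.i.d.\ Rademacher signs independent of everything else. By Jensen's inequality applied to the conditional expectation $\E[\,\cdot\,|(X_i)]$ and the fact that $\E X_i' = 0$, followed by the observation that $(X_i - X_i')_i \stackrel{\rm d}{=} (\epsilon_i (X_i - X_i'))_i$, we obtain
\begin{equation*}
\E \absbb{\sum_i b_i X_i}^p \;\le\; \E \absbb{\sum_i b_i (X_i - X_i')}^p \;=\; \E \absbb{\sum_i \epsilon_i b_i (X_i - X_i')}^p.
\end{equation*}

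Next I would condition on $(X_i - X_i')$ and apply Khintchine's inequality for Rademacher sums, which yields the sharp bound $\E_\epsilon \absb{\sum_i \epsilon_i a_i}^p \le (Cp)^{p/2} \pb{\sum_i a_i^2}^{p/2}$ for deterministic $a_i$ (this is a standard consequence of hypercontractivity, or of a direct moment-generating-function estimate, and I would invoke it as a known fact). Taking $a_i = b_i (X_i - X_i')$ and then taking expectation, this gives
\begin{equation*}
\E \absbb{\sum_i b_i X_i}^p \;\le\; (Cp)^{p/2} \, \E \pbb{\sum_i b_i^2 \abs{X_i - X_i'}^2}^{p/2}.
\end{equation*}
Since $p/2 \ge 1$, I apply Minkowski's inequality in $L^{p/2}$ together with $\norm{X_i - X_i'}_p \le 2 \mu_p$ to conclude
\begin{equation*}
\normbb{\sum_i b_i^2 \abs{X_i - X_i'}^2}_{p/2} \;\le\; \sum_i b_i^2 \, \norm{X_i - X_i'}_p^2 \;\le\; 4 \mu_p^2 \sum_i b_i^2,
\end{equation*}
and combining this with the previous display gives the claim (after absorbing the constant $4$ into $C$).

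The only potentially delicate point is the sharpness of the $\sqrt{p}$ factor in Khintchine's inequality; naive moment expansions give $p$ rather than $\sqrt{p}$, which would degrade later applications where $p$ is chosen as a large but fixed integer. I would therefore quote Khintchine with the optimal constant rather than re-prove it, as this is standard and independent of the random matrix context. Everything else is routine, and the symmetrization/Rademacher reduction is exactly what allows the single-variable moment bound $\mu_p$ to pass through the sum with the correct constant.
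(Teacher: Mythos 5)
Your proof is correct, but it takes a somewhat different route than the paper. The paper cites the classical Marcinkiewicz--Zygmund inequality (with the optimal $\sqrt{p}$ constant) as a black box, obtaining
\begin{equation*}
\normbb{\sum_i b_i X_i}_p^p \;\leq\; (Cp)^{p/2} \, \E\pbb{\sum_i |b_i|^2 |X_i|^2}^{p/2}\,,
\end{equation*}
and then disposes of the square function by a probability-measure trick: factoring out $B^2 = \sum_i |b_i|^2$, it applies Jensen's inequality for the convex map $t \mapsto t^{p/2}$ (needing $p \geq 2$) with respect to the probability weights $|b_i|^2/B^2$ on the index set, which directly gives $B^p \sum_i \frac{|b_i|^2}{B^2}\E|X_i|^p \leq B^p \mu_p^p$. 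You instead (a) re-derive the Marcinkiewicz--Zygmund step from scratch via symmetrization and Khintchine's inequality, and (b) bound the square function by the triangle inequality in $L^{p/2}$ rather than by Jensen. Both routes are valid and give the same constant up to an absolute factor. Your version is more self-contained (it only invokes Khintchine, which you correctly flag as the one nontrivial input, while the paper leans on the full strength of M--Z from \cite{stroock}), and you also dispatch $1 \leq p < 2$ explicitly whereas the paper's Jensen step tacitly assumes $p \geq 2$. The paper's version is shorter and its Jensen step is slightly slicker than Minkowski, but the two steps are interchangeable here. One small remark: if the $X_i$ are allowed to be complex (as they are in the intended application to matrix entries), the symmetrization and Khintchine steps should be read for complex coefficients $a_i = b_i(X_i-X_i')$ with $\sum_i |a_i|^2$; this requires no real change but is worth stating since the paper's convention has $|b_i|^2$ rather than $b_i^2$.
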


\begin{proof}
The proof is a simple application of Jensen's inequality. Writing $B^2 \deq \sum_{j} \abs{b_i}^2$,
we get, by the classical Marcinkiewicz-Zygmund inequality \cite{stroock} in the first line, that
\begin{align*}
\normbb{\sum_i b_i X_i}_p^p &\;\leq\; (C p)^{p/2} \, \normbb{\pbb{\sum_i \abs{b_i}^2 \abs{X_i}^2}^{1/2}}_p^{p}
\\
&\;=\; (Cp)^{p/2} B^p \, \E \qbb{\pbb{\sum_i \frac{\abs{b_i}^2}{B^2} \abs{X_i}^2}^{p/2}}
\\
&\;\leq\; (C p)^{p/2} B^p \, \E \qbb{\sum_i \frac{\abs{b_i}^2}{B^2} \abs{X_i}^p}
\\
&\;\leq\; (Cp)^{p/2} B^p \mu_p^p\,. \qedhere
\end{align*}
\end{proof}

Next, we prove the following intermediate result.

\begin{lem}\label{lm:aXY}
Let $X_1, \dots, X_N, Y_1, \dots, Y_N$ be independent random variables each satisfying \eqref{cond on X}, and suppose that the family $(a_{ij})$ is deterministic. Then for all $p \geq 2$ we have
\begin{equation*}
\normbb{\sum_{i,j} a_{ij} X_i Y_j}_p \;\leq\; C p \, \mu_p^2 \pbb{\sum_{i,j} \abs{a_{ij}}^2}^{1/2}\,.
\end{equation*}
\end{lem}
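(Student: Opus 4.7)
The plan is to apply the scalar Marcinkiewicz--Zygmund-type bound from Lemma \ref{lm:MZ} twice, in a nested fashion, by conditioning first on the family $(Y_j)$ and then using the independence of the $(X_i)$.

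First I would condition on $(Y_j)$, so that the coefficients $b_i \deq \sum_j a_{ij} Y_j$ become deterministic with respect to $(X_i)$. Writing $\sum_{i,j} a_{ij} X_i Y_j = \sum_i b_i X_i$ and applying Lemma \ref{lm:MZ} to the conditional expectation, I obtain
\begin{equation*}
\E\qBB{\absBB{\sum_{i,j} a_{ij} X_i Y_j}^p \,\bigg|\, (Y_j)} \;\le\; (Cp)^{p/2} \mu_p^p \, \pBB{\sum_i \absBB{\sum_j a_{ij} Y_j}^2}^{p/2}\,.
\end{equation*}
Taking the overall expectation and raising to the power $1/p$, I reduce the claim to
\begin{equation*}
\normBB{\sum_{i,j} a_{ij} X_i Y_j}_p^2 \;\le\; Cp \, \mu_p^2 \, \normBB{\sum_i \absBB{\sum_j a_{ij} Y_j}^2}_{p/2}\,.
\end{equation*}

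Next I would bound the remaining $L^{p/2}$ norm. Since $p \ge 2$, Minkowski's inequality in $L^{p/2}$ gives
\begin{equation*}
\normBB{\sum_i \absBB{\sum_j a_{ij} Y_j}^2}_{p/2} \;\le\; \sum_i \normBB{\absBB{\sum_j a_{ij} Y_j}^2}_{p/2} \;=\; \sum_i \normBB{\sum_j a_{ij} Y_j}_p^2\,.
\end{equation*}
A second application of Lemma \ref{lm:MZ}, this time to the sum $\sum_j a_{ij} Y_j$, yields $\normb{\sum_j a_{ij} Y_j}_p^2 \le Cp \mu_p^2 \sum_j \abs{a_{ij}}^2$. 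Summing over $i$ produces $Cp \mu_p^2 \sum_{i,j} \abs{a_{ij}}^2$, and inserting this into the previous display gives the claimed bound $\norm{\cdot}_p \le C p \mu_p^2 (\sum_{i,j} \abs{a_{ij}}^2)^{1/2}$.

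There is no real obstacle: the argument is a clean two-step use of Lemma \ref{lm:MZ}. The only small point to verify is that Minkowski's inequality may be applied to $L^{p/2}$, which requires $p \ge 2$; this is precisely the hypothesis of the lemma. The reason Minkowski (rather than a more refined Khintchine- or decoupling-type estimate for the bilinear form) is sufficient is that no cancellation among the $Y_j$'s is needed at the outer level: the inner Marcinkiewicz--Zygmund step has already converted the $X$-sum into an $\ell^2$-quantity, and Minkowski loses nothing in terms of the $p$-dependence we are after.
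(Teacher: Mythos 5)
Your proof is correct and is essentially identical to the paper's proof: both condition on one of the two independent families, apply Lemma \ref{lm:MZ} to the resulting linear sum, pass the $L^{p/2}$ norm inside the sum by Minkowski's inequality, and apply Lemma \ref{lm:MZ} a second time. The only cosmetic difference is that you condition on $(Y_j)$ and write the sum as $\sum_i b_i X_i$, whereas the paper conditions on $(X_i)$ and writes it as $\sum_j b_j Y_j$; the two are symmetric.
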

\begin{proof}
Write
\begin{equation*}
\sum_{i,j} a_{ij} X_i Y_j \;=\; \sum_j b_j Y_j \,, \qquad b_j \;\deq\; \sum_i a_{ij} X_i\,.
\end{equation*}
Note that $(b_j)$ and $(Y_j)$ are independent families. By conditioning on the family $(b_j)$, we therefore get from Lemma \ref{lm:MZ}
and the triangle inequality that
\begin{equation*}
\normbb{\sum_j b_j Y_j}_p \;\leq\; (C p)^{1/2} \, \mu_p \normbb{\sum_j \abs{b_j}^2}_{p/2}^{1/2} \;\leq\; (C p)^{1/2} \, \mu_p \pbb{\sum_j \norm{b_j}_p^2}^{1/2}\,.
\end{equation*}
Using Lemma \ref{lm:MZ} again, we have
\begin{equation*}
\norm{b_j}_p \;\leq\;  (C p)^{1/2} \, \mu_p \pbb{\sum_i \abs{a_{ij}}^2}^{1/2}\,.
\end{equation*}
This concludes the proof.
\end{proof}

\begin{lem}\label{lm:aXX}
Let $X_1, \dots, X_N$ be independent random variables each satisfying \eqref{cond on X}, and suppose that the family $(a_{ij})$ is deterministic.
Then we have
\begin{equation*}
\normbb{\sum_{i \neq j} a_{ij} X_i X_j}_p \;\leq\; C p \, \mu_p^2  \pbb{\sum_{i \neq j} \abs{a_{ij}}^2}^{1/2}\,.
\end{equation*}
\end{lem}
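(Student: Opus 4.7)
The plan is to reduce this quadratic (diagonal-free) large deviation bound to the already-established bilinear estimate in Lemma \ref{lm:aXY}, via a standard decoupling argument. The key idea is that off-diagonal sums $\sum_{i\ne j} a_{ij} X_i X_j$ can be compared with their \emph{decoupled} counterparts $\sum_{i,j} a_{ij} X_i Y_j$, where $(Y_j)$ is an independent copy of $(X_j)$, modulo a universal constant.

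First, I would introduce a random set $T\subset\{1,\dots,N\}$ where each index is included independently with probability $1/2$, chosen independently of $(X_i)$. For $i\ne j$ one has $\P(i\in T,\, j\in T^c)=1/4$, so by direct computation
\begin{equation*}
\sum_{i\ne j} a_{ij}X_i X_j \;=\; 4\,\E_T \sum_{i,j} a_{ij} X_i X_j \indb{i\in T}\indb{j\in T^c}\,.
\end{equation*}
Applying Jensen's inequality (to the conditional expectation $\E_T$ given $(X_i)$) and then taking $L^p$ norms over the full probability space,
\begin{equation*}
\normbb{\sum_{i\ne j} a_{ij}X_i X_j}_p \;\le\; 4\,\normbb{\sum_{i,j} a_{ij} X_i X_j \indb{i\in T}\indb{j\in T^c}}_p\,.
\end{equation*}
In particular there exists a deterministic realization $T_0$ of $T$ for which the right-hand norm, taken only over $(X_i)$, is no larger than the norm taken jointly.

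Next, I would fix such a $T_0$ and set $\tilde X_i \deq X_i \indb{i\in T_0}$, $\tilde Y_j\deq X_j\indb{j\in T_0^c}$. Then the families $(\tilde X_i)$ and $(\tilde Y_j)$ depend on disjoint subcollections of the $X_k$'s, hence are independent, and each still satisfies \eqref{cond on X} (mean zero, with $\norm{\tilde X_i}_p\le \mu_p$ and similarly for $\tilde Y_j$). Since $T_0\cap T_0^c=\emptyset$, the diagonal $i=j$ terms vanish, so
\begin{equation*}
\sum_{i,j} a_{ij} X_i X_j \indb{i\in T_0}\indb{j\in T_0^c} \;=\; \sum_{i,j} a_{ij}\, \tilde X_i\, \tilde Y_j\,.
\end{equation*}
Lemma \ref{lm:aXY} therefore yields
\begin{equation*}
\normbb{\sum_{i,j} a_{ij}\, \tilde X_i\, \tilde Y_j}_p \;\le\; Cp\,\mu_p^2 \pbb{\sum_{i,j} |a_{ij}|^2\indb{i\in T_0}\indb{j\in T_0^c}}^{1/2} \;\le\; Cp\,\mu_p^2 \pbb{\sum_{i\ne j}|a_{ij}|^2}^{1/2}\,,
\end{equation*}
and combining with the decoupling bound above gives the claim.

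The only mildly delicate step is the decoupling identity itself: it hinges on the factor $1/4$ arising from the independent $1/2$-Bernoulli choices and on Jensen to replace the conditional expectation with an $L^p$ norm. Beyond that, the argument is purely mechanical. There is no need to assume symmetry of $(a_{ij})$ (the identity holds entry by entry), and the final bound deducing independence of $(\tilde X_i)$ and $(\tilde Y_j)$ from the disjointness $T_0\cap T_0^c=\emptyset$ is what allows Lemma \ref{lm:aXY} to be applied directly after conditioning.
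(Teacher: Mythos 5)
Your approach is the standard decoupling argument and is, at bottom, the same as the paper's: the paper's identity $1 = Z_N^{-1}\sum_{I\sqcup J = \qq{N}}\ind{i\in I}\ind{j\in J}$ with $Z_N = 2^{N-2}$ is precisely the expectation over a uniform random partition written as a normalized finite sum, and the $\frac{2^N-2}{2^{N-2}}\le 4$ bound there is your explicit factor of $4$. So the two proofs are the same decoupling idea in two notations, both reducing to Lemma \ref{lm:aXY} applied to the two disjoint halves of the partition.

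There is one slip in your write-up that, as literally stated, breaks the chain. You wrote that there is a realization $T_0$ for which the conditional $L^p$ norm over $X$ is ``no larger than'' the joint norm. That direction is useless: you need $\|Z\|_p \le 4\normb{\cdot}_{p,T,X} \le 4\normb{\cdot}_{p,X|T_0}$, so you should pick $T_0$ realizing a value \emph{at least as large} as the joint norm (which exists since $\normb{\cdot}_{p,T,X}^p$ is an average of $\normb{\cdot}_{p,X|T}^p$ over $T$). Better yet, you can avoid the $T_0$-selection entirely: Lemma \ref{lm:aXY} gives, for every fixed realization $T$, the bound
$\normb{\sum_{i,j}a_{ij}X_iX_j\ind{i\in T}\ind{j\in T^c}}_{p,X}\le Cp\mu_p^2\big(\sum_{i\ne j}\abs{a_{ij}}^2\big)^{1/2}$
uniformly in $T$; raising to the $p$-th power, averaging over $T$, and taking the $p$-th root immediately bounds the joint norm by the same quantity. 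Aside from this wording issue, the verification that $(\tilde X_i)_{i\in T_0}$ and $(\tilde Y_j)_{j\in T_0^c}$ form independent families (hence Lemma \ref{lm:aXY} applies) and the observation that symmetry of $(a_{ij})$ is never needed are both correct and match the paper's proof.
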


\begin{proof}
The proof relies on the identity (valid for $i \neq j$)
\begin{equation} \label{identity for LDE}
1 \;=\; \frac{1}{Z_N} \sum_{I \sqcup J = \qq{N}} \ind{i \in I} \ind{j \in J}\,,
\end{equation}
where the sum ranges over all partitions of $\qq{N} = \{1, \dots, N\}$ into two sets $I$ and $J$, and $Z_N \deq 2^{N - 2}$ is independent of $i$ and $j$. Moreover, we have
\begin{equation} \label{comb bound}
\sum_{I \sqcup J = \qq{N}} 1 \;=\; 2^{N} - 2\,,
\end{equation}
where the sum ranges over nonempty subsets $I$ and $J$.
Now we may estimate
\begin{equation*}
\normbb{\sum_{i \neq j} a_{ij} X_i X_j}_p \;\leq\; \frac{1}{Z_N} \sum_{I \sqcup J = \qq{N}} \normbb{\sum_{i \in I} \sum_{j \in J} a_{ij} X_i X_j}_p \;\leq\; \frac{1}{Z_N} \sum_{I \sqcup J = \qq{N}} C p \, \mu_p^2  \pbb{\sum_{i \neq j} \abs{a_{ij}}^2}^{1/2}\,,
\end{equation*}
where we used that, for any partition $I \sqcup J = \qq{N}$, the families $(X_i)_{i \in I}$ and $(X_j)_{j \in J}$ are independent, and hence the Lemma \ref{lm:aXY} is applicable. The claim now follows from \eqref{comb bound}.
\end{proof}

Note that the proof of Lemma \ref{lm:aXX} may be easily extended to multilinear expressions of the form $\sum_{i_1, \dots, i_k}^* a_{i_1 \dots i_k} X_{i_1} \cdots X_{i_k}$. We shall not pursue such extensions here.
 
We may now complete the proof of Lemma \ref{lem9}.

\begin{proof}[Proof of Lemma \ref{lem9}]
The proof is a simple application of Markov's inequality. Part (i) follows from Lemma \ref{lm:MZ}, part (ii) from Lemma \ref{lm:aXX}, and part (iii) from Lemma \ref{lm:aXY}. We give the details for part (iii).

For $\epsilon >0$ and $D>0$ we have
\begin{align*}
 \P \qBB{ \absbb{\sum_{i \neq j} a_{ij} X_i X_j} \geq N^\epsilon\Psi}
& \;\leq\; \P\qBB{ \absbb{\sum_{i \neq j} a_{ij} X_i X_j} \geq N^\epsilon\Psi \,,\, 
 \pbb{\sum_{i \neq j} \abs{a_{ij}}^2}^{1/2}\leq N^{\epsilon/2}\Psi}
\\
&\qquad + \P \qBB{\pbb{\sum_{i \neq j} \abs{a_{ij}}^2}^{1/2} \geq N^{\epsilon/2}\Psi} \\
&\;\leq\;
  \P \qBB{\absbb{ \sum_{i \neq j} a_{ij} X_i X_j} \geq N^{\epsilon/2} 
\pbb{\sum_{i \neq j} \abs{a_{ij}}^2}^{1/2}} + N^{-D-1}
\\
&\;\leq\; \pbb{\frac{Cp \mu_p^2}{N^{\epsilon/2}}}^p + N^{-D-1}
\end{align*}
for arbitrary $D$. In the second step we used the definition of $\pb{\sum_{i\ne j} |a_{ij}|^2}^{1/2} \prec \Psi$ with parameters $\epsilon/2$ and $D+1$.
In the last step we used Lemma \ref{lm:aXX} by conditioning on $(a_{ij})$. Given $\epsilon$ and $D$, there is a large enough $p$ such that the first term on the last line is bounded by $N^{-D - 1}$. Since $\epsilon$ and $D$ were arbitrary, the proof is complete.

The claimed uniformity in $u$ in the case that $a_{ij}$ and $X_i$ depend on an index $u$ also follows from the above estimate.
\end{proof}

\section{Proof of Lemma \ref{thm:global_law}} \label{sec:PGL}

 \bpr[Proof of Lemma \ref{thm:global_law}] 
 A preliminary remark  is that by \eqref{6121414h44} (and its natural extension to the set $\mathcal{M}_{\le 1}(\R)$ of nonnegative measures on $\R$ with total mass at most  $1$), any element of $\mathcal{M}_{\le 1}(\R)$ is determined by its Stieltjes transform.

 Note  that for any fixed $z\in \C_+$, the function which maps a probability measure on $\R$ to its Stieltjes transform evaluated at $z$ is continuous in the topology of convergence in distribution. As a consequence, if $\mu \todist \varrho$ in probability, then $s(z)\to m(z)$ in probability. 
 
 To prove the converse, let us first treat the deterministic case. Suppose $\mu\equiv \mu_N$ to be deterministic. Then by Helly's theorem, from any of its subsequences, one can extract a subsequence having a limit $\tta$ in   $\mathcal{M}_{\le 1}(\R)$ for the vague topology (i.e.\ the topology generated by   integration of continuous functions with null limits at infinity). By hypothesis, $\tta$ must have Stieltjes transform $m$, hence be equal to $\varrho$. This proves that $\mu$ converges to $\varrho$ for the vague topology, hence for the convergence in distribution because $\mu$ and $\varrho$ are both \pro measures. 
 
 Let us now prove the converse in the general case where $\mu\equiv \mu_N$ is random. Let $(z_n)$ be a sequence of pairwise distinct elements of $\C_+$ having a limit in $\C_+$. By what precedes, the distance $\op{d}_S$ on the set $\mc{M}_1(\R)$ of \pro measures on $\R$ defined by $$ \op{d}_S(\nu,\nu')\; \deq\; \sum_{n} 2^{-n}S_{\nu-\nu'}(z_n)\qquad\qquad \trm{with }S_{\nu-\nu'}(z)\deq\int\frac{\ud(\nu-\nu')(t)}{t-z}$$ defines the topology of convergence in distribution. By hypothesis,  $\op{d}_S(\mu,\varrho)$ tends to zero in probability, which closes the proof of the theorem.

 Let us also give a more direct and  more conceptual proof of the converse. 
 For the vague topology defined above, the set $\mathcal{M}_{\le 1}(\R)$ is a compact Polish space, hence so is the set  $\mathcal{M}_1(\mathcal{M}_{\le 1}(\R))$ of \pro measures on $\mathcal{M}_{\le 1}(\R)$ endowed with the topology of the convergence in distribution. Denoting by $P\equiv P_N\in \mathcal{M}_1(\mathcal{M}_{\le 1}(\R))$ the distribution of $\mu$, the statement we have to prove is that $P\,\to\, \del_\varrho$. Let $P_\infty\in \mathcal{M}_1(\mathcal{M}_{\le 1}(\R))$ be an accumulation point of $P$. By hypothesis, a $P_\infty$-distributed random element of $\mathcal{M}_{\le 1}(\R)$ has Stieltjes transform equal to $m$ with \pro one, hence is  equal to $\varrho$ with \pro one. As the topology induced by the vague topology on $\mc{M}_1(\R)$ is the one of the convergence in distribution, this concludes the proof. \ 
 \epr

\section{Global semicircle law and proof of Theorem \ref{Th:Global_Law}}
Theorem \ref{Th:Global_Law} follows from the following more general result. 
\beg{Th}\la{Th:Global_Law_appendix} Suppose that the Hermitian matrix $H$ satisfies the following conditions. 
\begin{enumerate}
\item
The upper-triangular entries $( H_{ij} \col 1\le i\le j\le N)$ are independent.
\item
For all $i,j$ we have $\E  H_{ij}=0$ and for all $i\ne j$ we have $\E | H_{ij}|^2= N^{-1} $.   
\item  The sequence  $\del_N:=N\sup_{i,j}\E|H_{ij}|^3$ tends to zero as $N\to\infty$.  
\end{enumerate}  Then  for any fixed $z\in \C_+$  and $\eps>0$ there exist constants $C,c > 0$ \st 
$$\P(|s(z)-m(z)|\ge \eps)\;\le \; C\mathrm{e}^{-cN}\,.$$   \en{Th}

To prove this theorem, we shall use several preliminary lemmas. The first one is a decorrelation result, generalizing the well-known Stein lemma (obtained by a straightforward integration by parts) to non-Gaussian variables. 
\beg{lem}\la{lem:Stein} Let $f \in \cal C^2(\R^n;\R)$ be bounded with bounded first and second derivatives. Let $X$ be a centred random vector with values in $\R^n$ with finite second moment. Then there is $C\equiv C(n)$ \st $$\absb{\E f(X)X-\E \lan \nabla f(X),Y\rang Y} \; \le \; C\|\nabla^2 f\|_\infty \, \E |X|^3\,, $$ where $Y$ is an independent copy of $X$ and $\nabla^2 f$ is the Hessian of $f$.
\en{lem}

\bpr Note first that by the Taylor-Lagrange formula, for any $x$, $$|f(x)-(f(0)+\lan \nabla f(0),x\rang)|\;\le\; \|\nabla^2f\|_\infty |x|^2\,,$$ so it suffices to prove that $$
\absb{\E (f(0)+\lan \nabla f(0),Y\rang )Y-\E \lan \nabla f(X),Y\rang Y} \; \le \; C\|\nabla^2f\|_\infty \E |X|^3\, . $$
Since $X$ is centred, we can omit the term $f(0)Y$, and it suffices to prove that $$   | \nabla f(0) -  \nabla f(X)| \; \le \; C\|\nabla^2f\|_\infty\,  |X|\, ,$$ which follows from the mean-value theorem.
\epr

This lemma implies easily that for $H$ a centred complex random variable ($n=2$)  with finite third moment and $g$ a 
   $\mc{C}^2$ function on $\C$ which is bounded and has bounded first and second derivatives, we have \be\la{eq:SteinC}
 \lf|\E g(H)H\; -\; \E\lf( \frac{\partial g(H)}{\partial \real(H)} \real(H')H'+ \frac{\partial g(H)}{\partial \im(H)} \im(H')H'\ri)\ri|\;\le\; C\|\nabla^2g\|_\infty \E |H|^3
 \ee for $H'$ distributed as $H$, independent of $H$.
 
 The following well known lemma can be found in \cite{McDiarmid} or \cite[Th. 6.2]{BLM}.  It states that functions of many independent random variables $X_i$ have sub-Gaussian tails, with variance bounded by a quantity which is additive in the variables $X_i$. 
\beg{lem}[McDiarmid] Let $X_1, \ld, X_n$ be independent random variables taking values in some spaces denoted by   $E_1, \ld, E_n$, let $$f:E_1\ti\cd\ti E_n\to \R$$ be a measurable function and set $Y=f(X_1, \ld, X_n).$ Define, for each $k=1,\ld, n$, \be\la{wbstss}c_k\deq \sup |f(x_1, \ld, x_{k-1}, y, x_{k+1}, \ld, x_n)-f(x_1, \ld, x_{k-1}, z, x_{k+1}, \ld, x_n)|,\ee where the supremum is taken over $x_i\in E_i$ for all $i\ne k$ and $y,z\in E_k$. Then for each $r\ge 0$,  we have \be\la{eq:i7}\p(|Y-\E Y|\ge r)\;\le \; 2\exp\lf(-\frac{2r^2}{c_1^2+\cd+c_n^2}\ri).\ee\en{lem}

 The following elementary consequence of McDiarmid's lemma is pointed out in \cite{BCC2} (see also \cite{ShcherbinaTirozzi2010,PasturBook}). It implies for example that the fluctuations of $s(z)$ around its mean have order at most $N^{-1/2}$, as if the eigenvalues had been some i.i.d.\ $L^2$ random variables. It is well known (see e.g.\ \cite{bai-silver-book}) that in fact,   for Wigner matrices, as a consequence of the eigenvalue repulsion phenomenon, these fluctuations have order $N^{-1}$. However, for heavy-tailed or Erd\H{o}s-R\'enyi random matrices, the fluctuations can have any order between $N^{-1}$ and $N^{-1/2}$ (see \cite{ACFTCL,HHTMF}). 
\beg{lem}Let $H$ be a random  $N\ti N$ Hermitian matrix whose semi-columns $(H_{i,1})_{1\le i\le N}$, $(H_{i,2})_{2\le i\le N}$, \ld, $(H_{i,N})_{N\le i\le N}$ are independent random vectors. Then for any $z\in \C_+$ and any $r\ge 0$, \be\la{eq:i8}\p(|s(z)-\E s(z)|\ge r)\;\le \; 2\mathrm{e}^{-\frac{N\eta^2r^2}{8}}.\ee
\en{lem}

\bpr It suffices to notice that, by  \eqre{res_exp}, a variation of one of the semi-columns of $H$ affects $G(z)$ by an additive perturbation by a matrix with rank at most $2$ and operator norm at most $2/\eta$. 
\epr

\bpr[Proof of Theorem \ref{Th:Global_Law_appendix}]
Note that we have $ G(z)=z^{-1}(-1+HG(z))$, so that \beq \E s(z)&=&-\ff{z}+\ff{Nz}\E \Tr HG \\
&=&-\ff{z}+\ff{Nz}\sum_{k} \E H_{kk}G_{kk}  + \ff{Nz}\sum_{ k\ne  l} \E  H_{kl}G_{lk} \,.
\eeq
 Set $E_{kl\pm lk}$ to be the matrix with all entries equal to zero, except the $(k,l)$-th and $(l,k)$-th ones, which are respectively equal to $1$ and $\pm 1$. Set also $E_{kk}$ to be the  matrix with all entries equal to zero, except the $(k,k)$-th one, which is equal to $1$. 
Finally,     define $$\al_{kl}\deq N\E \real (H_{kl})H_{kl}\qquad \trm{ and }\qquad \bet_{kl}\deq N\E \im (H_{kl})H_{kl}\,.$$
Using \eqre{res_exp} to differentiate, it follows  from   \eqre{eq:SteinC}  that 
\beq  \E s(z)&=&-\ff{z}-   \ff{N^2z} \sum_kN\E H_{kk}^2\E (GE_{kk}G)_{kk} 
- \ff{N^2z}\sum_{k\ne l} \al_{kl} \E (GE_{kl+lk}G)_{lk}\\ &&
- \frac{\mathrm{i}}{N^2z}\sum_{k\ne l } \bet_{kl} \E (GE_{kl-lk}G)_{lk} +O(\cal E(z,N))
 \eeq 
where we use the shorthand $\cal E(z,N) \deq \max\{\eta^{-2}, \eta^{-3}\} (N^{-1/2}+\delta_N)$.
 Hence 
\beq  \E s(z) \;=\; -\ff{z} 
- \ff{N^2z}\sum_{k, l} \al_{kl} \E[G_{lk}^2+G_{kk}G_{ll}]
- \frac{\ii}{N^2z}\sum_{k, l } \bet_{kl} \E[G_{lk}^2-G_{kk}G_{ll}]  +O(\cal E(z,N))\, .
 \eeq 
Next, note that $$\Big|\sum_{k,l} G_{lk}^2\Big|=\lf|\Tr GG^T\ri|\le N\eta^{-2}\,.$$
 Using the fact that $\al_{kl}-\mathrm{i}\bet_{kl}=1$, it follows that $$  \E s(z)\;=\;-\ff{z} 
- \ff{N^2z}\sum_{k, l} \E G_{kk}G_{ll}
   +O(\cal E(z,N))\;=\;-\ff{z} 
- \ff{z}  \E s(z)^2 
   +O(\cal E(z,N))\,.
$$
But by \eqre{eq:i8}, $$|\E s(z)^2-(\E s(z))^2|\;\le \;\frac{16}{\eta^2N}\,,$$ so 
$$  \E s(z)\;=\; -\ff{z} 
- \ff{z}  (\E s(z))^2 
   +O(\cal E(z,N))\,.$$
We then conclude the proof using Lemma \ref{lem13} and \eqre{eq:i8}.\epr

We conclude this appendix with a remark about Theorem \ref{Th:Global_Law_appendix} applied to sparse random matrices. Sparse random matrices arise typically as adjacency matrices of random graphs. For instance, let $G(N,p)$ denote the \emph{Erd\H{o}s-R\'enyi graph}, defined as the (simple) random graph on $N$ vertices where each edge $\{i,j\}$ is open with probability $p$ and closed with probability $1 - p$, independently of the other edges. The graph is encoded by its adjacency matrix $A = A^* = \{0,1\}^{N \times N}$, whereby $A_{ij} = A_{ji}$ is equal to one if and only if the edge $\{i,j\}$ is open in $G(N,p)$.
%Clearly, $A$ is a Hermitian random matrix, and most of its entries are zero with high probability if $p \to 0$ as $N \to \infty$.
The diagonal entries of $A$ are zero, and the strict upper-triangular entries $(A_{ij} \col 1 \leq i < j \leq N)$ are i.i.d.\ Bernoulli random variables with law $(1-p)\del_0+p\del_1$. We define the normalized adjacency matrix $B\deq(Np(1-p))^{-1/2}A$, and denote by $s_B(z)$ the Stieltjes transform of its empirical eigenvalue distribution (see \eqref{def_s}). Then Theorem \ref{Th:Global_Law_appendix} implies that for any fixed $z \in \C_+$ we have $s_B(z) \to m(z)$ in probability provided that $Np(1-p) \to \infty$ as $N \to \infty$.

For the proof, we define $H \deq B - \E B$. Then it is an easy exercise to check that $H$ satisfies the assumptions (i)--(iii) of Theorem \ref{Th:Global_Law_appendix} provided that $Np(1-p) \to \infty$. Thus, Theorem \ref{Th:Global_Law_appendix} implies that $s_H(z) \to m(z)$ in probability. To return to $B$, we note that
\begin{equation*}
B \;=\; H - \frac{p}{\sqrt{N p (1 - p)}} I + \sqrt{\frac{Np}{1 - p}} \f e \f e^*\,,
\end{equation*}
where we defined the unit vector $\f e \deq N^{-1/2} (1,1, \dots, 1)^*$. Thus, $B$ differs from $H$ by a small shift proportional to the identity plus a matrix of rank one, and the convergence $s_B(z) \to m(z)$ follows from the convergence $s_H(z) \to m(z)$ combined with the following general deterministic result.

\begin{lem}[Small perturbations do not affect the global regime]
Let $H$ and $\tilde H$ be Hermitian $N \times N$ matrices, and denote by $s(z)$ and $\tilde s(z)$ the Stieltjes transforms of their empirical eigenvalue distributions respectively (see \eqref{def_s}). Then we have
\begin{equation*}
\abs{s(z) - \tilde s(z)} \;\leq\; \frac{\operatorname{rank}(H - \tilde H)}{N} \min \hbb{\frac{2}{\eta}, \frac{\norm{H - \tilde H}}{\eta^2}}\,.
\end{equation*}
%and
%\begin{equation} \label{pert_2}
%\abs{s(z) - \tilde s(z)} \;\leq\;  \frac{\pi}{N \eta} \, \operatorname{rank}(H - \tilde H)\,.
%\end{equation}
\end{lem}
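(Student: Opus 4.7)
The plan is to reduce everything to a single operator-norm estimate via the elementary inequality $\abs{\tr M} \leq \operatorname{rank}(M) \cdot \norm{M}$. Writing $\Delta \deq \tilde H - H$ and $r \deq \operatorname{rank}(\Delta)$, I will first apply the resolvent identity
\begin{equation*}
G(z) - \tilde G(z) \;=\; G(z) \, \Delta \, \tilde G(z) \,,
\end{equation*}
which is immediate upon multiplying both sides by $(H - z)$ on the left and $(\tilde H - z)$ on the right. The crucial consequence is that $G - \tilde G$, being a product in which the middle factor has rank $r$, itself has rank at most $r$.

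Next, I will use the inequality $\abs{\tr M} \leq \operatorname{rank}(M) \cdot \norm{M}$ to deduce
\begin{equation*}
\abs{s(z) - \tilde s(z)} \;=\; \frac{1}{N} \absb{\tr \pb{G(z) - \tilde G(z)}} \;\leq\; \frac{r}{N} \, \normb{G(z) - \tilde G(z)} \,.
\end{equation*}
It then only remains to bound $\norm{G - \tilde G}$ in two different ways. The triangle inequality together with the deterministic bound $\norm{(M - z)^{-1}} \leq \eta^{-1}$, valid for any Hermitian $M$, gives $\norm{G - \tilde G} \leq \norm{G} + \norm{\tilde G} \leq 2/\eta$. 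On the other hand, the resolvent identity above yields $\norm{G - \tilde G} = \norm{G \Delta \tilde G} \leq \norm{G} \cdot \norm{\Delta} \cdot \norm{\tilde G} \leq \norm{\Delta}/\eta^2$. Taking the smaller of these two bounds delivers the claimed inequality.

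There is no serious obstacle in this argument. The only step that warrants a line of justification in the full write-up is the rank-trace inequality $\abs{\tr M} \leq \operatorname{rank}(M) \cdot \norm{M}$, which I will verify by writing $M = U V^*$ with $U, V \in \C^{N \times r}$ coming from an economy-size singular value decomposition, so that $\tr M = \tr(V^* U)$ is the trace of an $r \times r$ matrix and is therefore bounded by $r \cdot \norm{V^* U} \leq r \cdot \norm{M}$. Everything else reduces to the two applications of the resolvent identity already used throughout the paper.
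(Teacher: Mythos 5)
Your proof is correct and follows the same route as the paper: apply the resolvent identity to write $G - \tilde G = G \Delta \tilde G$, observe the rank bound, and finish with $\abs{\tr M} \leq \operatorname{rank}(M)\norm{M}$ combined with the trivial bounds $\norm{G}, \norm{\tilde G} \leq \eta^{-1}$. The only cosmetic remark is that in your justification of the rank-trace inequality you should carry the diagonal singular-value factor $\Sigma$ along (write $M = (U\Sigma)V^*$ so that $\tr M = \tr(V^*U\Sigma)$), or simply invoke $\abs{\tr M} \leq \sum_i \sigma_i(M) \leq r\,\sigma_1(M)$.
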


\begin{proof}
We drop the arguments $z$ and denote by $G$ and $\tilde G$ the Green functions of $H$ and $\tilde H$ respectively. Then the resolvent identity yields
\begin{equation*}
s - \tilde s \;=\; \frac{1}{N} \tr \pb{G - \tilde G} \;=\; \frac{1}{N} \tr \pb{ G (\tilde H - H) \tilde G}\,.
%\;\leq\; \frac{\norm{H - \tilde H}}{\eta^2}\,,
\end{equation*}
Moreover, by the same resolvent identity we have $\operatorname{rank}(G - \tilde G) \leq \operatorname{rank}(H - \tilde H)$. The claim now follows from the trivial bounds $\norm{G(z)} \leq \eta^{-1}$ and $\norm{\tilde G(z)} \leq \eta^{-1}$, combined with the general estimate $\abs{\tr X} \leq \operatorname{rank}(X) \norm{X}$ for any matrix $X$, which can be proved for instance by singular value decomposition of $X$. This concludes the proof.
%
%In order to prove \eqref{pert_2}, by induction we may assume that $\operatorname{rank}(H - \tilde H) = 1$. Suppose without loss of generality that $\tilde H - H$ is a nonnegative rank-one matrix. In that case, by Cauchy's interlacing theorem we have
%\begin{equation} \label{interlacing}
%\lambda_1(\tilde H) \;\geq\; \lambda_1(H) \;\geq\; \lambda_2(\tilde H) \;\geq\; \lambda_2(H) \;\geq\; \cdots \;\geq\; \lambda_N(\tilde H) \;\geq\; \lambda_N(H)\,.
%\end{equation}
%Denote by $\mu$ and $\tilde \mu$ the empirical eigenvalue distributions of $H$ and $\tilde H$ respectively, as well as by $f(x) \deq \mu((-\infty,x])$ and $\tilde f(x) \deq \tilde \mu((-\infty,x])$ the associated integrated distribution functions. From \eqref{interlacing} we find $\abs{f(x) - \tilde f(x)} \leq N^{-1}$ for all $x \in \R$. Thus, integrating by parts we obtain
%\begin{equation*}
%\abs{s(z) - \tilde s(z)} \;=\; \absBB{\int \frac{\mu(\dd x) - \tilde \mu(\dd x)}{x - z}} \;=\; \absBB{\int \frac{f(x) - \tilde f(x)}{(x - z)^2} \, \dd x} \;\leq\; \frac{1}{N} \int \frac{1}{\abs{x - z}^2} \, \dd x \;=\; \frac{\pi}{N \eta}\,.
%\end{equation*}
%This concludes the proof of \eqref{pert_2}.
\end{proof}

%\tred{I think that if one only uses the fact that $|\Tr X|\le \|X\|\operatorname{rank}(X)$, he gets, instead of \eqref{pert_2}, $$ \abs{s(z) - \tilde s(z)} \;\leq\;  \frac{2}{N \eta} \, \operatorname{rank}(H - \tilde H)\,
%$$(indeed, by the resolvent identity, $G(z) - \tilde G(z)$ has norm at most $2\eta^{-1}$ and rank at most $ \operatorname{rank}(H - \tilde H)$), which is better than \eqref{pert_2} (with an easier proof).} %If we keep the present proof, I think that we should give a reference for Cauchy's interlacing theorem, should'nt we ?}

We note that also a local law has been established for the Erd\H{o}s-R\'enyi graph; see Section \ref{sec:beyondW} for references.

\section{Extreme eigenvalues: the F\"uredi-Koml\'os argument} \label{sec:FK}

We state here a theorem, essentially due to F\"uredi and Koml\'os in \cite{KF} (see also \cite{agz,bai-silver-book,VuCombinatorica,TAO2}), that we use in the proof of Proposition \ref{prop:bound_H} to ensure that $\norm{H} \leq C$ with high probability. As extreme eigenvalues are not the main subject of this text, in the proof given here a key combinatorial result, Vu's Lemma from \ref{lemFK}, is admitted without proof.

 \beg{Th}\la{Th:Furedi_Komlos}
 Let $H$ be a Wigner matrix.   Then for any fixed $\epsilon>0$, we have 
 $$ \norm{H}\;\le\; 2+\eps$$ with high probability.
 \en{Th}
 
The rest of this appendix is devoted to the proof of Theorem \ref{Th:Furedi_Komlos}. We remark that if, instead of Definition \ref{def:Wigner} (iii), we make the stronger assumption that $\sqrt{N} H_{ij}$ are uniformly sub-Gaussian (see \eqref{eq:subGaussian} below), then a simpler concentration argument, given in Appendix \ref{sec:sibG_ext}, yields the bound $\norm{H} \leq C$ with high probability, which is also sufficient to complete the proof of Proposition \ref{prop:bound_H} and may therefore be used to replace the arguments of this section.

Before proving Theorem  \ref{Th:Furedi_Komlos}, let us state the following lemma, whose proof is postponed after the proof of the theorem. 

\beg{lem}\la{Th:Furedi_Komlos0}
Let $\wH$ be a Hermitian matrix $\wH = \wH^*$ whose entries $\wH_{ij}$ satisfy the following conditions.
\begin{enumerate}
\item
The upper-triangular entries $(\wH_{ij} \col 1\le i\le j\le N)$ are independent.
\item
For all $i,j$ we have $\E \wH_{ij}=0$, and for $i\ne j$, we have $\E |\wH_{ij}|^2\le N^{-1}$.  
\item We have $\ds (\log N)^2\, \max_{i,j} \|\wH_{ij}\|_\infty \; \lto  \;0\, .$  \end{enumerate}  Then for any fixed $\epsilon>0$, we have 
$$\norm{\wH}\; \le\; 2+\eps$$
with high probability.
\en{lem}

\beg{rmk}\la{Rmk:Furedi_Komlos}  It follows obviously that if $\si=\si_N$ is a deterministic sequence tending to $1$, then the conclusion of Lemma \ref{Th:Furedi_Komlos0} also holds for the matrix $\si\wH$.  We then  deduce, by standard perturbation bounds (e.g.\ \cite[Cor. A.6]{agz}),   that for $M$ a deterministic Hermitian  matrix \st $\Tr (M^2) = o(1)$, the conclusion of the theorem also holds for the matrix $\si \wH+M$. 
\end{rmk}

\bpr[Proof of Theorem  \ref{Th:Furedi_Komlos}]Let us introduce several auxiliary matrices. We choose $\ka\in (0,1/2)$ and  define \bgt\ite $X = (X_{ij}) =\sqrt{N}H$,
\ite $Y = (Y_{ij})$ with $Y_{ij}\deq X_{ij}\ind{|X_{ij}|\le N^{\ka}}$,
\ite $M = (M_{ij})$ with $M_{ij}\deq \E Y_{ij}$,
\ite $\wH\deq \ff{\sqrt{N}\si}(Y-M)$ with $\si\deq \max_{ij}\si_{ij}$ and $\si_{ij}$ is the standard deviation of $Y_{ij}$.
\ent
Note first that by the Chebyshev inequality, for each $p\in \N$, if $C_p$ is a uniform bound on $\norm{\sqrt{N}H_{ij}}_p^p$, then for all $x> 0$, \be\la{3001150}\P(|X_{ij}|\ge x)\; \le \; \frac{C_p}{x^p}\,.\ee
The first consequence of \eqre{3001150} is that 
for  any $L>0$, there is $C=C(L)$ \st \be\la{30011501}\P(X\ne Y) \;\le\; CN^{-L}\,.\ee
Next, as \beq |M_{ij}|&=&|\E X_{ij}-\E X_{ij}\ind{|X_{ij}|> N^{\ka}}|\\ &=&|\E X_{ij}\ind{|X_{ij}|> N^{\ka}}|\\ &\le& \E |X_{ij}|\ind{|X_{ij}|> N^{\ka}}\\ &=&N^{\ka}\P(|X_{ij}|> N^{\ka})+\int_{N^{\ka}}^{+\infty}\P(|X_{ij}|> x)\ud x,\eeq
we know that for  any $L>0$, there is $C=C(L)$ \st \be\la{3001151}|M_{ij}|\le CN^{-L}.\ee
In the same way, as $\E Y_{ij}^2=1-\E |X_{ij}|^2\ind{|X_{ij}|>N^{\ka}}$, we have $$1-\E Y_{ij}^2= N^{2\ka}\P(|X_{ij}|> N^{\ka})+\int_{N^{\ka}}^{+\infty}2x\P(|X_{ij}|> x)\ud x\,,$$ so that   for  any $L>0$, there is $C=C( L)$ \st \be\la{3001152}\max_{i,j}|\si_{ij}-1|\le CN^{-L}.\ee
The first consequence of \eqre{3001151} and \eqre{3001152} is that for any $\eta\in (0, 1/2-\ka)$, for  $N$ large enough, for all $i,j$, we have \be\la{30011501M}\norm{ \wH_{ij}}_\infty \, \le  \,N^{-\eta}.\ee
As the entries of $\wH$ obviously satisfy the other hypotheses of Lemma \ref{Th:Furedi_Komlos0}, we deduce that the conclusion of this lemma holds for the extreme eigenvalues of $\wH$. By Remark \ref{Rmk:Furedi_Komlos} and the estimates   \eqre{3001152} and  \eqre{3001151}, we deduce    that  the conclusion of Lemma \ref{Th:Furedi_Komlos0}  also holds for the extreme eigenvalues of   $\ff{\sqrt{N}}Y$. At last, \eqre{30011501} and the union bound allow to conclude.
\epr

\bpr[Proof of Lemma \ref{Th:Furedi_Komlos0}]
We will prove the lemma thanks to the following equation, true for any $\eps>0$ and $k$ even   (the choice of $k$ will be specified later): \be\la{Eq:FKP1}\p(\norm{\wH}>2+\eps) \ \le \ \p(\Tr \wH^{k}\ge (2+\eps)^{k}) \ \le \ (2+\eps)^{-k} \E\Tr \wH^{k}.\ee

We have, for any $k$ even,   $$\E\Tr \wH^{k}\ = \ \sum_{\f i=(i_1, \ld, i_k)} \E \wH_{i_1i_2}\cd \wH_{i_{k}i_{1}} \ = \ \sum_{p=1}^k\sum_{\substack{\f i=(i_1,\ld, i_k)\\ |\{i_1, \ld, i_k\}|=p}}\E \wH_{i_1i_2}\cd \wH_{i_{k}i_{1}} \,.$$

For each $\f i=(i_1,\ld, i_k)$, let $G_{\f i}$ be the simple, non oriented graph with vertex set $V_{\f i}\deq \{i_1, \ld, i_k\}$ and edge set $E_{\f i}\deq \{\{i_t,i_{t+1}\}\ste t=1,\ld, k\}$ for $i_{k+1}\deq i_1$. For $p\deq |V_{\f i}|$, denote also by $W_{\f i}$ the $p$-tuple deduced from ${\f i}$ by removing any entry which has already appeared when running ${\f i}$ from $i_1$ to $i_k$ (for example, if ${\f i}=(1,2,5,1,3,2,4)$, then $W_{\f i}=(1,2,5,3,4)$).
For $\E \wH_{i_1i_2}\cd \wH_{i_{k}i_{1}}$ to be non zero, each edge of $G_{{\f i}}$ has to be visited at least twice by the path $\ga_{\f i}\deq (i_1, \ld, i_k,i_1)$.  It implies that $|E_{\f i}|\le k/2$, so that, as $G_{\f i}$ is connected, $|V_{\f i}|\le k/2+1$. 

For any $p=1, \ld, k/2+1$ and any   $W\in \{1, \ld, N\}^p$ with pairwise distinct entries, let $I_{k}(W)$ denote the set $k$-tuples  ${\f i}\in \{1, \ld, N\}^k$ \st \bgt\ite $W_{\f i}=W$,
\ite  each edge of $G_{{\f i}}$ has to be visited at least twice by the path $\ga_{\f i}$.
\ent
Then we have 
\beq \E\Tr \wH^{k}
&=&\sum_{p=1}^{k/2+1}\sum_{\substack{W\in \{1, \ld, N\}^p\\ \trm{with distinct entries}} }\sum_{\substack{{\f i}\in I_{k}(W)}}\E \wH_{i_1i_2}\cd \wH_{i_{k}i_{1}}\,.
\eeq
Note that Hypothesis (ii) implies that for \be\la{20615200}K \;\equiv\; K_N \;\deq\;   \sup_{i,j}  \|\wH_{ij}\|_\infty  \,,\ee    for any $m\ge 2$, \be\la{206152} \E |\wH_{ij}|^m \; \le \; N^{-1} K^{m-2}\,.\ee
Hence  that for any $p=1, \ld, k/2+1$ and any ${\f i}$ \st $|V_{\f i}|=p$, by \eqre{206152}, we have 
$$ \E \wH_{i_1i_2}\cd \wH_{i_{k}i_{1}} \; \le \; N^{-(p-1)}K^{k-2(p-1)}\,.$$ Indeed, there are at least $p-1$ independent factors in the product (because each $\ell\in \{2, \ld, k\}$ \st $i_\ell\notin\{i_1, \ld, i_{\ell-1}\}$ gives rise to a new independent factor). It follows that 
\beq  \E\Tr \wH^{k} 
&\le &\sum_{p=1}^{k/2+1}N^{-(p-1)}K^{k-2(p-1)}\sum_{\substack{W\in \{1, \ld, N\}^p\\ \trm{with distinct entries}} }|  I_{k}(W)| \eeq
Then, the key result we shall rely on is the following one, due to Vu in \cite[p. 735]{VuCombinatorica}.
\beg{lem}\la{lemFK}For any $p=1, \ld, k/2+1$ and any $W\in \{1, \ld, N\}^p$ with distinct entries, we have \be\la{206151}|  I_{k}(W)| \le \binom{k}{2p-2}2^{k+s+1}p^s(s+2)^s,\ee where $s\deq k-2(p-1)$.
\en{lem}

It follows that, using the notation $s=k-2(p-1)$, $p=\frac{k-s}{2}+1$,
\beq  \E\Tr \wH^{k} 
&\le &\sum_{p=1}^{k/2+1}N^{-(p-1)}K^{k-2(p-1)}N^p\binom{k}{2p-2}2^{k+s+1}p^s(s+2)^s\\
&=&N\sum_{s\in \{0,2,4, \ld, k\}} K^{s} \binom{k}{s}2^{k+s+1}\pbb{\frac{k-s}{2}+1}^s(s+2)^s
\eeq
Note that  for any $k\ge 16$ (with the notation $x\deq s/k$), $$
2k^{-2}\pbb{\frac{k-s}{2}+1}(s+2)= (1-x+2k^{-1})(x+4k^{-1})= (1-x)x+2k^{-1}(2-x)+8k^{-2}  \le  1\,.$$
Hence for  $k\ge 16$, \be\la{66151}  \E\Tr \wH^{k}  
\,\le\, 2^{k+1}N\sum_{s=0}^k\binom{k}{s}\lf(Kk^2 \ri)^s\,
=\, 2^{k+1}N\lf(1+Kk^2 \ri)^k\ee
Let us now  choose $k\equiv k_N$ even \st for $K$ as in \eqre{20615200}, we have  $$(\log N)/k\;\lto\; 0\qquad\trm{ and } \qquad   k^2K\;\lto\; 0$$(such a $k$ exists by Hypothesis (iii)). 
Then by \eqre{66151}, for any $\eps,D>0$, we have $$N^D(2+\eps)^{-k}\E\Tr \wH^{k}\;\lto\; 0 \,,$$
which, by \eqre{Eq:FKP1}, concludes the proof of   Lemma \ref{Th:Furedi_Komlos0}. \epr

\section{Extreme eigenvalues: sub-Gaussian entries} \label{sec:sibG_ext}

In this appendix we provide a simple alternative proof that the extreme eigenvalues of $H$ are bounded, following \cite{ESY2}. This argument requires sub-Gaussian entries and gives a weaker bound than Theorem \ref{Th:Furedi_Komlos}, establishing that $\norm{H} \leq C$ for some constant $C$ with high probability. On the other hand, it is very simple and gives strong bounds on the error probabilities. Note that any bound of the form $\norm{H} \leq C$ is sufficient for the proof of Proposition \ref{prop:bound_H}, so that the argument given in this appendix may be used to replace entirely that of Appendix \ref{sec:FK} if one strengthens the decay assumption in (iii) of Definition \ref{def:Wigner} to sub-Gaussian decay as in  \eqre{eq:subGaussian}.

\beg{Th} Let $H$ be a Hermitian $N\times N$ matrix with independent centred  upper-triangular entries such that for some positive constant $\tau$ we have \be\la{eq:subGaussian}\E \me^{\tau N|H_{ij}|^2}\;\le\; \tau^{-1}
\ee for all $i,j$. Then there are positive constants $c,C$ depending only on $\tau$ such that for all $t>0$ we have $$\P(\|H\|\ge t)\;\le\; \me^{-c(t^2-C)N}\,.$$
\en{Th}

\bpr It is elementary (see e.g.\ \cite[Lem. 1.4.2]{CGLP}) that for any $N$ and any   $\eps>0$, one can find a (deterministic) $\epsilon$-net $B$ of the unit sphere of $\C^N$ with cardinality $\abs{B} \leq ((2+\eps)/\eps)^{2N}$. This means that for any $\f v$ in the unit sphere there exists $\f w \in B$ such that $\abs{\f v - \f w} \leq \epsilon$.  Taking $\epsilon < 1$,   we deduce that
$$\|H\|^2\;\le \;   \frac{\max_{\f v \in B} \abs{H \f v}^2}{1-\eps}\,.$$
Fixing  $\eps \in (0,1)$   and using a union bound, we therefore have to prove that for some positive constants $c,C$ and for any deterministic unit vector $\f v$ we have $$\P(|H \f v|^2\ge u)\;\le\; \me^{-c(u-C)N}$$
for all $u > 0$. By Markov's inequality, it suffices to prove that for some positive constants $c,C$ and for any deterministic unit vector $\f v$ we have
$$\E \me^{c N |H \f v|^2}\;\le\; \me^{c N C}\,.$$ 
For a unit vector $\f v=(v_1, \ld, v_N)$, using the independence of the upper-triangular entries of $H$ and the Cauchy-Schwarz inequality we obtain, for any $c>0$, 
\begin{align*}
\E \me^{c N |H\f v|^2} &\;\le\; \E \me^{2 c N \sum_i \abs{\sum_{j < i} H_{ij} v_j}^2 + 2 c N \sum_i \abs{\sum_{j \geq i} H_{ij} v_j}^2}
\\
&\;\leq\; \pB{\E \me^{4 c N \sum_i \abs{\sum_{j < i} H_{ij} v_j}^2} \, \E \me^{4 c N \sum_i \abs{\sum_{j \geq i} H_{ij} v_j}^2}}^{1/2}
\\
&\;=\; \prod_{i = 1}^N \pB{\E \me^{4c N|\sum_{j<i} H_{ij}v_j|^2} \, \E \me^{4c N|\sum_{j\ge i} H_{ij}v_i|^2}}^{1/2}\,,
\end{align*}
so that the claim follows from  Lemma \ref{lem:subGaussianscalarproduct} below.\epr

\beg{lem}\la{lem:subGaussianscalarproduct}Let $\f Y \in \C^N$ be a random vector with independent centred entries $Y_i$ satisfying
$$\E \me^{\delta |Y_i|^2}\;\le\; \delta^{-1}$$
for all $i$, where $\delta$ is a positive constant. Then there 
is a $\tau>0$ depending only on $\delta$  such that for any unit vector $\f v\in \C^N$ we have  $$\E\me^{\tau |\scalar{\f Y}{\f v}|^2}\;\le\; \tau^{-1}\,.$$
\en{lem}

\bpr
By decomposing both $\f Y$ and $\f v$ into real and imaginary parts and using H\"older's inequality, we can assume without loss of generality that both $\f Y$ and $\f v$ have real entries. 
Let then  $g$ be a standard real Gaussian variable, independent of the other variables, let $\E_g$ denote the expectation with respect to $g$ and let $\E$ denote the expectation with respect to all other variables than $g$.  We have, for any $\lambda>0$,   $$\E\me^{\lambda|\scalar{\f Y}{\f v}|^2}=\E \E_g \me^{\sqrt{2\lambda}\scalar{\f Y}{\f v} g} \;=\; \E_g \E \me^{\sqrt{2\lambda}\scalar{\f Y}{\f v} g} 
   \;=\;  \E_g\prod_{i} \E \me^{\sqrt{2\lambda} Y_i v_ig},$$ so that, by Lemma \re{lem:fromX2toX} below,  $$
  \E\me^{\lambda|\scalar{\f Y}{\f v}|^2} \;\le\;  \E_g\prod_{i} \me^{6\lambda g^2v_i^2 / \delta^3} \;\le\; \E_g \me^{6\lambda g^2 / \delta^3}\,,
$$
which  is finite  as soon as 
 $12\lambda< \delta^3 $.\epr

 \beg{lem}\la{lem:fromX2toX}For any real centred random variable $X$, $r\in \R$, and $\del>0$, we have 
 $$\E \me^{rX}\;\le \;  1+3(\me^{r^2/\del^{2}}-1)\E \me^{\del X^2}\
 \;\le \; \exp \pb{3r^2\del^{-2}\E \me^{\del X^2}}\,.$$
  \en{lem}
  
  \bpr 
   The second inequality follows from the fact that for any $y\ge 1$ we have   $$1+3(\me^{r^2/\del^{2}}-1)y\;=\;1+\sum_{n\ge 1}\frac{3y(r^{2}/\del^{2})^n}{n!}\;\leq\; 1+\sum_{n\ge 1}\frac{(3yr^2/\del^{2})^n}{n!}\;=\;\exp \pb{3r^2\del^{-2}y}\,.$$
To prove the first inequality, it suffices to consider the case $r=1$ (otherwise consider $\tilde{X} \deq rX$ and $\tilde{\del}\deq\del/r$). 
Using that $\me^x\le 1+x+3\frac{\me^x+\me^{-x}-2}{2}$ for all $x\in \R$, we get $$\E \me^X\;\leq\; 1+3\sum_{n\ge 1}\frac{\E X^{2n}}{(2n)!}\;\leq\; 1+3\sum_{n\ge 1}\frac{n!\del^{-2n}\E \me^{\del X^2}}{(2n)!}\;\leq\; 1+3 (\me^{1/\del^{2}}-1)\E \me^{\del X^2}\,,
 $$
as claimed.
  \epr

\section*{Acknowledgements}

We gratefully acknowledge the hospitality and support of the Institut Henri Poincar\'e and the Soci\'et\'e Math\'ematique de France during the conference \emph{\'Etats de la recherche en matrices al\'eatoires} in December 2014. Antti Knowles was partly supported by Swiss National Science Foundation grant 144662 and the SwissMAP NCCR grant.

\begin{thebibliography}{10}

\bibitem{AjankiErdosKruger} O. Ajanki, L. Erd{\H o}s, T. Kr\"uger,  \emph{Quadratic vector equations on complex upper half-plane}. Preprint arXiv:1506.05095.

\bibitem{AjankiErdosKruger2} O. Ajanki, L. Erd{\H o}s, T. Kr\"uger,  \emph{Universality for general Wigner-type matrices}. Preprint arXiv:1506.05098.

\bibitem{agz} G. Anderson, A. Guionnet, O. Zeitouni, \emph{An Introduction to Random Matrices}. Cambridge Studies in Advanced Mathematics 118, Cambridge, 2009.
 
\bibitem{bai-silver-book} Z.D. Bai, J.W. Silverstein, \emph{Spectral analysis of large dimensional random matrices}. Second Edition, Springer, New York, 2009.

\bibitem{BaoErdosblockband} Z. Bao, L. Erd{\H o}s,  \emph{Delocalization for a class of random block band matrices}.  Preprint arXiv:1503.07510. 

\bibitem{BaoErdosSchnelli1} Z. Bao, L. Erd{\H o}s, K. Schnelli, \emph{Local Stability of the Free Additive Convolution}. Preprint arXiv:1508.05905. 

\bibitem{BaoErdosSchnelli2} Z. Bao, L. Erd{\H o}s, K. Schnelli, \emph{Local law of addition of random matrices on optimal scale}. Preprint arXiv:1509.07080.

\bibitem{LocalLawRegGraphs} R. Bauerschmidt, A.  Knowles, H.-T. Yau, \emph{Local semicircle law for random regular graphs}. Preprint arXiv:1503.08702.

\bibitem{BG:SRTLL} F. Benaych-Georges, \emph{Local Single Ring Theorem}. 	Preprint arXiv:1501.07840.

\bibitem{BEN2} F. Benaych-Georges, A. Guionnet, M. Ma\"{\i}da, \emph{Fluctuations of the extreme eigenvalues of finite rank deformations of random matrices}. Electron. J. Probab. Vol. 16 (2011),  no. 60, 1621--1662.

   \bibitem{ACFTCL}  F. Benaych-Georges,  A. Guionnet, C. Male, \emph{Central limit theorems for linear statistics of heavy tailed random matrices}.  Comm. Math. Phys. 329 (2014), no. 2, 641--686.
   
      \bibitem{HHTMF}  F. Benaych-Georges,  A. Maltsev, \emph{Fluctuations of linear statistics of half-heavy-tailed random matrices}.  Preprint arXiv:1410.5624.

\bibitem{BEKYYPCA} A. Bloemendal, L. Erd{\H o}s, A. Knowles, H.-T. Yau,  J. Yin, \emph{Isotropic local laws for sample covariance and generalized Wigner matrices}. Electron. J. Probab. 19 (2014), no. 33, 1--53.

\bibitem{BKYYPCA} A. Bloemendal, A. Knowles, H.-T. Yau,  J. Yin, \emph{On the principal components of sample covariance matrices}. To appear in {Probab. Theory Related Fields}.

\bibitem{BMPastur} L. Bogachev, S.A. Molchanov, L.A.  Pastur, \emph{On the density of states of random band matrices}. (Russian) Mat. Zametki 50 (1991), no. 6, 31--42; translation in 
Math. Notes 50 (1992), no. 5--6, 1232--1242.

     \bibitem{BCC2} C.  Bordenave, P.  Caputo,    D.  Chafa{\"{\i}},
    \emph{Spectrum of non-Hermitian heavy tailed random matrices}.
  Comm. Math. Phys.
 307 (2011), no. 2,  513--560.
 
     \bibitem{CharlesAlice} C.  Bordenave, A. Guionnet, \emph{Localization and delocalization of eigenvectors for heavy-tailed random matrices}. Probab. Theory Related Fields 157 (2013), no. 3--4, 885--953.

   \bibitem{BLM} S. Boucheron, G. Lugosi, P. Massart, \emph{Concentration inequalities}. Oxford  University Press, Oxford, 2013.

\bibitem{BEY1} P. Bourgade, L. Erd\H{o}s, H.-T. Yau, \emph{Bulk universality of general $\beta$-ensembles with non-convex potential}. J. of Math. Phys. 53, special issue in honor of E. Lieb's 80th birthday,  (2012), no. 9.

\bibitem{BEY2} P. Bourgade, L. Erd\H{o}s, H.-T. Yau, \emph{Universality of general $\beta$-ensembles}. Duke Math. J. 163 (2014), no. 6, 1127--1190.

\bibitem{BEY3} P. Bourgade, L. Erd\H{o}s, H.-T. Yau, \emph{Edge Universality of $\bet$-ensembles}.  Comm. Math. Phys. 332 (2014), no. 1, 261--353.

      \bibitem{BourgadeYau} P. Bourgade, H.-T. Yau, \emph{The eigenvector moment flow and local quantum unique ergodicity}.  Preprint arXiv:1312.1301.

      \bibitem{BourgadeCirc1}      
      P. Bourgade, H.-T. Yau, J. Yin, \emph{Local circular law for random matrices}, Probab. Theory Related Fields 159 (2014), no. 3, 545--595.

      \bibitem{BourgadeCirc2} P. Bourgade, H.-T. Yau, J. Yin,  \emph{The local circular law II: the edge case}.    Probab. Theory Related Fields  159  (2014), no. 3--4, 619--660.

      \bibitem{CMS}
      C. Cacciapuoti , A. Maltsev, B. Schlein, \emph{Bounds for the Stieltjes transform and the density of states of Wigner matrices}. Probab. Theory Related Fields 163  (2014), no. 1, 1--29.

      \bibitem{SchleinCirc}
      C. Cacciapuoti , A. Maltsev, B. Schlein, \emph{Local Marchenko-Pastur law at the hard edge of sample covariance matrices}. J. Math. Phys. 54 (2013).
 
   \bibitem{CGLP}  D. Chafa\"\i, O. Gu\'edon, G. Lecu\'e, A. Pajor,  \emph{Interactions between compressed sensing random matrices and high dimensional geometry}. Panoramas et Synth\`eses 37. Soci\'et\'e Math\'ematique de France, Paris,  2012.

 \bibitem{ChatterjeeLindeberg} S. Chatterjee, \emph{A generalization of the Lindeberg principle}. Ann. Probab. 34 (2006), no. 6, 2061--2076.

   \bibitem{Dyson}  F.J. Dyson, \emph{A Brownian-motion model for the eigenvalues of a random matrix}. J. Mathematical Phys. 3 (1962) 119--1198.

\bibitem{Erd1} L. Erd{\H o}s, \emph{Universality of Wigner random matrices: a survey of recent results}. Russian Mathematical Surveys 66 (2011), no. 3, 507--626.

\bibitem{EKQD}  L. Erd{\H o}s, A.  Knowles, \emph{Quantum diffusion and eigenfunction delocalization in a random band matrix model}. 
Comm. Math. Phys. 303 (2011), no. 2, 509--554. 

\bibitem{EKQDGeneral}  L. Erd{\H o}s, A.  Knowles, \emph{Quantum diffusion and delocalization for band matrices with general distribution}. Ann. Henri Poincar\'e 12 (2011), no. 7, 1227--1319.

\bibitem{EKYfluc}  L. Erd{\H o}s, A.  Knowles,   H.-T. Yau,
\emph{Averaging Fluctuations in Resolvents of Random Band Matrices}. Ann. Henri Poincar\'e 14 (2013), no. 8, 1837--1926.

\bibitem{EKYY1} L. Erd{\H o}s, A.  Knowles,   H.-T. Yau, J. Yin,
 \emph{Spectral Statistics of Erd\H{o}s-R\'enyi Graphs I: Local Semicircle Law}.
Ann. Probab. 41 (2013), no. 3B, 2279--2375.

\bibitem{EKYYERGII}
L. Erd{\H{o}}s, A. Knowles, H.-T. Yau,  J. Yin,  \emph{Spectral statistics of Erd\H{o}s-R\'enyi Graphs II: Eigenvalue spacing and the extreme eigenvalues}. Comm. Math. Phys. 314 (2012), no. 3, 587--640.

\bibitem{EKYY3}  L. Erd{\H o}s, A.  Knowles,   H.-T. Yau, J. Yin,
 \emph{Delocalization and Diffusion Profile for Random Band Matrices}.  Comm. Math. Phys. 323 (2013), no. 1, 367--416.

\bibitem{EKYY4}
L. Erd{\H{o}}s, A. Knowles, H.-T. Yau,  J. Yin,  \emph{The local semicircle law for a general class of random
  matrices}. Electron. J. Probab 18 (2013), 1--58.

    \bibitem{ESRY} L. Erd\H{o}s, J. Ram\'{\i}rez, B. Schlein, H.-T. Yau, \emph{Universality of sine-kernel for Wigner matrices with a small Gaussian perturbation}. 
Electron. J. Probab. 15 (2010), no. 18, 526--603. 

\bibitem{ESY2} L. Erd\H{o}s, B. Schlein, H.T. Yau, \emph{Semicircle law on short scales and delocalization of eigenvectors for Wigner random matrices}. Ann. Probab. 37 (2009), no. 3, 815--852.

\bibitem{ESY3} L. Erd\H{o}s, B. Schlein, H.T. Yau, \emph{Local semicircle law and complete delocalization for Wigner random matrices}. Comm. Math. Phys. 287 (2009), 641--655.

\bibitem{ESY4} L. Erd\H{o}s, B. Schlein, H.T. Yau, \emph{Wegner estimate and level repulsion for Wigner random matrices}. Int. Math. Res. Not. 2010 (2009), 436--479.

\bibitem{ESYY} L. Erd\H{o}s, B. Schlein, H.T. Yau,   J. Yin, \emph{The local relaxation flow approach to universality of the local statistics for random matrices}. 
Ann. Inst. Henri Poincar\'e Probab. Stat. 48 (2012), no. 1, 1--46. 

\bibitem{MR2981427}
L. Erd{\H{o}}s, H.-T. Yau,   J. Yin,
\emph{Bulk universality for generalized Wigner matrices}.
 Probab. Theor. Rel. Fields 154 (2012), no.  1--2, 341--407.

\bibitem{EYY2}  L. Erd{\H{o}}s,   H.-T. Yau,  J. Yin, \emph{Universality for generalized Wigner matrices with Bernoulli
distribution}.  J. of Combinatorics 1 (2011), no. 2, 15--85

\bibitem{EYYrigi}  L. Erd{\H{o}}s,   H.-T. Yau,  J. Yin, \emph{Rigidity of Eigenvalues of Generalized Wigner Matrices}.
Adv. Math.
 229 (2012), no. 3, 1435--1515.
 
 \bibitem{Girko} V.L. Girko, \emph{The circular law}. Teor. Veroyatnost. i Primenen. (Russian) 29 (1984), 669--679.

 \bibitem{KF} Z. F\"uredi, J. Koml\'os,  \emph{The eigenvalues of random symmetric matrices}.
Combinatorica 1 (1981), no. 3, 233--241.

\bibitem{Gaudin} M. Gaudin, \emph{Sur la loi limite de l'espacement des valeurs propres d'une matrice al\'eatoire}. Nuclear Physics
  25 (1961),   447--458.

\bibitem{KarginPTRF12} V. Kargin, \emph{A concentration inequality and a local law for the sum of two random matrices.}  Probab. Theory Related Fields 154 (2012), no. 3--4, 677--702.

\bibitem{KarginAOP13} V. Kargin, \emph{An inequality for the distance between densities of free convolutions.} Ann. Probab. 41 (2013), no. 5, 3241--3260.

\bibitem{KarginAOP13Sub} V. Kargin, \emph{Subordination of the resolvent for a sum of random matrices}. Ann. Probab. 43 (2015), no. 4, 2119--2150. 

\bibitem{Khor} A. Khorunzhy, \emph{On smoothed density of states for Wigner random matrices}. Rand. Op. Stoch. Eq. 4 (1997), no. 2, 147--162. 

 \bibitem{KnowlesYinEig} A. Knowles, J. Yin,  \emph{Eigenvector Distribution of Wigner Matrices}.  Probab. Theory Related Fields 155  (2013), 543--582.

 \bibitem{KnowlesYinIso} A. Knowles, J. Yin, \emph{The isotropic semicircle law and deformation of Wigner matrices}. 
 {Comm. Pure Appl. Math.} 66 (2013), no. 11, 1663--1750. 
 
  \bibitem{KnowlesYinOutliers} A. Knowles, J. Yin, \emph{The outliers of a deformed Wigner matrix}. Ann. Probab. 42 (2014), no. 5, 1980--2031.
 
  \bibitem{KnowlesYinAnIso} A. Knowles, J. Yin, \emph{Anisotropic local laws for random matrices}. Preprint arXiv:1410.3516.
 
 \bibitem{LINDEBERG} J.W. Lindeberg,  \emph{Eine neue Herleitung des Exponentialgesetzes in der Wahrscheinlichkeitsrechnung}. Math. Z. 15 (1922), 211--225.

\bibitem{MP1967} V.A. Marchenko, L.A. Pastur, \emph{Distribution of eigenvalues in certain sets of random matrices}. {Mat. Sb.} 72 (1967), no. 114, 507--536.

\bibitem{McDiarmid} C. McDiarmid, \emph{On the method of bounded differences}. Surveys in combinatorics,  London Math. Soc. Lecture Note Ser. 141, Cambridge Univ. Press, Cambridge (1989), pp. 148--188.

\bibitem{Mehta} M.L. Mehta, \emph{Random Matrices}. Second Edition, Academic Press, Boston, 1991.

\bibitem{MPasturK} S.A. Molchanov, L.A.  Pastur, A.M. Khorunzhii,  \emph{Distribution of the eigenvalues of random band matrices in the limit of their infinite order}. (Russian. English, Russian summary) Teoret. Mat. Fiz. 90 (1992), no. 2, 163--178; translation in 
Theoret. and Math. Phys. 90 (1992), no. 2, 108--118.

\bibitem{OrourkeVu} S. O'Rourke, V. Vu, 
\emph{Universality of local eigenvalue statistics in random matrices with external source}.
Random Matrices Theory Appl. 3 (2014), no. 2,   37 pp. 

\bibitem{PasturBook} L.A.  Pastur, M. Shcherbina, \emph{Eigenvalue distribution of large random matrices}. Mathematical Surveys and Monographs, 171. American Mathematical Society, Providence, RI, 2011.

\bibitem{SandrineSoshni} S. P\'ech\'e, A. Soshnikov, \emph{Wigner random matrices with non-symmetrically distributed entries}. J. Stat. Phys. 129 (2007), no. 5--6, 857--884.

\bibitem{PY1} N.S. Pillai, J. Yin, \emph{Universality of covariance matrices}. Ann. Appl. Probab. 24 (2014), no. 3, 935--1001. 

\bibitem{RRV2011JAMS} J.A. Ram\'\i rez, B. Rider, B. Vir\'ag, \emph{Beta ensembles, stochastic Airy spectrum, and a diffusion}. J. Amer. Math. Soc. 24 (2011), no. 4, 919--944.

  \bibitem{ShcherbinaTirozzi2010} M.~Shcherbina, B.~Tirozzi,  \emph{Central limit theorem for fluctuations of linear eigenvalue  statistics of large random graphs}. J. Math. Phys. 51 (2010), no. 2,  20 pp.

\bibitem{SinaiSoshni} Y.G. Sinai, A. Soshnikov,  \emph{A refinement of Wigner's semicircle law in a neighborhood of the spectrum edge for random symmetric matrices}. (Russian) Funktsional. Anal. i Prilozhen. 32 (1998), no. 2, 56--79, 96; translation in Funct. Anal. Appl. 32 (1998), no. 2, 114--131.

\bibitem{soshniCMP99} A. Soshnikov, \emph{Universality at the edge of the spectrum in Wigner random matrices}. Comm. Math. Phys. 207 (1999), no. 3, 697--733.

\bibitem{soshniJSP2002} A. Soshnikov, \emph{A note on universality of the distribution of the largest eigenvalues in certain sample covariance matrices}. Dedicated to David Ruelle and Yasha Sinai on the occasion of their 65th birthdays. J. Statist. Phys. 108 (2002), no. 5--6, 1033--1056.

\bibitem{stroock}
D. Stroock, \emph{Probability theory, and analytic view}. Second edition. Cambridge University
  Press, Cambridge, 2011.
  
  \bibitem{TAO2} T. Tao, 	\emph{Topics in random matrix theory}. Graduate Studies in Mathematics, American Mathematical Society, Providence, RI, 2012.

    \bibitem{Tao-Vu_CMP} T. Tao, V. Vu, \emph{Random matrices: universality of local eigenvalue statistics up to the edge}. Comm. Math. Phys. 298 (2010), no. 2, 549--572.
      
  \bibitem{Tao-Vu_ActaMath2011} T. Tao, V. Vu, \emph{Random matrices: universality of local eigenvalue statistics}. Acta Math. 206 (2011), no. 1, 127--204.
  
\bibitem{Tao-Vu_vectors} T. Tao, V. Vu, \emph{Random matrices: universal properties of eigenvectors}. Rand. Mat. Theor. Appl. 1 (2012), 1150001.
    
    \bibitem{TW94} C. Tracy, H. Widom, \emph{Level spacing distribution and Airy kernel}. Commun. Math. Phys. 159 
(1994), no.1, 151--174.

\bibitem{TW96}  C. Tracy, H. Widom,
\emph{On orthogonal and symplectic matrix ensembles}.
Commun. Math. Phys. 177 (1996), no. 3, 727--754.

\bibitem{VuCombinatorica} V. Vu, \emph{Spectral norm of random matrices}. Combinatorica 27  (2007), no. 6,  721--736.

\bibitem{Wig} E.P. Wigner, \emph{On the distribution of the roots of certain symmetric matrices}. Ann. of Math.  67 (1958),    325--327.

\bibitem{YinCirc} J. Yin, \emph{The local circular law III: general case}. Probab. Theory Related Fields  160  (2014), no. 3, 679--732.

\en{thebibliography}
 
  \end{document}